\documentclass[11pt, oneside]{book}
\usepackage[verbose,tmargin=2.5cm,bmargin=2.5cm,lmargin=2.5cm,rmargin=2.5cm]{geometry}
\usepackage{amsfonts,amssymb,amsmath,amsthm,amstext}
\usepackage{enumitem}
\usepackage{graphicx}
\usepackage{caption}
\usepackage{keytheorems}
\usepackage[title]{appendix}

\usepackage{xcolor}
\usepackage{comment}
\usepackage{mathtools}
\usepackage{subcaption}
\usepackage{float}
\usepackage{hyperref}
\usepackage{cleveref}
\usepackage{tikz}
\DeclareUnicodeCharacter{2061}{}

\newtheorem{thm}{Theorem}[section]
\newtheorem{claim}[thm]{Claim}
\newtheorem{proposition}[thm]{Proposition}
\newtheorem{lemma}[thm]{Lemma}
\newtheorem{corollary}[thm]{Corollary}

\theoremstyle{definition}
\newtheorem{example}[thm]{Example}
\newtheorem{obs}[thm]{Observation}
\newtheorem{question}[thm]{Question}
\newtheorem{Oquestion}[thm]{Open Question}
\newtheorem{defn}[thm]{Definition}

\newtheorem{remark}[thm]{Remark}

\usepackage{bbm}
\usepackage{comment} 
\usepackage[normalem]{ulem} 

\binoppenalty=\maxdimen
\relpenalty=\maxdimen

\date{\today}

\begin{document}
\begin{titlepage}
		\begin{center}
			\hspace{1cm}
			\vspace{6.5cm}
					
			\Huge
			\textbf{Percolation on Finite Graphs}
			\LARGE 
			\textbf{(0366-4118)}
			
			\vspace{1cm}
			\Large
			Prof. Michael Krivelevich 
			
			\vspace{0.5cm}
			Tel Aviv University, Spring 2026
			
		\end{center}
	\end{titlepage}

	\newpage
	
	\noindent \hrulefill \newline\newline\newline
	
	\noindent The following notes were written by the participants of the course, compiled by Itay Markbreit and are based on the lectures of a course on percolation on finite graphs, 
	given by Prof. Michael Krivelevich at the School of Mathematical Sciences, Tel Aviv University, Spring 2026. 
	Despite our best efforts, there are probably still some typos and mistakes left. Hence, corrections and other feedback would be greatly appreciated and can be
	sent to krivelev@tauex.tau.ac.il. \\ \\
    \textbf{Acknowledgements:} We would like to thank Michael Krivelevich for his corrections and invaluable suggestions for improvement throughout the preparation of these notes.

\tableofcontents
\pagebreak


\newcommand{\heading}[6]{
	\renewcommand{\thepage}{\arabic{page}}
	\noindent
	\begin{center}
		\framebox{
			\vbox{
				\hbox to 6.43in { \textbf{#2} \hfill #3 }
				\vspace{4mm}
				\hbox to 5.98in { {\Large \hfill #6  \hfill} }
				\vspace{2mm}
				\hbox to 6.43in { \textit{Lecturer: #4 \hfill #5} }
			}
		}
	\end{center}
	\vspace*{4mm}
}

\newcommand{\lecture}[4]{\heading{#1}{Percolation on Finite Graphs, Tel Aviv Univ., Spring 2026}{#2}{Prof. Michael Krivelevich}{Scribe: #4}{#3}}

\newcommand{\lecturenum}{1} 
\newcommand{\lecturedate}{April 12, 2026} 
\newcommand{\lecturetitle}{Lecture 1} 
\newcommand{\scribename}{Itay Markbreit} 
\phantomsection
\addcontentsline{toc}{chapter}{Lecture 1}
\lecture{\lecturenum}{\lecturedate}{\lecturetitle}{\scribename}
\setcounter{chapter}{1}
\section{Standard inequalities}

\begin{enumerate}
    \item For every $x\in \mathbb{R}$, $1+x\le e^x$.
    \item For every small enough $x>0$, $1+x\le e^{x-\frac{x^2}{3}}$.
    \item Recall that: For every $0\le k\le n$, $\binom{n}{k}=\frac{n!}{k!(n-k)!}$. For every $1\le k\le n$:
    \begin{align*}
        \left(\frac{n}{k}\right)^k\le \binom{n}{k}\le \sum_{i=0}^k\binom{n}{i}\le \left(\frac{en}{k}\right)^k.
    \end{align*}
    \item Stirling's formula:
    \begin{align*}
        \lim_{n\xrightarrow{}\infty}\frac{n!}{\sqrt{2\pi n}\cdot \left(\frac{n}{e}\right)^n}=1.
    \end{align*}
    \item Markov's inequality: Let $X$ be a non-negative random variable such that $\mathbb{E}[X]$ exists. Then, for every $t>0$,
    \begin{align*}
        \mathbb{P}(X\ge t)\le \frac{\mathbb{E}[X]}{t}. 
    \end{align*}
    \item Chebyshev's inequality: Let $X$ be a random variable such that $\mathbb{E}[X]$ and $\mathbb{E}[X^2]$ exist. Then, for every $t>0$,
    \begin{align*}
        \mathbb{P}(|X-\mathbb{E}[X]|\ge t)\le \frac{\text{Var}(X)}{t^2}. 
    \end{align*}
    \item Chernoff's inequality: Recall the notion of the binomial distribution. Let $\{X_i\}_{i=1}^n$ be i.i.d. Bernoulli$(p)$ random variables, that is, for every $1\le i\le n$, $\mathbb{P}(X_i=1)=p$ and $\mathbb{P}(X_i=0)=1-p$. Set $X=\sum_{i=1}^nX_i$. $X$ is a binomially distributed random variable with parameters $n$ and $p$, and we denote $X\sim Bin(n,p)$. Notice that $\mathbb{E}[X]=np$ and $\text{Var}(X)=npq$ where $q=1-p$. Chernoff's inequalities say that
    \begin{align*}
        \mathbb{P}(X\le np-a)\le e^{-\frac{a^2}{2np}},\quad \mathbb{P}(X\ge np+a)\le e^{-\frac{a^2}{2np}+\frac{a^3}{2(np)^2}},
    \end{align*}
    for every $a>0$.
\end{enumerate}
\underline{Basic notations and assumption in the course}
\begin{enumerate}
    \item $n\xrightarrow{}\infty$ (but finite).
    \item Let $\bar{\Omega}=\left(\Omega_n\right)_{n=1}^\infty$ be a sequence of probability spaces and let $\bar{A}=\left(A_n\right)_{n=1}^\infty$ such that for every $n\in\mathbb{N}$, $A_n\subseteq \Omega_n$ is an event. We say that $\bar{A}$ occurs \textit{with high probability} (abbv. whp) if 
    \begin{align*}
        \lim_{n\xrightarrow{}\infty}\mathbb{P}_{\Omega_n}(A_n)=1.
    \end{align*}
\end{enumerate}
\begin{example}
    When we say that $G\left(n,\frac{1}{2}\right)$ is Hamiltonian (=contains a Hamilton cycle), formally, we mean the following: $\Omega_n=G\left(n,\frac{1}{2}\right)$. $A_n$ is the event where $G\sim G\left(n,\frac{1}{2}\right)$ is Hamiltonian and 
    \begin{align*}
        \lim_{n\xrightarrow{}\infty}\mathbb{P}\left(G\sim G\left(n,\frac{1}{2}\right)\text{ is Hamiltonian}\right)=1.
    \end{align*}
\end{example}

\section{The Gilbert Model \texorpdfstring{$G(n,p)$}{G(n,p)}}

In the $G(n, p)$ model, we consider a set of $n$ labeled vertices, denoted as $[n] = \{1, \dots, n\}$. The total number of possible edges in a complete graph $K_n$ is given by:
\begin{equation*}
N \coloneqq \binom{n}{2} = |E(K_n)|.
\end{equation*}
\begin{defn}
    Let $0 \le p\coloneqq p(n) \le 1$. We say that $G \sim G(n, p)$ if $V(G)=[n]$ consists of $n$ labeled vertices, and each possible edge $(i, j)$ exists independently with probability $p$.
\end{defn}
\begin{remark}
    The fact that the vertices are labeled implies for example that
    \begin{figure}[H]
    \centering
    \includegraphics[width=0.5\textwidth]{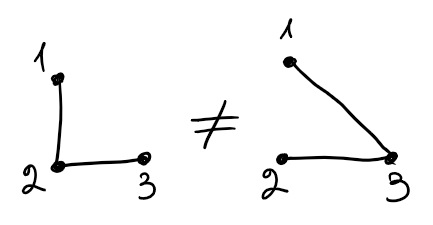}
    \caption{The order matters}
\end{figure}
\end{remark}
\noindent Equivalently,
\begin{defn}[Equivalent Definition]
    Let $\Omega_n = \{G = (V, E) \mid V = [n]\}$. For every $G\in \Omega_n$:
\begin{equation*}
\mathbb{P}(G) = p^{|E(G)|} (1-p)^{N - |E(G)|}.
\end{equation*}
\end{defn}
\begin{example}
    According to the following picture, $n=4$, $p=1/3$, and $|E(G)|=4$, and hence $\mathbb{P}(G) = \left(\frac{1}{3}\right)^4 \cdot \left(\frac{2}{3}\right)^2$.
    \begin{figure}[H]
        \centering
        \includegraphics[width=0.5\textwidth]{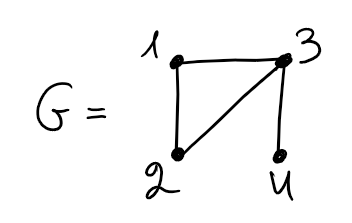}
        \caption{}
    \end{figure}
\end{example}

\begin{remark}
    Notice that $G(n,p)$ is a product space.
\end{remark}
\begin{remark}
    Assume that $G=([n],E)$ satisfies $|E(G)|=m$, then 
    \begin{align*}
        \mathbb{P}(G)=p^m(1-p)^{N-m},
    \end{align*}
    and hence the distribution $G(n,p)\big||E(G)|=m$ is a uniform distribution on graphs with vertex set $[n]$ and exactly $m$ edges.
\end{remark}
\begin{proposition}
    Let $G\sim G(n,p)$ and set $X = |E(G)|$. Then $X\sim Bin(N,p)$.
\end{proposition}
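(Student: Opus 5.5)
The plan is to write $X$ as a sum of independent indicators, exactly matching the definition of the binomial distribution recalled in the Chernoff item of the standard inequalities. Enumerate the $N=\binom{n}{2}$ pairs of $[n]$ as $e_1,\dots,e_N$, and for each $1\le i\le N$ let $X_i=\mathbbm{1}\{e_i\in E(G)\}$. Then, since every edge of $G$ is one of the $e_i$ and each is counted once, $X=|E(G)|=\sum_{i=1}^N X_i$.

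By the first definition of $G(n,p)$, each pair is present with probability $p$, so each $X_i$ is Bernoulli$(p)$. Moreover, the events $\{e_i\in E(G)\}$ are mutually independent, because $G(n,p)$ is a product space. Hence $\{X_i\}_{i=1}^N$ are i.i.d.\ Bernoulli$(p)$, and by definition $X\sim Bin(N,p)$.

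The only point needing any care is the independence claim if one starts instead from the equivalent definition $\mathbb{P}(G)=p^{|E(G)|}(1-p)^{N-|E(G)|}$. To cover that case, I would add a direct computation as a cross-check. For $0\le m\le N$, the event $\{X=m\}$ is the disjoint union over the $\binom{N}{m}$ graphs on $[n]$ with exactly $m$ edges. Each such graph has probability $p^m(1-p)^{N-m}$ by the preceding remark, so
\begin{align*}
\mathbb{P}(X=m)=\binom{N}{m}p^m(1-p)^{N-m},
\end{align*}
which is precisely the probability mass function of $Bin(N,p)$. This counting argument is self-contained and avoids the independence issue entirely, so I would present it as the main proof and mention the indicator decomposition as intuition.
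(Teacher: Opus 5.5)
Your proof is correct. The paper states this proposition without proof and treats it as immediate from the definitions. The implicit argument is exactly your indicator decomposition: $X$ is a sum of $N$ i.i.d.\ Bernoulli$(p)$ indicators, which is word for word how the paper defines $Bin(N,p)$ in the Chernoff item of the standard inequalities.

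Your counting argument, $\mathbb{P}(X=m)=\binom{N}{m}p^m(1-p)^{N-m}$, is an equally valid alternative. It works directly from the second definition and the preceding remark, so independence is never mentioned. The cost is that, since the paper defines the binomial law as the distribution of a sum of i.i.d.\ Bernoulli variables, you additionally need the standard fact that such a sum has this mass function. So the indicator route is actually the more direct one given the paper's definitions.

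The ``independence issue'' you want to sidestep is also not really there. The product formula $\mathbb{P}(G)=p^{|E(G)|}(1-p)^{N-|E(G)|}$ factorizes over the $N$ pairs, which is precisely the statement that the edge indicators are independent Bernoulli$(p)$; this is what the paper's ``product space'' remark records. I would therefore present the indicator decomposition as the main proof and the counting as the cross-check, though either ordering gives a complete proof.
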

\begin{remark}
    $G(n,p)$ was introduced by Gilbert in 1959 and is often called a \textit{binomial random graph}.
\end{remark}
\begin{example}
    \underline{Special case - $p=\frac{1}{2}$}: In this case, $\Omega_n = \{G = (V, E) \mid V = [n]\}$, and for every $G\in \Omega_n$:
    \begin{equation*}
    \mathbb{P}(G) = \left(\frac{1}{2}\right)^N.
    \end{equation*}
    We get a uniform distribution on the space of all graphs with vertex set $[n]$. Hence, we use the following convention: when saying, for example: ``Almost every graph on $n$ vertices is connected", we mean that 
    \begin{align*}
        \lim_{n\xrightarrow{}\infty}\mathbb{P}\left(G\sim G\left(n,\frac{1}{2}\right)\text{ is connected}\right)=1.
    \end{align*}
\end{example}

\section{The Erd\H{o}s R\'enyi Model \texorpdfstring{$G(n,m)$}{G(n,m)}}

In this model we consider $\Omega = \{G = (V, E) \mid V = [n],|E(G)|=m\}$ and the distribution is uniform: for every $G\in \Omega$,
\begin{align*}
    \mathbb{P}(G)=\frac{1}{|\Omega|}=\frac{1}{\binom{N}{m}}.
\end{align*}
\begin{remark}
    $G(n,m)$ was introduced and studied by Erd\H{o}s and R\'enyi in 1960.
\end{remark}

\section{Random Graph Process}

Conceptually, we start with the empty graph $\bar{K_n}$ and finish with the complete graph $K_n$ in a random process.
Formally, 
\begin{defn}
    Given a permutation $\sigma\in S_N$ on the edges of $K_n$, we define the following \textit{graph process} $\tilde{G}\coloneqq\tilde{G}(\sigma)$: $\tilde{G}= (G_i)_{i=0}^N$, where $G_i$ is a graph with $V(G_i)=[n]$ and $E(G_i)=\{e_{\sigma(1)},e_{\sigma(2)},\ldots ,e_{\sigma(i)}\}$ (the first $i$ edges according to $\sigma$).
\end{defn}
\begin{example}
    Let $n=4$. $\tilde{G}=(G_i)_{i=1}^6$. According to the following figure,\\
    $G_4=\big ([4],\big\{(1,2),(1,3),(3,4),(2,3)\big\}\big )$.
    \begin{figure}[H]
        \centering
        \includegraphics[width=0.5\textwidth]{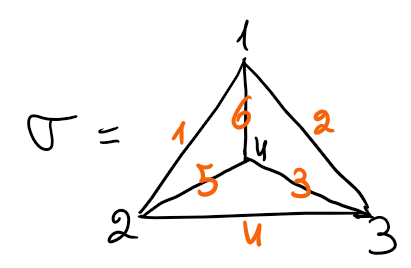}
        \caption{}
    \end{figure}
\end{example}

\noindent \textbf{Properties:}
\begin{enumerate}
    \item For every $0\le i\le N$, $|E(G_i)|=i$.
    \item $\bar{K_n}=G_0\subsetneq G_1\subsetneq \ldots\subsetneq G_N=K_n$ --- a nested sequence.
\end{enumerate}

\begin{defn}
    When $\sigma\in S_n$ is chosen at random, we say that $\tilde{G}=\tilde{G}(\sigma)$ is a \textit{random graph process}.
\end{defn}
\begin{defn}[Equivalent Definition]
    Start with $G_0=\bar{K_n}$. For every $1\le i\le N$, set $G_i=G_{i-1}\cup \{e\}$ where $e$ is a uniform chosen edge from the edges of $K_n$ that are not in $G_{i-1}$.
\end{defn}
\begin{remark}
    This model was introduced and studied by Erd\H{o}s and R\'enyi in the 1960s.
\end{remark}
\begin{proposition}
    Let $\tilde{G}$ be a random graph process. Then, for every $0\le m\le N$, the graph $G_m$ (which is called a \textit{snapshot} at time $m$) is distributed as $G(n,m)$.
\end{proposition}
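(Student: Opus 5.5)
We will prove the proposition by a direct counting argument on the uniformly random permutation $\sigma\in S_N$ (the typo $S_n$ in the definition should read $S_N$, since $\sigma$ permutes the $N$ edges of $K_n$). Fix $0\le m\le N$ and a graph $H$ with $V(H)=[n]$ and $|E(H)|=m$. Since $G(n,m)$ is the uniform distribution on such graphs, it suffices to show that
\begin{align*}
    \mathbb{P}(G_m=H)=\frac{1}{\binom{N}{m}},
\end{align*}
with the right-hand side independent of $H$. By the first listed property of the graph process, $|E(G_m)|=m$ always holds, so $G_m$ is supported on the sample space of $G(n,m)$.

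The key step is to count the permutations $\sigma$ with $G_m(\sigma)=H$. By definition, $E(G_m)=\{e_{\sigma(1)},\ldots,e_{\sigma(m)}\}$. Hence $G_m=H$ if and only if the set $\{\sigma(1),\ldots,\sigma(m)\}$ equals the index set of the $m$ edges of $H$. Such a $\sigma$ is determined by two independent choices:
\begin{enumerate}
    \item an ordering of the $m$ edges of $H$ in the first $m$ positions, which can be done in $m!$ ways;
    \item an ordering of the remaining $N-m$ edges in the last $N-m$ positions, which can be done in $(N-m)!$ ways.
\end{enumerate}
Thus exactly $m!(N-m)!$ permutations satisfy $G_m=H$. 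Since $\sigma$ is uniform on $S_N$, we get $\mathbb{P}(G_m=H)=\frac{m!(N-m)!}{N!}=\binom{N}{m}^{-1}$, which is what we need.

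For completeness, we would also verify the statement under the equivalent sequential definition, in which each $G_i$ is obtained from $G_{i-1}$ by adding a uniform edge not yet present. The probability that the first $m$ steps add exactly the edges of $H$, in some order, is
\begin{align*}
    \frac{m}{N}\cdot\frac{m-1}{N-1}\cdots\frac{1}{N-m+1}=\frac{m!(N-m)!}{N!},
\end{align*}
which again equals $\binom{N}{m}^{-1}$. This confirms the equivalence of the two definitions as used here.

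There is no real obstacle in this argument. The only point needing care is the bijective bookkeeping: the ordering within the first $m$ positions and the ordering within the last $N-m$ positions must be counted independently, and the count must be seen not to depend on $H$.
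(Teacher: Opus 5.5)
Your proof is correct and is essentially the paper's argument: you count the $m!(N-m)!$ permutations that put the edges of $H$ in the first $m$ positions, which gives $\mathbb{P}(G_m=H)=\binom{N}{m}^{-1}$ independently of $H$. The extra check via the sequential definition is valid but not needed, and you are right that $S_n$ in the definition should read $S_N$.
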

\begin{proof}
    First, recall that $|E(G_m)|=m$, hence $G_m$ is distributed over graphs with vertex set $[n]$ and exactly $m$ edges. Moreover, for every $G=([n],E)$ with $|E|=m$,
    \begin{align*}
        \mathbb{P}(G_m=G)=\frac{m!(N-m)!}{N!}=\frac{1}{\binom{N}{m}},
    \end{align*}
    where the $m!$ term is for the arrangement of the edges of $G$ at the beginning of the permutation and the $(N-m)!$ term is for the arrangement of the edges that are not in $G$.
    Therefore, $G_m$ is uniformly distributed over graphs with vertex set $[n]$ and exactly $m$ edges, as desired.
\end{proof}
\begin{remark}
    The above proposition demonstrates that random graph process contains all the spaces $G(n, m)$ for $0 \le m \le N$. Therefore, it is more difficult (compared to $G(n, m)$ or $G(n, p)$) to study the random graph process, but results regarding the process can yield consequences on $G(n, m)$ or $G(n, p)$.
\end{remark}

\section{Multiple Exposure}

\begin{proposition}
    Suppose that $0 \le p, p_1, p_2, \dots, p_k \le 1$ satisfy $1-p = \prod_{i=1}^k (1-p_i)$. \\
Let $G \sim G(n, p)$ and suppose $G_i \sim G(n, p_i)$, $i=1, \dots, k$, are independent. Define: $G' = \bigcup_{i=1}^k G_i$. Then $G$ and $G'$ have the same distribution.
\end{proposition}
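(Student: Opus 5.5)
The plan is to use the product structure of $G(n,p)$ noted in the remark above. A distribution on graphs with vertex set $[n]$ is exactly $G(n,p)$ if and only if the $N$ edge-indicator random variables $\{\mathbbm{1}_{e\in G}\}_{e\in E(K_n)}$ are mutually independent Bernoulli$(p)$ variables. So it suffices to show that the edge indicators of $G'$ have this property.

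\textbf{Marginals.} Fix an edge $e$ of $K_n$. By definition of the union, $e\notin G'$ if and only if $e\notin G_i$ for every $i=1,\dots,k$. The graphs $G_1,\dots,G_k$ are independent, and in each $G_i$ the edge $e$ is absent with probability $1-p_i$. Hence
\begin{align*}
\mathbb{P}(e\notin G')=\prod_{i=1}^k(1-p_i)=1-p,
\end{align*}
so $\mathbb{P}(e\in G')=p$.

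\textbf{Independence across edges.} For each $e$, the indicator $\mathbbm{1}_{e\in G'}=\max_i \mathbbm{1}_{e\in G_i}$ is a function only of the family $\{\mathbbm{1}_{e\in G_i}\}_{i=1}^k$. The full collection $\{\mathbbm{1}_{e\in G_i}\}_{e,i}$ consists of $kN$ mutually independent variables: within each $G_i$ the edges are independent, and the $G_i$ are independent of one another. These variables split into disjoint blocks indexed by $e$. Functions of disjoint blocks of independent variables are independent, so the indicators $\mathbbm{1}_{e\in G'}$ for different $e$ are mutually independent.

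\textbf{Conclusion.} Combining the two steps, for every fixed graph $H$ on $[n]$ we get
\begin{align*}
\mathbb{P}(G'=H)=p^{|E(H)|}(1-p)^{N-|E(H)|}.
\end{align*}
This agrees with the equivalent definition of $G(n,p)$, so $G'$ and $G$ have the same distribution. The only step needing care is the independence across edges, which rests on the grouping-of-independent-variables fact. If one prefers to avoid it, the product formula for $\mathbb{P}(G'=H)$ can be verified directly: write the event $\{G'=H\}$ as an intersection over edges of events, each depending only on its own block of variables, and multiply.
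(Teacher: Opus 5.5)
Your proposal is correct and follows the same route as the paper: use the product structure, then check that each edge lies in $G'$ with probability $1-\prod_{i=1}^k(1-p_i)=p$. You actually justify the independence across edges more carefully than the paper does. You group the $kN$ independent indicators into disjoint per-edge blocks, whereas the paper simply asserts that $G'$ is a product space.
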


\begin{proof}
    First, notice that $G, G'$ are product spaces: the probability of a specific edge appearing in $G$ (as well as in $G'$) is independent of other edges. \\
    Therefore, it is sufficient to show that for any edge $e \in E(K_n)$, $\mathbb{P}(e \in G) = \mathbb{P}(e \in G')$.
    \begin{itemize}
        \item In $G$: $\mathbb{P}(e \in G) = p$.
        \item In $G' = G_1 \cup \dots \cup G_k$:
    \end{itemize}
    \[
    \mathbb{P}(e \in G') = 1 - \mathbb{P}(e \notin G') = 1 - \mathbb{P}\left(\bigwedge_{i=1}^k e \notin G_i\right) = 1 - \prod_{i=1}^k \mathbb{P}(e \notin G_i) = 1 - \prod_{i=1}^k (1-p_i) = p,
    \]
    where the last equality follows from the assumption. \\
    Hence, $\mathbb{P}(e \in G) = \mathbb{P}(e \in G')$, and thus $G$ and $G'$ have the same distribution.
\end{proof}

\begin{remark}
    It is customary (with some abuse of notation) to use $G(n,p)$ both as a distribution and as a graph drown from this distribution.
\end{remark}

\begin{remark}[Typical Scenario --- Sprinkling]
    $0 \le p_1, p_2 \le 1$ and $p_1 \gg p_2$. Suppose that $1-p = (1-p_1)(1-p_2)$. Let: $G \sim G(n, p)$, $G_1 \sim G(n, p_1)$, $G_2 \sim G(n, p_2)$. Then from the previous proposition: $G \sim G_1 \cup G_2$. Since $p_1 \gg p_2$, most of the edges are typically from $G_1$, and $G_2$ ``sprinkles'' edges onto $G_1$.
\end{remark}

\section{Monotone Properties and Threshold Function}

\begin{defn}
    Suppose that $\mathcal{A}$ is a property of graphs on vertex set $[n]$, namely $\mathcal{A} \subseteq \Omega_n =\allowbreak \{G = ([n], E)\}$. $\mathcal{A}$ is called \textit{monotone increasing} (or \textit{monotone}) if for every $G \in \mathcal{A}$ and for every $H \in \Omega_n$ such that $G \subseteq H$, it holds that $H \in \mathcal{A}$.
\end{defn}

\begin{defn}
    A property $\mathcal{A}$ is \textit{monotone decreasing} if for every $G \in \mathcal{A}$ and $H \subseteq G$, it holds that $H \in \mathcal{A}$.
\end{defn}

\begin{example}
    Connectivity is a monotone increasing property, whereas planarity is a monotone decreasing property.
\end{example}

\begin{defn}
    A property $\mathcal{A}$ is called non-trivial if:
\begin{enumerate}
    \item $\bar{K}_n \notin \mathcal{A}$,
    \item $K_n \in \mathcal{A}$.
\end{enumerate}
\end{defn}
\begin{defn}
    Let $\mathcal{A} \subseteq \Omega_n$ be a non-trivial monotone property and let $0 \le p = p(n) \le 1$. We define:
    \[
    f_{\mathcal{A}}(p) = \mathbb{P}(G \sim G(n, p) \in \mathcal{A}) = \sum_{G \in \mathcal{A}} \mathbb{P}(G).
    \]
\end{defn}

\noindent \textbf{Properties:}
\begin{enumerate}
    \item $f_{\mathcal{A}}(0) = 0, f_{\mathcal{A}}(1) = 1$.
    \item $f_{\mathcal{A}}(p)$ is a polynomial in $p$ of degree $\le \binom{n}{2}$, therefore it is a continuous function.
    \item $f_{\mathcal{A}}(p)$ is a strictly increasing function. (Exercise!)
\end{enumerate}

\begin{question}
    How fast does $f_{\mathcal{A}}(p)$ jump from nearly 0 to nearly 1?
\end{question}

\begin{proposition}\phantomsection\label{1: jump}
    Let $\epsilon, \delta > 0$ be real numbers. Let $\mathcal{A} \subseteq \Omega_n$ be an increasing monotone property of graphs on a set of vertices $[n]$. Assume there exists an integer $C > 0$ such that $(1-\epsilon)^C \le \delta$. Let $G \sim G(n, p_0)$ satisfy: $\mathbb{P}(G \in \mathcal{A}) \ge \epsilon$. Define $p_1 = C \cdot p_0$. \\
Then: $\mathbb{P}(G \sim G(n, p_1) \in \mathcal{A}) \ge 1 - \delta$.
\end{proposition}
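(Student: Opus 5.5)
We will use multiple exposure (the proposition from the Multiple Exposure section) together with monotonicity of $\mathcal{A}$. Let $G_1,\dots,G_C$ be independent copies of $G(n,p_0)$ and set $G'=\bigcup_{i=1}^C G_i$. By the multiple exposure proposition, $G'\sim G(n,p')$, where
\begin{align*}
    1-p' = (1-p_0)^C .
\end{align*}

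The first step is to compare $p'$ with $p_1=Cp_0$. By the union bound (or by the inequality $(1-p_0)^C\ge 1-Cp_0$, which is Bernoulli's inequality), we have $p' = 1-(1-p_0)^C \le Cp_0 = p_1$. If $p_1>1$, the statement is interpreted with $p_1$ replaced by $\min(p_1,1)$, and the same bound holds.

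Next we need to show that $f_{\mathcal{A}}$ is nondecreasing, so that $\mathbb{P}(G(n,p_1)\in\mathcal{A})\ge \mathbb{P}(G(n,p')\in\mathcal{A})$. One route is the monotonicity property of $f_{\mathcal{A}}$ listed above. Alternatively, we can couple the two models directly. Write $1-p_1=(1-p')(1-q)$ for a suitable $q\in[0,1]$; by multiple exposure, $G(n,p_1)$ is then distributed as $G(n,p')\cup G(n,q)$. This union contains $G(n,p')$, so monotonicity of $\mathcal{A}$ gives the inequality.

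It then suffices to bound $\mathbb{P}(G'\notin\mathcal{A})$. Since $\mathcal{A}$ is monotone increasing, if some $G_i\in\mathcal{A}$ then $G'\supseteq G_i$ lies in $\mathcal{A}$. Therefore
\begin{align*}
    \mathbb{P}(G'\notin\mathcal{A}) \le \mathbb{P}\left(\bigwedge_{i=1}^C G_i\notin\mathcal{A}\right) = \prod_{i=1}^C \mathbb{P}(G_i\notin\mathcal{A}) \le (1-\epsilon)^C \le \delta,
\end{align*}
where the equality uses the independence of the $G_i$. Combining this with the comparison of $p'$ and $p_1$ yields $\mathbb{P}(G(n,p_1)\in\mathcal{A})\ge 1-\delta$.

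The only mildly delicate step is the comparison $p'\le p_1$ and the coupling showing that $f_{\mathcal{A}}$ is nondecreasing. Both reduce to multiple exposure together with the elementary inequality $(1-p_0)^C\ge 1-Cp_0$.
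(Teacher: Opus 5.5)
Your proposal is correct and follows the same route as the paper: take the union of $C$ independent copies of $G(n,p_0)$, note that $p'=1-(1-p_0)^C\le Cp_0=p_1$, use monotonicity to pass to $G(n,p_1)$, and bound $\mathbb{P}(G'\notin\mathcal{A})$ by $(1-\epsilon)^C\le\delta$ via independence. Your explicit sprinkling coupling for the monotonicity of $f_{\mathcal{A}}$ and your remark about the case $p_1>1$ only make explicit two small points that the paper leaves implicit.
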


\begin{figure}[H]
    \centering
    \includegraphics[width=0.5\textwidth]{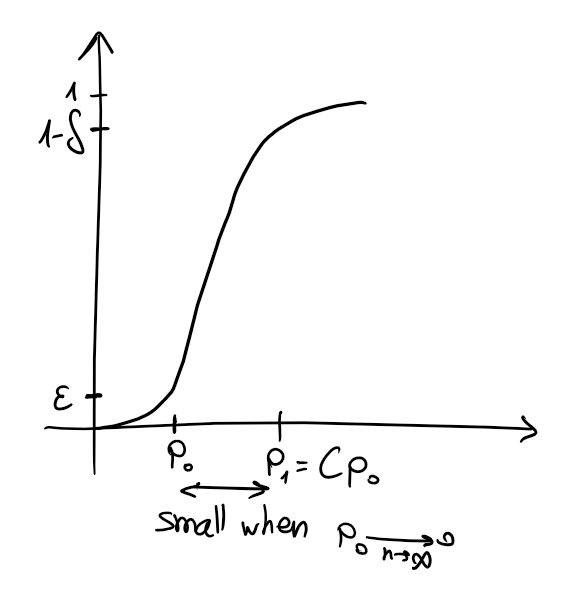}
    \caption{Quick jump to high probability}
\end{figure}

\begin{proof}
    Suppose that the graphs $G_1, \dots, G_C$ are independent and satisfy $G_i \sim G(n, p_0)$. Define $G' = \bigcup_{i=1}^C G_i$. Then $G' \sim G(n, p')$ where $1-p' = \prod_{i=1}^C (1-p_0) = (1-p_0)^C$. Therefore $p' \le C p_0 = p_1$. Since $\mathcal{A}$ is a monotone property, it suffices to prove that: $\mathbb{P}(G \sim G(n, p') \in \mathcal{A}) \ge 1 - \delta$. Indeed, let us note:
    \[
    \mathbb{P}(G \sim G(n, p') \notin \mathcal{A}) \le \mathbb{P}(G_i \notin \mathcal{A}, 1 \le i \le C) = \prod_{i=1}^C \mathbb{P}(G_i \notin \mathcal{A}) \le (1-\epsilon)^C \le \delta.
    \]
    We obtain the required result.
\end{proof}

\noindent \textbf{\underline{Conclusion:}} The jump of $\mathbb{P}(G \sim G(n, p) \in \mathcal{A})$ from nearly 0 to nearly 1 occurs within a range of $\theta(f_{\mathcal{A}}^{-1}(\epsilon))$. Thus if $f_{\mathcal{A}}^{-1}(\epsilon)=o(1)$, then the jump occurs within an interval of negligible width.  

\vspace{1.5em}

\begin{defn}
    Let $\mathcal{A}$ be an increasing monotone property of graphs on vertex set $[n]$. \\
    The probability $p^* = p^*(n)$ is called a threshold function for the property $\mathcal{A}$ if it satisfies:
    \[
    \lim_{n \to \infty} \mathbb{P}(G \sim G(n, p) \in \mathcal{A}) = 
    \begin{cases} 
    0 & p \ll p^*, \\ 
    1 & p \gg p^*. \end{cases}
    \]
\end{defn}

\begin{remark}
    According to the definition, $p^*$ is not a unique function, and it is defined up to the order of magnitude.
\end{remark}

\begin{example}
    $A = \{ G \text{ contains a triangle} \}$. \\
    Let $X$ be a random variable counting the number of triangles in the graph $G(n, p)$. We have:
    \[
    \mathbb{E}X = \binom{n}{3} \cdot p^3.
    \]
    
    \noindent If $\mathbb{E}X \xrightarrow[n \to \infty]{} 0$, then according to Markov's Inequality, $\mathbb{P}(X > 0) = o(1)$. \\
    If $\mathbb{E}X \xrightarrow[n \to \infty]{} \infty$, it can be shown using the Second Moment Method that $\mathbb{P}(X > 0) = 1 - o(1)$.
    
    \vspace{1em}
    
    \noindent When $p = o\left(\frac{1}{n}\right)$, then $\binom{n}{3} \cdot p^3 \xrightarrow[n \to \infty]{} 0$. \\
    When $p = \omega\left(\frac{1}{n}\right)$, then $\binom{n}{3} \cdot p^3 \xrightarrow[n \to \infty]{} \infty$.
    
    \vspace{1em}
    
    \noindent It can be proven that in this case $p^* = \frac{1}{n}$ is a threshold function.
\end{example}

\begin{thm}[Bollobás, Thomason \cite{zbMATH04057560}]
    For every non-trivial monotone property $\mathcal{A}$ of graphs on a set of vertices $[n]$, there exists a threshold function.
\end{thm}

\newpage
\phantomsection
\addcontentsline{toc}{chapter}{Lecture 2}
\lecture{2}{April 19, 2026}{Lecture 2}{Inbal Benbenishty}
\setcounter{chapter}{2}
\setcounter{section}{0}


\section{Monotone Properties}

\begin{thm}[Bollobás, Thomason \cite{zbMATH04057560}]
    For every non-trivial monotone property $\mathcal{A}$ of graphs on a set of vertices $[n]$, there exists a threshold function.
\end{thm}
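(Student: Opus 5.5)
The plan is to take $p^*(n)$ to be the unique $p$ with $f_{\mathcal{A}}(p)=\tfrac12$. Such a $p$ exists and is unique because $f_{\mathcal{A}}$ is continuous and strictly increasing on $[0,1]$, with $f_{\mathcal{A}}(0)=0$ and $f_{\mathcal{A}}(1)=1$ (here we use that $\mathcal{A}$ is non-trivial). We then check the two limits in the definition of a threshold function separately. Since $f_{\mathcal{A}}$ is increasing, it suffices in each case to compare with a suitable multiple of $p^*$.

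\textbf{Upper side ($p\gg p^*$).} Fix $\delta>0$ and take $\epsilon=\tfrac12$. Choose an integer $C$ with $(1/2)^C\le\delta$. Apply Proposition~\ref{1: jump} with $p_0=p^*$, which satisfies $f_{\mathcal{A}}(p^*)=\tfrac12\ge\epsilon$. This gives $f_{\mathcal{A}}(Cp^*)\ge 1-\delta$ whenever $Cp^*\le 1$. If $p=\omega(p^*)$, then eventually $p\ge Cp^*$, so by monotonicity $f_{\mathcal{A}}(p)\ge 1-\delta$. If instead $Cp^*>1$, then $p\ge 1$ is impossible to exceed, and $f_{\mathcal{A}}(1)=1$ covers this case. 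Since $\delta$ is arbitrary, $f_{\mathcal{A}}(p)\to 1$.

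\textbf{Lower side ($p\ll p^*$).} Here we run Proposition~\ref{1: jump} in reverse. Fix $\epsilon>0$ and suppose $f_{\mathcal{A}}(p)\ge\epsilon$ along some subsequence with $p=o(p^*)$. Choose $C$ with $(1-\epsilon)^C\le\tfrac14$. Proposition~\ref{1: jump} with $p_0=p$ gives $f_{\mathcal{A}}(Cp)\ge\tfrac34>\tfrac12=f_{\mathcal{A}}(p^*)$. Strict monotonicity then forces $Cp>p^*$. This contradicts $p=o(p^*)$ for $n$ large, because $C$ is a fixed constant. Hence $f_{\mathcal{A}}(p)<\epsilon$ eventually, so $f_{\mathcal{A}}(p)\to 0$.

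\textbf{Bookkeeping and main obstacle.} The main point needing care is that $C$ depends only on $\epsilon$ and $\delta$, not on $n$. This is exactly why the asymptotic relations $\ll$ and $\gg$ absorb the factor $C$. We must also handle the edge case $Cp_0>1$, which is resolved by monotonicity together with $f_{\mathcal{A}}(1)=1$. Once these uniformity issues are tracked, the two sides follow directly from Proposition~\ref{1: jump}, and no probabilistic estimates beyond that proposition are needed.
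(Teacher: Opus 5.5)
Your proof is correct and follows the paper's argument: both define $p^*$ by $f_{\mathcal{A}}(p^*)=\tfrac12$, apply Proposition~\ref{1: jump} once upward from $p^*$ with $\epsilon=\tfrac12$, and apply it once more to compare smaller $p$ with $p^*$. The only cosmetic difference is that you phrase the lower side as a contrapositive (a subsequence with $f_{\mathcal{A}}(p)\ge\epsilon$ would give $f_{\mathcal{A}}(Cp)\ge\tfrac34>f_{\mathcal{A}}(p^*)$), whereas the paper directly derives $f_{\mathcal{A}}(p^*/C)\le 1-2^{-1/C}$ and lets $C\to\infty$.
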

\begin{proof}
    We saw that $f_A(p) = \mathbb{P}(G \sim G(n, p) \in A)$ is a continuous, strictly increasing function, and $f_A(0) = 0, f_A(1) = 1$. Therefore, there exists a unique $p^* \in (0, 1)$ such that $f_A(p^*) = \frac{1}{2}$. \\
We will use Proposition \ref{1: jump} from Lecture 1 in both directions:

\begin{enumerate}
    \item Let $p_0 = p^*, p_1 = C \cdot p_0, \varepsilon = \frac{1}{2}$. Then by Proposition \ref{1: jump} from Lecture 1:
    \[ \mathbb{P}(G \sim G(n, p_1) \in A) \geq 1 - (1-\varepsilon)^C. \]
    Since $\lim_{C \to \infty} (\frac{1}{2})^C = 0$, it follows that for any $p \gg p^*$:
    \[ \mathbb{P}(G \sim G(n, p) \in A) = 1 - o(1). \]
    
    \item Take $p_1 = p^*, p_0 = \frac{p^*}{C}$. Then by Proposition \ref{1: jump} from Lecture 1:
    \[ \frac{1}{2} = \mathbb{P}(G \sim G(n, p_1) \notin A) \leq (1 - \mathbb{P}(G \sim G(n, p_0) \in A))^C. \]
    For $C \to \infty$ we obtain:
    \[ \mathbb{P}(G \sim G(n, p_0) \in A) = o(1). \]
\end{enumerate}
\end{proof}
\begin{remark}
    In the original proof of the theorem, Bollobás and Thomason used the Kruskal-Katona theorem and obtained a better quantitative estimate.
\end{remark}

\section{Comparison between \texorpdfstring{$G(n,p)$}{G(n,p)} and \texorpdfstring{$G(n,m)$}{G(n,m)}}

One can expect that if $p = \frac{m}{N}$, then $G(n, p)$ and $G(n, m)$ would be ``similar.'' \\
Indeed, let $X = |E(G)|$.

\begin{itemize}
    \item In $G(n, p)$: $X \sim \text{Bin}(N, p)$, and $\mathbb{E}[X] = Np = m$.
    \item In $G(n, m)$: $X = m$ holds with probability $1$.
\end{itemize}
However, this similarity should have some limits. \\
If $m \to \infty$ and $N - m \to \infty$, then $\mathbb{P}(X = m) = \mathbb{P}(\text{Bin}(N, p) = m) = o_n(1)$ since $\text{Var}(X) =\allowbreak Np(1-p) \to \infty$. However, in $G(n, m)$, the number of edges is always $m$.

\subsection*{Quantitative comparison between \texorpdfstring{$G(n,p)$}{G(n,p)} and \texorpdfstring{$G(n,m)$}{G(n,m)}}

\begin{proposition}\label{proposition: comparison gnp_gnm}
    Let $\mathcal{A}$ be a property of graphs on the set of vertices $[n]$. If $4 \leq m \leq N$, $m=Np$, then:
\[
\mathbb{P}(G \sim G(n, p) \in \mathcal{A}) \geq \frac{1}{6\sqrt{m}} \mathbb{P}(G \sim G(n, m) \in \mathcal{A}).
\]
\end{proposition}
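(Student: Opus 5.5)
We condition on the number of edges. Let $X=|E(G)|$ for $G\sim G(n,p)$; by the earlier proposition $X\sim Bin(N,p)$. By the remark in Lecture 1, conditioned on $X=m$ the graph $G(n,p)$ is uniformly distributed on graphs with vertex set $[n]$ and exactly $m$ edges, i.e. it is distributed as $G(n,m)$. Hence, by the law of total probability,
\begin{align*}
\mathbb{P}(G(n,p)\in\mathcal{A})=\sum_{k=0}^N\mathbb{P}(X=k)\,\mathbb{P}(G(n,k)\in\mathcal{A})\ge \mathbb{P}(X=m)\,\mathbb{P}(G(n,m)\in\mathcal{A}).
\end{align*}
Note that no monotonicity of $\mathcal{A}$ is needed, which matches the statement. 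So the whole proof reduces to the estimate
\begin{align*}
\mathbb{P}(X=m)=\binom{N}{m}p^m(1-p)^{N-m}\ge\frac{1}{6\sqrt{m}}\qquad\text{when } m=Np,\ 4\le m\le N.
\end{align*}

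For this estimate, first dispose of the case $m=N$: then $p=1$ and $\mathbb{P}(X=m)=1$. Otherwise $4\le m\le N-1$. I will use explicit Stirling-type bounds $\sqrt{2\pi k}(k/e)^k\le k!\le e\sqrt{k}(k/e)^k$, valid for all $k\ge1$. These are a quantitative version of Stirling's formula from the list of standard inequalities; the explicit constants are needed because the claim is for fixed $m\ge4$, not only asymptotically. Writing $q=1-p$ and $N-m=Nq$, we get
\begin{align*}
\binom{N}{m}p^mq^{N-m}=\frac{N!}{m!\,(Nq)!}\cdot\frac{m^m (Nq)^{Nq}}{N^N},
\end{align*}
and the exponential factors $(k/e)^k$ cancel exactly. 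The lower bound on $N!$ and the upper bounds on $m!$ and $(Nq)!$ then give
\begin{align*}
\mathbb{P}(X=m)\ge\frac{\sqrt{2\pi N}}{e^2\sqrt{m}\sqrt{N-m}}\ge\frac{\sqrt{2\pi}}{e^2\sqrt{m}},
\end{align*}
using $N\ge N-m$. Since $\sqrt{2\pi}/e^2\approx0.339>1/6$, this yields the claim.

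The main obstacle is the constant bookkeeping in the Stirling step. One must justify the non-asymptotic bounds on $k!$, which is a standard exercise comparing $\ln k!$ with $\int_1^k\ln x\,dx$ plus a trapezoid correction, or which can be cited. One must also check the edge case $N-m$ small, including $N-m=1$, where $(Nq)!$ is still covered by the bound at $k=1$. If the crude constant $e$ in the upper bound were insufficient, I would instead use the sharper $k!\le\sqrt{2\pi k}(k/e)^ke^{1/(12k)}$. With the bounds above, however, there is ample room. The hypothesis $m\ge4$ is presumably only there to absorb such constants, since the computation does not otherwise need it.
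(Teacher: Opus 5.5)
Your proof is correct. Its first half, conditioning on $X=|E(G)|$ and keeping only the term $X=m$, is exactly what the paper does. You differ in how you lower-bound the point probability $\mathbb{P}(\mathrm{Bin}(N,p)=m)$.

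The paper never computes this probability. It uses two facts. First, $m=Np$ is a mode of $\mathrm{Bin}(N,p)$ (left there as ``check it!''). Second, Chebyshev's inequality with $\sigma^2=Np(1-p)<m$ holds. Counting the at most $2a\sigma+1$ integers in the window $[m-a\sigma,m+a\sigma]$ then gives $\mathbb{P}(X=m)\ge \frac{1-1/a^2}{2a\sigma+1}$. With $a=\sqrt3$ this is at least $\frac{2/3}{2\sqrt{3m}+1}$, which is $\ge\frac{1}{6\sqrt m}$ exactly when $(4-2\sqrt3)\sqrt m\ge 1$, i.e.\ for integers $m\ge 4$. So in the paper the hypothesis $m\ge4$ absorbs the ``$+1$'' from the integer count. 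It does not absorb a Stirling constant, as you guessed.

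Your direct computation uses two-sided non-asymptotic Stirling bounds. These are indeed valid for all $k\ge1$, since $k!/(\sqrt{k}(k/e)^k)$ decreases from $e$ to $\sqrt{2\pi}$. It gives the sharper estimate $\mathbb{P}(X=m)\ge\frac{\sqrt{2\pi}}{e^2}\sqrt{\frac{N}{m(N-m)}}$. Hence the constant $\sqrt{2\pi}/e^2\approx0.339$ replaces $1/6$, for every $1\le m\le N$ (the case $m=N$ being trivial). You also do not need the unimodality fact.

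The trade-off is that you must import quantitative Stirling bounds. The paper's argument uses only the variance bound and the fact that $m$ is a mode, so it applies unchanged to any integer-valued distribution whose mode is $m$ and whose variance is at most $m$.
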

\begin{proof}
    Let $X$ be the random variable counting the number of edges in $G \sim G(n, p)$. Notice that $X \sim \text{Bin}(N, p)$, $\mathbb{E}[X] = Np = m$, and $\text{Var}(X) = Np(1-p) < Np = m$. \\
Therefore, by Chebyshev's inequality, for every $a > 0$, the following holds:
\[
\mathbb{P}(|X - m| > a \cdot \sigma) \leq \frac{1}{a^2}.
\]
Furthermore, for every $0 \leq m' \leq N$, we have $\mathbb{P}(X = m') \leq \mathbb{P}(X = m)$ (check it!), and thus:
\[
\mathbb{P}(X = m) \geq \frac{\mathbb{P}(|X - m| \leq a \cdot \sigma)}{2a\sigma + 1} \geq \frac{1 - \frac{1}{a^2}}{2a\sigma + 1},
\]
since in the interval $[m - a\sigma, m + a\sigma]$ there are at most $2a\sigma + 1$ distinct integer values. By setting $a = \sqrt{3}$, we obtain $\mathbb{P}(X = m) \geq \frac{1}{6\sqrt{m}}$ (we used $m\ge 4$ here).\\
Recall that the distribution of $G(n, p)$ conditioned on $|E(G)| = m$ is $G(n, m)$. Therefore, according to the Law of Total Probability:
\begin{align*}
\mathbb{P}(G \sim G(n, p) \in \mathcal{A}) &= \sum_{m'=0}^{N} \mathbb{P}(X = m') \cdot \mathbb{P}(G \sim G(n, p) \in \mathcal{A} \mid X = m') \\
&\geq \mathbb{P}(X = m) \cdot \mathbb{P}(G \sim G(n, p) \in \mathcal{A} \mid X = m) \\
&= \mathbb{P}(X = m) \cdot \mathbb{P}(G \sim G(n, m) \in \mathcal{A}) \\
&\geq \frac{1}{6\sqrt{m}} \cdot \mathbb{P}(G \sim G(n, m) \in \mathcal{A}).
\end{align*}
\end{proof} 
\noindent We can prove a better estimate for monotone properties. 
\begin{proposition}\label{proposition: comparison gnp_gnm - monotone} 
    Let $\mathcal{A}$ be a monotone property of graphs on vertex set $[n]$. Assume $N, p, m$ satisfy $m = N \cdot p$ and $m \to \infty$. Then:
\[
\mathbb{P}(G \sim G(n, p) \in \mathcal{A}) \geq \left(\frac{1}{2} - o(1)\right) \cdot \mathbb{P}(G \sim G(n, m) \in \mathcal{A}).
\]
\end{proposition}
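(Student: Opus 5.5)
Here $\mathcal{A}$ is monotone increasing, as in the paper's convention; for a decreasing property the same argument works with the inequalities reversed. As in the proof of Proposition \ref{proposition: comparison gnp_gnm}, I would condition on the number of edges. Let $X=|E(G)|$ for $G\sim G(n,p)$, so $X\sim Bin(N,p)$ with $\mathbb{E}[X]=m$. Since $G(n,p)$ conditioned on $X=m'$ is distributed as $G(n,m')$, the law of total probability gives
\[
\mathbb{P}(G(n,p)\in\mathcal{A})=\sum_{m'=0}^{N}\mathbb{P}(X=m')\,\mathbb{P}(G(n,m')\in\mathcal{A}).
\]

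The key monotonicity step is that $g(m')\coloneqq\mathbb{P}(G(n,m')\in\mathcal{A})$ is nondecreasing in $m'$. I would prove this with the random graph process. By the earlier proposition, the snapshot $G_{m'}$ is distributed as $G(n,m')$. Since $G_{m'}\subseteq G_{m'+1}$ and $\mathcal{A}$ is monotone increasing, the event $\{G_{m'}\in\mathcal{A}\}$ is contained in $\{G_{m'+1}\in\mathcal{A}\}$, so $g(m')\le g(m'+1)$. Dropping the terms with $m'<m$ and using $g(m')\ge g(m)$ for $m'\ge m$, we get
\[
\mathbb{P}(G(n,p)\in\mathcal{A})\ge \mathbb{P}(X\ge m)\cdot\mathbb{P}(G(n,m)\in\mathcal{A}).
\]

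It remains to show $\mathbb{P}(X\ge m)\ge \frac12-o(1)$ when $m=Np\to\infty$. This is the main obstacle, because Chebyshev alone only controls deviations of order $\sqrt{m}$ and says nothing about which side of the mean carries the mass.

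First, $Np(1-p)$ may stay bounded only if $p\to1$. In that case $N-X\sim Bin(N,1-p)$ has mean $N(1-p)$. Note that $N(1-p)$ is an integer, since $m=Np$ and $N$ are integers.
\begin{itemize}
\item If $N(1-p)=0$, then $X=N=m$ deterministically.
\item Otherwise, the standard fact that the median of a binomial with integer mean equals that mean gives $\mathbb{P}(N-X\le N(1-p))\ge\frac12$. This is equivalent to $\mathbb{P}(X\ge m)\ge \frac12$.
\end{itemize}
More generally, the same median fact (Kaas--Buhrman: for integer mean $np$, the median of $Bin(n,p)$ is $np$) gives $\mathbb{P}(X\ge m)\ge\frac12$ directly, even without the $o(1)$.

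If I want to avoid citing that fact, I would take the following route. When $\sigma^2=Np(1-p)\to\infty$, the de Moivre--Laplace central limit theorem gives $\mathbb{P}(X\ge m)\to\frac12$. When $\sigma^2$ stays bounded, the reduction to $N-X$ above applies, and $Bin(N,1-p)$ with bounded mean is handled by a Poisson approximation, together with the fact that the median of a Poisson variable with integer mean $\lambda$ is $\lambda$.

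Combining these cases yields $\mathbb{P}(X\ge m)\ge\frac12-o(1)$, which proves the claim.
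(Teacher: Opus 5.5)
Your proposal is correct and follows the same route as the paper: condition on $|E(G)|$, use that $m'\mapsto\mathbb{P}(G(n,m')\in\mathcal{A})$ is nondecreasing to reduce to $\mathbb{P}(\mathrm{Bin}(N,p)\ge m)$, and show this is at least $\frac12-o(1)$. Your additions only make the argument more complete. You derive the monotonicity from the random graph process coupling, where the paper merely asserts it. You also note that the paper's appeal to the CLT ``since $m\to\infty$'' really requires $Np(1-p)\to\infty$, and your median/Poisson treatment covers the remaining case of bounded $N-m$; the Kaas--Buhrman median fact even handles all cases uniformly and gives the constant $\frac12$ exactly.
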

\begin{proof}
    Note that since $\mathcal{A}$ is a monotone property, for every $0 \leq m \leq m' \leq N$:
\[ \mathbb{P}(G \sim G(n, m) \in \mathcal{A}) \leq \mathbb{P}(G \sim G(n, m') \in \mathcal{A}). \] 
Let $X = |E(G \sim G(n, p))|$. By the Law of Total Probability:
\begin{align*}
\mathbb{P}(G \sim G(n, p) \in \mathcal{A}) &= \sum_{m'=0}^{N} \mathbb{P}(X = m') \cdot \mathbb{P}(G \sim G(n, p) \in \mathcal{A} \mid X = m') \\
&= \sum_{m'=0}^{N} \mathbb{P}(X = m') \cdot \mathbb{P}(G \sim G(n, m') \in \mathcal{A}) \\
&\geq \sum_{m'=m}^{N} \mathbb{P}(X = m') \cdot \mathbb{P}(G \sim G(n, m') \in \mathcal{A}) \\
&\geq \sum_{m'=m}^{N} \mathbb{P}(X = m') \cdot \underbrace{\mathbb{P}(G \sim G(n, m) \in \mathcal{A})}_{\text{monotonicity}} \\
&= \mathbb{P}(G \sim G(n, m) \in \mathcal{A}) \cdot \sum_{m'=m}^{N} \mathbb{P}(X = m') \\
&= \mathbb{P}(G \sim G(n, m) \in \mathcal{A}) \cdot \mathbb{P}(\text{Bin}(N, p) \geq m).
\end{align*}
From the Central Limit Theorem, since $m \to \infty$, $\mathbb{P}(\text{Bin}(N, p) \geq m) \geq \frac{1}{2} + o(1)$. Thus:
\[ \mathbb{P}(G \sim G(n, p) \in \mathcal{A}) \geq \left(\frac{1}{2} + o(1)\right) \mathbb{P}(G \sim G(n, m) \in \mathcal{A}). \]
\end{proof}

\section{Connected Components in \texorpdfstring{$G(n,p)$}{G(n,p)} and Phase Transition}

Let $G \sim G(n, p)$. Let $L_1, L_2, ...$ be the connected components of $G$ ordered by cardinality \allowbreak $|L_1| \geq |L_2| \geq \dots$.

\begin{thm}[Erd\H os, Rényi \cite{zbMATH03168330}]\phantomsection
\label{main_ercp}
    Let $p = \frac{c}{n}$ for $c > 0$.
\begin{enumerate}
    \item If $0 < c < 1$ (the sub-critical regime), then whp $|L_1| = O_c(\log n)$.
    \item If $c > 1$ (the super-critical regime), then whp $|L_1| = \Theta(n)$ and all other components satisfy $|L_i| = O_c(\log n)$ for $i \geq 2$.
\end{enumerate}
\end{thm}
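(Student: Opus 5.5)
We will prove both parts by exploring components of $G\sim G(n,\frac{c}{n})$ with a breadth-first search (BFS) and comparing the exploration to binomial random walks. The BFS keeps three sets: processed vertices, active vertices (a queue) and unexplored vertices. When it processes an active vertex $v$, it exposes the pairs from $v$ to the currently unexplored vertices. Each such pair is a fresh, independent Bernoulli$(p)$ variable, because it has not been looked at before. So the number of new vertices found at step $t$ is $\mathrm{Bin}(|U_t|,p)$, where $U_t$ is the set of unexplored vertices at time $t$. If the component of a vertex $v$ has size at least $k$, then after $k$ steps the total number of discovered vertices is at least $k$. The number of new vertices per step is dominated by $\mathrm{Bin}(n,p)$, and once only few vertices have been discovered it stochastically dominates $\mathrm{Bin}(n-k,p)$.

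\textbf{Subcritical case.} If $|C(v)|\ge k$, then the first $k-1$ exploration steps discover at least $k-1$ new vertices. These are dominated by independent $\mathrm{Bin}(n,p)$ variables, so
\[
\mathbb{P}(|C(v)|\ge k)\le \mathbb{P}(\mathrm{Bin}(kn,p)\ge k-1).
\]
The mean of $\mathrm{Bin}(kn,p)$ is $ck$ with $c<1$. Chernoff's upper tail, with $a=(1-c)k-1$, gives a bound of the form $e^{-\alpha k}$ with $\alpha=\alpha(c)>0$. If the cubic correction term in the stated inequality is too large when $c$ is small, we replace it with the generic form $e^{-a^2/(2(np+a/3))}$ or compare directly with a larger $c$. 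Taking $k=K\log n$ with $K>1/\alpha$ and applying a union bound over the $n$ starting vertices gives $\mathbb{P}(|L_1|\ge K\log n)\le n\cdot n^{-\alpha K}=o(1)$.

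\textbf{Supercritical case.} Set $k_-=K\log n$ and $k_+=\beta n$ for a small constant $\beta=\beta(c)>0$. We claim that whp no vertex has a component whose exploration dies at a time $t\in[k_-,k_+]$. The exploration dies at time $t$ only if at most $t$ vertices have been discovered by then. While fewer than $2\beta n$ vertices are discovered, the exploration dominates $\mathrm{Bin}(t(n-2\beta n),p)$, whose mean is $c(1-2\beta)t>(1+\gamma)t$ once $\beta$ is small enough. Chernoff's lower tail bounds the probability of this event by $e^{-\alpha' t}$. Summing over $t\ge k_-$ and over all $n$ vertices gives $o(1)$. Hence every component has size either below $k_-$ or at least $\beta n$, which gives the $O(\log n)$ bound for the small components.

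It remains to show that there is exactly one large component and that it exists. For \emph{uniqueness} we use sprinkling, as in the remark on multiple exposure: write $1-p=(1-p_1)(1-p_2)$ with $p_1=\frac{c'}{n}$, where $1<c'<c$, and $p_2=\Theta(1/n)$. Apply the dichotomy above to $G(n,p_1)$. Suppose two large components $A,B$ of $G(n,p_1)$, each of size at least $\beta n$, stay disconnected in the union. This requires that no $G_2$-edge joins them. But there are at least $\beta^2n^2$ pairs between $A$ and $B$, and a more careful version uses many vertex-disjoint short paths or a union over partitions into large parts. The number of such partitions is at most $2^{1/\beta}$, since every part has at least $\beta n$ vertices, so this probability is at most $2^{1/\beta}e^{-p_2\beta^2n^2}=o(1)$.

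For \emph{existence} we need some component of size at least $\beta n$. Suppose instead that all components are smaller than $K\log n$. Then the vertex set can be split into two sets $S,T$ with $|S|,|T|\ge n/3$ and no edges between them, since the components can be grouped so that each side gets roughly half of the vertices. There are at most $2^n$ such splits, and each has probability at most $(1-p)^{n^2/9}\le e^{-cn/9}$. This union bound does not beat $2^n$ for small $c$, so we instead use an exploration argument. Run the BFS continuously, restarting from a new vertex each time a component is finished. During the first $\beta n$ steps, the total number of discovered vertices, minus the number of restarts, exceeds $(1+\gamma)t$ whp. Since every component found is smaller than $k_-$, there are at least $t/k_-$ restarts, and this count yields a contradiction.

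The main obstacle is making the coupling rigorous: the number of new vertices at each step is binomial in a shrinking set of unexplored vertices, so the stochastic domination has to be set up carefully. The existence of the giant component is the most delicate part, and we expect to handle it with the continuous-exploration argument just described.
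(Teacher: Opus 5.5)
Your subcritical bound and your supercritical dichotomy (no exploration from any vertex dies at a time in $[K\log n,\beta n]$) are sound. This exploration route is genuinely different from the paper's, which uses first-moment counts of trees, unicyclic and complex components, and then computes the exact volume $(1+o(1))\frac{nx}{c}$ of the small tree components. The gap is in \emph{uniqueness}. Your sprinkling step shows only that the components of $G_1\sim G(n,p_1)$ of size at least $\beta n$ all end up in one component of $G=G_1\cup G_2$. It says nothing about components of $G$ that contain no large $G_1$-component. Such a component is a union of small $G_1$-components glued together by $G_2$-edges, and since $p_2=\Theta(1/n)$ is of the same order as $p$, there is no a priori reason for it to stay small. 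A union bound over candidate unions of small $G_1$-components also fails: there are $2^{\Theta(n)}$ of them, against failure probabilities $e^{-\Theta(n)}$ with uncontrolled constants. So, as written, $|L_i|=O_c(\log n)$ for $i\ge 2$ is not established.

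The paper avoids this with its volume count. It takes $c_1=c-\frac{\log n}{n}\to c$, so the large $G_1$-components already have total size $(1+o(1))(1-\frac{x}{c})n$. That equals the total volume of all large components of $G$, which leaves no room for another one. Within your framework there is a cheap fix. Let $S$ be the union of the small components of $G_1$, and condition on $G_1$ and on the $G_2$-edges inside $S$. A component of $G$ meeting no large $G_1$-component is then a component of $G[S]$. There are at most $1/\beta$ components of $G[S]$ of size at least $\beta n$. The union of the large $G_1$-components has size at least $\beta' n$, where $\beta'=\beta(c')$, by your existence argument applied to $G_1$. So each such component has at least $\beta\beta' n^2$ still-unexposed $G_2$-pairs to that union, and whp each is attached to it: the failure probability is at most $\frac{1}{\beta}e^{-p_2\beta\beta' n^2}=o(1)$. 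The dichotomy applied to $G$ itself then forces every remaining component below $K\log n$. The standard alternative is to run explorations from any two vertices with large components up to time $\Theta(n)$, and join their linear-size active sets by fresh edges.

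A minor point on existence: the contradiction does not come from counting restarts. When all components are smaller than $k_-$, the number of discovered vertices equals $t+\#\text{active}\le t+k_-$. On the other hand, at $t=\beta n$ at least $(1+\gamma)t$ vertices are discovered whp by queries alone, and these two bounds are incompatible.
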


\begin{figure}[H]
    \centering
    \includegraphics[width=0.8\textwidth]{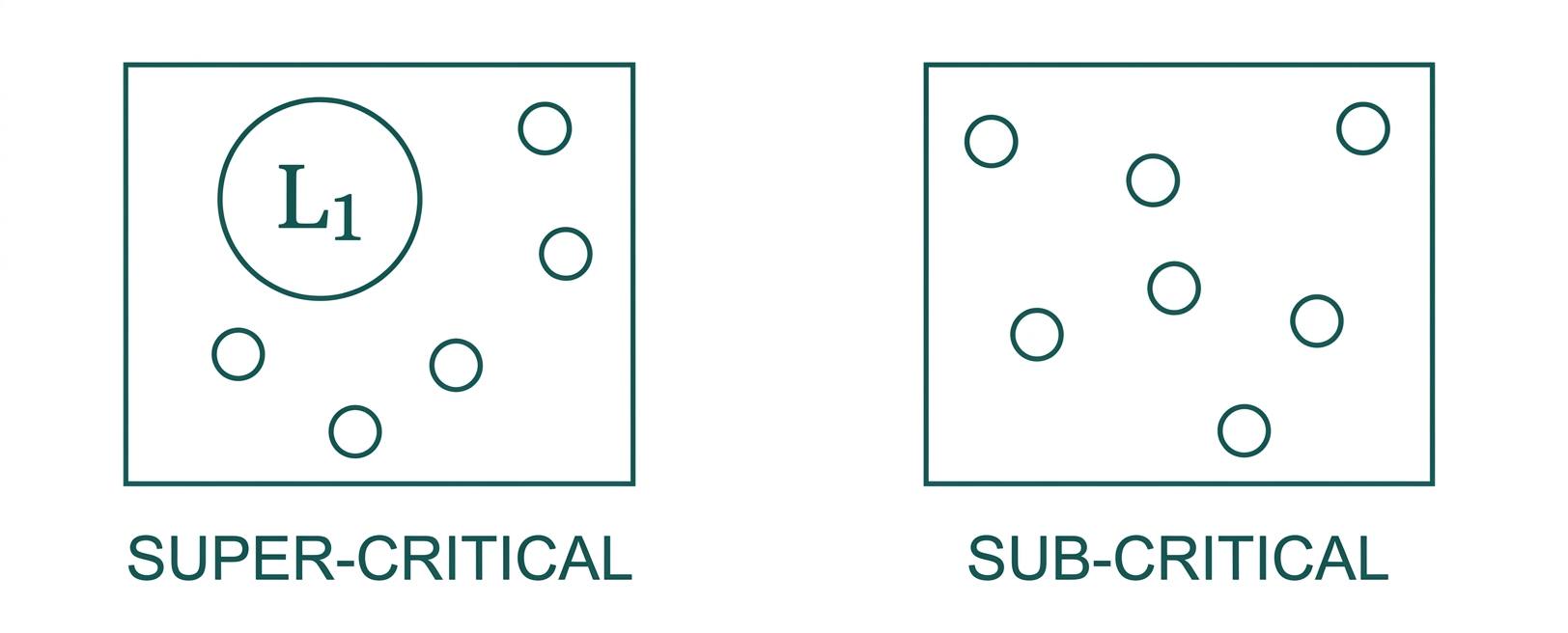} 
    \caption{Super-critical and Sub-critical states}
    \label{fig:critical_states}
\end{figure}

For the proof of Theorem \ref{main_ercp} we utilize several auxiliary lemmas that are given in Lecture 2 and in Lecture 3.

We begin with the case where $0<c<1$.
\begin{lemma}\phantomsection\label{ercp_lemma1}
    In $G \sim G(n, \frac{c}{n})$, whp there are no connected components with 2 or more cycles.
\end{lemma}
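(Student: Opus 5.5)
The plan is a first moment argument. Here $0<c<1$ is fixed, which is the sub-critical case under discussion. A connected component with at least two cycles contains a connected subgraph $H$ with $|E(H)|=|V(H)|+1$. Concretely, $H$ is one of two shapes. It can be two cycles joined by a (possibly trivial) path, the ``handcuff'' or ``figure eight'' shape. Or it can be a theta graph: two vertices joined by three internally disjoint paths. So it suffices to show that whp $G$ contains no connected subgraph on $k$ vertices with $k+1$ edges, for any $k$.

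To see that such an $H$ exists, take a component $C$ with two cycles. Its cyclomatic number $|E(C)|-|V(C)|+1$ is at least $2$. Take a spanning tree $T$ of $C$ and two non-tree edges $e_1,e_2$. Then $T\cup\{e_1,e_2\}$ contains the two fundamental cycles $C_1,C_2$. If they share a vertex, $C_1\cup C_2$ contains a theta or a figure eight. Otherwise we connect them by a shortest path in $T$. Either way we get a minimal connected subgraph with excess exactly $1$. Actually, all we need for counting is this: there is a connected subgraph $H$ with $v$ vertices and $v+1$ edges, obtained by taking the union of the two cycles and a connecting path, and pruning any extra edges.

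Let $X$ be the number of such subgraphs $H$ in $G$, and bound $\mathbb{E}X$ by counting. For each $k\ge 4$, the number of labelled connected graphs on a fixed $k$-set that have the handcuff or theta shape is at most $k!\cdot k^2$ (up to a constant). The reason is that the shape is determined by an ordering of the vertices along a traversal, together with $O(k^2)$ choices for where the branch points and path endpoints sit. Hence
\[
\mathbb{E}X\le \sum_{k=4}^{n}\binom{n}{k}\,O(k^2)\,k!\,\left(\frac{c}{n}\right)^{k+1}\le \sum_{k=4}^{n} O(k^2)\, n^k\left(\frac{c}{n}\right)^{k+1}=\frac{O(1)}{n}\sum_{k\ge4}k^2c^{k+1}.
\]
Since $c<1$, the series $\sum k^2c^k$ converges, so $\mathbb{E}X=O_c(1/n)=o(1)$. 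Markov's inequality then gives $\mathbb{P}(X>0)=o(1)$, which proves the lemma.

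The main obstacle is getting the enumeration of the excess-one shapes right. It needs a count with only polynomial overhead $k^{O(1)}\cdot k!$, so that the factor $n^k\cdot n^{-(k+1)}$ leaves an $n^{-1}$ and the sum still converges. I would handle this by parametrising each shape by a sequence of all $k$ vertices plus at most $3$ integer ``breakpoints''. For the figure eight these are the lengths of the first cycle and of the connecting path. For the theta they are the lengths of two of the paths. This gives at most $k!\,k^3$ labelled copies, and the resulting series $\sum k^3c^k$ still converges. A secondary point is checking carefully that the pruning produces a subgraph with exactly $v+1$ edges rather than more; the choice of exactly two non-tree edges $e_1,e_2$ guarantees this.
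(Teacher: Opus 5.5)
Your proof is correct and matches the paper's argument: a first-moment bound over the minimal excess-one configurations (handcuff, figure eight, theta), each encoded by an ordering of its $k$ vertices plus $k^{O(1)}$ extra choices, giving $\mathbb{E}X \le \frac{O(1)}{n}\sum_{k\ge 4} k^{O(1)} c^{k} = o(1)$ for $0<c<1$, followed by Markov's inequality. The paper encodes each configuration as a spanning path of $H$ plus two edges from the path's endpoints back into the path, for $\binom{n}{k}\,k!\,k^2$ choices. That is essentially your breakpoint parametrisation, and it shows your $k^2$ count is already enough, so the $k^3$ slack is unnecessary.
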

\begin{proof}
If a connected graph $H$ has at least 2 cycles, then $H$ contains one of the following 3 configurations:
\begin{enumerate}
    \item Two disjoint cycles $C_1, C_2$ connected by a path.
    \item Two cycles $C_1, C_2$ sharing one vertex.
    \item Two vertices connected by 3 edge-disjoint paths.
\end{enumerate}
One can easily verify that in any of these cases, H contains a path $P$ and 2 additional edges, each from one end of $P$ inwards (``end edges"). The expected number of such structures in $G(n,p)$ is:
\begin{align*}
\mathbb{E}[\#\text{components with }\geq 2 \text{ cycles}]&\leq \sum_{k=4}^{n} \underbrace{\binom{n}{k}}_{\text{\shortstack{choosing the \\ vertices}}} \cdot \underbrace{k!}_{\text{\shortstack{ordering the \\ vertices}}} \cdot \underbrace{k^2}_{\text{\shortstack{placing the \\ "end edges"}}} \cdot \underbrace{p^{k+1}}_{\text{\shortstack{prob. $p$ \\ for each edge}}} \\
&\leq \sum_{k=4}^{n} \frac{n^k}{k!} \cdot k! \cdot k^2 \left(\frac{c}{n}\right)^{k+1}= \frac{c}{n} \sum_{k=4}^{n} k^2 \cdot c^k \xrightarrow[n \to \infty]{} 0.
\end{align*}
Since the expectation goes to 0, by Markov's inequality, whp no such component exists.
\end{proof}

\begin{lemma}\phantomsection\label{ercp_lemma2}
    Whp, the total number of vertices in components containing a (unique) cycle is at most $\omega(n)$ for any $\omega(n)$ such that $\omega(n) \to \infty$ as $n \to \infty$.
\end{lemma}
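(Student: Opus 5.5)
We are in the sub-critical regime $p=\frac{c}{n}$ with $0<c<1$, as in Lemma \ref{ercp_lemma1}. The plan is a first-moment computation on unicyclic components, followed by Markov's inequality. Let $Y$ be the total number of vertices lying in components that contain exactly one cycle. (By Lemma \ref{ercp_lemma1} we may ignore components with two or more cycles, although this is not even needed.) It suffices to show that $\mathbb{E}[Y]=O_c(1)$. Then for any $\omega(n)\to\infty$, Markov's inequality gives
\[
\mathbb{P}(Y\ge \omega(n))\le \frac{O_c(1)}{\omega(n)}\to 0 .
\]

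To bound $\mathbb{E}[Y]$, fix $k\ge 3$ and count the connected unicyclic graphs on a given $k$-vertex set. Such a graph has exactly $k$ edges. It is a spanning tree plus one extra edge, so the number of them is at most $k^{k-2}\cdot\binom{k}{2}\le k^{k}$ by Cayley's formula. A cruder but self-contained alternative is to use the known count of connected unicyclic graphs, which is $\le \sqrt{\pi/8}\,k^{k-1/2}$; the $k^k$ bound should suffice. Each such structure has probability $p^k$ of having all its edges present. We also need the component to be exactly this set, so no edges may leave it, which has probability $(1-p)^{k(n-k)}$. Therefore
\[
\mathbb{E}[Y]\le \sum_{k=3}^{n} k\binom{n}{k} k^{k} \left(\frac{c}{n}\right)^{k}(1-p)^{k(n-k)} .
\]

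The main obstacle is controlling this sum uniformly in $k$. Using $\binom{n}{k}\le \frac{n^k}{k!}$ and Stirling, $\frac{k^k}{k!}\le e^k$, each term is at most $k\,(ce)^k e^{-ck(n-k)/n}$. For $k\le \sqrt{n}$ the exponent is $-ck+o(1)$, so the term is $O\bigl(k\,(ce^{1-c})^k\bigr)$. Since $c<1$, we have $ce^{1-c}<1$, because $xe^{1-x}$ is maximized uniquely at $x=1$. Hence this part of the sum is a convergent geometric-type series, $O_c(1)$.

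For $k>\sqrt{n}$ I would use the sharper $\binom{n}{k}\le \frac{n^k}{k!}e^{-k^2/(2n)}$. This comes from $\prod_{i<k}(1-i/n)\le e^{-k(k-1)/(2n)}$, and together with $e^{ck^2/n}$ from the edge-out factor it gives a term bounded by $k\,(ce^{1-c})^k e^{(c-1/2)k^2/n+O(k/n)}$.

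The factor $e^{(c-1/2)k^2/n}$ is harmless when $c\le 1/2$. When $1/2<c<1$, I would instead drop the $(1-p)^{k(n-k)}$ factor for large $k$ and bound the term by $k\,(ce)^k e^{-k^2/(2n)}$ directly. This is the delicate range. If needed, I would handle it using $\prod_{i<k}(1-i/n)\le \exp(-\sum_i i/n)$ combined with $(1-p)^{k(n-k)}$, yielding the exponent
\[
k\log c + k - ck + \frac{ck^2}{n} - \frac{k^2}{2n} .
\]
This is maximized at the endpoints in $k\in[\sqrt n,n]$. At $k=n$ it equals $n(\log c+1/2)$, which requires more care, so for the large range I expect to use the tighter exact bound $\prod(1-i/n)=\frac{n!}{(n-k)!\,n^k}$ with Stirling, giving $e^{-k}(1-k/n)^{-(n-k)}$-type factors.

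Once the full sum is shown to be $O_c(1)$, which the terms exponentially small in $k$ strongly suggest, the lemma follows.
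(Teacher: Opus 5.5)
Your estimate is fine for $k\le\sqrt n$, but the range $k>\sqrt n$ is a genuine gap. No sharpening of the binomial or counting estimates can close it, because for $c$ close to $1$ the sum you wrote down is itself exponentially large. Take $k=n/2$ (say $n$ even). The corresponding term is
\[
\frac n2\binom{n}{n/2}\Bigl(\frac n2\Bigr)^{n/2}\Bigl(\frac cn\Bigr)^{n/2}\Bigl(1-\frac cn\Bigr)^{n^2/4}.
\]
Up to factors polynomial in $n$, this equals $2^n(c/2)^{n/2}e^{-cn/4}=\bigl(\sqrt{2c}\,e^{-c/4}\bigr)^n$, and $\sqrt{2c}\,e^{-c/4}>1$ for every $c\in[3/4,1)$.

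None of your proposed remedies helps. Using the exact count $\Theta(k^{k-1/2})$ of connected unicyclic graphs, or Stirling for $\binom nk$, changes only polynomial factors. Dropping the boundary factor only makes the bound larger. So the hope that the terms are exponentially small in $k$ is false for $k=\Theta(n)$. The underlying reason is that you charge only for the $k(n-k)$ absent boundary pairs and nothing for the interior of the vertex set. Your count is therefore dominated by rare configurations in which $G[S]$ is dense and contains a huge number of unicyclic spanning subgraphs.

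The missing idea, which is exactly what the paper's proof uses, is the word ``exactly'' in your own definition of $Y$. A component of order $k$ with exactly one cycle has exactly $k$ edges. Hence the remaining $\binom k2-k$ pairs inside it must be non-edges, which contributes an extra factor $(1-p)^{\binom k2-k}$. Then
\[
(1-p)^{k(n-k)+\binom k2-k}\le\exp\Bigl(-ck+\frac{ck^2}{2n}+\frac{3ck}{2n}\Bigr),
\]
so only half of the quadratic term survives. Together with $\binom nk\le\frac{n^k}{k!}e^{-k(k-1)/(2n)}$, the quadratic terms combine to $\frac{(c-1)k^2}{2n}\le 0$. This bounds the expected number of vertices in unicyclic components of order $k$ by $e^2k\,(ce^{1-c})^k$, uniformly for all $3\le k\le n$ and with no case distinction. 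Markov's inequality then finishes as you planned.

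Note that this uses the restriction to unicyclic components that you called unnecessary. Components with two or more cycles are handled separately by Lemma~\ref{ercp_lemma1}. Alternatively, you could first prove by an independent exploration argument that whp all components have order $O(\log n)$, after which your $k\le\sqrt n$ estimate suffices. That argument would have to be supplied, though: you cannot quote the subcritical theorem, since its proof relies on this lemma.
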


\begin{proof}
For $3 \leq k \leq n$, let $Z_k$ be the number of vertices in components of order $k$ containing exactly one cycle. Our task is to show that whp $Z_3+Z_4+\ldots +Z_n\le \omega(n)$. We first estimate the expectation:
\[
\mathbb{E}\left[\sum_{k=3}^n Z_k\right] = \sum_{k=3}^n \mathbb{E}[Z_k].
\]
The bound for $\mathbb{E}[Z_k]$ is given by:
\[
\mathbb{E}[Z_k] \leq \underbrace{\binom{n}{k}}_{\text{\shortstack{choosing k \\ vertices}}} \cdot \underbrace{k^{k-2}}_{\text{\shortstack{choosing a spanning \\ tree (Cayley)}}} \cdot \underbrace{\binom{k}{2}}_{\text{forming a cycle}} \cdot \underbrace{p^k}_{\text{\shortstack{prob. $p$ \\for each edge}}}  \cdot \underbrace{(1-p)^{k(n-k) + \binom{k}{2}-k}}_{\text{\shortstack{disconnecting from \\ other components\\ and forbidding\\other edges inside}}} \cdot \underbrace{k}_{\text{\shortstack{order of the \\ component}}}.
\]
Note that we have 
\begin{align*}
\binom{n}{k} &= \frac{n^k}{k!} \cdot \frac{n(n-1)\cdots(n-k+1)}{n^k} = \frac{n^k}{k!} \left( \frac{n}{n} \cdot \frac{n-1}{n} \cdots \frac{n-k+1}{n} \right) \\
&\stackrel{1-x \leq e^{-x}}{\leq} \frac{n^k}{k!} \cdot e^{-\left(\frac{1}{n} + \frac{2}{n} + \dots + \frac{k-1}{n}\right)} = \frac{n^k}{k!} \cdot e^{-\frac{k(k-1)}{2n}}.
\end{align*}
Hence,
\begin{align*}
    \mathbb{E}[Z_k]&\leq \frac{n^k}{k!} \cdot e^{-\frac{k(k-1)}{2n}} \cdot k^{k+1} \left( \frac{c}{n} \right)^k \cdot e^{-\frac{c}{n} (kn - \frac{k^2}{2} - \frac{3k}{2})} \\
    &\leq \frac{n^k}{(\frac{k}{e})^k} e^{-\frac{k(k-1)}{2n}} \cdot k^{k+1} e^{-ck + \frac{k^2}{2n} + \frac{3k}{2n}} \cdot \left( \frac{c}{n} \right)^k \\
    &= k \cdot (ce^{1-c})^k e^{\frac{2k}{n}}\le e^2 k \cdot (ce^{1-c})^k.
\end{align*}
Notice that for every $c\ne 1$, we have $ce^{1-c}<1$, hence plugging this into $\mathbb{E}[Z_k]$ we obtain:
\[
\mathbb{E}\left[\sum_{k=3}^n Z_k\right] \leq \sum_{k=3}^n e^2 \cdot k(ce^{1-c})^k = O(1).
\]
Therefore, by Markov's inequality, for any $\omega(n) \to \infty$, we have that whp $\sum_{k=3}^n Z_k \leq \omega(n)$.
\end{proof}

\newpage
\phantomsection
\addcontentsline{toc}{chapter}{Lecture 3}
\lecture{3}{May 3, 2026}{Lecture 3}{Gali Maman}
\setcounter{chapter}{3}
\setcounter{section}{0}

\section{Connected Components in \texorpdfstring{$G(n,p)$}{G(n,p)} and Phase Transition - \allowbreak Continued}

\begin{lemma}\phantomsection\label{ercp_lemma3}
For $c > 0$, $c\ne 1$, let $p = \frac{c}{n}$. For $1 \le k \le n$, let $X_k$ denote the number of connected components that are trees of order $k$. The following hold:
\begin{enumerate}
    \item[\textbf{(a)}] There exists a constant $\beta_1 = \beta_1(c) > 0$ such that whp, for all $k \ge \beta_1 \log n$, it holds that $X_k = 0$. \textit{(Meaning: whp, there is no connected component of order $\ge \beta_1\log n$ in $G$ which is a tree).}
    
    \item[\textbf{(b)}] There exists a constant $\beta_0 = \beta_0(c) > 0$ such that whp, for all $1 \le k \le \beta_0 \log n$, it holds that:
    \[ X_k = (1+o_n(1)) \frac{n}{c} \cdot \frac{k^{k-2}}{k!} (ce^{-c})^k. \]
\end{enumerate}
\end{lemma}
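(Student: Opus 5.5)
Both parts rest on the exact formula for the expected number of isolated trees of order $k$, and I would compute it first. By Cayley's formula,
\[
\mathbb{E}[X_k]=\binom{n}{k}k^{k-2}p^{k-1}(1-p)^{k(n-k)+\binom{k}{2}-(k-1)}.
\]
The factors are: choose the vertex set, choose a spanning tree, require its $k-1$ edges, and forbid all edges leaving the set as well as the remaining internal pairs.

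\textbf{Part (a).} Use the same estimates as in the proof of Lemma~\ref{ercp_lemma2}, namely $\binom{n}{k}\le \frac{n^k}{k!}e^{-k(k-1)/(2n)}$, $k!\ge (k/e)^k$ and $1-p\le e^{-p}$. These give
\[
\mathbb{E}[X_k]\le \frac{n}{c}\,k^{-2}\,e^{O(k/n)}\,(ce^{1-c})^k\le C_1 n\,(ce^{1-c})^k .
\]
Since $c\ne 1$ we have $q:=ce^{1-c}<1$. Summing over $k\ge \beta_1\log n$ gives a total of order $n\cdot q^{\beta_1\log n}=n^{1-\beta_1\log(1/q)}$. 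Choosing $\beta_1>2/\log(1/q)$ makes this $o(1)$, and Markov's inequality finishes part (a).

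\textbf{Part (b), first moment.} For $k\le \beta_0\log n$ I would show that $\mathbb{E}[X_k]=(1+o(1))\frac{n}{c}\frac{k^{k-2}}{k!}(ce^{-c})^k$, uniformly in $k$. The ingredients are $\binom{n}{k}=(1+O(k^2/n))\frac{n^k}{k!}$ and $(1-p)^{kn-O(k^2)}=e^{-ck+O(k^2/n)}$, with $O(k^2/n)=o(1)$ in this range. Set $\mu_k$ for this expectation. Using Stirling, $\frac{k^{k-2}}{k!}(ce^{-c})^k\approx k^{-5/2}(ce^{1-c})^k$, so
\[
\mu_k\ge n\cdot n^{-\beta_0\log(1/q)}\cdot \mathrm{polylog}(n)^{-1}.
\]
Choosing $\beta_0$ small, say $\beta_0\log(1/q)<1/3$, gives $\mu_k\ge n^{2/3}$ uniformly over the range.

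\textbf{Part (b), second moment.} Here $\mathbb{E}[X_k(X_k-1)]$ counts ordered pairs of distinct isolated $k$-trees. Two distinct isolated components are necessarily vertex-disjoint, so only disjoint pairs of vertex sets contribute. For such a pair, the probability that both are isolated trees equals the product of the individual probabilities divided by $(1-p)^{k^2}$, because the $k^2$ pairs between the two sets were forbidden twice. The number of disjoint pairs is at most $\binom{n}{k}^2$. Hence
\[
\mathbb{E}[X_k(X_k-1)]\le \mu_k^2(1-p)^{-k^2}=\mu_k^2 e^{O(k^2/n)}=\mu_k^2(1+o(1)),
\]
and therefore $\mathrm{Var}(X_k)\le \mu_k+o(\mu_k^2)$.

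The main obstacle is that Chebyshev must hold \emph{simultaneously} for all $k\le\beta_0\log n$, and this requires quantifying the $o(\mu_k^2)$ term. The error is $\mu_k^2\cdot O(k^2/n)$, so
\[
\mathrm{Var}(X_k)\le \mu_k+\mu_k^2 O(\log^2 n/n).
\]
Fix a slowly decaying $\varepsilon=\varepsilon(n)\to 0$, for instance $\varepsilon=n^{-1/10}$. Chebyshev gives
\[
\mathbb{P}(|X_k-\mu_k|>\varepsilon\mu_k)\le \frac{1}{\varepsilon^2\mu_k}+\frac{O(\log^2 n/n)}{\varepsilon^2}\le n^{-2/3+1/5}+n^{-1+1/5}\log^2 n .
\]
A union bound over $O(\log n)$ values of $k$ then gives $o(1)$, which establishes (b) with the uniform $1+o(1)$ factor.
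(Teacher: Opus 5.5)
Your part (b) is correct and is the same second-moment argument as in the paper. The exact disjoint-pairs bound $\mathbb{E}[X_k(X_k-1)]\le\mu_k^2(1-p)^{-k^2}$, together with the explicit choice $\varepsilon=n^{-1/10}$, is if anything a cleaner way to get the uniform $1+o(1)$ than the paper's fixed-$\varepsilon$ Chebyshev step. Part (a), however, has a genuine gap when $c>1$.

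The estimates you borrow from the proof of Lemma~\ref{ercp_lemma2} do not give $e^{O(k/n)}$ in general. The bound $\binom{n}{k}\le\frac{n^k}{k!}e^{-k(k-1)/(2n)}$ contributes $e^{-k^2/(2n)}$. The bound $(1-p)^{k(n-k)+\binom{k}{2}-k+1}\le e^{-ck+ck^2/(2n)+O(k/n)}$ contributes $e^{+ck^2/(2n)}$. So what you actually obtain is
\[
\mathbb{E}[X_k]\le\frac{n}{ck^2}\,(ce^{1-c})^k\,e^{(c-1)k^2/(2n)+O(k/n)} .
\]
In Lemma~\ref{ercp_lemma2} the leftover factor was harmless because $c<1$ there. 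For $c>1$ it is $e^{\Theta(n)}$ once $k=\Theta(n)$, and your sum runs up to $k=n$.

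This is not merely a lossy step. For $k=\alpha n$ one has $\frac1n\log\mathbb{E}[X_k]\to-(1-\alpha)\log(1-\alpha)+\alpha\log c-c\alpha+\frac{c\alpha^2}{2}$. This exceeds $\alpha(1+\log c-c)$ by $\frac{(c-1)\alpha^2}{2}-O(\alpha^3)>0$ for small $\alpha$, so the inequality $\mathbb{E}[X_k]\le C_1n(ce^{1-c})^k$ is false in that range.

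Restricting to $k\le\beta_2n$ with $\beta_2$ small rescues everything below $\beta_2 n$, since the base $ce^{1-c+(c-1)\beta_2/2}$ is still $<1$. The linear range needs another idea. The paper uses the crude bound only at $k_0=\lceil\beta_1\log n\rceil$, where it gives $\mu_{k_0}=o(n^{-2})$. It then controls every larger $k$ by the exact ratio
\[
\frac{\mu_{k+1}}{\mu_k}=\frac{n-k}{n}\,c\left(1+\frac1k\right)^{k-2}(1-p)^{n-k-2}\le xe^{1-x}(1-p)^{-2}\le(1-p)^{-2},\qquad x=\frac{c(n-k)}{n}.
\]
Hence $\mu_k\le e^{2c+o(1)}\mu_{k_0}$ for all $k\ge k_0$, and the whole tail is $O(n)\cdot o(n^{-2})=o(n^{-1})$. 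What makes this work for $c>1$ is keeping the exact factor $\frac{n-k}{n}$ rather than its upper bound $e^{-k/n}$.

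Alternatively, in the range $k\ge\beta_2 n$ you could use $\binom{n}{k}\le e^{nH(k/n)}$, where $H(x)=-x\log x-(1-x)\log(1-x)$. You would then check that the rate function above is negative on $(0,1]$. It is concave on $[0,1-1/c]$ with value $0$ and negative slope at $0$, convex on $[1-1/c,1]$, and its value at $1$ is $\log c-c/2<0$.
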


\begin{proof}
\textbf{Part (a):} We first calculate the expectation $\mathbb{E}[X_k]$:

\[ \mathbb{E}[X_k] = \underbrace{\binom{n}{k}}_{\text{choosing vertices}} \cdot \underbrace{k^{k-2}}_{\text{choosing the tree}} \cdot \underbrace{p^{k-1}}_{\text{tree edges}} \cdot \underbrace{(1-p)^{k(n-k)}}_{\text{no edges to rest of graph}} \cdot \underbrace{(1-p)^{\binom{k}{2}-k+1}}_{\text{no extra edges in tree}}. \]
Notice that for all $k$, $\binom{n}{k} \le \frac{n^k}{k!}$, and specifically for $k = O(\log n)$, we have $\binom{n}{k} = (1+o(1)) \frac{n^k}{k!}$. Therefore, we can bound the expectation:
\[ \mathbb{E}[X_k] \le \frac{n^k}{k!} k^{k-2} \left(\frac{c}{n}\right)^{k-1} e^{-ck + O(\frac{k^2}{n})}, \]
and for $k = O(\log n)$, it holds that:
\[ \mathbb{E}[X_k] = (1+o(1)) \frac{n}{c} \cdot \frac{k^{k-2}}{k!} (ce^{-c})^k. \]
Let us denote $\mu_k = \mathbb{E}[X_k] = \binom{n}{k} k^{k-2} p^{k-1} (1-p)^{k(n-k) + \binom{k}{2} - k + 1}$. We can further bound $\mu_k$ as follows (assuming $k=O(\log n)$):
\begin{align*}
    \mu_k &= (1+o(1)) \frac{n}{c} \cdot \frac{k^{k-2}}{k!} (ce^{-c})^k \le (1+o(1)) \frac{n}{c} e^k (ce^{-c})^k \le (1+o(1)) \frac{n}{c} (ce^{1-c})^k.
\end{align*}

Notice that for every $1 \ne c \in \mathbb{R}$ it holds that $ce^{1-c} < 1$. Therefore, if we choose a constant $0 < \beta_1 = \beta_1(c)$ large enough, for $k = \lceil \beta_1 \log n \rceil$ it will hold that $\mu_k = o(n^{-2})$.

Furthermore, analyzing the ratio between consecutive terms yields:
\begin{align*}
    \frac{\mu_{k+1}}{\mu_k} &= \frac{\binom{n}{k+1} (k+1)^{k-1} p^k (1-p)^{(k+1)(n-k-1) + \binom{k+1}{2} - k}}{\binom{n}{k} k^{k-2} p^{k-1} (1-p)^{k(n-k) + \binom{k}{2} - k + 1}}= \frac{n-k}{k+1} \cdot \frac{(k+1)^{k-1}}{k^{k-2}} p (1-p)^{n-k-2}\\
    &= \frac{n-k}{n} \cdot c \cdot \left(\frac{k+1}{k}\right)^{k-2} (1-p)^{n-k-2}\le \frac{n-k}{n} \cdot c \cdot e \cdot e^{-\frac{c(n-k)}{n}} (1-p)^{-2} < (1-p)^{-2}.
\end{align*}

Hence, summing over the tail gives:
\begin{align*}
    \sum_{k=\lceil\beta_1 \log n\rceil}^n \mu_k &\le \mu_{\lceil\beta_1 \log n\rceil} \sum_{i=0}^{n} \underbrace{(1-p)^{-2i}}_{= O(1)} = O(n) \mu_{\lceil\beta_1 \log n\rceil} = O(n) \cdot o(n^{-2}) = o(n^{-1}).
\end{align*}

Therefore, according to Markov's inequality, whp, for all $k \ge \beta_1 \log n$ it holds that $X_k = 0$.

\vspace{1em}
\noindent \textbf{Part (b) (sketch):} We will use the Second Moment Method (Chebyshev's inequality) for \break $k \le \beta_0 \log n$, where $\beta_0 = \beta_0(c) > 0$ is a small enough constant. We already know that:
\[ \mathbb{E}[X_k] = (1+o(1)) \frac{n}{c} \cdot \frac{k^{k-2}}{k!} (ce^{-c})^k. \]
We will calculate the variance of $X_k$. Consider the number of ordered pairs of trees $T_1, T_2$ of order $k$ that are disjoint in vertices:

\begin{figure}[H]
    \centering
    \includegraphics[width=0.8\textwidth]{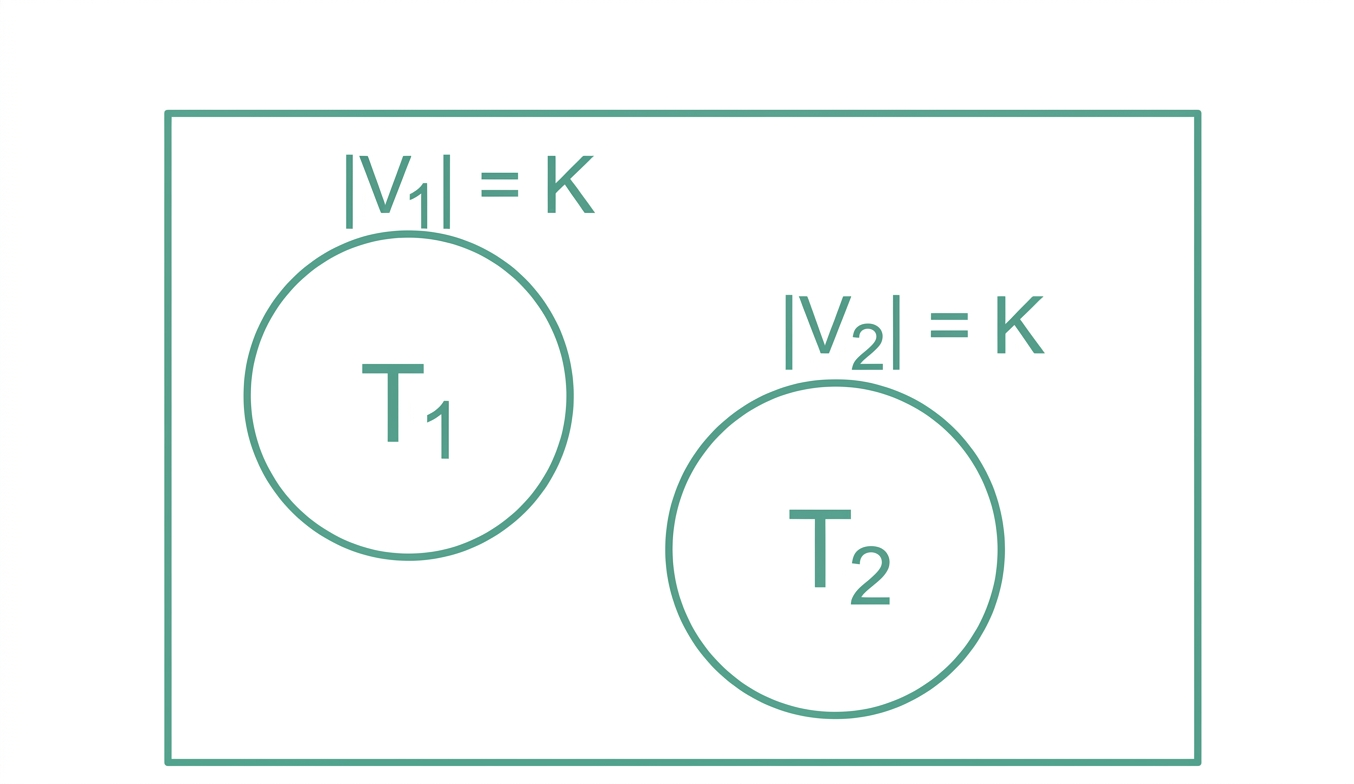} 
    \caption{T1 and T2}
    \label{fig:T1 and T2}
\end{figure}

\[
\mathbb{E}\left[\begin{matrix} \text{Number of ordered pairs of} \\ \text{tree components of order } k \end{matrix}\right] =
\underbrace{\binom{n}{k}}_{\substack{\text{choosing} \\ V(T_1)}} \cdot \underbrace{\binom{n-k}{k}}_{\substack{\text{choosing} \\ V(T_2)}} \cdot \underbrace{(k^{k-2} p^{k-1})^2}_{\substack{\text{trees of order } k \\ \text{and edges}}} \cdot \underbrace{(1-p)^{2k(n-2k) + \binom{2k}{2} - 2k + 2}}_{\substack{\text{no edges going out of} \\ T_1 \cup T_2 \text{ and between them}}}.
\]

We can bound the variance, using the above estimate, as follows:
\begin{align*}
    Var(X_k) &\le \mathbb{E}[X_k] + \mathbb{E}^2[X_k] \underbrace{((1-p)^{-k^2} - 1)}_{\approx p k^2} \le \mathbb{E}[X_k] + \frac{2c k^2}{n} \mathbb{E}^2[X_k].
\end{align*}

Applying Chebyshev's inequality gives us:
\begin{align*}
    \mathbb{P}(|X_k - \mathbb{E}[X_k]| \ge \varepsilon \mathbb{E}[X_k]) &\le \frac{1}{\varepsilon^2 \mathbb{E}[X_k]} + \frac{2c k^2}{\varepsilon^2 n}.
\end{align*}

Recall that:
\begin{align*}
    \mathbb{E}[X_k] &= (1+o(1)) \frac{n}{c} \cdot \frac{k^{k-2}}{k!} (ce^{-c})^k \le (1+o(1)) \frac{n}{c} \underbrace{e^k (ce^{-c})^k}_{(ce^{1-c})^k}.
\end{align*}
For $k \le \beta_0 \log n$, we choose $\beta_0 = \beta_0(c) > 0$ small enough such that: $\mathbb{E}[X_k] \ge n^{\frac{1}{2}}$. Hence, by the union bound:
\begin{align*}
    \mathbb{P}[\exists 1 \le k \le \beta_0 \log n : |X_k - \mathbb{E}[X_k]| > \varepsilon \mathbb{E}[X_k]] &= O(\log n) \cdot O\left(\frac{1}{\sqrt{n}}\right) = o(1),
\end{align*}
which completes the proof.
\end{proof}

Let us conclude for the subcritical regime:
\begin{thm}[Subcritical]
Let $0 < c < 1$ be a constant, and let $G \sim G(n, \frac{c}{n})$. Then whp, all connected components of $G$ are of order $O_c(\log n)$.
\end{thm}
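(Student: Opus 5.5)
The plan is to combine the three lemmas already established: Lemma~\ref{ercp_lemma1}, Lemma~\ref{ercp_lemma2} and part (a) of Lemma~\ref{ercp_lemma3}. Every connected component of $G$ is one of three kinds: a tree, a component with exactly one cycle (a unicyclic component), or a component with at least two cycles. We bound the order of each kind separately.

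\textbf{Components with at least two cycles.} By Lemma~\ref{ercp_lemma1}, whp there are none.

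\textbf{Tree components.} By Lemma~\ref{ercp_lemma3}(a), applied with $c<1$ (so $c\ne 1$), there is $\beta_1=\beta_1(c)$ such that whp $X_k=0$ for all $k\ge \beta_1\log n$. Hence whp every tree component has order less than $\beta_1\log n$.

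\textbf{Unicyclic components.} By Lemma~\ref{ercp_lemma2}, for any $\omega(n)\to\infty$, whp the total number of vertices in unicyclic components is at most $\omega(n)$. Choosing $\omega(n)=\log n$, whp every unicyclic component has order at most $\log n$.

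The only step needing care is this last choice of $\omega(n)$. Lemma~\ref{ercp_lemma2} holds for every function tending to infinity, so we may take $\omega(n)=\log n$. Alternatively, the proof of that lemma gives $\mathbb{E}[Z_k]\le e^2k(ce^{1-c})^k$, and a union bound over $k\ge \beta\log n$ for large $\beta$ yields the same conclusion directly.

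Finally, intersect the three whp events; a finite intersection of whp events is again whp. Then whp every component has order at most $\max(\beta_1,1)\log n=O_c(\log n)$, which proves the theorem. None of the steps is a real obstacle, since the substantive work is already done in the lemmas.
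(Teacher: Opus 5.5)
Your proposal is correct and is essentially the paper's proof: the same three-way split of components (at least two cycles, exactly one cycle, trees), handled by Lemma~\ref{ercp_lemma1}, Lemma~\ref{ercp_lemma2} and Lemma~\ref{ercp_lemma3}(a) respectively, followed by intersecting the whp events. The only cosmetic difference is the choice of $\omega(n)$: you take $\omega(n)=\log n$ and obtain the bound $\max(\beta_1,1)\log n$, whereas the paper takes $1\ll\omega(n)\ll\log n$, so that $\beta_1\log n$ alone bounds every component.
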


\begin{proof}
According to Lemmas \ref{ercp_lemma1} and \ref{ercp_lemma2} from Lecture 2 and due to Lemma \ref{ercp_lemma3}, we get whp that:
\begin{enumerate}
    \item there are no components with more than one cycle (Lemma \ref{ercp_lemma1});
    \item the number of vertices in components with exactly one cycle is at most $\omega(n)$ (where $1\ll \omega(n)\allowbreak \ll \log n$) (Lemma \ref{ercp_lemma2});
    \item there are no connected components that are trees of order $> \beta_1 \log n$ (Lemma \ref{ercp_lemma3}).
\end{enumerate} 

Combining these facts, it follows that whp, all connected components in $G$ are of order at most $\beta_1 \log n$, as required.
\end{proof}

We now turn to the supercritical regime. We will need the following definition later. Let us define $x = x(c)$ by:
\[
x \coloneqq x(c) = 
\begin{cases} 
  c & c < 1, \\
  \text{the unique solution of } x e^{-x} = c e^{-c} & c > 1 \text{ (in the interval } (0,1)).
\end{cases}
\]

[Note that the function $f(x) = x e^{-x}$ attains its maximum value at $x=1$, is monotone increasing for $0<x<1$, and is monotone decreasing for $x>1$. Therefore, for every $c > 1$, the solution $x \in (0,1)$ is well-defined and unique.]

\begin{figure}[H]
    \centering
    \includegraphics[width=0.8\textwidth]{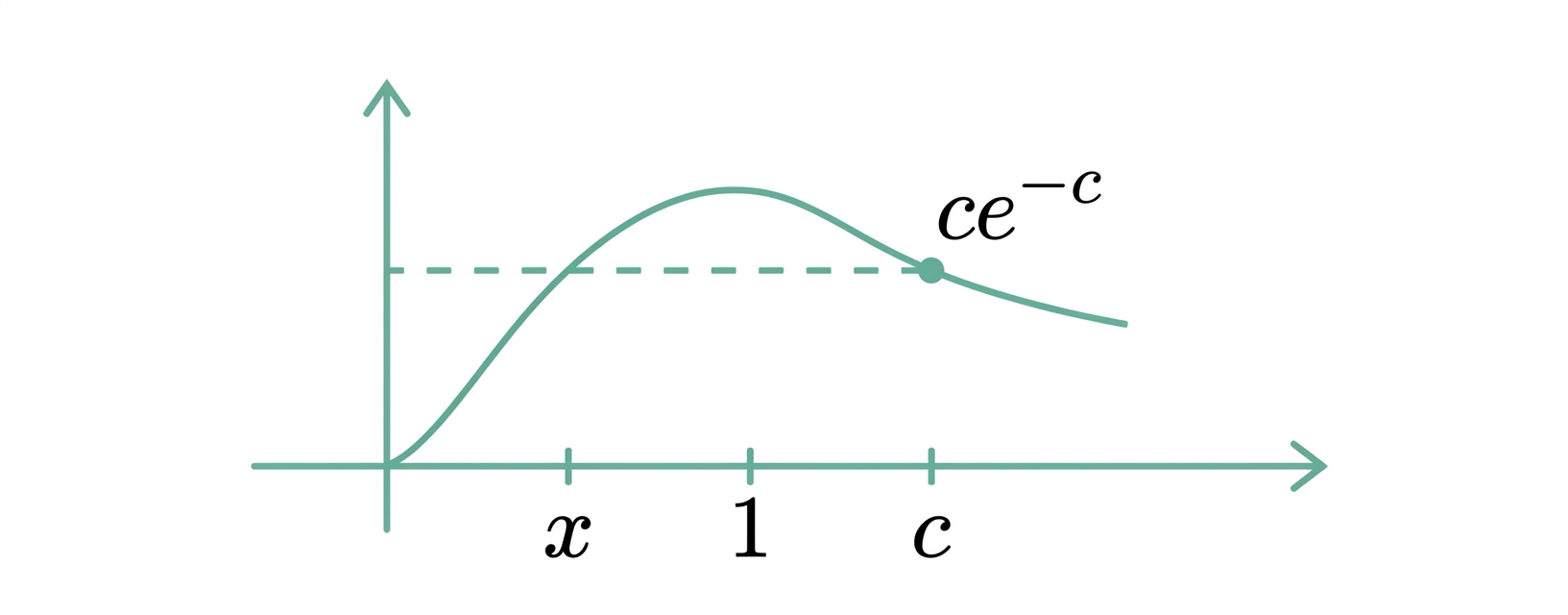} 
    \caption{x}
    \label{fig:x}
\end{figure}

\vspace{1em}
\begin{lemma}\phantomsection\label{ercp_lemma4}
Let $c > 0$, $c \ne 1$. Then:
\[ x = \sum_{k=1}^{\infty} \frac{k^{k-1}}{k!} (ce^{-c})^k. \]
\end{lemma}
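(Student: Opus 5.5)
The plan is to recognize the series as the tree function $T(y)=\sum_{k\ge1}\frac{k^{k-1}}{k!}y^k$. This is the exponential generating function of rooted labeled trees: by Cayley there are $k\cdot k^{k-2}=k^{k-1}$ of them on $[k]$. The claim then reduces to two facts: the functional equation $T(y)=ye^{T(y)}$ on $|y|\le e^{-1}$, and the identification of the correct root of $ze^{-z}=y$ at $y=ce^{-c}$.

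\textbf{Step 1 (convergence).} By Stirling, $\frac{k^{k-1}}{k!}y^k=\Theta\bigl(k^{-3/2}(ey)^k\bigr)$. Hence the series converges absolutely for $0\le y\le e^{-1}$, including the endpoint, and $T$ is continuous on $[0,e^{-1}]$. Since $ce^{-c}\le e^{-1}$ for every $c>0$, the series in the lemma converges.

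\textbf{Step 2 (functional equation).} Decompose a rooted labeled tree into its root and the unordered set of rooted subtrees hanging off it. In exponential generating function language this gives $T(y)=y\exp(T(y))$ as formal power series. One could instead use Lagrange inversion: the inverse of $z\mapsto ze^{-z}$ near $0$ has coefficients $\frac{1}{k}[z^{k-1}]e^{kz}=\frac{k^{k-1}}{k!}$. Either way, the identity holds as formal power series. Since both sides converge absolutely for $0\le y<e^{-1}$, it holds as an identity of real functions there, and by continuity also at $y=e^{-1}$. So $z=T(y)$ satisfies $ze^{-z}=y$ for all $y\in[0,e^{-1}]$.

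\textbf{Step 3 (choosing the root).} This is the step needing care. For $y=ce^{-c}<e^{-1}$ (that is, $c\neq1$), the equation $ze^{-z}=y$ has exactly two positive roots, one in $(0,1)$ and one in $(1,\infty)$, because $ze^{-z}$ is strictly increasing on $(0,1)$ and strictly decreasing on $(1,\infty)$. I need $T(y)\in(0,1)$.

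Consider $T$ on $[0,e^{-1}]$. It is continuous and strictly increasing, since its coefficients are positive. Also $T(0)=0$ and $T(e^{-1})=\sum_k\frac{k^{k-1}}{k!}e^{-k}$, which is a root of $ze^{-z}=e^{-1}$, hence equals $1$. Therefore $T$ maps $[0,e^{-1})$ into $[0,1)$. For $y=ce^{-c}$ we get $T(y)\in(0,1)$, so $T(y)$ is the root of $ze^{-z}=ce^{-c}$ lying in $(0,1)$.

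By the definition of $x(c)$, that root is $c$ itself when $c<1$, and it is the defined $x$ when $c>1$. In both cases $T(ce^{-c})=x$, which is the lemma.

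The main obstacle is making Step 2 rigorous, whether via the bijective decomposition for exponential generating functions or via Lagrange inversion, together with passing from a formal identity to an analytic one. Absolute convergence on the closed disk of radius $e^{-1}$, established in Step 1, handles the latter.
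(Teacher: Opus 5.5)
Your proof is correct, but it takes a genuinely different route from the paper's.

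\textbf{The paper's route.} The paper proves the identity probabilistically. For $0<c<1$ it starts from $n=\mathbb{E}[Z]+\sum_k k\,\mathbb{E}[X_k]$ in $G(n,c/n)$. The earlier lemmas show that vertices in components with cycles contribute only $o(n)$, and that $\mathbb{E}[X_k]=(1+o(1))\frac{n}{c}\frac{k^{k-2}}{k!}(ce^{-c})^k$ for $k=O(\log n)$, with a negligible tail. Dividing by $n$ and letting $n\to\infty$ gives $c=\sum_k\frac{k^{k-1}}{k!}(ce^{-c})^k$. The case $c>1$ is then read off by applying this to $x\in(0,1)$, since the series depends only on $ce^{-c}=xe^{-x}$.

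\textbf{Your route.} You recognize the series as the tree function and derive $T(y)=ye^{T(y)}$ formally, by either the root-plus-subtrees decomposition or Lagrange inversion. You then transfer this identity to real $y\in[0,e^{-1}]$. That transfer is legitimate, because all coefficients are nonnegative and $\exp$ is entire. Finally, you select the root in $(0,1)$ by continuity and monotonicity of $T$.

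\textbf{What each approach buys.}
\begin{itemize}
\item The paper's argument stays inside the course's toolkit. It needs no generating functions and reuses component estimates already proved. Its choice of root is automatic, because the random-graph argument only works in the subcritical regime.
\item Your argument does not depend on random graphs at all. It avoids the asymptotic bookkeeping, in particular interchanging $n\to\infty$ with the infinite sum, which needs the $o(1)$ errors to be uniform in $k\le\beta_1\log n$. It also proves more: $T$ inverts $z\mapsto ze^{-z}$ on all of $[0,1]$, including the critical value $T(e^{-1})=1$.
\end{itemize}

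\textbf{A small simplification of your Step 3.} You do not need to compute $T(e^{-1})$. The equation $T(y)=1$ would force $y=1\cdot e^{-1}$. So for $y\in[0,e^{-1})$, continuity of $T$ together with $T(0)=0$ and the intermediate value theorem already gives $T(y)<1$.
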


\begin{proof}
We will prove this via the $G(n,p)$ model. Let us define a random variable $Z$ to be the number of vertices in components with a cycle. Additionally, for all $k \ge 1$, we define $X_k$ to be the number of connected components of $G$ that are trees with $k$ vertices.

Summing the vertices of all the components gives the total number of vertices:
\[ n = |V(G)| = Z + \sum_{k=1}^n k \cdot X_k. \]
Taking the expectation of both sides yields:
\begin{align}\label{main_eq_3}
    n = \mathbb{E}[Z] + \sum_{k=1}^n k \cdot \mathbb{E}[X_k].
\end{align}

\textbf{Case 1: $0 < c < 1$.} From Lemmas \ref{ercp_lemma1} and \ref{ercp_lemma2} from Lecture 2 it follows that $\mathbb{E}[Z] = o(n)$. From Lemma \ref{ercp_lemma3} we know that for $k \le \beta_0 \log n$, $\mathbb{E}[X_k] = (1+o(1)) \frac{n}{c} \cdot \frac{k^{k-2}}{k!} (ce^{-c})^k$. We have also bounded the tail: $\sum_{k=\beta_1 \log n}^n \mathbb{E}[X_k] = o\left(\frac{1}{n}\right)$.
This implies that the contribution of the tail is negligible:
\[ \sum_{k=\beta_1 \log n}^n k \cdot \mathbb{E}[X_k] \le n \sum_{k=\beta_1 \log n}^n \mathbb{E}[X_k] = n \cdot o\left(\frac{1}{n}\right) = o(1). \]

Therefore, substituting these into equation \eqref{main_eq_3}, we get:
\begin{align*}
    n &= \underbrace{o(n)}_{\mathbb{E}[Z]} + \underbrace{o(1)}_{\text{tail}} + \sum_{k=1}^{\beta_0 \log n} \frac{n}{c} \cdot \frac{k^{k-1}}{k!} (ce^{-c})^k =  o(n) + (1+o(1)) \frac{n}{c} \sum_{k=1}^{\infty} \frac{k^{k-1}}{k!} (ce^{-c})^k.
\end{align*}
Dividing both sides by $n$ and taking the limit as $n \to \infty$, we get:
\[ 1 = \frac{1}{c} \sum_{k=1}^{\infty} \frac{k^{k-1}}{k!} (ce^{-c})^k \quad \Longrightarrow \quad c = \sum_{k=1}^{\infty} \frac{k^{k-1}}{k!} (ce^{-c})^k. \]
Since $x=c$ in this domain, the lemma holds.

\textbf{Case 2: $c > 1$.} In this regime, $xe^{-x} = ce^{-c}$, where $0 < x < 1$. Using the result from the first case applied to $x$, we immediately get:
\[ \sum_{k=1}^{\infty} \frac{k^{k-1}}{k!} (ce^{-c})^k = \sum_{k=1}^{\infty} \frac{k^{k-1}}{k!} (xe^{-x})^k = x.\qedhere \]
\end{proof}

We are now ready to establish the phenomenon in the supercritical regime.
\begin{thm}[Supercritical]
Let $c > 1$ be a constant and let $G \sim G(n, \frac{c}{n})$. Then whp:
\begin{enumerate}
    \item[\textbf{1)}] In $G$ there exists a unique component $L_1$ of order $|L_1| = (1+o(1))\left(1-\frac{x}{c}\right)n$, where $x=x(c)$ is defined as in Lemma \ref{ercp_lemma4}. This is the \textbf{giant component} (of order $\Theta(n)$).
    \item[\textbf{2)}] For every $i \ge 2$ it holds that $|L_i| = O_c(\log n)$.
\end{enumerate}
\end{thm}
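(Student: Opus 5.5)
The plan follows the classical Erd\H{o}s--R\'enyi argument, built on Lemma \ref{ercp_lemma3} and Lemma \ref{ercp_lemma4}. Both parts of Lemma \ref{ercp_lemma3} were stated for every $c\neq 1$, so they apply here with $c>1$.

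\textbf{Step 1: count the vertices in small tree components.} By part (a), whp there is no tree component of order $\ge \beta_1\log n$. By part (b), whp $X_k=(1+o(1))\frac{n}{c}\frac{k^{k-2}}{k!}(ce^{-c})^k$ for all $k\le\beta_0\log n$. The intermediate range $\beta_0\log n<k<\beta_1\log n$ is controlled by expectation: $\mathbb{E}[kX_k]\le (1+o(1))\frac{n}{c}k(ce^{1-c})^k$, which summed over $k\ge\beta_0\log n$ is $o(n)$; Markov then makes this range negligible whp. Summing $kX_k$ over all $k$ and using Lemma \ref{ercp_lemma4}, which gives $\sum_k \frac{k^{k-1}}{k!}(ce^{-c})^k=x$, we get that whp the number of vertices in tree components equals $(1+o(1))\frac{x}{c}n$. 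To justify exchanging the limit with the infinite sum, I would truncate at a large constant $K$ and control the tail uniformly. Consequently the remaining $(1+o(1))(1-\frac{x}{c})n$ vertices lie in components containing a cycle, and all tree components have order $O(\log n)$.

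\textbf{Step 2: components with a cycle are either small or huge.} I would show that whp no component has order $k$ with $\beta_1\log n\le k\le n^{2/3}$. For this, take the first moment of the number of connected sets of size $k$ (those with a spanning tree) having no edges to the outside:
\[
\binom{n}{k}k^{k-2}p^{k-1}(1-p)^{k(n-k)}\le \frac{n}{c}\,\bigl(ce^{1-c+ck/n}\bigr)^k .
\]
For $k\le n^{2/3}$ this is at most $n(ce^{1-c}+o(1))^k=o(n^{-2})$ once $\beta_1$ is large. Summing over $k$ and applying Markov gives the claim. So whp every component has order either $O(\log n)$ or at least $n^{2/3}$.

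\textbf{Step 3: uniqueness of the large component, by sprinkling.} This is the main obstacle. I would write $p=p_1+p_2-p_1p_2$ with $p_2=\omega(n)/n^{4/3}$ small, say $p_2=n^{-4/3}\log n$, and $p_1=c_1/n$ where $c_1=c-o(1)>1$. Applying Steps 1 and 2 in $G(n,p_1)$ shows that whp all components there have order $O(\log n)$ or at least $n^{2/3}$. There are at most $n^{1/3}$ large components, and together they cover $(1-o(1))(1-\frac{x}{c})n$ vertices; this uses continuity of $x(c)$ in $c$. Now suppose the large components could be split into two classes $A,B$ with each class of size at least $n^{2/3}$. There are at most $2^{n^{1/3}}$ such splits. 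For a fixed split, the probability that no sprinkled edge joins $A$ and $B$ is at most
\[
(1-p_2)^{|A||B|}\le e^{-p_2 n^{2/3}\cdot \Omega(n)}=e^{-\Omega(n^{1/3}\log n)},
\]
which beats $2^{n^{1/3}}$ in the union bound. Hence whp, after sprinkling, all large components of $G(n,p_1)$ merge into a single component. The sprinkled graph $G(n,p_2)$ has only $O(n^{2/3}\log n)$ edges, and it can also merge small components. To see that this does not create another large component, I would rely on Step 2 applied directly to $G(n,p)$: every component is either of order $O(\log n)$ or at least $n^{2/3}$. Any component of $G(n,p)$ of order at least $n^{2/3}$ must meet the merged giant component whp. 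Indeed, otherwise it would consist of at least $n^{2/3}/O(\log n)$ small $p_1$-components glued together by sprinkled edges, and a first-moment count of such tree-like gluings shows they are unlikely. The simpler route is the vertex count: the giant already has $(1-o(1))(1-\frac{x}{c})n$ vertices, and Step 1 (applied at $p$) says that only $(1+o(1))(1-\frac{x}{c})n$ vertices lie outside tree components. Any other non-tree component therefore has $o(n)$ vertices, and by Step 2 it must then be of order $O(\log n)$.

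\textbf{Step 4: conclude.} By Steps 2 and 3, there is a unique component $L_1$ of order $(1+o(1))(1-\frac{x}{c})n=\Theta(n)$, since $x<1<c$. Every other component has order $O_c(\log n)$.
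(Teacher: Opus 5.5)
Your overall plan matches the paper's: a first-moment gap in component orders, Lemmas~\ref{ercp_lemma3} and~\ref{ercp_lemma4} for the volume of small trees, and sprinkling to merge the large pieces. However, two steps do not go through as written.

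\textbf{First gap: small components with cycles.} Nothing in your proof controls the vertices in \emph{small} components that contain a cycle. Step~1 counts tree components, and Step~2 only gives the dichotomy ``order $O(\log n)$ or at least $n^{2/3}$''. So the assertion in Step~3 that the large components of $G(n,p_1)$ cover $(1-o(1))(1-\frac{x}{c})n$ vertices does not follow. A priori, unicyclic or complex components of logarithmic order could hold a linear number of vertices. That assertion is exactly your lower bound on $|L_1|$, and it is also what gives $|A|+|B|=\Omega(n)$ in your sprinkling estimate. Lemmas~\ref{ercp_lemma1}--\ref{ercp_lemma2} do not supply it, since they were proved for $c<1$.

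You need the paper's extra first-moment step. The expected number of vertices in components of order $k\le\beta\log n$ containing a cycle is at most $k\binom{n}{k}k^{k-2}\binom{k}{2}p^{k}(1-p)^{k(n-k)}=O\bigl(k(ce^{1-c})^k\bigr)$, because the extra edge costs a factor $p=c/n$ that cancels the leading $n$. Summing over $k$ gives $O(1)$, and Markov then gives $o(n)$ whp.

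\textbf{Second gap: uniqueness.} The uniqueness step at the end of Step~3 is a non sequitur. The vertex count shows that any other non-tree component of $G(n,p)$ has $o(n)$ vertices. But Step~2 only excludes orders in $[\beta_1\log n,n^{2/3}]$, and $o(n)$ does not mean at most $n^{2/3}$. A second component of order $n^{3/4}$, say, is not ruled out, and the same hole undermines part 2) of the statement.

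Nor can the count be sharpened enough. It only uses $|L_1|\ge V_1$, where $V_1$ is the volume of the large $p_1$-components. Writing $T(\cdot)$ for the number of vertices in tree components, the slack $n-T(G)-V_1$ is at least $T(G_1)-T(G)$. Since $x(c)/c$ has nonzero derivative in $c$, the expectation of this difference is of order $n(c-c_1)$, i.e.\ of order $n^{2/3}\log n$ with your choice of $p_2$.

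The fix is what the paper does. Your own bound $\frac{n}{c}\bigl(ce^{1-c+ck/n}\bigr)^k$ stays $o(n^{-2})$ for all $\beta\log n\le k\le\beta_2 n$, once $\beta_2$ is small enough that $ce^{1-c+c\beta_2}<1$ and $\beta$ is large. So whp no component has order in that whole range. Then any component with $o(n)$ vertices has order $O(\log n)$. As a bonus, there are only $O(1)$ large components, so the sprinkling needs only a union bound over $O(1)$ pairs. The alternative is to actually carry out the ``gluing'' first-moment count that you only sketch.
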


\begin{proof}[Proof Idea] We will prove that whp, the following four facts hold:
\begin{enumerate}[label=\alph*)]
    \item There are no components with orders in the range $[\beta_3 \log n, \beta_2 n]$ for some constants $\beta_2, \beta_3 > 0$;
    \item The total order of components of order at most $\beta_3 \log n$ that are not trees is $o(n)$;
    \item The total order of components that are trees of order $\le \beta_3 \log n$ is $(1+o(1))\frac{nx}{c}$;
    \item In conclusion, the rest of the graph has volume $(1+o(1))\left(n - \frac{nx}{c}\right)$, forming a single connected component, which is $L_1$.
\end{enumerate}\renewcommand{\qedsymbol}{}
\end{proof}
\begin{proof} 
For $1 \le k \le n$, let $Y_k$ denote the number of connected components of order $k$ in $G$. We can bound the expectation $\mathbb{E}[Y_k]$ as follows:
\begin{align*}
    \mathbb{E}[Y_k] &\le \binom{n}{k} k^{k-2} p^{k-1} (1-p)^{k(n-k)} \le \left(\frac{en}{k}\right)^k k^{k-2} \left(\frac{c}{n}\right)^{k-1} e^{-ck + \frac{ck^2}{n}} = \frac{n}{ck^2} \left(c e^{1-c + \frac{ck}{n}}\right)^k.
\end{align*}

We will choose $\beta_2 = \beta_2(c) > 0$ small enough such that $c e^{1-c+c\beta_2} < 1$ (which is always possible as $ce^{1-c}<1$). Then, we can choose $\beta_3 = \beta_3(c) > 0$ large enough such that $\left(c e^{1-c+c\beta_2}\right)^{\beta_3 \log n} \ll \frac{1}{n^2}$.
Under these choices, $\mathbb{E}[Y_k] = o\left(\frac{1}{n}\right)$ for all $\beta_3 \log n \le k \le \beta_2 n$. Therefore, according to Markov's inequality, whp, for all $k \in [\beta_3 \log n, \beta_2 n]$ it holds that $Y_k = 0$.

Now, we estimate the total size of the connected components of $G$ that are small trees ($\le \beta_3 \log n$). According to Lemma \ref{ercp_lemma3}, whp we are guaranteed that for components of order $1 \le k \le \beta_0 \log n$, the number of connected components that are trees of order $k$ is $X_k = (1+o(1)) \frac{n}{c} \cdot \frac{k^{k-2}}{k!} (ce^{-c})^k$.

Therefore, whp, the total volume of trees of order $k$ (for $1 \le k \le \beta_0 \log n$) is \break $k \cdot X_k = (1+o(1)) \frac{n}{c} \cdot \frac{k^{k-1}}{k!} (ce^{-c})^k$. Summing this up yields:
\begin{align*}
    \sum_{k=1}^{\beta_0 \log n} k \cdot X_k &= (1+o(1)) \frac{n}{c} \sum_{k=1}^{\beta_0 \log n} \frac{k^{k-1}}{k!} (ce^{-c})^k\\
    &= (1+o(1)) \frac{n}{c} \sum_{k=1}^{\infty} \frac{k^{k-1}}{k!} (ce^{-c})^k = (1+o(1)) \cdot \frac{nx}{c},
\end{align*}
where $x$ is from Lemma \ref{ercp_lemma4}.

Furthermore, for all $\beta_0 \log n \le k \le \beta_3 \log n$, it holds that $\mathbb{E}[X_k] = O\left(\frac{n}{k^2} (ce^{1-c})^k\right)$. If we substitute $k \ge \beta_0 \log n$, this expression becomes $O\left(\frac{n^{1-\delta}}{\log n}\right)$ for some $\delta > 0$. 
Hence, the expected number of components in this intermediate range is $\mathbb{E}\left[\sum_{\beta_0 \log n}^{\beta_3 \log n} X_k\right] = O(n^{1-\delta})$. According to Markov's inequality, we get whp:
\[ \sum_{k=1}^{\beta_3 \log n} k \cdot X_k = (1+o(1)) \frac{nx}{c}. \]

Next, we estimate the typical total volume of components of order $\le \beta_3 \log n$ with cycles. Let $Z_k$ denote the number of vertices in components of order $k$ with a cycle. We can bound this expectation:
\begin{align*}
\mathbb{E}\left[\sum_{k=1}^{\beta_3 \log n} Z_k\right] &\le \sum_{k=1}^{\beta_3 \log n} \underbrace{\binom{n}{k}}_{\substack{\text{choosing} \\ \text{component}}} \cdot k^{k-2} \cdot \binom{k}{2} \cdot k \cdot \underbrace{p^k}_{\substack{\text{payment} \\ \text{for edges} \\ \text{that close} \\ \text{a cycle}}} \cdot \underbrace{(1-p)^{k(n-k)}}_{\substack{\text{no edges} \\ \text{between the} \\ \text{component} \\ \text{and the rest}}} \le \\
    &\le \sum_{k=1}^{\beta_3 \log n} \left(\frac{en}{k}\right)^k k^{k+1} \left(\frac{c}{n}\right)^k e^{-ck + \frac{ck^2}{n}} = O(1).
\end{align*}

Thus, Markov's inequality implies that whp, $\sum_{k=1}^{\beta_3 \log n} Z_k = o(n)$.

\paragraph{Intermediate Summary:}
At this point, we know that whp:
\textbf{(1)} There are no components of order in $[\beta_3 \log n, \beta_2 n]$.
\textbf{(2)} The total volume of components that are trees of order up to $\beta_3 \log n$ is $(1+o(1))\frac{nx}{c}$.
\textbf{(3)} The total volume of components $\le \beta_3 \log n$ with a cycle is negligible ($o(n)$).

Therefore, whp, the remaining volume $(1+o(1))(1-\frac{x}{c})n$ must be taken up entirely by connected components of order at least $\beta_2 n$.

\vspace{1em}
\noindent\textbf{Final Step (Sprinkling):} It remains to show that whp, this entire volume of about $(1-\frac{x}{c})n$ is taken up by a \textbf{single component}. We will prove this using the \textbf{sprinkling} method.

We have $p = \frac{c}{n}$ for $c > 1$. Let us define $c_1 = c - \frac{\log n}{n}$ and its corresponding $p_1 = \frac{c_1}{n}$; thus $p_1$ is only slightly smaller than $p$. We define the "sprinkled" probability $p_2$ by the relation $1-p = (1-p_1)(1-p_2)$.

Let us sample $G_1 \sim G(n, p_1)$ and $G_2 \sim G(n, p_2)$ independently, so that $G = G_1 \cup G_2$. We define $x_1 = x(c_1)$ by $x_1 \in (0,1)$ and $x_1 e^{-x_1} = c_1 e^{-c_1}$. 
Notice that $x_1 = x(1-o(1))$. Applying the previous considerations to $G_1$, we get that whp the volume $(1+o(1))\left(1-\frac{x_1}{c_1}\right)n = (1+o(1))\left(1-\frac{x}{c}\right)n$ is taken up by components of order $\ge \beta_2 n$.

The total number of these large components is bounded by a constant: $\ell \le \frac{(1-\frac{x}{c})n(1+o(1))}{\beta_2 n} = O(1)$. Suppose that $C_1, \dots, C_\ell$ are these components in $G_1$, where $|C_i| \ge \beta_2 n$. 

When we "sprinkle" the edges of $G_2$, the probability that two such huge components fail to connect is very small:
\begin{align*}
    \mathbb{P}\left[\begin{matrix} \text{no edge between } C_i \\ \text{and } C_j \text{ in } G(n, p_2) \end{matrix}\right] &= (1-p_2)^{|C_i|\cdot|C_j|} \le e^{-p_2 \Theta(n^2)} = n^{-\Theta(1)} = o(1),
\end{align*}
where we used here the following fact: since $1-p = (1-p_1)(1-p_2)$ implies $p_2 \ge p - p_1 = \frac{\log n}{n^2}$.

Therefore, after sprinkling, whp, all the large components $C_1, \dots, C_\ell$ merge into a single giant component $L_1$. Its total volume is exactly the volume that completes the rest of the graph:
\[ |L_1| = (1+o(1))\left(1-\frac{x}{c}\right)n. \]
This concludes the proof.
\end{proof}
\newpage
\phantomsection
\addcontentsline{toc}{chapter}{Lecture 4}
\lecture{4}{May 10, 2026}{Lecture 4}{Roy Maulbogat}
\setcounter{chapter}{4}
\setcounter{section}{0}

\section{Giant Component in \texorpdfstring{$G \sim G\!\left(n,\frac{c}{n}\right)$}{G \sim G\!\left(n,\frac{c}{n}\right)} --- Alternative Approach}

In the previous lecture we looked at $G \sim G\!\left(n, \frac{c}{n}\right)$ with $c > 1$. We defined $x \in (0, 1)$ to be the unique solution of $xe^{-x} = ce^{-c}$ in that interval, and proved that whp\ $|L_1| = (1+o(1))\!\left(1 - \frac{x}{c}\right)n$, meaning the largest component is a giant component of order linear in $n$. We also proved that all other components are small: $|L_i| = O_c(\log n)$ for all $i \ge 2$. The proof was a long, tedious calculation. We now want to see an alternative explanation for the order of $|L_1|$ that is more intuitive, and can also be formalized into a rigorous proof.

Let $c > 1$, $G \sim G\!\left(n, \frac{c}{n}\right)$, and $v \in [n]$ be a vertex. Denote by $C_v$ the connected component of $v$. We want to evaluate the size of $C_v$. In fact, we would like to evaluate $y \coloneqq \Pr{|C_v| \text{ is ``big''}}$. After that we can argue that most largish components actually merge into one giant component, and by linearity of expectation we expect $|L_1| = (1 + o(1))y \cdot n$.

How do we evaluate $|C_v|$? The idea is to expose the neighbors of $v$, then expose the neighbors of the neighbors of $v$, and so on. This algorithm is called \textit{Breadth-First Search (BFS)}, which will be presented formally later.

The number of neighbors of $v$ is distributed as $\mathrm{Bin}(n - 1, p)$. More generally, suppose we have exposed $C_v$ up to a certain stage. The number of neighbors of $u\in C_v$ (outside of $C_v$) is distributed as $\mathrm{Bin}(n - |C_v|, p)$. As long as $|C_v|=o(n)$, we can estimate $\mathrm{Bin}(n - |C_v|, p) \approx \mathrm{Bin}(n, p)$, meaning the expected value is asymptotically the same.

Now, $\mathrm{Bin}\!\left(n, \frac{c}{n}\right)$ can be approximated by $\mathrm{Po}(c)$, where $\mathrm{Po}(c)$ is the Poisson distribution given by:
\[
  \mathbb{P}(X = k) = e^{-c} \cdot \frac{c^k}{k!}, \quad k \ge 0.
\]
We model the exposure of $|C_v|$ by a \textit{Galton--Watson branching process} with offspring distribution $\mathrm{Po}(c)$ (for further details, see, for example, \cite{zbMATH06976511}):

\begin{defn}
    We define a \textit{Galton--Watson branching process} with offspring distribution $\mathrm{Po}(c)$ as follows: denote by $X_i$ the size of generation $i$. Set $X_0 = 1$, and
    \[
      X_{n+1} = \sum_{j=1}^{X_n} \xi_j^{(n)},
    \]
    where $\xi_j^{(n)}$ are independent $\mathrm{Po}(c)$ random variables. We grow a tree: in generation~$0$ we start from a single root and assign it a number of children drawn from $\mathrm{Po}(c)$. Each child then independently produces its own children according to $\mathrm{Po}(c)$, and so on. 
\end{defn}

\begin{figure}[H]
  \centering
  \includegraphics[width=0.75\textwidth]{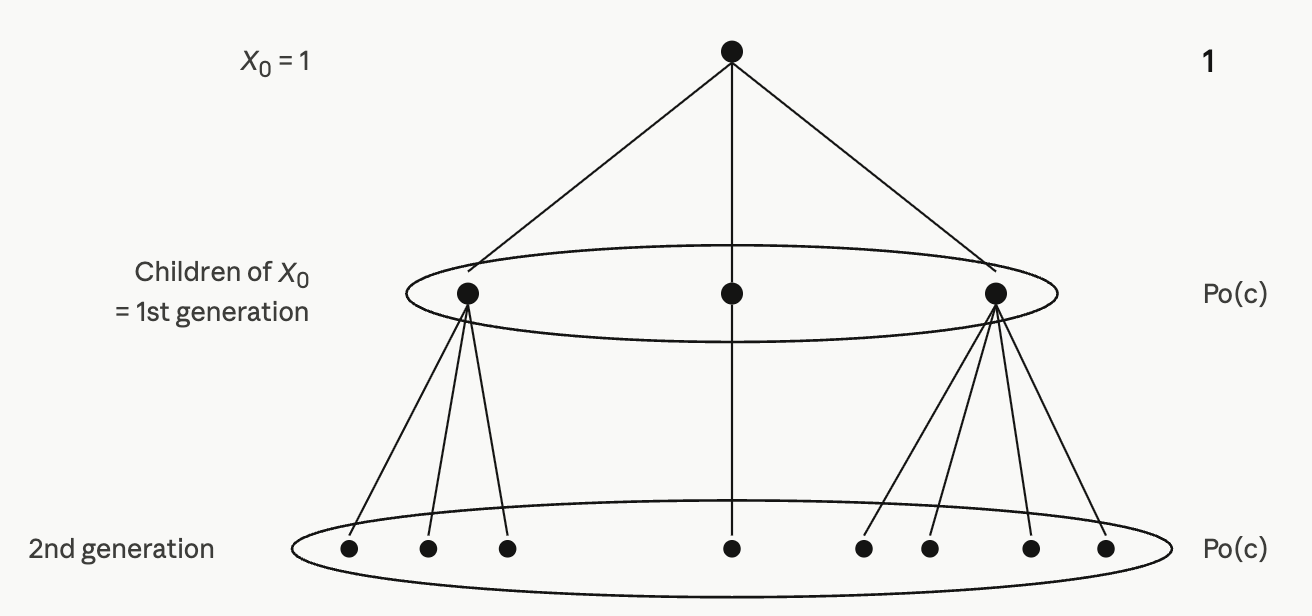}
  \caption{Galton--Watson tree.}
  \label{fig:gw_tree}
\end{figure}

Let $y$ be the probability that this process is \emph{infinite}, i.e.\ the survival probability of the dynasty (and $1-y$ is the probability of extinction). As long as $C_v$ is not too large, this branching
process is a good approximation for growing $C_v$, so $y$ equals the probability that $C_v$ is big.

For the tree rooted at $v$ to \emph{not} survive, every child of the root must also
fail to survive. Since the children are independent, we have:
\begin{align*}
  1 - y &= \sum_{k=0}^{\infty} \mathbb{P}(X = k)\,(1-y)^k
         = \sum_{k=0}^{\infty} e^{-c} \cdot \frac{c^k}{k!}(1-y)^k
         = e^{-c} \cdot e^{c(1-y)}
         = e^{-cy}.
\end{align*}
Hence $1 - y = e^{-cy}$, $y \in (0,1)$. One can show analytically that this
equation has a unique solution for every $c > 1$, from which we can conclude (by further work) that
whp $|L_1| = (1+o(1))y \cdot n$.

In Theorem \ref{main_ercp} we have shown that $|L_1| = (1+o(1))\left(1 - \frac{x}{c}\right) \cdot n$, hence we have to verify that $y = 1 - \frac{x}{c}$. Recall that $xe^{-x} = ce^{-c}$, thus:
\[
  \frac{x}{c} = e^{-c+x} = e^{-c(1 - x/c)}
  \implies
  1 - \frac{x}{c} = 1 - e^{-c(1-x/c)}.
\]
Since $y$ satisfies $y = 1 - e^{-cy}$, the identify $y = 1 - \frac{x}{c}$ follows.

\begin{remark}
When $c = 1 + \varepsilon$ for some small $\varepsilon > 0$,
one can show that $y = (1 + o_\varepsilon(1))\,2\varepsilon$. Hence, if $G \sim G\!\left(n, \frac{c}{n}\right)$ with
$c = 1 + \varepsilon$ for a small constant $\varepsilon > 0$, then whp $|L_1| = (1+o_\varepsilon(1))\,2\varepsilon n$.
\end{remark}

\begin{figure}[H]
  \centering
  \includegraphics[width=0.65\textwidth]{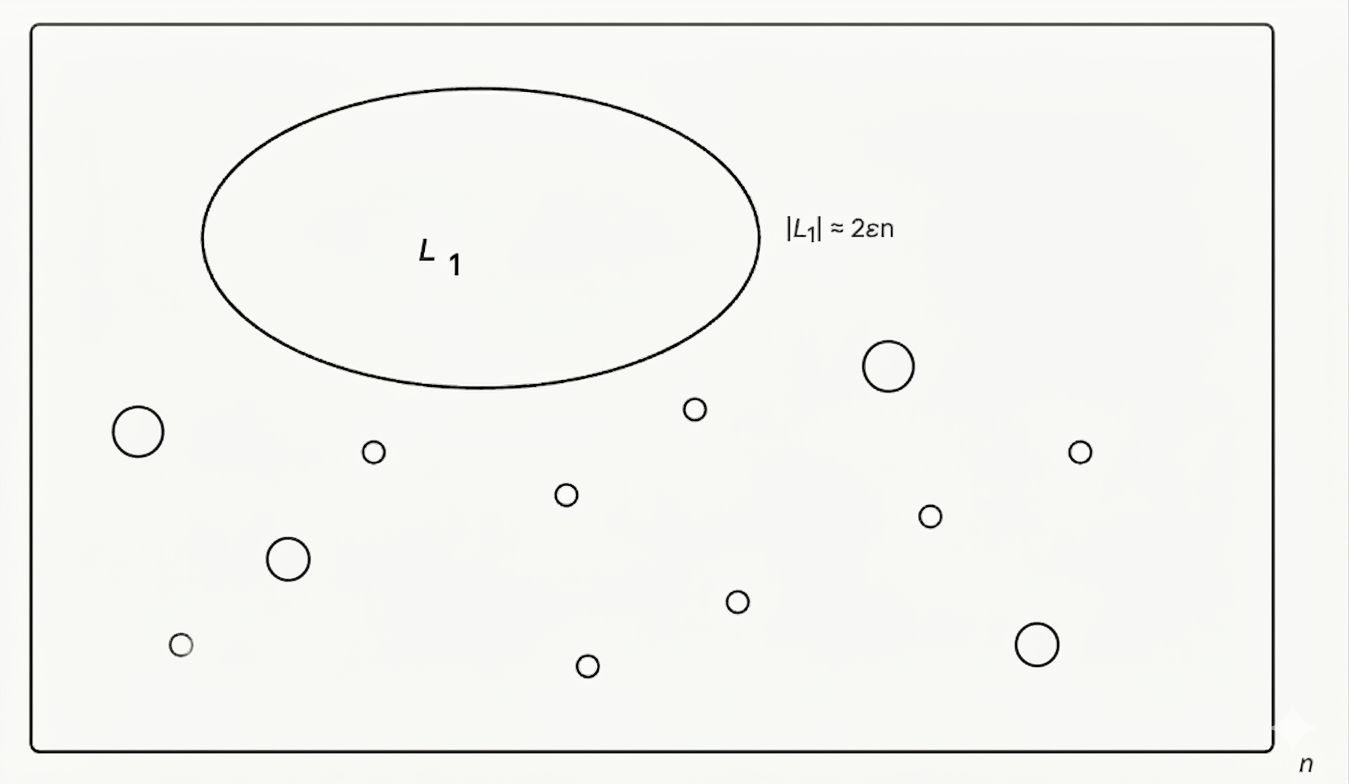}
  \caption{Linear approximation $y \approx 2\varepsilon$ near the critical point $c = 1 + \varepsilon$.}
  \label{fig:critical_approx}
\end{figure}

\section{Long Paths and Cycles in Supercritical/Sparse Random Graphs}

We will show that for $G\sim G\left(n,\frac{1+\varepsilon}{n}\right)$ (the supercritical case), $G$ contains whp a path of length $\Theta(n)$, and when $G\sim G\left(n,\frac{c}{n}\right)$ for a constant $c>1$ (sparse random graph), $G$ contains a path of length $n(1 - \delta)$, where $\delta\coloneqq \delta(c)$ is a constant satisfying $\lim_{c\to\infty}\delta(c)=0$.

First, we need some background in graph-search algorithms.

\subsection{Breadth-First Search (BFS)}

\textbf{Input:} A graph $G = (V,E)$ and a permutation $\sigma$ on $V$ determining the order of vertices.
\noindent
\textbf{Output:} The connected components of $G$.

The algorithm maintains and updates the partition $V = S \cup Q \cup T$, where:
\begin{itemize}
  \item $S$ = vertices whose processing has been completed;
  \item $Q$ = vertices currently being processed (\textbf{FIFO}\footnote{First-in-First-out.} queue);
  \item $T$ = vertices waiting to be processed.
\end{itemize}
Initialize $S = Q = \emptyset$, $T = V$. The algorithm terminates once $Q = T = \emptyset$, $S = V$.

\paragraph{Algorithm step.}
\begin{itemize}
  \item If $Q \ne \emptyset$: let $v$ be the first vertex in $Q$. Scan $T$ according to $\sigma$
        and look for a neighbor of $v$. If a neighbor $u$ is found, move $u$ from $T$ to $Q$.
        Otherwise, move $v$ from $Q$ to $S$.
  \item If $Q = \emptyset$: take the first vertex $v$ in $T$ (according to $\sigma$) and move it to $Q$.
\end{itemize}

The algorithm never closes a cycle, and it outputs a spanning forest $F$ with $V(F) = V(G)$ whose components coincide with those of $G$. The time between $Q$ becoming non-empty and becoming empty again is called an \textit{epoch}, corresponding to discovering one
connected component.

\subsection{Depth-First Search (DFS)}

\textbf{Input:} A graph $G = (V,E)$ and a permutation $\sigma$ on $V$.

\noindent
\textbf{Output:} The connected components of $G$.

The algorithm maintains the partition $V = S \cup U \cup T$, where:
\begin{itemize}
  \item $S$ = vertices we have completed processing;
  \item $U$ = vertices currently being processed (\textbf{LIFO}\footnote{Last-in-First-out.} stack);
  \item $T$ = vertices waiting to be processed.
\end{itemize}
The only difference from BFS is that when $U \ne \emptyset$, we let $v$ be the
\emph{last} vertex in $U$ (according to $\sigma$) and search for neighbors of $v$ in $T$.

\paragraph{Example of DFS.}

Consider the graph in Figure~\ref{fig:dfs_graph}.

\begin{figure}[H]
  \centering
  \includegraphics[width=0.55\textwidth]{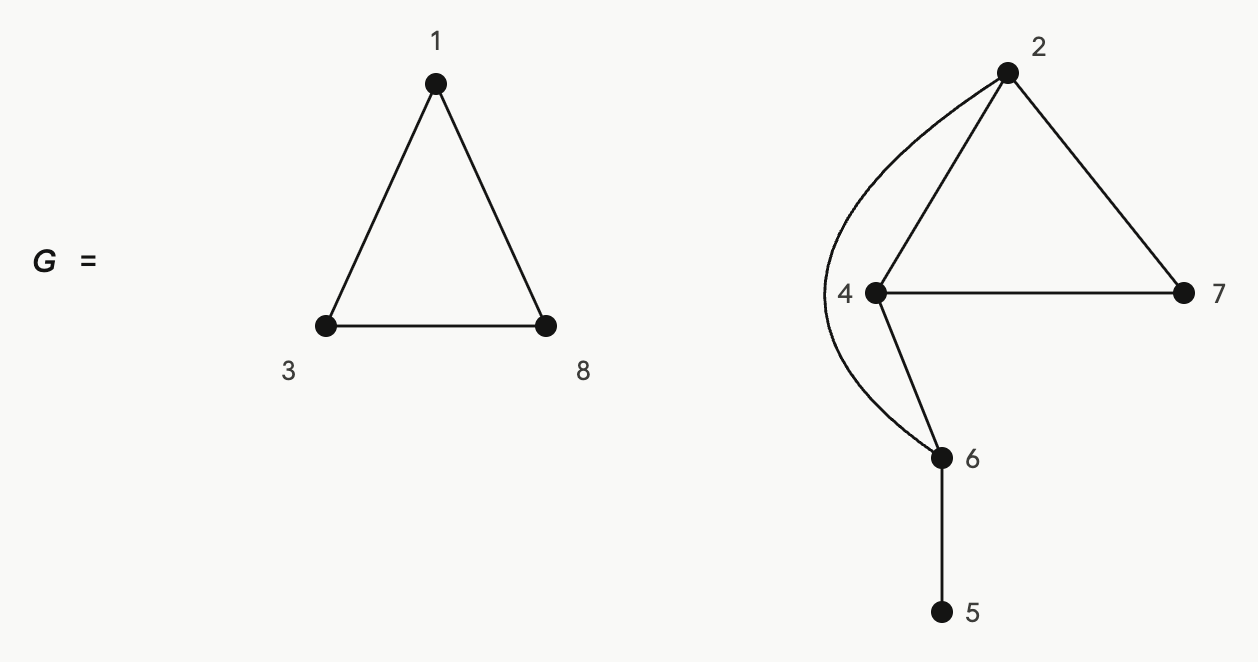}
  \caption{Example graph for the DFS walkthrough.}
  \label{fig:dfs_graph}
\end{figure}

\begin{enumerate}
  \item Start with $S = U = \emptyset$, $T = V$.
  \item Move $1$ from $T$ to $U$.
  \item Discover neighbor $3$ of $1$; move $3$ to $U$.
  \item Discover neighbor $8$ of $3$; move $8$ to $U$.
  \item Move $8$ to $S$.
  \item Move $3$ to $S$.
  \item Move $1$ to $S$.
  \item Move $2$ from $T$ to $U$.
  \item Discover neighbor $4$ of $2$; move $4$ to $U$.
  \item Discover neighbor $6$ of $4$; move $6$ to $U$.
  \item Discover neighbor $5$ of $6$; move $5$ to $U$.
  \item Move $5$ to $S$.
  \item Move $6$ to $S$.
  \item Discover neighbor $7$ of $4$; move $7$ to $U$.
  \item Move $7$ to $S$.
  \item Move $4$ to $S$.
  \item Move $2$ to $S$.
\end{enumerate}

The resulting spanning forest is shown in Figure~\ref{fig:dfs_forest}. Note that at every step, $U$ is a path in $G$.

\begin{figure}[H]
  \centering
  \includegraphics[width=0.55\textwidth]{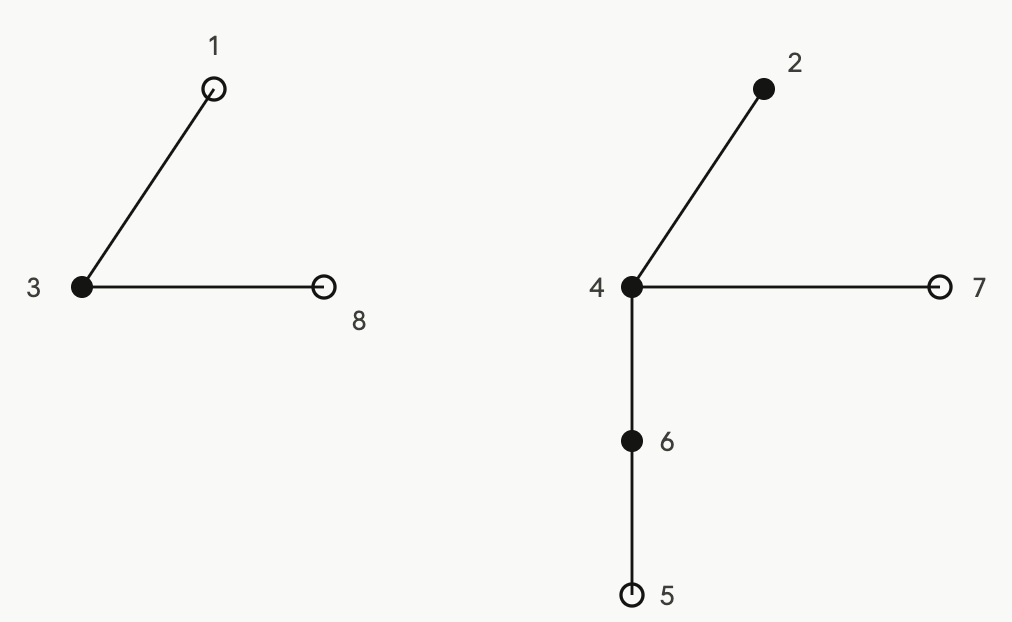}
  \caption{Spanning forest produced by DFS on the example graph.}
  \label{fig:dfs_forest}
\end{figure}

\paragraph{Basic properties of DFS.}
\begin{enumerate}
  \item At each step, exactly one vertex moves: either from $T$ to $U$, or from $U$ to $S$.\label{4: dfs 1}
  \item At each step, there are no edges between the current $S$ and the current $T$.\label{4: dfs 2}
  \item At each step, $U$ spans a \emph{path} in $G$, ordered by the order in which vertices were added to $U$ (Whenever we add $u$ to $U$, it is adjacent to the last vertex of $U$, and thus extends the current path. Removing the last vertex of $U$ shortens the path.).\label{4: dfs 3}
  \item The algorithm generates a spanning forest $F$ with $V(F) = V(G)$ and the same components as $G$. If $e = (u,v) \in E(G) \setminus E(F)$, then one of $u, v$ is an ancestor of the other in a tree of $F$ (see Figure~\ref{fig:dfs_back_edge}).\label{4: dfs 4}
\end{enumerate}

\begin{figure}[H]
  \centering
  \includegraphics[width=0.55\textwidth]{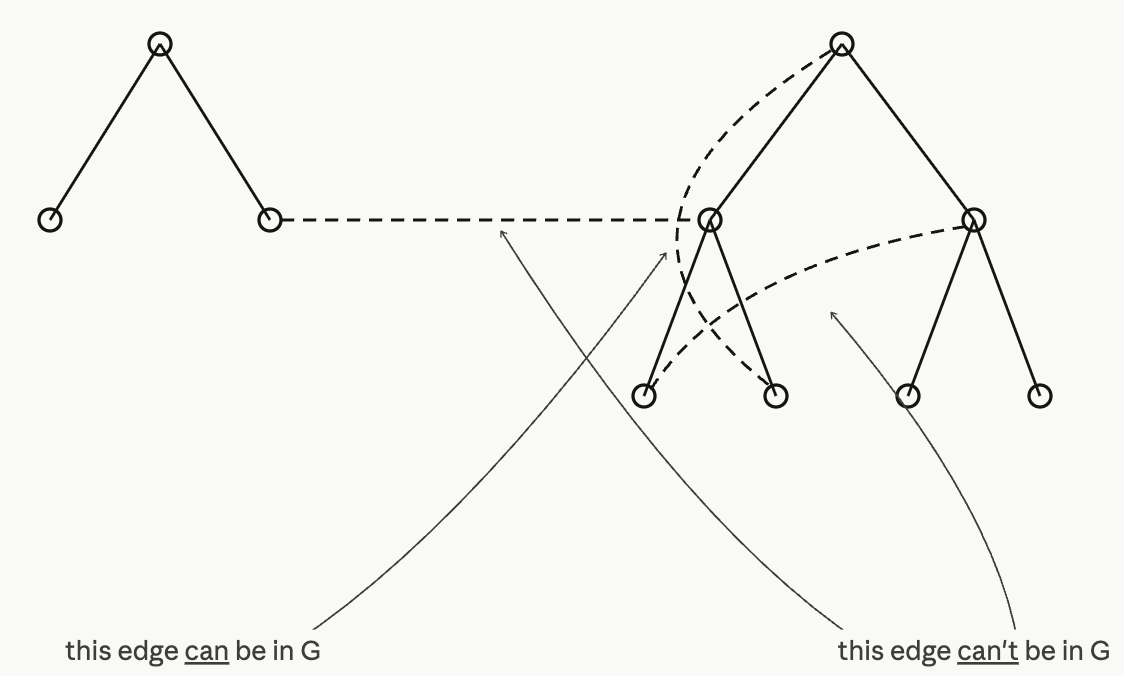}
  \caption{A back edge $(u,v) \in E(G)\setminus E(F)$: one endpoint is an ancestor of the other in $F$.}
  \label{fig:dfs_back_edge}
\end{figure}

\subsection{Using DFS to Find Long Paths and Cycles}

\begin{defn}
    For $G = (V,E)$ and $S \subseteq V$, the \emph{external neighborhood} of $S$ is \break $N(S) = \{u \in V \setminus S : u \text{ has a neighbor in } S\}$.
\end{defn}

\begin{thm}
Let $k, \ell>0$ be positive integers and $G = (V,E)$ a graph with $|V| > k$. Suppose that for every $S \subseteq V$ with $|S| = k$ we have $|N(S)| \ge \ell$. Then $G$ contains a path of length at least $\ell$.
\end{thm}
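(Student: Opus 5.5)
We run the DFS algorithm described above on $G$ with an arbitrary ordering $\sigma$, and track the partition $V=S\cup U\cup T$ over time. The aim is to find a moment at which $|S|=k$, or at least $|S|$ is small, and then use the expansion hypothesis to force $U$ to be long. By property~\ref{4: dfs 3} of DFS, $U$ always spans a path in $G$, so a long $U$ gives the required path.

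At the start $|S|=0$, and at the end $|S|=|V|>k$. By property~\ref{4: dfs 1}, exactly one vertex moves per step, so $|S|$ grows by at most one per step. Hence there is a first moment at which $|S|=k$; we fix that moment and write $S,U,T$ for the sets then.

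By property~\ref{4: dfs 2}, there are no edges between $S$ and $T$ at this moment. Therefore every external neighbor of $S$ lies in $U$, that is, $N(S)\subseteq U$. Applying the hypothesis to this $S$ gives $|U|\ge|N(S)|\ge\ell$.

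There is one subtle point. The hypothesis only yields $\ell$ vertices, so $U$ is a path on at least $\ell$ vertices, i.e.\ of length at least $\ell-1$ edges. To get length $\ell$ we need one more vertex in $U$. We use the fact that $S$ had just changed: at the previous step, the vertex $v$ that entered $S$ was the last vertex of $U$. So just before that step, $U\cup\{v\}$ formed the path, and the current $U$ was unchanged by the step apart from losing $v$. Thus the path $U\cup\{v\}$ has at least $\ell+1$ vertices, i.e.\ length at least $\ell$.

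If $k=0$ is excluded (it is, since $k>0$), the moment $|S|=k$ really is preceded by a move from $U$ to $S$, so this works. Getting this off-by-one right, by taking the path at the step just before $|S|$ reaches $k$, is the only delicate point of the argument.
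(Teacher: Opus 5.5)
Your proof is correct and follows the paper's own argument: stop the DFS at the first moment $|S|=k$, use the absence of $S$--$T$ edges to get $N(S)\subseteq U$, and recover the extra edge from the vertex $v$ that was just moved from the end of $U$ into $S$. Your handling of the off-by-one matches the paper's final step, including the observation that $k>0$ guarantees this moment is reached by a move from $U$ to $S$.
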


\begin{proof}
Fix an arbitrary order $\sigma$ on $V$ and run DFS on $(G, \sigma)$. Consider the first step for which $|S| = k$ (such a step exists because $|V| > k$ and vertices move one by one). By assumption $|N(S)| \ge \ell$. By Property~\ref{4: dfs 2}, there are no edges between $S$ and $T$, hence $N(S) \subseteq U$, and therefore $|U| \ge \ell$. By Property~\ref{4: dfs 3}, $U$ spans a path, so $G$ contains a path on at least $\ell$ vertices, i.e. of length at least $\ell - 1$. Moreover, the last operation was moving a vertex $v$ from $U$ to $S$; just before that move, $U$ contained $v$ plus at least $\ell$ other vertices, giving a path of length at least $\ell$.
\end{proof}

\begin{thm}[Ben-Eliezer, Krivelevich, Sudakov \cite{zbMATH06033283}]
Let $k < n$ be positive integers and $G = (V,E)$ a graph with $|V| = n$. Suppose that for every pair $A, B \subseteq V$ with $|A| = |B| = k$ and $A \cap B = \emptyset$ there is an edge in $G$ between $A$ and $B$. Then $G$ has a path of length at least $n - 2k + 1$ and a cycle of length at least $n - 4k + 4$ (provided this is positive).
\end{thm}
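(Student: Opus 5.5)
We run DFS on $(G,\sigma)$ for an arbitrary order $\sigma$ and track the sizes of $S$, $U$, $T$. By Property~\ref{4: dfs 1}, exactly one vertex moves at each step. At the start $|S|=0$ and $|T|=n$; at the end $|S|=n$ and $|T|=0$. The quantity $|S|-|T|$ therefore changes by exactly $1$ at each step and goes from $-n$ to $n$. Hence there is a step at which $|S|=|T|$ (if $n$ and $|U|$ have matching parity), or a step at which the two differ by one. More usefully, we look at the first moment when $|S|\ge k$ or $|T|\le k-1$ fails to be avoidable in the right way. Concretely, we consider the step at which $|S|=|T|$ up to one, and handle the bookkeeping as follows.

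\textbf{Path.} Suppose at some step $|S|\ge k$ and $|T|\ge k$. By Property~\ref{4: dfs 2} there are no edges between $S$ and $T$, so choosing $k$-subsets of each contradicts the hypothesis. Hence at every step $\min(|S|,|T|)\le k-1$. Initially $|T|=n>k-1$. Take the first step at which $|T|\le k-1$, so $|T|=k-1$, since $|T|$ decreases by one at a time. Just before it we had $|T|=k$, so at that moment $|S|\le k-1$. The transition step moved a vertex from $T$ to $U$ and did not change $S$. So after the step $|S|\le k-1$ and $|T|=k-1$, and thus $|U|\ge n-2k+2$. By Property~\ref{4: dfs 3}, $U$ spans a path on at least $n-2k+2$ vertices, i.e.\ of length at least $n-2k+1$.

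\textbf{Cycle.} This is the main obstacle: we must find a long path whose ends are joined by an edge. At the moment found above, $U$ is a path $P$ with $|P|\ge n-2k+2$. Let $A$ be the first $k$ vertices of $P$ and $B$ the last $k$ vertices. If $|P|\ge 2k$, these sets are disjoint, so the hypothesis gives an edge $ab$ with $a\in A$ and $b\in B$. Together with the segment of $P$ from $a$ to $b$, this edge closes a cycle. The segment contains all of $P$ except at most $k-1$ vertices at each end. So the cycle has at least $|P|-2(k-1)\ge n-2k+2-2k+2=n-4k+4$ vertices, hence length at least $n-4k+4$. If $|P|<2k$, then $n-4k+4\le |P|-2k+2\le 1$, and the claim is vacuous under the positivity proviso, since a cycle needs length at least $3$. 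This boundary case should be checked carefully.

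The delicate points are two: the bookkeeping ensuring $|S|\le k-1$ at the exact moment $|T|$ drops to $k-1$, and the use of the edge between $A$ and $B$ as a chord along the path rather than a DFS back-edge. The general principle is that any path on $m$ vertices whose first and last $k$ vertices are disjoint yields a cycle on at least $m-2k+2$ vertices.
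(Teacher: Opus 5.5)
Your proof is essentially the paper's. You run DFS, use the absence of $S$--$T$ edges to get $\min(|S|,|T|)\le k-1$ at every step, deduce $|U|\ge n-2k+2$, and close a cycle with an edge between the first and last $k$ vertices of the path. The only difference is the stopping time. The paper stops at a step with $|S|=|T|$. Such a step always exists, because both kinds of move raise $|S|-|T|$ by exactly one, going from $-n$ to $n$; so the parity hedge in your first paragraph is unfounded. Your first step with $|T|=k-1$ works equally well, and your bookkeeping there is correct.

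One correction concerns the boundary remark. The case $|P|<2k$ is \emph{not} vacuous: if $n-4k+4=1$, the statement still asks for some cycle to exist. There is also a second degenerate case: when $|P|=2k$, the $A$--$B$ edge may be the middle edge of $P$, which closes nothing. So the DFS argument yields a genuine cycle only when $n-4k+4\ge 3$, i.e.\ when $|P|\ge 2k+1$. The paper's proof has the same slip, since it infers disjointness of $A$ and $B$ from positivity alone. In fact, for $k=1$, $n=2$ the cycle claim is false ($G=K_2$). For $k\ge 2$ and $n\in\{4k-3,4k-2\}$ a separate argument is needed. For example, if $G$ were a forest, extend it to a tree on $V$ and split at a centroid; this gives two disjoint $k$-sets with no edges between them, contradicting the hypothesis.
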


\begin{proof}
Fix an arbitrary order $\sigma$ on $V$ and run DFS on $(G, \sigma)$. By Property~\ref{4: dfs 1}, there exists a step at which $|S| = |T|$. By Property~\ref{4: dfs 2}, there are no edges between $S$ and $T$. The theorem's assumption then forces $|S| = |T| < k$, Hence:
\[
  |U| = n - |S| - |T| \ge n - 2(k-1) = n - 2k + 2.
\]
By Property~\ref{4: dfs 3}, $U$ spans a path $P$ in $G$ of length at least $n - 2k + 1$.

To obtain a long cycle, let $A$ be the first $k$ vertices of $P$ and $B$ the last $k$ vertices (see Figure~\ref{fig:long_cycle}). These sets are disjoint (as $n - 4k + 4>0$), and hence by the theorem's assumption, there is an edge between $A$ and $B$, which closes a cycle of length at least
\[
  |V(P)| - 2(k-1) \ge n - 4k + 4. \qedhere
\]
\end{proof}

\begin{figure}[H]
  \centering
  \includegraphics[width=0.65\textwidth]{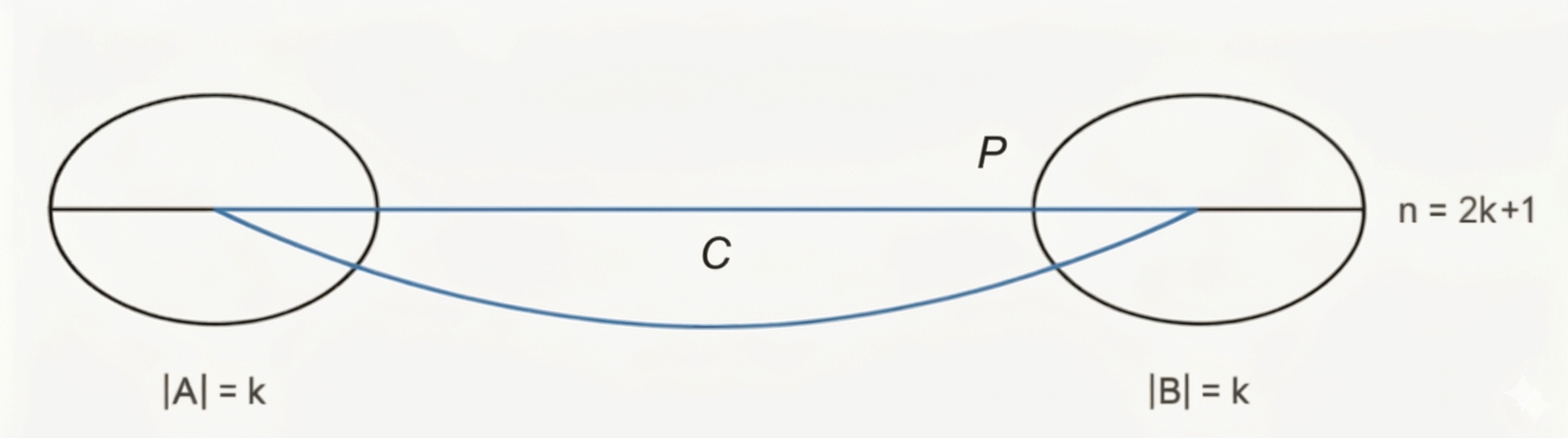}
  \caption{Closing a long cycle.}
  \label{fig:long_cycle}
\end{figure}

\begin{remark}
DFS is also applicable to directed graphs. Essentially, the same proof gives: if $G = (V,E)$ is a directed graph on $n$ vertices, $k$ is a positive integer, and for every ordered pair $(A, B)$ with $|A| = |B| = k$, $A \cap B = \emptyset$, there is an edge from $A$ to $B$ in $G$, then $G$ contains a directed path of length at least $n - 2k + 1$ and a directed cycle of length at least $n - 4k + 4$.
\end{remark}

\subsection{Using DFS for Random Graphs}

Let $G \sim G(n, p)$. We want to find long paths and cycles in $G$ whp\ by running DFS and exposing $G$ simultaneously.

Let $N = \binom{n}{2}$ and let $\bar{X} = (X_i)_{i=1}^{N}$ be an i.i.d.\ sequence of Bernoulli$(p)$ random bits. Fix an ordering on $[n]$ (e.g.\ the natural order) and run DFS on $G$ using $\bar{X}$ as follows: when DFS makes its $i$-th query ---``is $(u,v) \in E(G)$?''--- answer yes if $X_i = 1$, and no if $X_i = 0$. After DFS completes and all components are discovered, answer any remaining unqueried edges from the same sequence $\bar{X}$. The resulting graph is distributed as $G(n, p)$.

Note that a graph is (in a sense) a ``2-dimensional'' object, while the sequence $\bar{X} = (X_i)_{i=1}^{N}$ is 1-dimensional. The above approach turn DFS into a technique for ``dimension reduction" from 2 to 1.
\newpage
\phantomsection
\addcontentsline{toc}{chapter}{Lecture 5}
\lecture{5}{May 17, 2026}{Lecture 5}{Tom Guy}
\setcounter{chapter}{5}
\setcounter{section}{0}

\section{Long Paths and Cycles in $G \sim G(n,p)$}

\begin{thm}[Krivelevich, Sudakov \cite{zbMATH06214315}]
    For every sufficiently small $\varepsilon > 0$, a random graph $G \sim G\!\left(n, \tfrac{1+\varepsilon}{n}\right)$ whp contains a path of size at least $\tfrac{\varepsilon^2 n}{5}$.
\end{thm}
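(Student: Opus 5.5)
We run the DFS algorithm of Lecture 4 on $G\sim G\!\left(n,\tfrac{1+\varepsilon}{n}\right)$ and expose the edges through the i.i.d.\ Bernoulli$(p)$ sequence $\bar X=(X_i)_{i=1}^N$, exactly as described at the end of Lecture 4. We use two facts about this coupling. Every query answered $1$ moves a vertex from $T$ to $U$. And every such move is caused by a positive answer, except at the start of each epoch, when a vertex is moved from $T$ to $U$ because $U=\emptyset$. Hence, after $t$ queries, $|S\cup U|$ is at most the number of ones among $X_1,\dots,X_t$ plus the number of epochs started so far. Conversely, every query answered $0$ is asked from the current last vertex $v\in U$ to some vertex of $T$. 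A vertex moves to $S$ only after all of its pairs to the then-current $T$ have been queried negatively. So if $|T|$ stays at least $n-O(\varepsilon n)$, each vertex in $S$ accounts for roughly $n$ queries.

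Fix $t_0=\tfrac{\varepsilon n^2}{2}$ and look at the first $t_0$ queries. By Chernoff's inequality, whp the number of ones among $X_1,\dots,X_{t_0}$ is $(1+o(1))\,p\,t_0=(1+o(1))\tfrac{\varepsilon(1+\varepsilon)n}{2}$. Since a shortfall of $n^{2/3}$ is much smaller than $\varepsilon^2 n$, Chernoff's inequality together with a union bound over all prefixes $t\le t_0$ gives more: whp every prefix of length $t\le t_0$ contains at least $pt-n^{2/3}$ ones.

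\textbf{First claim:} at time $t_0$ the algorithm has not finished, and in fact $|S|<n/3$. Suppose instead that $|S|\ge n/3$ at some time $t\le t_0$. Take the first time $|S|=n/3$. At that time $|U|$ is bounded by the number of ones plus the number of epochs, which is $O(\varepsilon n)$. So $|T|\ge n/2$, and there are no $S$--$T$ edges. This means the $|S|\,|T|\ge n^2/6$ pairs between $S$ and $T$ were all queried negatively, which requires $t\ge n^2/6>t_0$, a contradiction. Hence at time $t_0$ we have $|S|<n/3$ and $|T|\ge n/3$. The pairs between $S$ and $T$ are at least $|S|\,|T|$ and all lie among the queried pairs, so
\[|S|\,|T|\le t_0.\]
This refined bound is the one used below.

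\textbf{Second claim and main obstacle:} at time $t_0$, $|U|\ge \varepsilon^2 n/5$. Every vertex in $S\cup U$ either came from a positive answer or started an epoch. Starting an epoch needs $U=\emptyset$. If $U$ were very small at time $t_0$, then on the one hand
\[|S|+|U|\ge \#\text{ones}\approx \tfrac{\varepsilon(1+\varepsilon)n}{2}.\]
On the other hand, $|T|=n-|S|-|U|$, so $|S|\le \tfrac{t_0}{n-|S|-|U|}$. Expanding, this gives
\[|S|\le \tfrac{\varepsilon n}{2}\,(1+|S|/n+|U|/n+\dots)\le \tfrac{\varepsilon n}{2}+\tfrac{\varepsilon^2 n}{4}+O(\varepsilon^3 n)+O(\varepsilon|U|).\]
Subtracting, we get
\[|U|\gtrsim \tfrac{\varepsilon^2 n}{2}-\tfrac{\varepsilon^2 n}{4}-O(\varepsilon^3 n)\ge \tfrac{\varepsilon^2 n}{5},\]
provided the ones count exceeds the number of epochs only negligibly. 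This last point is the delicate part: the extra vertices added at epoch starts only help the inequality, since they enlarge $|S|+|U|$ without using queries. What is really needed is the upper bound on $|S|$ from negative queries. To make it rigorous, the counting must be set up as follows: every query at time $t$ is either positive, or a negative pair between $U$ and $T$ that later becomes an $S$--$T$ pair or stays a $U$--$T$ pair. This bounds $t_0\ge$ negatives $\ge |S|\,|T|$, which is clean. With the Chernoff lower bound on ones, the computation above goes through for $\varepsilon$ small.

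Finally, by Property~\ref{4: dfs 3}, $U$ spans a path. So $G$ contains whp a path on at least $\varepsilon^2 n/5$ vertices. I expect the main care to lie in the bookkeeping of the second-order terms in $\varepsilon$, in particular the $\varepsilon^2 n/4$ loss coming from $|T|<n$, which fixes the constant $\tfrac15$.
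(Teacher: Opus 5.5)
Your argument is essentially the paper's proof. You feed DFS with the bit sequence, stop at $t_0=\varepsilon n^2/2$, and first rule out $|S|\ge n/3$ via the first time $|S|=n/3$. You then combine $|S\cup U|\ge\#\text{ones}$ with $|S|\,|T|\le t_0$. Your bootstrapped expansion of $|S|\le t_0/(n-|S|-|U|)$, started from the a priori bound $|S|\le 3t_0/n$, plays the role of the paper's monotonicity of $x\mapsto x(n-x-\varepsilon^2 n/5)$ on $[0,n/3]$, and it correctly isolates the $\varepsilon^2 n/4$ loss. The union bound over all prefixes is not needed.

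One justification must be repaired. In the first claim you bound $|U|$ by the number of ones plus the number of epochs and assert that this is $O(\varepsilon n)$. You rely on the same bound again when you deduce $|T|\ge n/3$ at time $t_0$. Nothing you have shown bounds the number of epochs by $O(\varepsilon n)$. At the first moment $|S|=n/3$ it can a priori be as large as $|S|+1$, which would spoil $|T|\ge n/2$, so the step is unjustified as written.

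The fix is your own observation that a new epoch starts only when $U=\emptyset$. Hence $U$ consists only of vertices of the current epoch, and
\[
|U|\le 1+\#\text{ones}\le 1+\tfrac{\varepsilon(1+\varepsilon)n}{2}+n^{2/3}.
\]
This is exactly the paper's inequality, and with it the rest of your argument goes through as written.
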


\begin{proof}
Run DFS on $G$, revealing the edges as the algorithm progresses, based on a vector $\bar{X} = (X_i)_{i=1}^{N}$, where $X_i \sim \mathrm{Bernoulli}\!\left(\tfrac{1+\varepsilon}{n}\right)$  i.i.d.
($N = \binom{n}{2}$).

\noindent At each step $t$: if $T \neq \emptyset$, the algorithm checks whether the current top-of-stack vertex in $U$ has a neighbor in $T$, in which case it moves a vertex from $T$ into $U$. For each $i \le t$, every value $X_i =1$ adds a vertex to $U$. This vertex might move later to $S$. Concretely, after $t$ steps, if $T \neq \emptyset$, we have that
\begin{align}\label{eq: U}
    |S \cup U| \geq \sum_{i=1}^{t} X_i, \qquad |U| \leq 1+ \sum_{i=1}^{t} X_i.
\end{align}

\noindent An explanation for the second inequality in \eqref{eq: U}: the process starts with $U$ as an empty set. The first vertex from $T$ that is added to $U$ is added for free, and then, at step $i\le t$, a new vertex moves from $T$ to $U$ due to having the corresponding random variable $X_j=1$ for every $j\le i\le t$.

Let $N_0 = \frac{\varepsilon n^2}{2}$ (think of $N_0$ as an integer number). According to Chebyshev's (or Chernoff's) inequality, we have whp that:
\[
\left|\sum_{i=1}^{N_0} X_i - \frac{\varepsilon n^2}{2} \cdot \frac{1+\varepsilon}{n}\right| \leq n^{2/3}.
\]
We aim to show that whp at time $N_0$ we have $|U| > \frac{\varepsilon^2 n}{5}$. First, we will show that whp, at $N_0$, it must be that $|S| \leq \frac{n}{3}$. Assume by contradiction that $|S|> \frac{n}{3}$. Then, look at the first time $t$ for which $|S| = \frac{n}{3}$. Notice that $t \leq N_0$, and hence
\[
|U| \leq 1+ \sum_{i=1}^{t} X_i \leq 1 + \sum_{i=1}^{N_0} X_i \leq \frac{n}{3},
\]
where the last inequality holds for sufficiently small $\varepsilon > 0$. Now,
\[
|T| = n - |S| - |U| \geq \frac{n}{3}.
\]
We know that all queries between $S$ and $T$ (at time $t$) have been asked (and returned a negative answer), thus:
\[
|S| \cdot |T| \ge \frac{n^2}{9} \quad \implies \quad
\frac{\varepsilon n^2}{2} = N_0 \geq t \geq |S| \cdot |T| \ge \frac{n^2}{9},
\]
a contradiction for sufficiently small $\varepsilon$.

\medskip
\noindent Now, $|S| \leq \frac{n}{3}$, we assume by contradiction that $|U| \leq \frac{\varepsilon^2 n}{5}$ (so in particular $T\ne \emptyset$). Since whp
\[
|S \cup U| \geq \sum_{i=1}^{N_0} X_i \geq \frac{\varepsilon(1+\varepsilon)n}{2} - n^{2/3},
\]
we have: 
\[
|S| \geq \frac{\varepsilon(1+\varepsilon)}{2} n - n^{2/3} - \frac{\varepsilon^2 n}{5}
= \frac{\varepsilon n}{2} + \frac{3\varepsilon^2 n}{10} - n^{2/3},
\]
and as before, all queries between $S$ and $T$ have been asked (and answered negatively). Thus:
\[
N_0 \geq |S| \cdot |T| \geq |S|\!\left(n - |S| - \frac{\varepsilon^2 n}{5}\right).
\]

Using $|S| \leq \frac{n}{3}$ (and the function $f(x) = x\left(n-x-\frac{\varepsilon^2 n}{5}\right)$ increases in $[0,n/3]$), we derive:
\begin{align*}
    N_0 &\geq \left(\frac{\varepsilon n}{2} + \frac{3\varepsilon^2 n}{10} - n^{2/3}\right)
    \left(n - \frac{\varepsilon n}{2} - \frac{\varepsilon^2 n}{5} + n^{2/3}\right) \\[6pt]
    &= \frac{\varepsilon n^2}{2} + \frac{3\varepsilon^2 n^2}{10} - \frac{\varepsilon^2 n}{4}
    - O(\varepsilon^3) n^2 \\[4pt]
    &= \frac{\varepsilon n^2}{2} + \frac{\varepsilon^2 n^2}{20} - O(\varepsilon^3) n^2
    > \varepsilon n^2 / 2,
\end{align*}
giving the desired contradiction. We have shown that typically at time $N_0$ we have $|U| > \frac{\varepsilon^2 n}{5}$.
Therefore, $G \sim G\!\left(n,\tfrac{1+\varepsilon}{n}\right)$ has a path of size at least $\frac{\varepsilon^2 n}{5}$.
\end{proof}

\begin{remark}\hfill \break 
\begin{enumerate}
    \item Ajtai, Komlós, Szemerédi \cite{zbMATH03769675} showed:
    In $G \sim G(n, c/n)$ for $c > 1$, whp there is a path with $\Theta(n)$ vertices.
    \item How to get a cycle from a path? For path $P$ of length $\Theta(n)$, take $o(n)$ vertices from each end of $P$ and connect them bu an edge whp using sprinkling. Thus, the questions on long paths and long cycles are the same asymptotically. 
    \item We have shown that for $\varepsilon > 0$,
    in $G \sim G\!\left(n, \tfrac{1+\varepsilon}{n}\right)$ there is typically a path/cycle of size $\frac{\varepsilon^2 n}{5}$.
    The largest a cycle could be is $|L_1|$, which is whp $ (1+o_\varepsilon(1))2\varepsilon n$. It is known that for $p = \frac{1+\varepsilon}{n}$, where $\varepsilon > 0$ small enough, whp the size of the longest cycle is $\Theta(\varepsilon^2 )n$. 
\end{enumerate} 
\end{remark}

\begin{Oquestion}
    What is the right constant before $\varepsilon^2 n$ for the typical length of a longest cycle in this regime?
\end{Oquestion}

\begin{remark}
    How does $L_1$ look whp?
\end{remark}

\begin{figure}[H]
    \centering
    \includegraphics[width=0.7\linewidth]{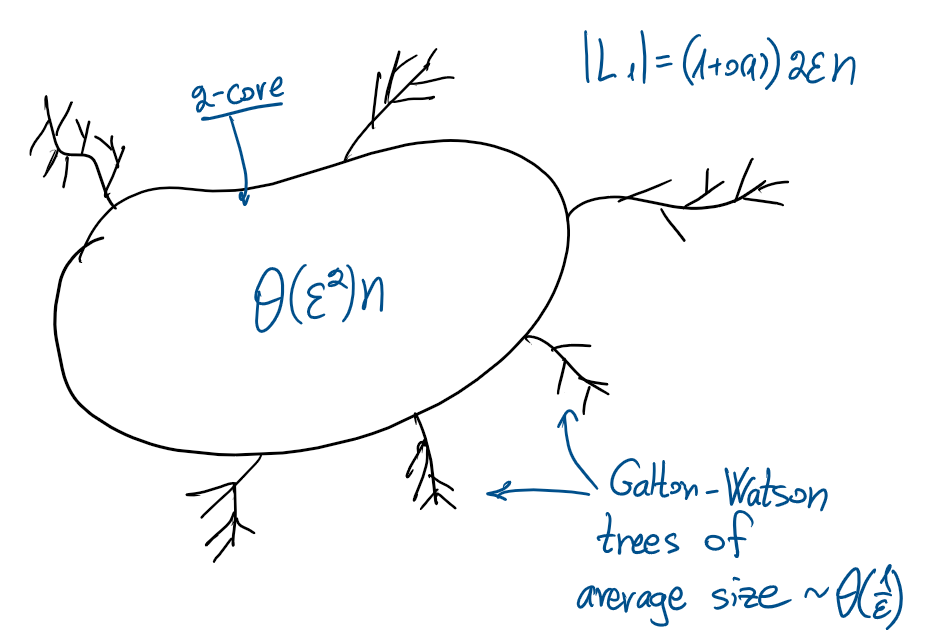}
\end{figure}

\subsection{Long Paths in Sparse Random Graphs}
We can ask ourselves which probability $p(n)$ does it take to have whp a cycle of length $n$ in $G(n,p)$, i.e. a Hamilton cycle.

\noindent\textbf{Answer:} The required $p(n)$ is $p = \frac{\ln n+\ln\ln n+\omega(n)}{n}$, where $\lim_{n \to \infty}\omega(n) = \infty$.

If we are willing to settle for a nearly spanning cycle (path), the required probability $p(n)$ is much lower, as given by the following theorem.

\begin{thm}[Ajtai, Komlós, Szemerédi \cite{zbMATH03769675}, Fernandez de la Vega \cite{zbMATH03693325}]\label{almost hamilton path}
    For every $\varepsilon > 0$, there exists $C = C(\varepsilon) > 0$ such that whp, for $G \sim G\!\left(n, \tfrac{C}{n}\right)$, a longest path has size of at least $(1-\varepsilon)n$.
\end{thm}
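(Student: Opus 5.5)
The plan is to combine the Ben-Eliezer--Krivelevich--Sudakov theorem from Lecture 4 with a first-moment (union bound) computation. That theorem says that if every pair of disjoint $k$-sets $A,B$ in $G$ spans at least one edge, then $G$ has a path of length at least $n-2k+1$. Set $k=\lceil \varepsilon n/2\rceil$. It then suffices to show that for $C=C(\varepsilon)$ large enough, whp every pair of disjoint $k$-subsets of $[n]$ in $G\sim G(n,C/n)$ has an edge between them. Once this holds, the path has length at least $n-2k+1\ge (1-\varepsilon)n$, adjusting by at most a constant via rounding, or by choosing $k=\lfloor \varepsilon n/2\rfloor$ and absorbing the $+1$.

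For the key estimate, fix disjoint $A,B$ with $|A|=|B|=k$. There are $k^2$ potential edges between them, each present independently with probability $p=C/n$. So the probability that no edge is present is
\[
(1-p)^{k^2}\le e^{-pk^2}\le e^{-C\varepsilon^2 n/4}.
\]
The number of ordered pairs $(A,B)$ is at most $\binom{n}{k}^2\le (en/k)^{2k}\le (2e/\varepsilon)^{\varepsilon n+2}$, using the standard bound $\binom{n}{k}\le (en/k)^k$ from the list of standard inequalities. By the union bound, the probability that some pair has no edge between them is at most
\[
\exp\Big((\varepsilon n+2)\ln(2e/\varepsilon)-\tfrac{C\varepsilon^2}{4}n\Big).
\]
This tends to $0$ as soon as $C>\frac{4}{\varepsilon}\ln(2e/\varepsilon)+1$, say. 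Hence whp the hypothesis of the Ben-Eliezer--Krivelevich--Sudakov theorem holds, and the conclusion follows.

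The main point to handle carefully is the competition between the entropy term $\binom{n}{k}^2$, which is exponential in $\varepsilon n\log(1/\varepsilon)$, and the probability term, which is exponential in $C\varepsilon^2 n$. This is exactly why $C$ must grow like $\varepsilon^{-1}\log(1/\varepsilon)$. Beyond that, the argument is routine. As a side remark, the same estimate with $k$ slightly smaller, together with the cycle part of the same theorem, yields a cycle of length at least $(1-2\varepsilon)n$, which after rescaling $\varepsilon$ gives nearly spanning cycles as well.
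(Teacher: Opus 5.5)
Your proposal is correct and follows essentially the same route as the paper: apply the Ben-Eliezer--Krivelevich--Sudakov DFS theorem with $k\approx \varepsilon n/2$, then use a union bound over at most $\binom{n}{k}^2$ pairs together with $(1-p)^{k^2}\le e^{-pk^2}$ to show that whp every two disjoint $k$-sets span an edge. Your explicit choice $C>\frac{4}{\varepsilon}\ln(2e/\varepsilon)+1$ plays the role of the paper's $C=\frac{5\ln(1/\varepsilon)}{\varepsilon}$, with the minor advantage that it works for every $\varepsilon>0$ rather than only for sufficiently small $\varepsilon$.
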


\begin{proof}
We have shown that for $n > 2k - 2$, and for a graph $G$ on $n$ vertices, if for every $A, B \subseteq V(G)$ with $|A| = |B| = k$, $A \cap B = \emptyset$, there is an edge between $A$ and $B$, then $G$ contains a path of size $n - 2k + 2$.

Let $k = \left\lfloor \frac{\varepsilon n}{2} \right\rfloor$. We will prove that for large enough $C = C(\varepsilon)$,
the graph $G \sim G(n, C/n)$ will contain whp at least one edge between every pair of disjoint sets of size $k$.

The probability that $G \sim G(n, C/n)$ does not satisfy the above property is at most:
\[
\underbrace{\binom{n}{k}}_{\substack{\text{choosing }A}}\underbrace{\binom{n-k}{k}}_{\substack{\text{choosing }B}}\underbrace{(1-p)^{k^2}}_{\substack{\text{no edges between }\\ A \text{ and }B}} \leq \binom{n}{k}^2 (1-p)^{k^2}
    \leq \left(\frac{en}{k}\right)^{2k} e^{-pk^2}
    \leq \left[\left(\frac{en}{k}\right)^2 e^{-pk}\right]^k,
\]
where we use $\binom{n}{k} \leq \left(\frac{en}{k}\right)^k$ and $1-p \leq e^{-p}$. Now, for our $k$, we have $(\frac{en}{k})^{2k} =  \left(O\!\left(\tfrac{1}{\varepsilon^2}\right)\right)^k$, thus by taking $C = \frac{5\ln(1/\varepsilon)}{\varepsilon}$ we get that the above probability is $o_n(1)$ (for small enough $\varepsilon$).

We thus conclude that for $G \sim G(n,p)$ with $p = \frac{5\log(2/\varepsilon)}{\varepsilon n}$, whp
$G$ contains a path of size at least $(1-\varepsilon)n$.
\end{proof}

\begin{remark}\label{almost hamilton}
    It is possible to show that for $C=C(\epsilon)$ large enough, a random graph $G\sim G\left(n,\frac{C}{n}\right)$ whp contains also a cycle of length $\ge (1-\epsilon)n$, either through sprinkling or the deterministic argument for the existence of a cycle of length at least $n-4k+4$.  
\end{remark}

\section{Hitting Time for Connectivity}

First, we define the notion of the hitting time for a non-trivial monotone property on graphs on $n$ vertices.
\begin{defn}
    Let $\tilde{G}=(G_i)_{i=0}^N$ be a graph process, and let $\mathcal{A}$ be a non-trivial monotone property on graphs on $n$ vertices. The \emph{hitting time} $\tau_{\mathcal{A}}(\tilde{G})$ of $\mathcal{A}$ with respect to $\tilde{G}$ is the minimum index $i$ such that $G_i \in \mathcal{A}$, that is,
    \[
    \tau_A(\tilde{G}) = \min\{i : G_i \in \mathcal{A}\}.
    \]
\end{defn}

\begin{remark}
    When $\tilde{G}$ is a random graph process, then $\tau_{\mathcal{A}}(\tilde{G})$ is a random variable equal to the first time $\tilde{G}$ hits $\mathcal{A}$, and it is possible to study its typical properties.
\end{remark}

\noindent We study the hitting time of a random graph process $\tilde{G}$ to become connected and denote it by $\tau_{c}(\tilde{G})$.

\medskip

\noindent\textbf{Characterizations of Graph Connectivity:}
\begin{enumerate}
    \item $G$ is connected iff $G$ contains a spanning tree.
    \item $G$ is connected iff for every partition $V(G)=A\cup B$ such that $\emptyset\ne A,B\subsetneq V(G)$, $G$ has an edge between $A$ and $B$.
\end{enumerate}

\noindent A possible reason for a graph $G$ to be disconnected is the existence of isolated vertices. We denote by $\tau_{1}(\tilde{G})$ the hitting time of the property $\delta(G)\ge 1$. Notice that $\tau_{C}(\tilde{G})\ge\tau_{1}(\tilde{G})$ deterministically, i.e. in any graph process. It turns out that for a random graph process, the hitting time for connectivity typically \textit{exactly} equals the hitting time for minimum degree $\geq 1$. In other words, typically as soon as the last isolated vertex disappears, the graph becomes connected:

\begin{thm}[Bollobás, Thomason \cite{zbMATH03943863}]
    In a random graph process $\tilde{G}$, whp $\tau_c(\tilde{G}) = \tau_1(\tilde{G})$.
\end{thm}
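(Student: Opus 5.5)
We will sandwich the hitting time $\tau_1(\tilde G)$ between two deterministic times and show that, in this window, the only obstruction to connectivity is isolated vertices. Set
\[
m_1=\tfrac{n}{2}(\ln n-\omega(n)),\qquad m_2=\tfrac{n}{2}(\ln n+\omega(n)),
\]
where $\omega(n)\to\infty$ slowly (say $\omega=\ln\ln\ln n$). The plan has four steps.

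\textbf{Step 1: locating $\tau_1$.} We show that whp $m_1\le \tau_1(\tilde G)\le m_2$, i.e., $G_{m_1}$ has an isolated vertex and $G_{m_2}$ has none. Since $G_m\sim G(n,m)$, we work in $G(n,p)$ with $p=m/N$ and transfer back using Propositions \ref{proposition: comparison gnp_gnm} and \ref{proposition: comparison gnp_gnm - monotone}. Having no isolated vertex is monotone increasing, and having one is monotone decreasing, so the monotone comparison applies in both directions.

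In $G(n,p)$ with $p=(\ln n\pm\omega)/n$, let $X$ count isolated vertices. Then
\[
\mathbb{E}X=n(1-p)^{n-1}=e^{\mp\omega+o(1)}.
\]
For $p_2=m_2/N$, Markov gives $X=0$ whp. For $p_1=m_1/N$, a second-moment computation gives $X>0$ whp: $\mathbb{E}X(X-1)=n(n-1)(1-p)^{2n-3}=(1+o(1))(\mathbb{E}X)^2$.

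\textbf{Step 2: the early graph has no small bad sets.} We show that whp $G_{m_1}$ has no connected component of order $k$ with $2\le k\le n/2$. Combined with Step 1 this finishes the proof: the set of vertices that are isolated in $G_{m_1}$ is at most $\tilde O(e^{\omega})$ whp, and everything else lies in one giant component. As the process continues, edges only merge components, and no new small component can appear. So at any time $m\ge m_1$ the graph consists of one giant component plus isolated vertices. At time $\tau_1$ the last isolated vertex disappears and the graph is connected, which gives $\tau_c=\tau_1$.

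\textbf{Step 3: bounding the bad components.} To prove Step 2, we bound in $G(n,p_1)$ the expected number of components of order $k$, $2\le k\le n/2$, by
\[
\binom nk k^{k-2}p^{k-1}(1-p)^{k(n-k)}.
\]
This probability is not monotone in the graph, so we transfer using Proposition \ref{proposition: comparison gnp_gnm}. That costs a factor $6\sqrt{m}=O(\sqrt{n\ln n})$, so we need the $G(n,p)$ expected count to be $o(1/\sqrt{n\ln n})$.

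\textbf{Step 4: the two ranges of $k$, and the main obstacle.} Sharp control of the sum over $k$ is the main obstacle.

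For $k\ge 2$ small (say $k\le n/\ln n$), we use $(1-p)^{k(n-k)}\le e^{-pk n+pk^2}$. The $k$-th term is then roughly
\[
n\,(e\ln n)^{k-1}\,k^{-2}\,\big(e^{\omega}/n\big)^{k}.
\]
For $k=2$ this is $\approx e^{2\omega}\ln n/n$, which beats the $\sqrt{n}$ loss once $\omega$ is small. The ratio between successive terms decays like $\ln n\,e^{\omega}/n$, so the sum over $k\ge 2$ is dominated by its first term.

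For $k$ between $n/\ln n$ and $n/2$, we use $(1-p)^{k(n-k)}\le e^{-pkn/2}=n^{-k/2+o(k)}$ together with $\binom nk k^{k-2}p^{k-1}\le (e\ln n)^k n$. This gives a superpolynomially small bound.

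The delicate point is the middle of the range, where the $e^{pk^2}$ correction matters. We handle it by splitting at $k=\varepsilon n$ and treating each piece with one of the two bounds above. If the factor $\sqrt m$ turns out to be too costly, we will instead argue directly in $G(n,m)$ by counting, whose estimates mirror these.
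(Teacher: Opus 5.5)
Your overall plan matches the paper's: sandwich $\tau_1$ between $m_1,m_2$ using a slowly growing $\omega$, transfer between $G(n,p)$ and $G(n,m)$, and rule out components of order $2\le k\le n/2$ in $G_{m_1}$ at the cost of the $6\sqrt m$ factor. Steps 1, 3 and 4 are fine. The gap is in Step 2.

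\textbf{The false claim.} You assert that as the process continues ``no new small component can appear''. This is false. Suppose some edge $e_i$ with $m_1<i\le\tau_1$ joins two vertices $u,v$ that were both isolated in $G_{m_1}$. It creates a new component $\{u,v\}$ of order $2$, disjoint from the giant. Neither $u$ nor $v$ is isolated any longer. So the last isolated vertex can disappear while $\{u,v\}$, or a larger cluster built from such vertices, is still separated from the giant, and then $\tau_c>\tau_1$. Knowing the structure of $G_{m_1}$ alone does not exclude this.

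\textbf{The missing step.} This is exactly the third property in the paper's proof: whp no edge $e_i$ with $m_1<i\le m_2$ has both endpoints in the set $V_0$ of vertices isolated in $G_{m_1}$. You already state the needed ingredient, the bound on $|V_0|$, but never use it.

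By Markov's inequality, whp $|V_0|\le e^{2\omega}$; this transfers to $G(n,m_1)$ because ``at most $K$ isolated vertices'' is monotone increasing. Since $V_0$ is determined by $e_1,\dots,e_{m_1}$, for each $i$ in the window
$\mathbb{P}(e_i\subseteq V_0\mid e_1,\dots,e_{i-1})\le \binom{|V_0|}{2}/(N-m_2)=O(e^{4\omega}/n^2)$.
A union bound over the $m_2-m_1=n\omega$ steps gives $O(\omega e^{4\omega}/n)=o(1)$.

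With this in hand, during $(m_1,m_2]$ every edge touching a vertex of $V_0$ has its other endpoint in the giant. So each vertex of $V_0$ joins the giant the moment it receives its first edge. Since $\tau_1\le m_2$ whp, the graph becomes connected exactly at $\tau_1$. Accordingly, your statement about ``any time $m\ge m_1$'' should be restricted to $m_1\le m\le m_2$, and it holds only on this additional event.
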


Before proving the statement we study the typical disappearance of isolated vertices in $G(n,p)$. Our goal is to find the threshold function $p\coloneqq p(n)$ for this (monotone) property.

For a vertex $v\in [n]$, we have $\mathbb{P}(v\text{ is isolated}) = (1-p)^{n-1}$, and hence:
\[
\mathbb{E}[\#\text{isolated vertices in } G(n,p)] = n \cdot (1-p)^{n-1} \sim ne^{-pn}.
\]
Notice that for $p = \frac{\ln n}{n}$, one has $ne^{-pn} = 1$. Hence we can guess that typically isolated vertices disappear entirely after $p = \frac{\ln n}{n}$, and we will show the following: 

\begin{thm}
     Let $p \coloneqq p(n) = \frac{\ln(n) + c(n)}{n}$. Then:
    \begin{enumerate}
        \item If $\lim_{n\to\infty} c(n) = \infty$, then whp $G \sim G(n,p)$ contains no isolated vertices.
        \item If $\lim_{n\to\infty} c(n) = -\infty$, then whp $G \sim G(n,p)$ has isolated vertices.
    \end{enumerate}
\end{thm}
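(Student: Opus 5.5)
We argue with the first and second moment methods applied to $X$, the number of isolated vertices in $G\sim G(n,p)$. Write $X=\sum_{v\in[n]} I_v$, where $I_v$ indicates that $v$ is isolated. Then $\mathbb{E}[X]=n(1-p)^{n-1}$. Since both parts are monotone statements, we may assume $|c(n)|\le \ln\ln n$, say. For part 1, increasing $p$ only helps, because having no isolated vertices is a monotone increasing property; for part 2, decreasing $p$ only helps. We may also assume $p\ge 0$. With this restriction, $p=O(\ln n/n)$ and $np^2=o(1)$. Using $1-p=e^{-p-O(p^2)}$, we get
\[
\mathbb{E}[X]=n\,e^{-pn+O(p)+O(np^2)}=(1+o(1))\,n e^{-\ln n-c(n)}=(1+o(1))e^{-c(n)}.
\]

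For part 1, where $c(n)\to\infty$, this gives $\mathbb{E}[X]\to 0$. Markov's inequality then yields $\mathbb{P}(X\ge 1)\le \mathbb{E}[X]=o(1)$, so whp there are no isolated vertices. This direction does not even need the reduction, since $1-p\le e^{-p}$ gives $\mathbb{E}[X]\le n e^{-p(n-1)}\le e^{-c(n)+p}\to 0$ directly.

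For part 2, where $c(n)\to-\infty$, we have $\mathbb{E}[X]\to\infty$, and we apply Chebyshev's inequality: $\mathbb{P}(X=0)\le \mathrm{Var}(X)/\mathbb{E}[X]^2$. Next we compute the second moment. For distinct $u,v$, both are isolated exactly when the $2(n-2)+1=2n-3$ edges touching $\{u,v\}$ are all absent. Hence $\mathbb{E}[I_uI_v]=(1-p)^{2n-3}$, and
\[
\mathrm{Var}(X)\le \mathbb{E}[X]+n(n-1)\big((1-p)^{2n-3}-(1-p)^{2n-2}\big)\le \mathbb{E}[X]+n^2(1-p)^{2n-2}\Big(\tfrac{1}{1-p}-1\Big).
\]
The second term equals $\mathbb{E}[X]^2\cdot\frac{p}{1-p}=o(\mathbb{E}[X]^2)$. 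Therefore
\[
\frac{\mathrm{Var}(X)}{\mathbb{E}[X]^2}\le \frac{1}{\mathbb{E}[X]}+O(p)\to 0,
\]
so whp $X>0$.

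The only delicate point is the asymptotic estimate $\mathbb{E}[X]=(1+o(1))e^{-c}$ in part 2. It requires the lower bound $1-p\ge e^{-p-p^2}$, valid for small $p$, together with $np^2=o(1)$. This is why we first use monotonicity to cap $|c(n)|$, so that $p$ is $O(\ln n/n)$. The covariance step is routine once we observe that two vertices share only one potential edge.
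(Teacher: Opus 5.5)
Your proof is correct and follows the paper's argument essentially step for step. Part 1 is Markov's inequality applied to $\mathbb{E}[X]\to 0$, and part 2 is Chebyshev with $\mathrm{Var}(X)\le \mu+\mu^2 p/(1-p)$, together with the same monotonicity reduction to $|c(n)|\le \ln\ln n$. The only difference is cosmetic: the paper lower-bounds $\mu$ via $1-x\ge e^{-x/(1-x)}$, whereas you obtain the sharper $\mu=(1+o(1))e^{-c(n)}$ from $1-p\ge e^{-p-p^2}$ and $np^2=o(1)$.
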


\begin{remark}
    This theorem shows that $p = \frac{\ln n}{n}$ is a (very) sharp threshold function for the absence of isolated vertices.
\end{remark}

\begin{proof}
\underline{Case (1)}: Let $X$ denote the number of isolated vertices in $G \sim G(n,p)$. We have:

\[
\mathbb{E}[X] = n(1-p)^{n-1} = \frac{n}{1-p}(1-p)^n
\leq \frac{ne^{-pn}}{1-p}
= ne^{-\frac{\ln n + c(n)}{n} \cdot n}
= \frac{e^{-c(n)}}{1-p}
\xrightarrow{n\to\infty} 0.
\]
By Markov's inequality whp $X = 0$ (no isolated vertices).

\noindent \underline{Case (2)}: $\lim_{n\to\infty} c(n) = -\infty$, $p(n) = \frac{\ln n + c(n)}{n}$.

\noindent We need a lower bound for the expectation. Using $1 - x \geq e^{-\frac{x}{1-x}}$ for $0 \leq x < 1$, we get:
\begin{align*}
    \mu\coloneqq \mathbb{E}[X] &= n(1-p)^{n-1}
    \geq ne^{-(n-1)\frac{p}{1-p}}
    = n \cdot e^{-\frac{np}{1-p}} \cdot e^{\frac{p}{1-p}}
    \geq ne^{-\frac{np}{1-p}} \\[4pt]
    &= ne^{-\frac{\ln n + c(n)}{1-p}}
    = \exp\left(\ln n-\frac{\ln n}{1-p} - \frac{c(n)}{1-p}\right)\\
    &=\exp\left(-\frac{p\ln n}{1-p} - \frac{c(n)}{1-p}\right)
    \xrightarrow{n\to\infty} \infty.
\end{align*}

Notice that we can assume that $|c(n)|\le \ln\ln n$ (since the property of having no isolated vertices is monotone (increasing), hence by showing it for small $|c(n)|$ implies to larger $|c(n)|$).

We use the second moment method. For this, we compute $\mathbb{E}[X^2]$. Notice that $X = \displaystyle\sum_{i=1}^{n} X_i$, where $X_i$ is the indicator of vertex $i$ being isolated:
\[
X_i = \begin{cases} 1 & d(i) = 0; \\ 0 & d(i) > 0. \end{cases}
\]
Then:
\begin{align*}
    \mathbb{E}[X^2]
    &= \mathbb{E}\!\left[\left(\sum_{i=1}^{n} X_i\right)^{\!2}\right]
    = \sum_{i=1}^{n} \mathbb{E}[X_i^2] + \sum_{i \neq j} \mathbb{E}[X_i X_j]= n\mathbb{E}[X_1^2]+n(n-1)\mathbb{E}[X_1 X_2]\\
    &= n(1-p)^{n-1}+n(n-1)(1-p)^{2n-3}\le \mu +n^2\cdot \frac{(1-p)^{2n-2}}{1-p}=\mu+\frac{\mu^2}{1-p},
\end{align*}
where the penultimate equality holds due to obvious symmetry. Therefore:
\[
\mathrm{Var}(X) = \mathbb{E}[X^2] - (\mathbb{E}[X])^2
= \mu + \frac{\mu^2}{1-p} - \mu^2 = \frac{\mu^2 p}{1-p} + \mu = o(\mu^2).
\]
By Chebyshev's inequality:
\[
\mathbb{P}(X = 0) \leq \frac{\mathrm{Var}(X)}{\mu^2} = \frac{o(\mu^2)}{\mu^2} = o(1),
\]
hence whp $X > 0$, i.e. $G$ has some isolated vertices.
\end{proof}
\newpage
\phantomsection
\addcontentsline{toc}{chapter}{Lecture 6}
\lecture{6}{May 31, 2026}{Lecture 6}{Rom Amiaz}
\setcounter{chapter}{6}
\setcounter{section}{0}

\section{Hitting Time for Connectivity - Continued}

\begin{thm}[Bollobás, Thomason \cite{zbMATH03943863}]
    In a random graph process $\tilde{G}$, whp $\tau_c(\tilde{G}) = \tau_1(\tilde{G})$.
\end{thm}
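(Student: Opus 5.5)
We sandwich the hitting time between two deterministic edge counts. Set
\[
m_1=\tfrac{n}{2}(\ln n-\ln\ln n),\qquad m_2=\tfrac{n}{2}(\ln n+\ln\ln n),
\]
and $p_i=m_i/N$, so that $p_1\approx\frac{\ln n-\ln\ln n}{n}$ and $p_2\approx\frac{\ln n+\ln\ln n}{n}$. By the theorem on isolated vertices from Lecture 5, $G(n,p_1)$ has isolated vertices whp and $G(n,p_2)$ has none whp. These are monotone properties, so Proposition \ref{proposition: comparison gnp_gnm - monotone} (in both directions; for the decreasing property ``has an isolated vertex'' we use complements) transfers the statements to $G_{m_1}$ and $G_{m_2}$. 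Hence whp $m_1<\tau_1(\tilde G)\le m_2$.

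It therefore suffices to prove that whp $G_{m_1}$ has the following property $\mathcal{P}$: every connected component of $G_{m_1}$ is either an isolated vertex or belongs to a single component containing all non-isolated vertices. Moreover, we need $\mathcal{P}$ to persist between $m_1$ and $\tau_1$, meaning that no edge added in $(m_1,\tau_1]$ creates a component which is neither the giant one nor an isolated vertex. Given this, the graph is connected at the moment the last isolated vertex disappears, since every newly attached vertex joins the big component. The only other way to break $\mathcal{P}$ is an edge joining two vertices that were both isolated in $G_{m_1}$, forming an isolated edge.

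\textbf{Step 1: no small components in $G(n,p_1)$.} We bound the expected number of components of order $k$ for $2\le k\le n/2$ by
\[
\binom nk k^{k-2}p^{k-1}(1-p)^{k(n-k)}\le n\,\bigl(e\,np\,e^{-np(1-k/n)}\bigr)^k/(np)\cdot\frac{1}{k^2}.
\]
With $np\approx\ln n-\ln\ln n$, the factor $e^{-np}\approx \ln n/n$. For $k$ up to $n/\ln n$ or so each term is roughly $n(e\ln^2 n/n)^k$, which is $o(1)$ summed over $k\ge2$. For larger $k\le n/2$ we use $e^{-npk/2}$, which is tiny. This calculation, ensuring the sum over all $2\le k\le n/2$ is $o(1)$ with the $\ln\ln n$ slack, is the main computational obstacle; the range $k=2$ gives about $n^2p(1-p)^{2n}\approx n\ln n\cdot \ln^2n/n^2\to0$, so it is fine. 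Transfer to $G_{m_1}$ is done via Proposition \ref{proposition: comparison gnp_gnm - monotone}, which only handles monotone properties, so for this non-monotone event I would use Proposition \ref{proposition: comparison gnp_gnm} instead. The factor $6\sqrt{m}$ is then absorbed by making the bound $o(1/\sqrt{n\ln n})$, which the calculation above gives for $k\ge 3$ but only barely fails at $k=2$. If needed, I would handle $k=2$ directly in $G(n,m_1)$ via first moment.

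\textbf{Step 2: no edge between two isolated vertices before $\tau_1$.} In $G_{m_1}$ the number $X$ of isolated vertices is whp at most $\ln^2 n$, by a first-moment bound since $\mathbb{E}X\approx \ln n$. Edges $m_1+1,\dots,m_2$ are uniformly random distinct edges. Each lands inside the set of isolated vertices with probability at most $\binom{X}{2}/(N-m_2)=O(\ln^4 n/n^2)$. Over $m_2-m_1=n\ln\ln n$ steps, the union bound gives $O(\ln^4n\ln\ln n/n)=o(1)$. Thus whp every isolated vertex of $G_{m_1}$ is absorbed directly into the giant component, and at time $\tau_1$ the graph is connected, i.e. $\tau_c=\tau_1$.
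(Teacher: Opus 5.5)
Your proposal is correct and is essentially the paper's proof. It uses the same sandwich $m_1<\tau_1\le m_2$, the same exclusion of components of order $2\le k\le n/2$ in $G(n,p_1)$ transferred to $G_{m_1}$ via the $6\sqrt{m}$ comparison, the same first-moment bound $|V_0|\le \ln^2 n$, and the same union bound showing that no edge added in $(m_1,m_2]$ lies inside $V_0$. One small correction: the $k=2$ term is $\Theta(\ln^3 n/n)$, which is comfortably $o(1/\sqrt{n\ln n})$, so nothing fails at $k=2$ and isolated edges need no separate treatment.
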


\begin{proof}
Define 
\[
m_{1}=\frac{n-1}{2}(\ln n-\ln\ln n);\quad p_{1}=\frac{m_{1}}{N}=\frac{\ln n-\ln\ln n}{n},
\]

\[
m_{2}=\frac{n-1}{2}(\ln n+\ln\ln n);\quad p_{2}=\frac{m_{2}}{N}=\frac{\ln n+\ln\ln n}{n}.
\]

We know that for a random graph process $\tilde{G}=(G_{i})_{i=0}^{N}$
we have: $G_{i}\sim G(n,i)$, for every $0\leq i\leq N$. We will prove that for a random graph process, whp:

\begin{enumerate}
    \item In $G_{m_{1}}$: there are isolated vertices, but no more than $\ln^{2}n$ such vertices. All other vertices belong to the same connected component $L_1$;\label{prop: lec6_1}
    \item In $G_{m_{2}}$: there are no isolated vertices;\label{prop: lec6_2}
    \item If we denote the set of isolated vertices in $G_{m_{1}}$by $V_{0}$, then the process $\tilde{G}=(e_1,\ldots ,e_N)$ satisfies the following: for every $m_{1}< i\leq m_{2}$, $e_i\nsubseteq V_0$. \label{prop: lec6_3}
\end{enumerate}

First, we claim that if a graph process $\tilde{G}$ satisfies these properties, then $\tau_{c}(\tilde{G})=\tau_{1}(\tilde{G})$ (deterministically). Indeed, from property \ref{prop: lec6_1}, $\tau_{1}(G)>m_{1}$. From property \ref{prop: lec6_2}, $\tau_{1}(G)\leq m_{2}$. Hence, for every $v\in V_{0}$, there exists $m_{1}<i\leq m_{2}$ such that $v\in e_{i}$.
From property \ref{prop: lec6_1}: $L_{1}=G-V_{0}$, thus at time $m_{2}$, by property \ref{prop: lec6_3},
all vertices of $V_{0}$ have joined $L_{1}$. Therefore, if we look
at the time $\tau_{1}(G)$ (when the last isolated vertex receives
an edge), then the graph becomes connected at that exact moment, and therefore $\tau_{1}(G)=\tau_{c}(G).$ 

Now, we prove properties \ref{prop: lec6_1}-\ref{prop: lec6_3}:

\medskip
\underline{Proof of property \ref{prop: lec6_1}}: Denote by $X$ the number of isolated vertices in $G_{m_{1}}.$
We prove that whp $0<X\leq\ln^{2}n$. We already proved that $X>0$ whp in $G(n,p_{1})$. Note that:
\[
\mathbb{E}[X]=n\cdot(1-p_1)^{n-1}\leq n\exp(-p_1\cdot(n-1))\approx n\exp(-p_1n)=\ln n.
\]
Therefore, by Markov's inequality, we derive that $\mathbb{P}(X\geq\ln^{2}n)<\frac{1}{\ln n}$, hence
whp $X<\ln^{2}n$. The properties $X\leq\ln^{2}n$ and $X>0$
are monotone (increasing and decreasing respectively), therefore
we have that whp $0<X\leq\ln^{2}n$ in $G_{m_{1}}\sim G(n,m_{1})$
as well (by Proposition \ref{proposition: comparison gnp_gnm - monotone}). It is left to prove that whp $G(n,m_{1})$ contains no connected components $C$ such that $2\leq\left|C\right|=k\leq\frac{n}{2}$. For a fixed vertex set $U$ with $\left|U\right|=k$, we have that:
\[
\mathbb{P}\left(U \text{ is\:a\:connected\:component}\right)\leq k^{k-2}\cdot p_1^{k-1}(1-p_1)^{k\cdot(n-k)}.
\]
However, for large values of k, it is better instead of the above estimate to look at the
probability that $U$ is disconnected from $[n]\setminus U$:
\[
\mathbb{P}\left(U \text{ is\:a\:connected\:component}\right)\le \mathbb{P}\left(U \text{ is disconnected from } G-U\right)\leq(1-p_1)^{k\cdot(n-k)}.
\]
Overall we get:
\[
\mathbb{P}(\exists \text{\:connected\:component } C:2\leq\left|C\right|\leq\frac{n}{2})=\sum_{k=2}^{n/2}\binom{n}{k}\cdot(1-p_1)^{k\cdot(n-k)}\cdot\min\left\{1,k^{k-2}\cdot p_1^{k-1}\right\}=\sum_{k=2}^{n/2}u_{k}.
\]
Whenever $2\leq k\leq n^{0.9}$:
\begin{align*}
    u_{k}&\leq\left(\frac{en}{k}\right)^{k}k^{k-2}\cdot p_{1}^{k-1}\cdot\exp(-p_{1}k(n-k))=\frac{1}{k^{2}p}(enp_{1}\cdot \exp(-(n-k)\cdot p_{1}))^{k}\\
    &\leq n\cdot\left(3\ln n\cdot\exp\left(-0.9n\cdot\frac{\ln n}{n}\right)\right)^{k},
\end{align*}

where the last inequality holds since $k^{2}p\leq n^{0.8+o(1)}<n$
and since $n-k\geq 0.9n$. As $k\geq 2$ we get:
\[
n\cdot\left(3\ln n\cdot\exp\left(-0.9n\cdot\frac{\ln n}{n}\right)\right)^{k}\leq n\cdot(n^{0.9}\cdot3\ln n)^{k}\leq n^{-0.3k}.
\]
Now we look at the case where $n^{0.9}\leq k\leq\frac{n}{2}$:
\[
u_{k}\leq\left(\frac{en}{k}\right)^{k}\exp(-k\cdot(n-k)\cdot p_{1})\leq \left(en^{0.1}\cdot\exp\left(\frac{-np_{1}}{2}\right)\right)^{k}\leq n^{-0.3k}.
\]
Therefore, we get (from convergence of geometric series): 
\[
\sum_{k=2}^{n/2}u_{k}\leq \sum_{k=2}^{n/2}n^{-0.3k}=O(n^{-0.6}).
\]
From the relation between $G(n,m)$ and $G(n,p)$ and from the fact
that $O(\sqrt{m_1})\cdot O(n^{-0.6})=o(1)$, we have that whp there
are no connected components in $G(n,m_{1})$ of size $2\leq k\leq\frac{n}{2}$.
That concludes the proof of property \ref{prop: lec6_1}.

\medskip
\underline{Proof of property \ref{prop: lec6_2}}: we show that whp $G_{m_{2}}\sim G(n,m_{2})$ has no isolated vertices. Denote by $X$ the number of isolated vertices in $G(n,p_{2})$. Notice
that:
\[
\mathbb{E}[X]=n(1-p_{2})^{n-1}\leq n(1+o(1))\cdot\exp(-\ln n-\ln\ln n)=o(1).
\]
Therefore, by Markov's inequality whp $X=0$. From monotonicity we get the same result for $G(n,m_{2})$.

\medskip
\underline{Proof of property \ref{prop: lec6_3}}: set $V_{0}$ to be the set of isolated vertices in $G_{m_{1}}$. We have shown that
whp $\left|V_{0}\right|\leq\ln^{2}n.$ We prove that whp
no edge $e_{i}:m_{1}< i\leq m_{2}$ is contained in $V_{0}$. Indeed,
\[
\mathbb{P}(e_{i}\subseteq V_{0}|e_{1},\ldots e_{i-1})\leq\frac{\binom{\left|V_{0}\right|}{2}}{N-(i-1)}=O\left(\frac{\ln^{4}n}{n^{2}}\right).
\]
Therefore, the probability for any $e_{i}$ to be contained in $V_{0}$
for $m_{1}< i\leq m_{2}$ is at most (by the union bound):
\[
(m_{2}-m_{1})\cdot O\left(\frac{\ln^{4}n}{n^{2}}\right)=O\left(\frac{\ln^{4}n\cdot\ln\ln n}{n}\right)=o(1).
\]
\end{proof}

\begin{corollary} [Erd\H{o}s, Rényi \cite{zbMATH03150484}] Assume that $G\sim G(n,m),$$m=\frac{n-1}{2}\cdot(\ln n+c(n))$ then:
    \begin{enumerate}
        \item If $c(n)\rightarrow-\infty$, then whp G is not connected.
        \item If $c(n)\rightarrow+\infty$, then whp G is connected.
    \end{enumerate}
\end{corollary}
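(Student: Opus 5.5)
Since both events in question are monotone (connectivity is increasing, disconnectedness is decreasing), the plan is to work inside the random graph process $\tilde G=(G_i)_{i=0}^N$. By the snapshot proposition, $G_m\sim G(n,m)$, and the hitting time result $\tau_c(\tilde G)=\tau_1(\tilde G)$ whp reduces everything to locating $\tau_1$. Concretely, $G_m$ is connected iff $m\ge \tau_c$. So whp $G_m$ is connected iff $m\ge\tau_1$, i.e.\ iff $G_m$ has no isolated vertices. Thus it suffices to show that $G(n,m)$ has an isolated vertex whp when $c(n)\to-\infty$, and has none whp when $c(n)\to+\infty$.

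For the case $c(n)\to+\infty$, I would set $p=m/N=\frac{\ln n+c(n)}{n}$ and let $\mathcal{A}$ be the increasing property $\delta(G)\ge1$. By the isolated-vertex threshold theorem (case 1), $G(n,p)$ has no isolated vertices whp. I would then transfer this to $G(n,m)$. Proposition~\ref{proposition: comparison gnp_gnm - monotone} compares in the wrong direction, so instead I would use the complementary decreasing property $\mathcal{B}=\{\delta(G)=0\}$. The same law-of-total-probability argument works for decreasing properties, summing over $m'\le m$, and gives $\mathbb{P}(G(n,p)\in\mathcal{B})\ge(\tfrac12-o(1))\,\mathbb{P}(G(n,m)\in\mathcal{B})$. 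Hence $\mathbb{P}(G(n,m)\in\mathcal{B})\le (2+o(1))\,o(1)=o(1)$. Alternatively, Proposition~\ref{proposition: comparison gnp_gnm} gives the bound $6\sqrt m\cdot\mathbb{P}(G(n,p)\in\mathcal B)$ directly, but that requires the expectation bound $e^{-c(n)}$ to beat $\sqrt{n\ln n}$, which fails for slowly growing $c(n)$. So the monotone version is the one to use.

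For the case $c(n)\to-\infty$, the argument is symmetric. Case 2 of the threshold theorem gives isolated vertices in $G(n,p)$ whp, i.e.\ $G(n,p)\notin\mathcal{A}$ whp. Applying the increasing-property inequality to $\mathcal A$ itself gives $\mathbb{P}(G(n,m)\in\mathcal A)\le(2+o(1))\,\mathbb{P}(G(n,p)\in\mathcal A)=o(1)$. Hence $G(n,m)$ has an isolated vertex whp and is therefore disconnected; this last step is deterministic and does not even need the hitting time theorem.

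Two technical points remain, and I expect them to be the only obstacles. First, Proposition~\ref{proposition: comparison gnp_gnm - monotone} needs $m\to\infty$; if $c(n)$ is so negative that $m$ stays bounded, the graph has at most $O(1)$ edges and is trivially disconnected. Second, by monotonicity one may assume $|c(n)|\le\ln\ln n$, which keeps $p\in(0,1)$ and makes the CLT step valid. Beyond these, the decreasing-property variant of the comparison proposition is routine. Note that with this direct route the Bollob\'as--Thomason hitting time theorem is in fact not needed for case 2: in case 1 one could use it as stated, or argue directly from connectivity of $G_m$ for $m\ge m_2$ together with properties of $G_{m_1}$.
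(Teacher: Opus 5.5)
Your proposal is correct and follows the paper's route. You combine the hitting-time theorem $\tau_c(\tilde G)=\tau_1(\tilde G)$ with the isolated-vertex threshold for $G(n,p)$, and note that the disconnected case needs only isolated vertices. The paper leaves the transfer from $G(n,p)$ to $G(n,m)$ implicit, and your explicit handling supplies it: the comparison applied to $\{\delta(G)\ge 1\}$ when $c(n)\to-\infty$, its decreasing analogue applied to $\{\delta(G)=0\}$ when $c(n)\to+\infty$, and the truncation $|c(n)|\le \ln\ln n$.
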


\begin{proof}
We showed that in a random graph process $\tilde{G}$ we have whp $\tau_{1}(\tilde{G})=\tau_{c}(\tilde{G})$, hence

\begin{enumerate}
    \item $m=\binom{n}{2}\cdot\frac{(\ln n+c(n))}{n}$, $c(n)\rightarrow-\infty$, then whp $G$ has isolated vertices, therefore not connected.
    \item $m=\binom{n}{2}\cdot\frac{(\ln n+c(n))}{n}$, $c(n)\rightarrow\infty$, then whp $G$ has no isolated vertices, therefore as $\tau_{1}(\tilde{G})=\tau_{c}(\tilde{G})$ whp we have that $G$ is connected.
\end{enumerate}
\end{proof}

\begin{corollary}
Let $G\sim G(n,p)$ with $p=\frac{(\ln n+c(n))}{n}$. Then:
\begin{enumerate}
    \item If $c(n)\rightarrow-\infty$, then whp G is not connected.
    \item If $c(n)\rightarrow+\infty$, then whp G is connected.
\end{enumerate}
\end{corollary}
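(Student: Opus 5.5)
Both parts come from the corresponding statements for $G(n,m)$, transferred to $G(n,p)$ through the number of edges. Connectivity is monotone increasing and disconnectedness is monotone decreasing. So the cleanest route avoids the $G(n,m)$-to-$G(n,p)$ comparison propositions; instead I will condition directly on $X=|E(G)|\sim \mathrm{Bin}(N,p)$. By the law of total probability,
\[
\mathbb{P}(G\sim G(n,p)\text{ connected})=\sum_{m'}\mathbb{P}(X=m')\,\mathbb{P}(G(n,m')\text{ connected}),
\]
and $\mathbb{P}(G(n,m')\text{ connected})$ is nondecreasing in $m'$, since $G_{m'}$ in a random graph process is distributed as $G(n,m')$ and the process is nested.

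For part 2, where $c(n)\to+\infty$, I first reduce to $c(n)\le \ln\ln n$ by monotonicity in $p$, as was done for isolated vertices. I then set $m_-=\frac{n-1}{2}(\ln n+c(n)/2)$. Since $Np=\frac{n-1}{2}(\ln n+c(n))$, we have $Np-m_-=\frac{n-1}{4}c(n)$. The standard deviation of $X$ is at most $\sqrt{Np}=O(\sqrt{n\ln n})$, which is $o(n\,c(n))$. Chebyshev therefore gives $X\ge m_-$ whp. It follows that
\[
\mathbb{P}(G\text{ connected})\ge \mathbb{P}(X\ge m_-)\,\mathbb{P}(G(n,m_-)\text{ connected})=1-o(1),
\]
by the previous Corollary applied with $c'(n)=c(n)/2\to\infty$.

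For part 1, where $c(n)\to-\infty$, I argue symmetrically. I take $|c(n)|\le\ln\ln n$ (again by monotonicity, now using that disconnectedness is decreasing) and set $m_+=\frac{n-1}{2}(\ln n+c(n)/2)$. This time $m_+-Np=\frac{n-1}{4}|c(n)|\gg\sqrt{Np}$, so whp $X\le m_+$. Hence $\mathbb{P}(G\text{ disconnected})\ge \mathbb{P}(X\le m_+)\,\mathbb{P}(G(n,m_+)\text{ disconnected})=1-o(1)$, using the Corollary with $c(n)/2\to-\infty$. Alternatively, part 1 follows at once from the earlier theorem on isolated vertices: for this $p$, whp $G(n,p)$ has an isolated vertex, and so it is not connected.

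There is one subtle point. The constant-$c$ boundary case is irrelevant here, because $c(n)\to\pm\infty$ leaves room to halve $c(n)$. I must still ensure that $|c(n)|\,n\gg\sqrt{n\ln n}$; this holds since $|c(n)|\to\infty$.
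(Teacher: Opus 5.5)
Your proposal is correct and follows the same route as the paper, which derives this statement in one line from the preceding $G(n,m)$ corollary together with the relation between $G(n,m)$ and $G(n,p)$ for monotone properties. Your version makes the transfer step explicit by shifting $m$ by $\frac{n-1}{4}|c(n)|$ and using Chebyshev to get $X\ge m_-$ (resp.\ $X\le m_+$) whp, which is what actually yields a whp conclusion; the paper's monotone comparison proposition, as stated with $m=Np$, only gives a factor $\frac12-o(1)$.
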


\begin{proof}
From the last theorem and the equivalence between $G(n,m)$ and $G(n,p)$
(connectivity is a monotone property).
\end{proof}

\section{Perfect Matchings in Random Graphs}
\begin{defn}
For a graph $G=(V,E)$, a \textit{matching} in $G$ is a set of pairwise-disjoint edges, i.e. $M\subseteq E$, $\forall e_{1}e_{2}\in M:e_{1}\cap e_{2}=\emptyset$. A \textit{perfect matching} is a matching of size $\frac{\left|V\right|}{2}.$
(Can exist only when $\left|V\right|$ is even.)
\end{defn}

\begin{question}
If $G\sim G(n,p)$ (and n is even), what is the minimal $p(n)$ s.t. $G$ contains a perfect matching whp?
\end{question}

Recall that if $p=\frac{\ln n-\omega (n)}{n}$, where $\omega(n)\rightarrow-\infty$, then whp $G$ has isolated vertices, and therefore doesn't have a perfect matching. The following theorem shows the positive side of this question:

\begin{thm}\label{thm: perfectM - ER} [Erd\H{o}s, Rényi \cite{zbMATH03323793}] 
Suppose $p=\frac{\ln n+\omega(n)}{n}$, where $\omega(n)\rightarrow+\infty$ (and n is even). Then whp there exists a perfect matching in $G\sim G(n,p)$. 
\end{thm}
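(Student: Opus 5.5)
We plan to use Tutte's theorem (or Hall's theorem applied after a bipartite reduction); the cleanest route in the spirit of the course is a direct Tutte-type structural argument. By monotonicity of having a perfect matching, we may assume $\omega(n)\le \ln\ln n$, so $p=\Theta(\ln n/n)$. Tutte's theorem says $G$ has a perfect matching iff for every $S\subseteq V$ the number $o(G-S)$ of odd components of $G-S$ is at most $|S|$. Equivalently, we use the standard minimal-obstruction form: if $G$ has no perfect matching, then there exist sets $A,B$ with $|B|=|A|+1$ such that $N(A)\subseteq B$ (in bipartite language, a Hall violator). For general graphs we instead reduce to a bipartite setting. Randomly split $V$ into halves $V_1,V_2$ of size $n/2$ and consider the bipartite subgraph $H$ of $G$ between them. $H$ is distributed as a random bipartite graph with edge probability $p$. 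A perfect matching of $H$ is a perfect matching of $G$. The catch is that a vertex has only about $np/2=(\ln n+\omega)/2$ neighbours across the split, so isolated vertices in $H$ would appear. For this reason we do \emph{not} split. Instead we work with $G$ directly and verify Tutte's condition via expansion properties.

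We aim to establish the following whp properties of $G\sim G(n,p)$.

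(P1) The minimum degree satisfies $\delta(G)\ge 1$. This follows from the isolated-vertices theorem of Lecture 5.

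(P2) No two vertices of small degree are close. Call a vertex \emph{small} if its degree is below $\ln n/10$. Then whp no two small vertices are within distance $4$, and there are at most $n^{0.2}$ small vertices. We prove this by a first moment computation over paths of length at most $4$, using the Chernoff bound for $\Pr(\mathrm{Bin}(n,p)<\ln n/10)\le n^{-0.9}$.

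(P3) Small sets expand. Every set $S$ with $|S|\le n/(\ln n)^{1/2}$ spans fewer than $2|S|$ edges, i.e. $G$ is locally sparse. The proof is the usual first moment bound $\binom{n}{s}\binom{\binom{s}{2}}{2s}p^{2s}$.

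(P4) Large sets are connected by edges. Every two disjoint sets of size at least $n/(\ln n)^{1/2}$ (or even $\varepsilon n/\ln n \cdot \ln\ln n$) have an edge between them. This is the same union bound as in Theorem \ref{almost hamilton path}.

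From (P1) and (P2), combined with (P3), we derive the key expansion statement: every $S$ with $|S|\le n/4$ satisfies $|N(S)|\ge |S|$. For large $S$ this comes from (P4). We then verify Tutte's condition deterministically, and the counting splits into two cases. Suppose $S$ is a set with $o(G-S)>|S|$; parity gives $o(G-S)\ge|S|+2$, and we choose a maximal such $S$. If $|S|$ is small, the odd components must include many small components. The union of $|S|+2$ components $C_i$ has $N(\bigcup C_i)\subseteq S$, which contradicts expansion as long as the union has size at most $n/4$. If the union is larger than $n/4$, we split it into two groups of components, each of size at least about $n/8$ or so. There is no edge between different components, which contradicts (P4). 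If $|S|$ is large (above $n/\ln n$ say), then there are more than $|S|$ components, so many of them are singletons or tiny. Their union $W$ of size greater than $|S|/2$ has $N(W)\subseteq S$. Its total degree is at least about $|W|\ln n/10$ (mostly), which forces $e(W,S)$ to be large relative to $|W\cup S|$ and contradicts local sparsity (P3), or (P4) when the sizes are big.

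The main obstacle is exactly this deterministic Tutte verification. The delicate range is where $|S|$ is comparable to $n/\ln n$ and the odd components are mostly singletons of small degree. This is where (P2) is essential: small-degree vertices are isolated from each other and have distinct neighbourhoods, so they can be matched greedily first. We would try first to handle it by removing the small vertices together with disjoint chosen neighbours, matching them greedily thanks to (P2). After that, the remaining graph has minimum degree at least $\ln n/20$. On that graph we apply Hall/Tutte with the expansion (P3)–(P4), which now works cleanly with a factor-of-$\ln n$ expansion for small sets.
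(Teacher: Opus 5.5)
\textbf{There is a genuine gap, in (P3), and it is not a matter of constants: as stated, (P3) is false.}

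A fixed set of size $s=n/\sqrt{\ln n}$ already spans about $\binom{s}{2}p\approx \frac{s}{2}\sqrt{\ln n}\gg 2s$ edges. Your first-moment bound evaluates to $\binom{n}{s}\binom{\binom{s}{2}}{2s}p^{2s}\le \bigl(e^3nsp^2/16\bigr)^s=\bigl(\Theta(s\ln^2 n/n)\bigr)^s$. This is small only for $s\ll n/\ln^2 n$; with any fixed constant in place of $2$, the same bound still only reaches $s=o(n/\ln n)$.

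At the other end, (P4) is available only for sets of size at least $Cn\ln\ln n/\ln n$ with $C>2$, because the union bound needs $pa>2\ln(en/a)$. It does not hold with a small $\varepsilon$.

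Hence your key claim $|N(X)|\ge|X|$ for all $|X|\le n/4$ is unproved for $|X|$ between roughly $n/\ln^2 n$ and $n\ln\ln n/\ln n$. This is exactly the range that matters, and your diagnosis of it is off. There are at most $n^{0.2}$ small-degree vertices, so they cannot form most of $\gtrsim n/\ln n$ odd components. The dangerous configuration is a set $X$ of singleton components of \emph{large} degree with $N(X)\subseteq S$ and $|X|>|S|\approx n/\ln n$. Pre-matching the small vertices (your fallback, which is the paper's first step) does not touch this configuration. A factor-$\ln n$ expansion is simply not available at that scale.

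\textbf{The missing ingredient is the paper's property 4.} It says that every $V_0\subseteq\mathrm{LARGE}$ with $|V_0|\le n/\sqrt{\ln n}$ has $|N(V_0)|\ge|V_0|\ln^{1/4}n$. The proof chooses $V_0$ with $|V_0|=k$, a candidate $N(V_0)$ of size $k\ln^{1/4}n$, and $ck\ln n/2$ edges touching $V_0$. The point is that these edges are chosen among only the $\binom{k}{2}+k\cdot k\ln^{1/4}n$ pairs that meet $V_0$, not among all pairs inside $V_0\cup N(V_0)$. This gives a factor $\bigl(O(k\ln^{1/4}n/n)\bigr)^{ck\ln n/2}$, which is strong enough all the way up to $k=n/\sqrt{\ln n}$. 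Equivalently, you can replace (P3) by the true statement that every $U$ with $|U|\le n/\sqrt{\ln n}$ spans at most $\epsilon|U|\ln n$ edges, for a small $\epsilon$ depending on $c$, and apply it to $X\cup N(X)$.

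With this repaired, your architecture does work and is a legitimate variant of the paper's:
\begin{enumerate}
\item Small vertices are pairwise at distance at least $5$. So for $X=X_L\cup X_S$ with $|N(X_L)|\ge D|X_L|$, one gets $|N(X)|\ge(D-3)|X_L|+|X_S|$.
\item Hence expansion holds for all $|X|\le n/4$ with small vertices included, using (P4) once $|X|$ is large.
\item Apply this to the union of the $|S|+1$ smallest odd components of $G-S$. If one chosen component already has size $\ge a$, use the leftover largest odd component together with it and (P4).
\end{enumerate}
This verifies Tutte's condition with no pre-matching and no case split on $|S|$.

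A minor point: with threshold $\ln n/10$ the lower tail is about $n^{-0.67}$, not $n^{-0.9}$. Take a smaller constant, as the paper does.
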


There exists a stronger theorem on the hitting time of a perfect matching. Let us denote by $\tau_{M}(\tilde{G})$ the hitting time of the property ``$G$ contains a perfect matching".

\begin{thm}[Bollobás, Thomason \cite{zbMATH03943863}] \label{PM Bollobas Thomason}
Whp, in a random graph process $\tilde{G}$, $\tau_{M}(\tilde{G})=\tau_{1}(\tilde{G})$
(if n is even).
\end{thm}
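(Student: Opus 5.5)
We will follow the template of the proof of $\tau_c(\tilde G)=\tau_1(\tilde G)$ from the beginning of this lecture. Set $m_1=\frac{n-1}{2}(\ln n-\ln\ln n)$ and $m_2=\frac{n-1}{2}(\ln n+\ln\ln n)$ as there. Trivially $\tau_M(\tilde G)\ge \tau_1(\tilde G)$, so it suffices to show that whp $G_{\tau_1}$ has a perfect matching. The tool will be Tutte's condition, or more conveniently a Hall-type expansion criterion: whp every graph $G_m$ with $m_1\le m\le m_2$ and $\delta(G_m)\ge 1$ "expands well". We then check the Tutte condition $o(G-S)\le |S|$ for all $S$ at time $\tau_1$.

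\textbf{Step 1 (structure at time $m_1$).} From the proof of property \ref{prop: lec6_1} and its analogues we already know that whp $G_{m_1}$ has at most $\ln^2 n$ low-degree vertices. We extend this to a set $\mathrm{SMALL}$ of vertices of degree at most $\delta_0\ln n$ in $G_{m_1}$, for a small constant $\delta_0$. A first-moment computation in $G(n,p_1)$ should give the following whp, transferred to $G(n,m_1)$ via Proposition \ref{proposition: comparison gnp_gnm} (the $O(\sqrt{m})$ loss is absorbed by polynomially small bounds):
(i) $|\mathrm{SMALL}|\le n^{0.1}$;
(ii) no two vertices of $\mathrm{SMALL}$ are within distance $\le 4$ of each other in $G_{m_2}$ (compute in $G(n,p_2)$, which contains $G_{m_1}$ up to coupling);
(iii) whp $G(n,p_1)$ has the expansion property that every set $U$ of at most $n/\ln n$ vertices of degree $\ge\delta_0\ln n$ has $|N(U)|\ge 2|U|$;
(iv) every two disjoint sets of size $n/(4\ln\ln n)$ or so are joined by an edge.
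All of these are monotone-type or union-bound estimates of the style already used in this lecture, for sets of size $k$ with probabilities like $(en/k)^{k}\cdots$.

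\textbf{Step 2 (deterministic matching argument).} Consider $G=G_{\tau_1}$. It contains $G_{m_1}$ and has no isolated vertices, since whp $m_1<\tau_1\le m_2$ by properties \ref{prop: lec6_1} and \ref{prop: lec6_2}. We show $G$ satisfies Hall's condition in the form: $|N(U)|\ge |U|$ for $|U|\le n/2$, up to the standard reduction of a perfect matching in a general graph to expansion. Concretely we use Tutte: suppose $S$ violates it, and take a smallest odd component or a small set $U$ with $|N(U)|<|U|$. For small $U$, split $U$ into SMALL vertices and the rest. By (ii), SMALL vertices have disjoint neighborhoods, each of size $\ge1$ because $\delta\ge1$, and these neighborhoods avoid other SMALL vertices. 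By (iii), the rest expands by a factor $2$. Together this gives $|N(U)|\ge|U|$. For large $U$, of size between $n/\ln n$ and $n/2$, property (iv) forces the complement of $U\cup N(U)$ to be small, which gives expansion again. An alternative cleaner route is Pósa-style: build a matching by first matching SMALL vertices to distinct neighbors via (ii), then deleting them and applying Hall to a random bipartition of the remainder, which is a robust expander.

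\textbf{Main obstacle.} The hard step is the medium-size regime in Step 2 together with parity/Tutte issues for a non-bipartite graph. Translating ``expansion'' into an actual perfect matching needs care: odd components of $G-S$ of size $\ge2$ must be ruled out by connectivity-like estimates (an odd component $C$ with $|C|\le n/2$ and $N(C)\subseteq S$ small). Our first attempt will be the reduction to a random balanced bipartition $V=A\cup B$ and Hall's theorem there. Each non-SMALL vertex keeps $\ge\delta_0\ln n/3$ neighbors across the cut whp by Chernoff, and SMALL vertices are matched beforehand using (ii). Hall's condition for $|U|\le n/4$ then follows from (ii)+(iii), and for $n/4<|U|\le n/2$ it follows from (iv) applied to $U$ and $B\setminus N(U)$, with the symmetric argument on the other side.
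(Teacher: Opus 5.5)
Your reduction is the right one, and it is how the proof of Theorem~\ref{thm: perfectM - ER} (Lemma~\ref{lem: lec6 - main} plus Tutte) upgrades to a hitting-time statement. The paper states Theorem~\ref{PM Bollobas Thomason} without proof, so that proof is the relevant template. Concretely: $\tau_M\ge\tau_1$ trivially, whp $m_1<\tau_1\le m_2$, and you freeze SMALL/LARGE at time $m_1$ while measuring distances in $G_{m_2}\supseteq G_{\tau_1}$. Your (i)--(iv) are true. For (i), use monotonicity: a Markov bound of order $n^{-0.05}$ does not absorb the factor $6\sqrt{m_1}$ of Proposition~\ref{proposition: comparison gnp_gnm}.

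The gap is Step~2, which is where the real work lies. Your preferred route, a uniformly random balanced bipartition followed by Hall, fails. A vertex of degree $D$ has all its neighbours on its own side with probability about $2^{-D}$. This is $n^{-\ln 2}\gg n^{-1}$ already for $D=\ln n$, so the per-vertex Chernoff bound does not survive a union bound over $n$ vertices. In fact the expected number of vertices with no neighbour across the cut is about $\sum_v 2^{-d(v)}\approx n(1-p/2)^{n-1}=n^{1/2+o(1)}$. The main contribution comes from vertices of degree about $\frac12\ln n$, which are not in SMALL. A second-moment check then shows that whp the bipartite graph you want to apply Hall to has isolated vertices.

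The direct Tutte route is only gestured at, and with your parameters it does not close.
\begin{itemize}
\item $|N(U)|\ge|U|$ for all $|U|\le n/2$ is not sufficient for a perfect matching in a non-bipartite graph; you must verify $o(G-S)\le|S|$ itself, as in Lecture~7.
\item (iii) covers only $|U|\le n/\ln n$, and (iv) only sets of size $n/(4\ln\ln n)$. For $n/\ln n<|U|<n/(4\ln\ln n)$ neither applies, so the claim that (iv) makes $V\setminus(U\cup N(U))$ small is unjustified. Nor can (iv) be lowered to size $n/\ln n$: at this density greedy already finds independent sets of size about $n\ln\ln n/\ln n$.
\item Expansion by a factor $2$ is too weak. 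In the Tutte case analysis you only know $|S|<\ell\le\alpha(G)<n/(2\ln\ln n)$. A union of components of volume at most $n/\ln n$ with twice as many neighbours cannot beat that.
\item Even your small-$U$ step fails. If $U_S$ consists of degree-one SMALL vertices and $U_L$ of their neighbours, (iii) gives only $|N(U)|\ge 2|U_L|-|U_S|=|U|/2$.
\end{itemize}

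The fix is to keep your Step~1 but use the paper's parameters, with SMALL/LARGE frozen at $m_1$:
\begin{itemize}
\item Properties~\ref{prop: lec6 - 2}--\ref{prop: lec6 - 5} of Lemma~\ref{lem: lec6 - main} hold whp at $p_1$ by the same computations.
\item Properties~\ref{prop: lec6 - 4} and~\ref{prop: lec6 - 5} are increasing once LARGE is fixed, so they persist to $G_{\tau_1}$.
\item Property~\ref{prop: lec6 - 3} is checked in $G_{m_2}$.
\item Property~\ref{prop: lec6 - 1} holds at $\tau_1$ by definition.
\end{itemize}
The Lecture~7 argument then yields a perfect matching in $G_{\tau_1}$: match SMALL by $M_0$, delete $V(M_0)$, reduce to $|S|\le n/\sqrt{\ln n}$ via $\alpha$, and run the three cases using expansion by $(\ln n)^{1/8}$. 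Alternatively, a large constant expansion factor suffices if it is proved for all $|U|\le\varepsilon n$. This follows from local sparseness (whp no set of $s\le\varepsilon n$ vertices spans $c\,s\ln n$ edges), and it removes the middle-range hole.
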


\begin{proof}[Proof of Theorem \ref{thm: perfectM - ER}]

$G\sim G(n,p)$, $p=\frac{\ln n+\omega(n)}{n}$, $\omega(n)\rightarrow+\infty$. Notice that (due to monotonicity) we can assume that $\omega(n)\leq\ln n$. Fix a vertex $i\in[n]$, and consider $X=d_{G}(i)\sim Bin(n-1,p)$. Note that
\[
\mathbb{E}[X]=p\cdot(n-1)\geq\ln n (1-o(1)).
\]
By Chernoff's inequality, we get:
\[
\mathbb{P}(d_{G}(i)\le c\ln n)\leq\exp(-\Theta(\ln n))=n^{-\Theta(1)}.
\]
Assume $c$ is small enough and define:
\[
\text{SMALL}=\{i:d_{G}(i)\leq c\ln n\},\:\text{LARGE}=[n]\setminus \text{SMALL}.
\]

\begin{lemma}\label{lem: lec6 - main}
Suppose $p=\frac{\ln n+\omega(n)}{n}$, $\omega(n)\rightarrow+\infty$ and suppose $G\sim G(n,p)$. Then:
\begin{enumerate}
    \item $\delta(G)\geq 1$;\label{prop: lec6 - 1}
    \item $\left|\text{SMALL}\right|\leq n^{0.2}$;\label{prop: lec6 - 2}
    \item $\forall u\neq v\in \text{SMALL}:dist_{G}(u,v)\geq5$ (this can be any fixed number);\label{prop: lec6 - 3}
    \item $\forall V_{0}\subseteq \text{LARGE}:\left|V_{0}\right|\leq\frac{n}{\sqrt{\ln n}}\Rightarrow\left|N(V_{0})\right|\geq\left|V_{0}\right|\cdot(\ln n)^{1/4}$;\label{prop: lec6 - 4}
    \item $\forall A,B\subseteq[n]$ disjoint sets s.t. $\left|A\right|=\left|B\right|=\frac{n}{2\sqrt{\ln n}},$ we have: $E_{G}(A,B)\neq\emptyset$.\label{prop: lec6 - 5}
\end{enumerate} 
\end{lemma}
\begin{proof}
\underline{Proof of property \ref{prop: lec6 - 1}}: We have already proven this property.

We use (multiple times) the following claim. There exists
a small enough $c>0$ s.t. for every $ k>0:$
\[
\mathbb{P}(Bin(n-k,p)\leq c\ln n)\leq n^{-0.9}.
\]
Indeed, that probability is:
\begin{align*}
    \sum_{i=1}^{c\ln n}\mathbb{P}(Bin(n-k,p)=i)&\leq(1+c\ln n)\mathbb{P}(Bin(n-k,p)=c\ln n)\\
    &=O(\ln n)\cdot\binom{n-k}{c\ln n}\cdot p^{c\ln n}\cdot(1-p)^{n-k-c\ln n}\\
    &\leq O(\ln n)\cdot\left(\frac{enp}{c\ln n}\right)^{c\ln n}\cdot\exp(-p(n-k-c\ln n))\\
    &\leq O(\ln n)\cdot\left(\frac{2e}{c}\right)^{c\ln n}e^{-0.95\ln n}=O(\ln n)\cdot\left(\left(\frac{2e}{c}\right)^{c}\cdot e^{-0.95}\right)^{\ln n}\\
    &\le n^{-0.9},
\end{align*}
where the first inequality comes from the monotonicity of the tail
in the binomial distribution and the last inequality holds for a sufficiently small $c$.

\medskip
\underline{Proof of property \ref{prop: lec6 - 2}}: from symmetry,
\[
\mathbb{E}[\left|\text{SMALL}\right|]=n\cdot\mathbb{P}(d(1)\leq c\ln n)\leq n\cdot n^{-0.9}=n^{0.1}.
\]
Therefore, by Markov's inequality, $\mathbb{P}(\left|\text{SMALL}\right|\geq n^{0.2})\leq n^{-0.1}$,
so whp $\left|\text{SMALL}\right|\leq n^{0.2}$.

\medskip
\underline{Proof of property \ref{prop: lec6 - 3}}: we start with distance 1:
\begin{align*}
    \mathbb{P}(\exists u,v\in \text{SMALL}:dist_{G}(u,v)=1)&=\mathbb{P}(\exists u,v\in \text{SMALL}:(u,v)\in E(G))\\
    &\leq\underbrace{\binom{n}{2}}_{\text{choosing } u,v}\cdot \underbrace{p}_{(u,v)\in E(G)}\cdot \underbrace{\left(\mathbb{P}(Bin(n-2,p)\leq c\ln n)\right)^{2}}_{\text{pay for } u,v\in \text{SMALL}}\\
    &=O(n\ln n)\cdot \left(n^{-0.9}\right)^2=o(1).
\end{align*}

For distance 2:
\[
\mathbb{P}(\exists u,v\in \text{SMALL}:dist_{G}(u,v)=2)\leq\binom{n}{3}\cdot p^{2}\cdot \left(\mathbb{P}(Bin(n-3,p)\leq c\ln n)\right)^{2}=O(n\ln^{2}n)\cdot n^{-1.8}=o(1).
\]
Similarly, we can show that whp there are no $u,v\in \text{SMALL}$ at
distance 3 or 4 from each other. \renewcommand{\qedsymbol}{}
\end{proof}
\renewcommand{\qedsymbol}{}
\end{proof}
\newpage
\phantomsection
\addcontentsline{toc}{chapter}{Lecture 7}
\lecture{7}{June 7, 2026}{Lecture 7}{Aner Mash}
\setcounter{chapter}{7}
\setcounter{section}{0}

\section{Perfect Matchings in Random Graphs - Continued} 

We continue the proof of Theorem \ref{thm: perfectM - ER}.

\underline{Proof of property \ref{prop: lec6 - 4}}:
Assume that $k\coloneqq |V_0| \leq \frac{n}{\ln^{1/2}n}$. $V_0 \subseteq LARGE$, hence, for every $v\in V_0$, $d(v) \geq c\cdot \ln n$. Thus, $V_0$ is incident to at least $|V_0|\cdot \frac{c\cdot \ln n}{2}=\frac{k\cdot \ln n}{2}$ edges. These edges lie inside $V_0$ or between $V_0$ and $U\coloneqq N_G(V_0)$ --- the external neighborhood of $V_0$ in $G$.


\begin{figure}[h]
    \centering
    \includegraphics[width=0.5\textwidth]{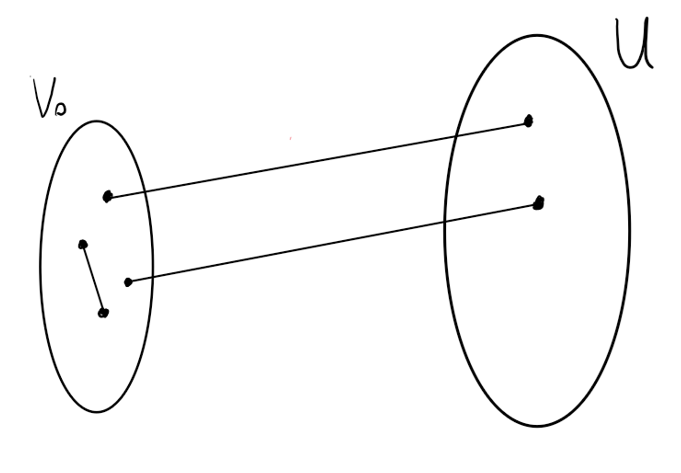}
\end{figure}

\begin{align*}
    \mathbb{P} \left(\exists V_0 \subseteq LARGE,\; |V_0|=k,\;  1\leq k \leq \frac{n}{\ln^{1/2} n},\;  |N(V_0)| \leq k\cdot \ln^{1/4}n\right)\\
    \leq \sum_{k=1}^{n/\ln^{1/2}n} \underbrace{\binom{n}{k}}_{\text{choosing } V_0} \underbrace{\binom{n-k}{k\cdot \ln^{1/4}n}}_{\text{choosing } N_G(V_0) } \underbrace{\binom{\binom{k}{2} +k\cdot k\ln^{1/4}n}{\frac{kc\cdot \ln n}{2}}}_{\text{choosing the edges touching } V_0}  \cdot p^\frac{k\cdot c\ln n}{2}\\
    \leq \sum_{k=1}^{n/\ln^{1/2}n} \left( \frac{3n}{k\cdot \ln^{1/4}n} \right)^{k\cdot \ln^{1/4}n} \left( \frac{3k^2\cdot \ln^{1/4}n}{\frac{k\cdot c\ln n}{2}} \cdot p \right)^\frac{k\cdot c\ln n}{2}\\
    \leq \sum_{k=1}^{n/\ln^{1/2}n} \left[\left(\frac{3n}{k\cdot \ln^{1/4}} \right)^{k\cdot \ln^{1/4}n} \cdot \left( \frac{6k}{c\ln^{3/4}n} \cdot \frac{2\ln n}{n} \right)^\frac{c\ln n}{2}\right]^{k}\\
    = \sum_{k=1}^{n/\ln^{1/2}n} \left[ \left( \frac{3n}{k\cdot \ln^{1/4}n} \right)^{\ln^{1/4}n} \cdot \left(\frac{12}{c} \cdot \frac{k\ln^{1/4}n}{n}\right)^{\frac{c\ln n}{2}} \right] ^k\\
    \leq \sum_{k=1}^{n/\ln^{1/2}n} \left[ \left( \frac{12}{c} \cdot \frac{k\ln^{1/4}n}{n} \right)^ {\frac{c\ln n}{4}} \right]^k = o(1).
\end{align*}

\underline{Proof of property \ref{prop: lec6 - 5}}: It suffices to show that whp, $G\sim G(n,p)$ contains an edge between any two sets $A$ and $B$ satisfying:
\begin{equation*}
    A,B \subseteq[n],\quad A\cap B = \emptyset,\quad |A|=|B|= \frac{n}{2 \ln^{1/2}n}.
\end{equation*}

The probability that this does not happen is at most:
\begin{align*}
    \underbrace{\binom{n}{\frac{n}{2\ln^{1/2}n}}^2}_{\text{choosing } A,B} \cdot \underbrace{\left(1-p\right)^{\left(\frac{n}{2\ln^{1/2}n}\right)^2}}_{E_G(A,B)=\emptyset}&\leq \left( 2e\cdot \ln^{1/2}n\right)^{\frac{n}{\ln^{1/2}n}} \cdot e^{-{\frac{\ln n}{n}}\cdot \frac{n^2}{4\ln n}}=\left(2e\cdot \ln^{1/2}n \right)^{\frac{n}{\ln^{1/2}n}} \cdot e^{-\frac{n}{4}} \\
    &\leq \left(\ln n\right) ^\frac{n}{\ln^{1/2}n} \cdot e^{-\frac{n}{4}} =\exp \left\{ \frac{n\cdot \ln\ln n}{\ln^{1/2}n}-\frac{n}{4} \right\} = o(1).
\end{align*}
This completes the proof of Lemma \ref{lem: lec6 - main}.\hfill \qedsymbol
\medskip

We now prove deterministically that if $G$ is a graph on $n$ vertices ($n$ is even and sufficiently large) satisfying properties \ref{prop: lec6 - 1}-\ref{prop: lec6 - 5}, then $G$ contains a perfect matching, proving Theorem \ref{thm: perfectM - ER}. To that end, we use Tutte's Theorem. Before stating Tutte's Theorem, we recall that for a graph $G=(V,E)$, we denote by $o(G)$ the number of odd connected components (i.e., of odd size) in $G$.

\begin{thm}[Tutte \cite{zbMATH03045906}]
    A graph $G=(V,E)$ contains a perfect matching if and only if for every $S\subseteq V$, we have  $o(G-S) \leq |S|$.
\end{thm}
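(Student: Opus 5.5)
We prove the two directions separately. \emph{Necessity} is a counting argument. Suppose $M$ is a perfect matching of $G$ and $S\subseteq V$. Every odd component $C$ of $G-S$ cannot be perfectly matched inside itself, since $|C|$ is odd. So some vertex of $C$ is matched by $M$ to a vertex outside $C$. That vertex cannot lie in another component of $G-S$, so it lies in $S$. The matching edges arising this way are disjoint, so distinct odd components use distinct vertices of $S$. Hence $o(G-S)\le |S|$.

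For \emph{sufficiency} we follow Lovász's edge-maximal counterexample argument. Assume $G$ satisfies Tutte's condition but has no perfect matching. Taking $S=\emptyset$ gives $o(G)=0$, so $|V|$ is even. Adding edges preserves Tutte's condition, because adding an edge can only merge components of $G-S$. Two odd components merge into an even one, and an odd and an even component merge into an odd one, so $o(G-S)$ never increases. Since $K_{|V|}$ has a perfect matching, we may replace $G$ by an edge-maximal graph on $V$ satisfying Tutte's condition with no perfect matching. Now let $U$ be the set of vertices adjacent to all other vertices.

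The first main step is the case in which every component of $G-U$ is a complete graph. Each even component is perfectly matched internally. Each odd component matches all but one of its vertices internally, and the leftover vertex is matched to a distinct vertex of $U$. This is possible since $o(G-U)\le |U|$ and $U$ is joined to everything. The remaining vertices of $U$ form a clique. Their number is even by parity, because $|V|$ is even and everything else is already matched. Matching them among themselves gives a perfect matching, a contradiction.

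The second main step, which we expect to be the main obstacle, is the case in which some component of $G-U$ is not complete. Then it contains vertices $x,z$ with a common neighbour $y$ and $xz\notin E$. Since $y\notin U$, there is a vertex $w$ with $yw\notin E$. By maximality, $G+xz$ and $G+yw$ have perfect matchings $M_1$ and $M_2$, with $xz\in M_1$ and $yw\in M_2$. Consider the symmetric difference $M_1\triangle M_2$, which is a disjoint union of even alternating cycles. If $xz$ and $yw$ lie in different cycles, we take $M_2$ and swap along the cycle containing $xz$. If they lie in the same cycle $C$, we use the edges $xy$ or $yz$ together with the $M_1$-edges of one arc of $C$ and the $M_2$-edges of the other arc, avoiding both $xz$ and $yw$. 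In either case we obtain a perfect matching of $G$, a contradiction. The delicate point is verifying carefully that this cycle surgery yields a perfect matching in each subcase.
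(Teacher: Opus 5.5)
The notes do not prove this theorem. It is quoted from Tutte's paper and used only as a black box in the perfect-matching argument for $G(n,p)$, so there is no in-paper route to compare with. Your proposal is Lov\'asz's standard edge-maximal proof, and most of it is right:
\begin{itemize}
\item the necessity count;
\item $|V|$ even from $S=\emptyset$;
\item closure of Tutte's condition under adding edges;
\item the case in which every component of $G-U$ is a clique.
\end{itemize}
For the other case, add one line explaining why a connected non-complete component contains $x,y,z$ with $xy,yz\in E$ and $xz\notin E$: take two of its vertices at distance $2$.

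However, your first cycle-surgery subcase is wrong as written. Suppose $xz$ and $yw$ lie on different cycles of $M_1\triangle M_2$, and $C_{xz}$ is the cycle through $xz$. Taking $M_2$ and swapping along $C_{xz}$ gives $M_2\triangle E(C_{xz})$. This uses the $M_1$-edges on $C_{xz}$, so it contains $xz$. It keeps $M_2$ everywhere else, so it still contains $yw$. Both forbidden edges survive. You must swap the other way: take $M_1\triangle E(C_{xz})$, i.e.\ the $M_2$-edges on $C_{xz}$ and $M_1$ elsewhere, or equivalently $M_2\triangle E(C_{yw})$. That is a perfect matching of $G$.

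In the same-cycle subcase your description is correct in outline. Since you flag this as the delicate point, note that the choice of arcs and connecting edge is forced. Deleting $xz$ and $yw$ from $C$ leaves a $y$--$a$ path and a $w$--$b$ path, with $\{a,b\}=\{x,z\}$. Because $yw\in M_2$ and $xz\in M_1$:
\begin{itemize}
\item the first path starts at $y$ with an $M_1$-edge and ends at $a$ with an $M_2$-edge;
\item the second path starts at $w$ with an $M_1$-edge and ends at $b$ with an $M_2$-edge.
\end{itemize}
So the $M_2$-edges of the $y$--$a$ path cover all its vertices except $y$, and the $M_1$-edges of the $w$--$b$ path cover all its vertices except $b$. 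Add $yb\in E(G)$ and the edges of $M_1$ not on $C$ (these cover exactly $V\setminus V(C)$). The result is a perfect matching of $G$ avoiding $xz$ and $yw$. The opposite assignment ($M_1$ on the $y$--$a$ path, $M_2$ on the $w$--$b$ path) would need the edge $aw$, which need not exist. With these two corrections the proof is complete.
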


We use this theorem to argue about existence of the desired matching in $G$. First, for every $v \in \text{SMALL}$, we arbitrarily choose an edge $e_v \in E(G)$ such that $v \in e_v$
(this is possible since property \ref{prop: lec6 - 1} asserts that $\delta(G) \geq 1$).

Note that the chosen edges $e_v$ are disjoint, because if they are not disjoint, then there exist vertices in $\text{SMALL}$ at distance of at most 2 from each other, which is a contradiction to property \ref{prop: lec6 - 3}.
\begin{figure}[H]
    \centering
    \includegraphics[width=0.3\textwidth]{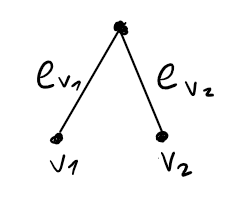}
\end{figure}
Therefore, the set of chosen edges 
$$M_0 = \{e_v\mid v\in SMALL\} \subseteq E(G)$$
is a matching of size $|M_0| = |SMALL| \leq n^{0.2}$ (By property \ref{prop: lec6 - 2}). 

\begin{figure}[H]
    \centering
    \includegraphics[width=0.6\textwidth]{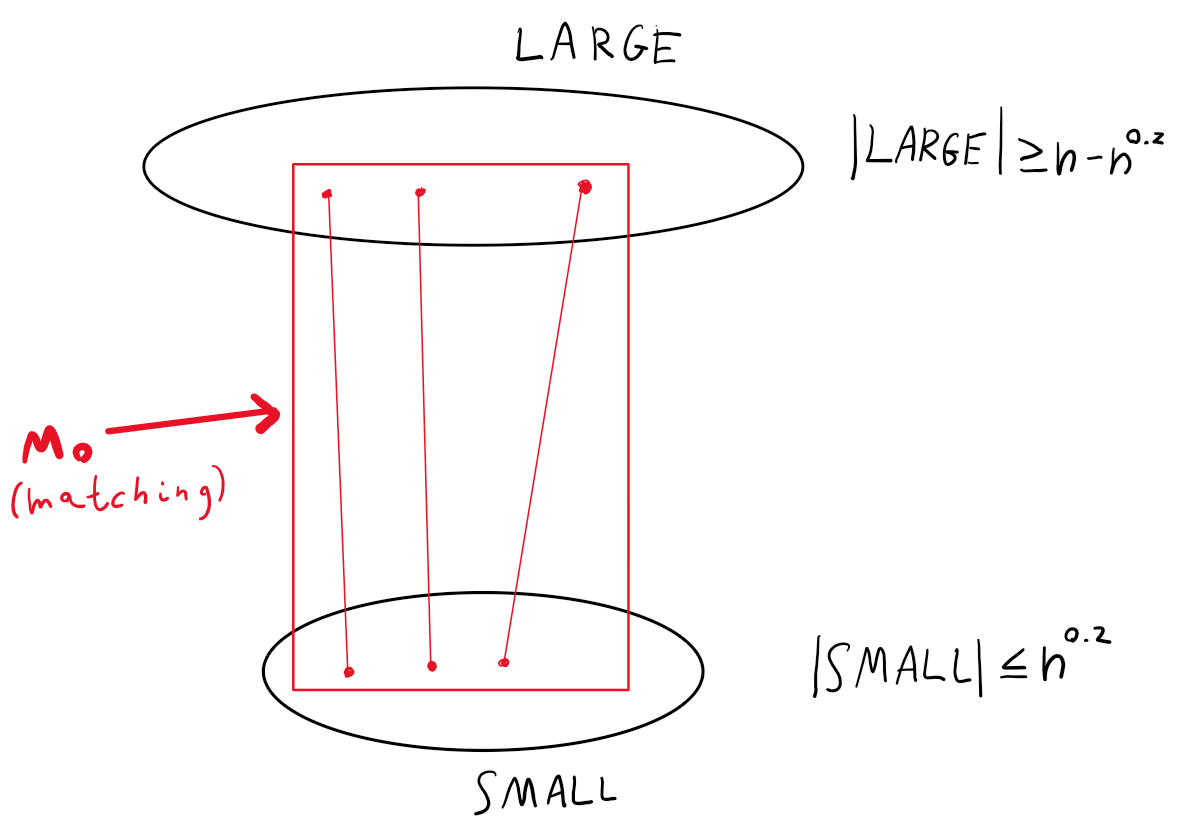}
\end{figure}

We construct a perfect matching $M$ in $G$ such that $M_0 \subseteq M$.
Let $W \subseteq V$ denote the union of the edges of $M_0$.
$$|W|=2\cdot |M_0|= 2\cdot |SMALL| \leq 2\cdot n^{0.2}.$$

For every $v \in V\setminus W$, observe that $v \in LARGE$, hence $d_G(v) \geq c\cdot \ln n$.
Moreover, $v$ has at most one neighbor in $W$, since otherwise there would exist two distinct vertices in $\text{SMALL}$ at distance of at most 4 from each other, contradicting property \ref{prop: lec6 - 3}).

\begin{figure}[H]
    \centering
    \includegraphics[width=0.3\textwidth]{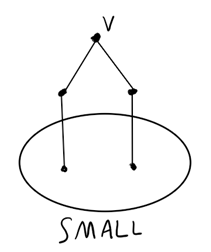}
\end{figure}

Denote $U=V\setminus W$. $U$ satisfies $|U|=n-o(n)$.
The properties of $G[U]$ are:
\begin{enumerate}
    \item [(a)] $\forall V_0 \subseteq U,\ |V_0| \leq \frac{n}{\ln^{1/2}n}:\ |N(V_0,U)| \geq |N(V_0)| - |V_0| \geq |V_0| \cdot \ln^{1/8}n$ (by property \ref{prop: lec6 - 4}).\label{prop: a}
    \item [(b)] For every $A,B\subseteq U$ such that $ A\cup B = \emptyset$ and $|A|,|B| \geq \frac{n}{2\ln^{1/2}n}$, there is an edge between $A$ and $B$ (by property \ref{prop: lec6 - 5}).\label{prop: b}
\end{enumerate}

We show the existence of a perfect matching in $G_1 := G[U]$ by verifying Tutte's condition for $G_1$. Let $S \subseteq U$ with $ k\coloneqq |S| \geq 0$. We need to verify that $o(G_1[U\setminus S]) \leq |S|.$

If $k=0$, we have to show that $G_1$ is connected. Indeed, if $C$ is a connected component of $G_1$, then $|C|>\frac{n}{\ln^{1/2}n}$ (every small set expands itself). Otherwise, $G_1$ has two connected components $C_1 \neq C_2,\ |C_1|,|C_2| \geq \frac{n}{\ln^{1/2}n}$, contradicting property \ref{prop: lec6 - 2} (there is an edge between $C_1$ and $C_2$).

So we can assume that $k>0$. Let $C_1,...,C_\ell$ denote the connected components (of some parity) in $G_1 - S$. It suffices to show that $\ell \leq k$. WLOG, assume that $|C_1| \geq |C_2| \geq \ldots \geq |C_\ell|$.

First, note that $\alpha(G) \leq \frac{n}{\ln^{1/2}n}$. Otherwise, there exists an independent set $I$ in $G$ with $|I| \geq \frac{n}{\ln^{1/2}n}$, and we can find $A,B \subseteq I, \ A\cap B = \emptyset, \ |A|=|B| \geq \frac{n}{2\ln^{1/2}n}$ such that there is no edge between $A$ and $B$, in contradiction to property \ref{prop: lec6 - 5}.

If $G_1-S$ has $\ell$ connected components, then $\alpha (G_1-S)\geq \ell$ (we choose one representative of each connected component).
Hence, $\ell \leq \frac{n}{\ln^{1/2}n}$, so we can assume that $k \leq \frac{n}{\ln^{1/2}n}$.\\

\textbf{Case 1: $\sum_{i=2}^{\ell}|C_i| \leq \frac{n}{\ln^{1/2}n}$}.\\
Let $V_0= \bigcup_{i=2}^{\ell}C_i$. Notice that $\ell-1 \leq |V_0| \leq \frac{n}{\ln^{1/2}n}$ and $N_{G_1}(V_0)\subseteq S$.
From the expansion property \ref{prop: a}, it follows that: 
\[
k=|S| \geq |N_G(V_0)| \geq |V_0| \cdot \ln^{1/8}n \geq (\ell-1) \cdot \ln^{1/8}n,
\]
and in particular $k \geq \ell$, as desired.

\textbf{Case 2: $|V_0| \geq \frac{n}{\ln^{1/2}n}, \ |C_1| \geq \frac{n}{2\ln^{1/2}n}$}.\\
Under these conditions, $G$ contains an edge between $C_1$ and $V_0$ --- a contradiction. \\

\textbf{Case 3: $|\bigcup_{i=2}^{\ell}C_i| = |V_0| \leq \frac{n}{\ln^{1/2}n}, \ |C_1| \leq \frac{n}{2\ln^{1/2}n}$}. \\
Under these conditions, there exists a collection $I \subseteq [\ell]$ of connected components $\{C_i\}_{i\in I}$, such that:
$$\frac{n}{2\cdot \ln^{1/2}n} \leq |\cup_{i\in I}C_i| \leq \frac{n}{\ln^{1/2}n}$$
(We add components to $I$ one by one until the desired total volume is reached).
Hence, the set $\bigcup_{i \in I}C_i$ expands itself:
$$\left|N_{G_i}\left(\cup_{i\in I}C_i\right)\right| \geq |\cup_{i \in I}C_i| \cdot \ln^{1/8}n = \Theta\left(\frac{n}{\ln^{3/8}n}\right).$$
As before:
$$N_{G_1}(\cup_{i \in I}C_i) \subseteq S,$$
and we assumed that $k=|S| \leq \frac{n}{\ln^{1/2}n}$ --- a contradiction.

Overall, We proved that there exists a perfect matching $M_1$ in $G_1=G[U]=G[V\setminus W]$, therefore, the union $M:= M_0 \cup M_1$ is a perfect matching in the entire graph $G$, completing the proof of Theorem \ref{thm: perfectM - ER}.
\hfill $\square$

\section{A generalization of Cayley's formula for graphs with a given maximum degree}

Similar to Cayley's theorem, which determines the number of spanning trees in the complete graph on $n$ vertices, we would like to estimate the number of $k$-vertex subgraphs of a given graph $G$ that are trees. To that end, we use the following definition:

\begin{defn}
    Given a graph $G$, a vertex $v \in V(G)$, and  $k \in \mathbb{N}$, let $\mathcal{T}(v,k)$ denote the collection of trees in $G$ of size $k$ rooted at $v$. Furthermore, we denote $|\mathcal{T}(v,k)| = t(v,k)$.
\end{defn}

\begin{thm}[Cayley's formula]
    Let $2\le n\in \mathbb{N}$. In $K_n$, $t(v,n)=n^{n-2}$, for every $v\in V(K_n)$.
\end{thm}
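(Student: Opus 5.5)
We need to prove Cayley's formula: the number of spanning trees of $K_n$ containing a fixed vertex $v$ equals $n^{n-2}$. Every spanning tree contains every vertex, so $t(v,n)$ is simply the number of labeled trees on $[n]$. We therefore prove that there are $n^{n-2}$ labeled trees on $[n]$. The plan is to use the Prüfer code, a bijection between labeled trees on $[n]$ and sequences in $[n]^{n-2}$.

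\textbf{Encoding.} Given a labeled tree $T$ on $[n]$ with $n\ge 3$, repeat the following $n-2$ times. Remove the leaf with the smallest label, and record the label of its unique neighbour. This yields a sequence $(a_1,\dots,a_{n-2})\in[n]^{n-2}$. The procedure is well defined because every tree with at least $2$ vertices has at least two leaves, and deleting a leaf leaves a tree. The map is trivially valid for $n=2$, where the only tree is a single edge and the code is the empty sequence, matching $2^0=1$.

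\textbf{Key observation.} We show by induction along the deletions that each vertex $u$ appears in the code exactly $\deg_T(u)-1$ times. A vertex is recorded once each time one of its neighbours is deleted while $u$ survives. If $u$ is itself deleted, this happens when its degree has dropped to $1$. If $u$ is one of the two surviving vertices, the one remaining edge accounts for the missing $1$. Consequently, the leaves of the current tree are exactly the surviving vertices that do not appear in the remaining part of the sequence.

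\textbf{Decoding.} Given $(a_1,\dots,a_{n-2})$, for $i=1,\dots,n-2$ let $b_i$ be the smallest label that is neither among $b_1,\dots,b_{i-1}$ nor among $a_i,\dots,a_{n-2}$, and add the edge $b_ia_i$. Finally, join the two labels not among the $b_i$. By the key observation, when this is applied to the code of $T$ it recovers exactly the leaves deleted by the encoding, in the same order. Hence decoding inverts encoding, so the encoding is injective.

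For surjectivity we must show that decoding an arbitrary sequence produces a tree whose code is that sequence. We do this by induction on $n$. The first step removes $b_1$, the smallest label absent from the sequence, and attaches it to $a_1$. The remainder is the decoding of $(a_2,\dots,a_{n-2})$ on the vertex set $[n]\setminus\{b_1\}$, which by induction is a tree with that code. Adding the leaf $b_1$ keeps the graph a tree. In the resulting tree $b_1$ is the smallest leaf, because the leaves are exactly the labels absent from the sequence. So the code of the resulting tree starts with $a_1$ followed by $(a_2,\dots,a_{n-2})$, i.e.\ it is the given sequence.

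Thus encoding is a bijection between labeled trees on $[n]$ and $[n]^{n-2}$, and $t(v,n)=n^{n-2}$. The main care point is the bookkeeping in the key observation (the degree-count characterisation of leaves), since both injectivity and surjectivity rest on it. An alternative route, if that bookkeeping becomes messy, is the double-counting argument of Pitman. Count sequences of directed edges that build a rooted spanning forest into a rooted tree: on one hand there are $t_n\cdot n\cdot (n-1)!$ of them, on the other $\prod_{k=1}^{n-1} n(n-k)=n^{n-1}(n-1)!$. Comparing the two gives $t_n=n^{n-2}$.
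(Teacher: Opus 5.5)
The paper does not prove this statement. Cayley's formula is quoted as a classical fact and used only as an input to the Beveridge--Frieze--McDiarmid bound, where the factor $k^{k-2}$ counts spanning trees of $K_k$.

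Your proof is correct and self-contained. The reduction is right: every spanning tree of $K_n$ contains $v$, so $t(v,n)$ is the number of labelled trees on $[n]$. The degree count (each $u$ is recorded exactly $\deg_T(u)-1$ times) is properly justified. It gives both that decoding inverts encoding and, by induction on $n$, that every sequence in $[n]^{n-2}$ is a code.

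Two small points in the surjectivity step should be tightened.
\begin{enumerate}
\item You assert that the leaves of the decoded tree $T$ are exactly the labels absent from $(a_1,\dots,a_{n-2})$. This should not come from applying the key observation to $T$, since that presupposes the sequence is the code of $T$, which is what you are proving. Derive it from the inductive hypothesis instead. Applying the key observation to $T'$, whose code is $(a_2,\dots,a_{n-2})$, gives $\deg_{T'}(u)=1+\#\{j\ge 2: a_j=u\}$ for $u\neq b_1$. Adding the edge $b_1a_1$ then raises the degree of $a_1$ by one and gives $b_1$ degree $1$.
\item The induction really runs over arbitrary linearly ordered label sets of size $n-1$ (here $[n]\setminus\{b_1\}$). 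This is harmless by order-preserving relabelling, but it should be stated.
\end{enumerate}

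The two routes you offer trade off differently. The Pr\"ufer bijection costs the bookkeeping above but gives more than the count. From your key observation one reads off the refined formula $\binom{n-2}{d_1-1,\dots,d_n-1}$ for the number of trees with degree sequence $(d_1,\dots,d_n)$. Pitman's double count, which you state correctly as $t_n\cdot n\cdot(n-1)!=\prod_{k=1}^{n-1}n(n-k)$, is shorter and needs no inverse map. It is also closer in spirit to the notes, since their Beveridge--Frieze--McDiarmid argument is likewise a double count of labelled trees.
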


\begin{example}
    For $k=1$: $t(v,1)=1$. For $k=2$: $t(v,2)=d_G(v)$.
\end{example}

\begin{thm} [Beveridge, Frieze, McDiarmid \cite{zbMATH01288356}]
    Let $G$ be a graph with maximum degree $\Delta$, and let $v \in V(G)$. Then for all $k \ge 2$:
    $$t(v,k) \leq \frac{k^{k-2} \cdot  \Delta^{k-1}}{(k-1)!} \leq (e\Delta)^{k-1}.$$
\end{thm}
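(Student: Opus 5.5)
We will prove the bound by injecting subtrees of $G$ into labelled trees on $[k]$ together with ``local choices'' of bounded size. The plan is to encode every tree $T\in\mathcal{T}(v,k)$ by a pair (an abstract tree shape on $[k]$, a sequence of neighbour choices in $G$), and then count pairs.

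Given $T$, we label its vertices by $1,\dots,k$ in a canonical way starting from $v$. This produces a labelled tree $T^*$ on $[k]$ rooted at $1$, together with an embedding $\phi:[k]\to V(G)$ with $\phi(1)=v$. For the labelling we use BFS on $T$ from $v$, ordering children by a fixed order of $V(G)$. Then, processing $T^*$ in BFS order, each non-root vertex $i$ has a parent $j<i$, and $\phi(i)$ is one of at most $\Delta$ neighbours of $\phi(j)$ in $G$. So for a fixed shape $T^*$, the number of embeddings is at most $\Delta^{k-1}$.

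The number of rooted labelled trees on $[k]$ with root $1$ is $k^{k-2}$ by Cayley's formula. This gives the crude bound $t(v,k)\le k^{k-2}\Delta^{k-1}$, which loses exactly the factor $(k-1)!$. The main obstacle is recovering this $(k-1)!$.

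For that we drop the canonical labelling and count labelled versions instead. Each unlabelled subtree $T$ containing $v$ admits exactly $(k-1)!$ bijections $V(T)\setminus\{v\}\to\{2,\dots,k\}$, with $v\mapsto 1$. Each bijection gives a distinct pair $(T^*,\phi)$ with $\phi$ an injective embedding of $T^*$ into $G$ rooted at $v$. Hence
\[
(k-1)!\,t(v,k)\le \sum_{T^*}\#\{\phi:\ \phi(1)=v,\ \phi \text{ is a homomorphism } T^*\to G\}\le k^{k-2}\Delta^{k-1}.
\]
Here the inner count is at most $\Delta^{k-1}$: we embed the vertices of $T^*$ in BFS order from $1$, and each new vertex is a neighbour of its already-embedded parent. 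Dividing by $(k-1)!$ gives the first inequality.

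For the second inequality we use $(k-1)!\ge ((k-1)/e)^{k-1}$ and $k^{k-2}=k^{k-1}/k$:
\[
\frac{k^{k-2}}{(k-1)!}\le \frac{e^{k-1}}{k}\Big(\frac{k}{k-1}\Big)^{k-1}\le \frac{e^{k-1}}{k}\cdot e\le e^{k-1},
\]
where the last step needs $e\le k$. For $k=2$ this fails, so we check it directly: $k^{k-2}/(k-1)!=1\le e$. The bound $(k-1)!\ge((k-1)/e)^{k-1}$ is the standard consequence of $e^x\ge x^m/m!$, or of Stirling.
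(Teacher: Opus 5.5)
Your proposal is correct and follows essentially the same argument as the paper. You label each tree in all $(k-1)!$ ways with the root fixed, pass to a labelled tree on $[k]$ (at most $k^{k-2}$ choices by Cayley), and recover the embedding in BFS order with at most $\Delta$ choices per vertex; your final estimate via $(k-1)!\ge((k-1)/e)^{k-1}$, with $k=2$ checked separately, is only a cosmetic variant of the paper's step $k^{k-2}\le(k-1)^{k-1}$.
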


\begin{proof}
    For a tree $T \in \mathcal{T}(v,k)$, consider a labeling $\varphi$ of the vertices of $T$ satisfying:
    $$ \varphi(v)=k,\ \varphi:V(T) \to [k],\ \varphi \text{ is a bijection}.$$ 
    For every $T \in \mathcal{T}(v,k)$ there exist $(k-1)!$ such labelings. Therefore, the total number of pairs $(T, \varphi)$, where $T \in \mathcal{T}(v,k)$ and $\varphi$ is a valid labeling, is:
    $$|\mathcal{T}(v,k)| \cdot (k-1)! = t(v,k)\cdot (k-1)!\ .$$
    Now, given a pair $(T, \varphi)$, we construct a spanning tree $T'$ in $K_k$ as follows:
    $$(i,j) \in E(T') \iff \varphi(x)=i,\ \varphi(y)=j\ \text{for}\ x\neq y\in V(T)\ \text{and}\ (x,y)\in E(T).$$
    
\begin{figure}[H]
    \centering
    \includegraphics[width=0.5\textwidth]{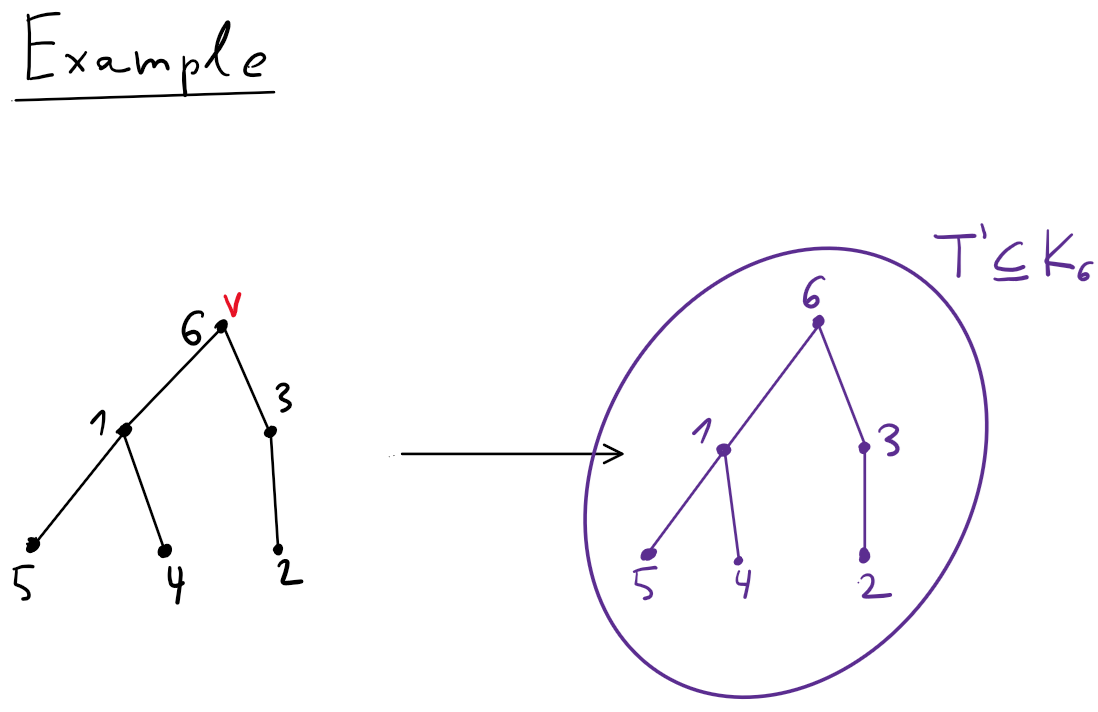}
\end{figure}

    We now bound from above the number of pairs $(T, \varphi)$ that yield a tree $T'$ in $K_k$.
    We run BFS on the tree $T'$ starting from vertex $k$.
    When we reach (in the BFS) a vertex $\ell \in [k-1]$, we have to define $\varphi^{-1}(\ell)$: we find the parent $i \in [k]$ of $\ell$ in $T'$, and choose $\varphi^{-1}(\ell)$ from among the (available) neighbors of $\varphi^{-1}(i)$ --- there are at most $\Delta$ options.
    Hence, there are at most $\Delta^{k-1}$ ways to choose $\varphi^{-1}$. We obtain:
    $$t(v,k) \cdot (k-1)! \leq k^{k-2} \cdot \Delta^{k-1} \implies t(v,k) \leq \frac{k^{k-2} \cdot \Delta^{k-1}}{(k-1)!}.$$
    Finally, we note that for all $k \ge 2$, one can verify that
    $$ \frac{k^{k-2}}{(k-1)!} \leq \frac{(k-1)^{k-1}}{(k-1)!} \leq e^{k-1} \implies t(v,k) \leq (e\Delta)^{k-1}.$$
\end{proof}

\begin{remark}
    We have proved:
    $$\Delta(G) \leq \Delta \implies t(v,k) \leq \frac{k^{k-2}\cdot \Delta^{k-1}}{(k-1)!}.$$
    It is easy to see that in fact the essentially same proof yields the following:
    Let $G$ be a graph with $\delta(G) = \delta \ge k$, then for every $v \in V$ and for every integer $k \ge 2$:
    $$t(v,k) \geq \frac{(\delta -k)^{k-1} \cdot k^{k-2}}{(k-1)!}.$$
    In particular, if $G$ is a $d$-regular graph and $d \ge k$, then:
    $$\frac{(d -k)^{k-1} \cdot k^{k-2}}{(k-1)!} \leq t(v,k) \leq \frac{d^{k-1}\cdot k^{k-2}}{(k-1)!}. $$
\end{remark} 

\section{The Binary (Hyper)cube}

\begin{defn}
    For every integer $d \ge 1$, we define the \textit{binary (hyper)cube} $Q^d$ as follows:  
    \begin{enumerate}
        \item $V(Q^d)=  \left\{0,1\right\}^d$;
        \item $\forall \bar{x}, \bar{y} \in V(Q^d):  (\bar{x},\bar{y})\in E(Q^d) \iff d_H(x, y) = 1$, i.e., $\bar{x}$ and $\bar{y}$ differ in exactly one coordinate.
    \end{enumerate}
\end{defn}

\begin{remark}
    $Q^d$ can be constructed from $Q^{d-1}$ by taking two disjoint copies of $Q^{d-1}$ and connecting the corresponding vertices in both copies by a matching.
\end{remark}

\begin{defn}
    For $\bar{x}\in V(Q^d)$, we define its \textit{weight}: 
    $$|\bar{x}|\coloneqq | \left\{1\leq i \leq d\mid x_i=1\right\} |.$$
\end{defn}

\begin{figure}[H]
    \centering
    \includegraphics[width=0.5\textwidth]{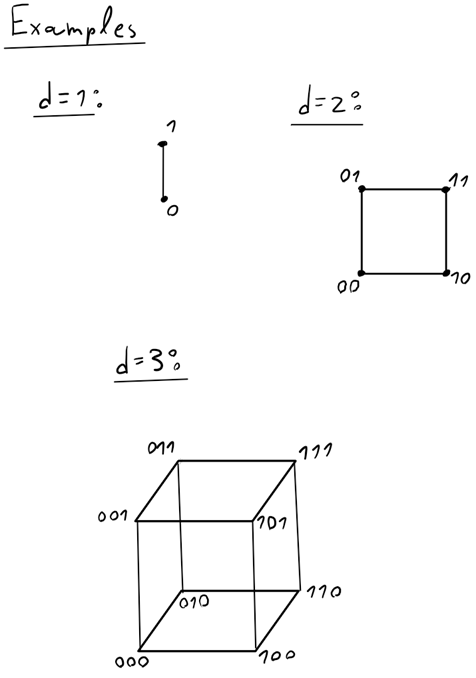}
\end{figure}

\subsection{Basic properties of $Q^d$}
\begin{enumerate}
    \item $n=|V(Q^d)|=2^d$.
    \item $Q^d$ is a $d$-regular graph.
    \item $Q^d$ is a bipartite graph, with sides: 
    $$O=\left\{\bar{x}\in V(Q^d): |\bar{x}|\ \text{is odd}\right\},\quad E=\left\{\bar{x}\in V(Q^d): |\bar{x}|\ \text{is even}\right\}.$$
    \item $Q^d$ is $d$-connected.
    \item In $Q^d$, there exist a perfect matching and a Hamilton cycle.
\end{enumerate}

\subsection{Isoperimetric Inequalities in $Q^d$}

\begin{defn}
    Let $G=(V,E)$ be a graph and let $\emptyset \neq S \subsetneq V$.
    \begin{enumerate}
        \item The \textit{vertex boundary of S} is $N(S)= \left\{v\in V\setminus S\mid v \ \text{has a neighbor in }S\right\}$.
        \item The \textit{edge boundary of S} is $\partial S= \left\{e \in E\mid |e\cap S| =\allowbreak |e \cap \bar{S}| = 1\right\}$.
    \end{enumerate}
\end{defn}

\begin{figure}[H]
    \centering
    \includegraphics[width=0.5\textwidth]{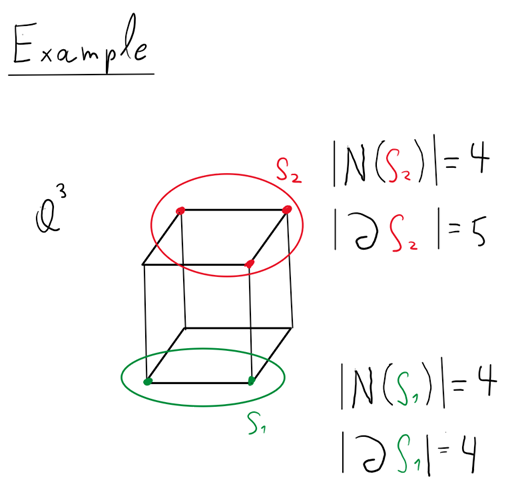}
\end{figure}

The isoperimetric properties of a graph look as follows:\\
\textbf{Vertex Isoperimetry:}
$$\forall S\subseteq V, 0<|S|=s<|V|:\ |N(S)| \geq f(s).$$
\textbf{Edge Isoperimetry:} 
$$S \subseteq V, 0<|S|=s<|V(G))|:\ |\partial S| \geq g(s).$$

\subsubsection{Edge Isoperimetric properties of $Q^d$}

We begin with the following observation:
\begin{obs}
    $Q^d$ is a $d$-regular graph, hence for every $S\subseteq V(Q^d)$:
    $$d\cdot |S| = \sum_{v\in S}d(v)=2\cdot e(S) + |\partial S| \implies |\partial S| = d\cdot |S| - 2\cdot e(S).$$
    That is, it suffices to bound $e(S)$ from above.
\end{obs}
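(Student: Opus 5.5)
This observation is a direct double count of the degree sum over $S$, so the plan is short. Fix $S\subseteq V(Q^d)$ and compute $\sum_{v\in S} d(v)$ in two ways. Since $Q^d$ is $d$-regular (basic property 2 above), each term equals $d$, so the sum is $d\cdot|S|$.

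For the second count, write the sum as the number of pairs $(v,e)$ with $v\in S$, $e\in E(Q^d)$ and $v\in e$. I would classify each edge $e$ by $|e\cap S|\in\{0,1,2\}$:
\begin{itemize}
\item an edge with both endpoints in $S$ (counted in $e(S)$) contributes exactly $2$ pairs;
\item an edge with exactly one endpoint in $S$ is, by definition, an edge of $\partial S$ and contributes exactly $1$ pair;
\item an edge with no endpoint in $S$ contributes nothing.
\end{itemize}
Summing the contributions gives $\sum_{v\in S}d(v)=2e(S)+|\partial S|$.

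Equating the two counts yields $d|S|=2e(S)+|\partial S|$, and rearranging gives $|\partial S|=d|S|-2e(S)$.

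The only point to check is that the case split matches the definition of $\partial S$. An edge $e$ with $|e\cap S|=1$ automatically has $|e\cap\bar S|=1$, since edges have two endpoints. So "exactly one endpoint in $S$" is equivalent to $e\in\partial S$.

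The final remark then follows immediately. Since $d|S|$ is fixed once $|S|$ is fixed, a lower bound on $|\partial S|$ is equivalent to an upper bound on $e(S)$. I expect no real obstacle.
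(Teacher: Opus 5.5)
Your proposal is correct and is exactly the paper's justification: $d$-regularity gives $\sum_{v\in S}d(v)=d|S|$, and the degree sum splits as $2e(S)+|\partial S|$ by counting each edge inside $S$ twice and each boundary edge once. This is precisely your case split on $|e\cap S|\in\{0,1,2\}$, and your check that $|e\cap S|=1$ forces $|e\cap \bar S|=1$ correctly matches the paper's definition of $\partial S$.
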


\begin{thm}[Harper \cite{zbMATH03353132}]\label{thm: Harper}
    For every $d \ge 1$ and every $S \subseteq V(Q^d)$:
    $$e_{Q^d}(S) \leq \frac{1}{2}|S|\cdot \log_2|S|.$$
    Thus, $|\partial S| \geq |S|\cdot(d- \log_2|S|)$.
\end{thm}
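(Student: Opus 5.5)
We argue by induction on $d$, using the recursive structure of $Q^d$ noted in the remark above: $Q^d$ consists of two copies of $Q^{d-1}$ joined by a perfect matching. The claim to prove is $e(S)\le \frac12 |S|\log_2|S|$ for every $S\subseteq V(Q^d)$, with the convention that the bound is $0$ when $|S|\le 1$. The consequence for $|\partial S|$ then follows at once from the Observation $|\partial S|=d|S|-2e(S)$.

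\textbf{Base case and splitting.} The base case $d=1$ (or even $d=0$) is immediate. For $|S|=2$ there is at most one edge, and $\frac12\cdot 2\cdot 1=1$. For the inductive step, fix $S\subseteq V(Q^d)$. Split $V(Q^d)$ according to the last coordinate into $V_0$ and $V_1$, each inducing a copy of $Q^{d-1}$. Put $S_0=S\cap V_0$ and $S_1=S\cap V_1$, with $a=|S_0|$, $b=|S_1|$, and assume without loss of generality $a\ge b$. The edges of $S$ are of three kinds: those inside $S_0$, those inside $S_1$, and matching edges between them. There are at most $b=\min(a,b)$ matching edges, since each vertex of $S_1$ has exactly one matching partner. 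By induction,
\[
e(S)\le \tfrac12 a\log_2 a+\tfrac12 b\log_2 b + b .
\]

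\textbf{Key inequality.} It therefore suffices to prove the purely analytic statement that for all integers $a\ge b\ge 0$,
\[
a\log_2 a + b\log_2 b + 2b \le (a+b)\log_2(a+b),
\]
with $0\log 0=0$. This is the step where the real work lies. The case $b=0$ is trivial. Otherwise, divide by $a+b$ and set $x=\frac{b}{a+b}\in(0,\frac12]$. The inequality becomes
\[
2x\le -x\log_2 x-(1-x)\log_2(1-x)=H(x),
\]
where $H$ is the binary entropy function. Both sides agree at $x=0$ and at $x=\frac12$, where each equals $1$. Moreover $H$ is concave on $[0,\frac12]$, while $2x$ is linear. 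A concave function lies above the chord joining two of its points, so $H(x)\ge 2x$ throughout $[0,\frac12]$. This proves the inequality and completes the induction.

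The main obstacle is recognizing the correct reduction: charge at most $\min(a,b)$ crossing edges and reduce to the entropy inequality $H(x)\ge 2x$. Once that is in place, the rest is routine. As a sanity check, equality holds for subcubes: $|S|=2^k$ gives $e(S)=k2^{k-1}$. This confirms that the bound is tight and that the inequality must be sharp at $x=\frac12$, which it is.
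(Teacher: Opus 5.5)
Your proof is correct and complete. The steps all check out: the split along the last coordinate into two copies of $Q^{d-1}$, the bound of $\min(a,b)$ on the matching edges, the reduction to $H(x)\ge 2x$ on $[0,\frac12]$ with $x=\frac{b}{a+b}$, and the chord argument from concavity of the binary entropy. It is, however, a genuinely different route from the notes. The notes only state this theorem, citing Harper, and never prove the sharp constant.

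In Lecture~8 they prove only the weaker bound $e(S)\le |S|\log_2|S|$, in two steps:
\begin{enumerate}
    \item Repeatedly delete low-degree vertices to obtain a subgraph of $Q^d[S]$ with minimum degree at least $e(S)/|S|$.
    \item Show that any $S'$ with $\delta(Q^d[S'])\ge \delta$ satisfies $|S'|\ge 2^{\delta}$, by growing distance layers $A_i$ around a vertex and proving $|A_i|\ge\binom{\delta}{i}$.
\end{enumerate}
That argument is local and yields a structural lemma of independent interest. But the passage to a minimum-degree subgraph halves the degree, so it can never reach the tight constant $\frac12$.

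Your induction on the dimension uses the product structure of the cube and loses nothing; equality holds for subcubes. It also recovers the notes' minimum-degree lemma directly: if $\delta(Q^d[S])\ge\delta$, then $\frac{\delta|S|}{2}\le e(S)\le \frac12|S|\log_2|S|$, so $|S|\ge 2^{\delta}$.

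The sharp form is what the notes actually rely on later. The perfect-matching argument for $Q^d_p$ in Lecture~10 uses $e_{Q^d}(W_{\mathcal O},V\setminus W_{\mathcal E})\ge \frac m2(d-\log_2 m)$ for $m$ close to $2^d$. There the weak bound $\frac m2(d-2\log_2 m)$ would be negative and useless. So your argument supplies something the notes only cite.
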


\begin{remark}
    The above bound is tight for a $k$-dimensional subcube ($k \le d$), which is defined as follows:
    $$S=\left\{\bar{x} \in V(Q^d)\mid x_1=x_2=...=x_{d-k}=0\right\}$$
    (in general, fixing $d-k$ coordinates to arbitrary values).
    
    Indeed, $|S|=2^k$ and every $v\in S$ has exactly $k$ neighbors in $S$. Thus,
    \[
    e(S)=\frac{1}{2}|S|\cdot k=\frac{1}{2}\cdot 2^k\cdot k=\frac{1}{2}|S|\cdot \log_2|S|.
    \]
\end{remark}
\newpage
\phantomsection
\addcontentsline{toc}{chapter}{Lecture 8}
\lecture{8}{June 14, 2026}{Lecture 8}{Itay Markbreit}
\setcounter{chapter}{8}
\setcounter{section}{0}

\section{Edge Isoperimetric Inequalities for the Hypercube - Continued}

We show a (slightly) weaker version of Harper's result.

\begin{thm}\label{thm: weak Harper}
    Suppose that $S \subseteq V(Q^d)$. Then
    \[
    e(S)\le |S|\log_2 |S|.
    \]

\end{thm}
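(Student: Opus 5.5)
We will prove the bound $e(S)\le |S|\log_2|S|$ by induction on the dimension $d$, using the recursive structure of $Q^d$ noted after the definition of the hypercube. The base case $d=1$ is immediate: $S$ has at most $2$ vertices, and $e(S)\le 1\le 2\log_2 2$ when $|S|=2$. Otherwise $e(S)=0$, since $|S|\le 1$ forces $|S|\log_2|S|=0$, with the convention $0\log_2 0=0$.

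For the inductive step, split $Q^d$ along the last coordinate into two copies $Q_0,Q_1$ of $Q^{d-1}$, joined by the perfect matching $x\mapsto x\oplus e_d$. Write $S_0=S\cap V(Q_0)$ and $S_1=S\cap V(Q_1)$, with $a=|S_0|$, $b=|S_1|$, and assume without loss of generality $a\ge b$. The edges spanned by $S$ are of three kinds: those inside $S_0$, those inside $S_1$, and matching edges between them. Since the matching edges form a matching, there are at most $\min(a,b)=b$ of the third kind. The induction hypothesis then gives
\[
e(S)\le a\log_2 a + b\log_2 b + b .
\]

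It remains to check the purely numerical inequality $a\log_2 a+b\log_2 b+b\le (a+b)\log_2(a+b)$ for integers $a\ge b\ge 0$. If $b=0$ this is trivial. For $b\ge1$, rewrite the claim as
\[
b \le a\log_2\frac{a+b}{a} + b\log_2\frac{a+b}{b}.
\]
Since $a\ge b$, we have $\frac{a+b}{b}\ge 2$, so the second term alone is at least $b\log_2 2=b$. The first term is nonnegative, so the inequality holds. This closes the induction.

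The only point needing care is the numerical inequality in the last step, and the crude bound used there already suffices. We in fact obtain the stronger statement with the $b\log_2\frac{a+b}{b}$ term, which shows the argument has slack. Harper's sharp constant $\tfrac12$ would require a finer numerical estimate, but that is not needed here.
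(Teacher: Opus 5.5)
Your proof is correct, but it takes a different route from the paper's.

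\textbf{The paper's route.} The paper never uses the recursive structure of $Q^d$. It first shows that if $\delta(Q^d[S_0])\ge\delta$ then $|S_0|\ge 2^{\delta}$. To do this it takes the distance layers $A_i$ of $S_0$ around a fixed vertex and double counts the edges between consecutive layers, obtaining $|A_{i+1}|\ge\frac{\delta-i}{i+1}|A_i|$ and hence $|A_i|\ge\binom{\delta}{i}$. It then uses the general fact that a graph of average degree $D$ contains a subgraph of minimum degree at least $D/2$. Applying both to $Q^d[S]$, whose average degree is $2e(S)/|S|$, gives $|S|\ge 2^{e(S)/|S|}$. The minimum-degree lemma is tight for subcubes, so the factor $2$ lost relative to Harper is lost exactly in the average-to-minimum-degree reduction. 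As structured, the paper's argument therefore stops at constant $1$.

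\textbf{What your route buys.} Your induction on the dimension uses the same splitting the paper applies to prove Theorem~\ref{isop_large sets_Q^d}. It is shorter and fully self-contained. It also upgrades to the sharp bound of Theorem~\ref{thm: Harper} with one more line. Put $x=b/(a+b)\in[0,\tfrac12]$ and $H(x)=-x\log_2 x-(1-x)\log_2(1-x)$. The needed inequality $a\log_2\frac{a+b}{a}+b\log_2\frac{a+b}{b}\ge 2b$ then reads $H(x)\ge 2x$. This holds by concavity of $H$, since $H(0)=0$ and $H(\tfrac12)=1$.

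\textbf{What the paper's route buys.} It yields a structural statement of independent interest: every subgraph of the cube with minimum degree $\delta$ has at least $2^{\delta}$ vertices. That fact is proved from the local layered structure around a single vertex, not from the decomposition of $Q^d$ into two copies of $Q^{d-1}$ joined by a matching.
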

In order to prove this theorem, we show the following two lemmas:

\begin{lemma}
    Let $S\subseteq V(Q^d)$. If the minimum degree of the induced subgraph $Q^d[S]$ is $\delta$, then
    \[
    |S|\ge 2^\delta .
    \]
\end{lemma}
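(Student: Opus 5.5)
We argue by induction on $d$, using the recursive structure of the cube noted in the remark above: $Q^d$ is two copies of $Q^{d-1}$ joined by a perfect matching. If $\delta=0$ the claim $|S|\ge 1$ is trivial for nonempty $S$ (and we assume $S\neq\emptyset$ throughout). For $d=1$ we have $\delta\le 1$, and if $\delta=1$ then $S=V(Q^1)$, so $|S|=2=2^\delta$.

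For the induction step, assume $\delta\ge 1$. Since $Q^d[S]$ has an edge, there is a coordinate $i$ on which some two vertices of $S$ differ. We split $V(Q^d)$ along coordinate $i$ into the two subcubes
\[
V_0=\{\bar x: x_i=0\},\qquad V_1=\{\bar x: x_i=1\},
\]
each isomorphic to $Q^{d-1}$, and set $S_j=S\cap V_j$. Both $S_0$ and $S_1$ are nonempty. Each vertex $v\in S_j$ has at most one neighbor outside $V_j$, namely its matching partner across coordinate $i$. Hence its degree inside $Q^{d-1}[S_j]$ is at least $\delta-1$. By the induction hypothesis applied to $S_j$ inside the $(d-1)$-dimensional cube $V_j$, we get $|S_j|\ge 2^{\delta-1}$ for $j=0,1$. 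Therefore
\[
|S|=|S_0|+|S_1|\ge 2\cdot 2^{\delta-1}=2^\delta .
\]

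The only delicate point is guaranteeing that both halves are nonempty, so that the induction hypothesis gives a nontrivial bound for each; splitting on an arbitrary coordinate could put all of $S$ on one side. We resolve this by choosing the splitting coordinate $i$ to be the direction of an edge of $Q^d[S]$, which exists because $\delta\ge1$. The endpoints of that edge then lie in different halves.

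One should also check that the induction hypothesis is stated for any subcube, not just the standard copy of $Q^{d-1}$. This is immediate: each $V_j$ is isomorphic to $Q^{d-1}$ by deleting coordinate $i$, and this isomorphism preserves induced subgraphs and degrees.
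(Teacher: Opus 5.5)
Your proof is correct, but it takes a different route from the paper. You induct on the dimension: you split $Q^d$ along the direction of an edge of $Q^d[S]$, so both halves $S_0,S_1$ are nonempty and each vertex loses at most one unit of degree (its single matching neighbor across coordinate $i$). The induction hypothesis, used in the monotone form ``minimum degree at least $\delta-1$ forces at least $2^{\delta-1}$ vertices,'' then doubles the bound.

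The paper does no induction on $d$. It fixes $\bar v\in S$ and layers $S$ by Hamming distance, $A_i=\{\bar u\in S:\mathrm{dist}(\bar u,\bar v)=i\}$. It then double counts edges between consecutive layers, using that a vertex at distance $i$ from $\bar v$ has at most $i$ cube-neighbors at distance $i-1$ and that each layer is independent. This gives $|A_{i+1}|\ge\frac{\delta-i}{i+1}|A_i|$, hence $|A_i|\ge\binom{\delta}{i}$ and $|S|\ge\sum_{i=0}^{\delta}\binom{\delta}{i}=2^\delta$.

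Your argument is shorter. It uses only the fact that $Q^d$ is two copies of $Q^{d-1}$ joined by a perfect matching, which is the same coordinate splitting the paper uses later to prove $|\partial S|\ge|S|$. The paper's layering argument gives a stronger, local conclusion: around every vertex of $S$, the set contains at least as many vertices at each distance $i$ as a $\delta$-dimensional subcube does.
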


\begin{proof}

Fix a vertex $\bar{v}\in S$. For every $0\le i\le \delta$, define
\[
A_i=\{\bar{u}\in S\mid\text{dist}_{Q^d}(\bar{u},\bar{v})=i\}.
\]
Notice that (for example)
\[
A_0=\{\bar{v}\},\quad A_1=N(\bar{v})\cap S.
\]
Observe that the sets $A_i$ are pairwise disjoint, hence
\[
|S|\ge \sum_{i=0}^{\delta}|A_i|.
\]
For every $i\ge 0$, all vertices in $A_i$ have the same parity, hence $A_i$ is an independent set in $Q^d$. Therefore, the neighbors of each vertex $\bar{u}\in A_i$ lie in $A_{i-1}\cup A_{i+1}$. Moreover, $\bar{u}$ sends at most $i$ edges into $A_{i-1}$, thus, at least $\delta-i$ edges into $A_{i+1}$ (since the degree of $Q^d[S]$ is at least $\delta$). Similarly, every $w\in A_{i+1}$ incident to at most $i+1$ edges from $A_i$. Therefore,
\[
|A_i|(\delta-i)
   \le |E(A_i,A_{i+1})|\le |A_{i+1}|(i+1),
\]
implying
\[
|A_{i+1}|
   \ge
   \frac{\delta-i}{i+1}|A_i|.
\]
Observe that for every $0\le i\le \delta$,
\[
\frac{\binom{\delta}{i+1}}{\binom{\delta}{i}}=\frac{\delta-i}{i+1},
\]
hence, it is easy to verify (by induction on $i$) that for every $0\le i\le \delta$,
\[
|A_i|
   \ge
   \binom{\delta}{i}.
\]
Consequently,
\[
|S|
   \ge
   \sum_{i=0}^{\delta}|A_i|
   \ge
   \sum_{i=0}^{\delta}\binom{\delta}{i}
   =
   2^\delta.\qedhere
\]
\end{proof}

\begin{lemma}
    If $G$ is a graph with average degree $d$, then $G$ contains a subgraph $G'$ whose minimum degree satisfies
    \[
    \delta(G') \ge \frac d2.
    \]
\end{lemma}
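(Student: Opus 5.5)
We will use the standard vertex-deletion argument. Start with $G_0 = G$ and repeatedly delete a vertex whose current degree is strictly less than $\frac d2$, as long as such a vertex exists. Let $G'$ be the graph remaining when the process stops. By construction, every vertex of $G'$ has degree at least $\frac d2$ in $G'$. It therefore remains to show that $G'$ is nonempty, i.e.\ that the process cannot delete every vertex.

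To see this, track the quantity $\Phi(H) = e(H) - \frac d2 |V(H)|$ along the process. Initially, since the average degree of $G$ is $d$, we have $e(G) = \frac{d}{2}|V(G)|$, so $\Phi(G_0)=0$. When we delete a vertex of degree $r<\frac d2$, the number of edges drops by $r$ and the number of vertices drops by one. Hence $\Phi$ changes by $-r+\frac d2 > 0$, so $\Phi$ strictly increases with every deletion.

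If all vertices were eventually deleted, we would reach the empty graph, where $\Phi = 0$. But at least one deletion would have occurred (assuming $G$ has at least one vertex), so $\Phi$ would have to be strictly positive there. This is a contradiction. Hence $G'$ is nonempty, and it is the desired subgraph with $\delta(G')\ge \frac d2$.

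There is essentially no obstacle. The only point needing care is the degenerate case: if $d=0$ the claim is trivial (take $G'=G$), and for $d>0$ the strict increase of $\Phi$ is exactly what rules out emptiness. The resulting $G'$ is in fact an induced subgraph, which is the form that will be convenient when combining with the previous lemma to prove Theorem~\ref{thm: weak Harper}.
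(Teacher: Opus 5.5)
Your proof is correct and is essentially the paper's argument. The paper runs the same deletion process and, assuming every vertex is eventually removed, sums the degrees at deletion time to get $e(G) < \frac{d}{2}|V(G)|$, contradicting average degree $d$. Your potential $\Phi(H)=e(H)-\frac{d}{2}|V(H)|$ is the same count written as bookkeeping.
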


\bigskip

\begin{proof}

We construct a sequence
\[
G=G_0,G_1,\ldots
\]
as follows.

As long as $G_i\neq\varnothing$, choose a vertex $v$ whose degree in $G_i$ is smaller
than $\frac d2$ (if exists), and define
\[
G_{i+1}=G_i-\{v\}.
\]

If the process terminates with a non-empty graph $G_t$, then every vertex of
$G_t$ has degree at least $\frac d2$, and therefore we are done, by taking $G'=G_t$.

Thus, it remains to show that the process cannot terminate with the empty graph.

Suppose, for contradiction, that eventually $G_t=\varnothing$.
Then every deleted vertex had degree smaller than $\frac d2$ at the moment it
was removed.

Therefore the total number of removed edges is less than
\[
|V(G)|\cdot \frac d2.
\]

On the other hand, the total number of removed edges is exactly $e(G)$. Hence
\[
e(G)<|V(G)|\frac d2,
\]
which contradicts the fact that the average degree of $G$ is $d$.
\end{proof}

\begin{proof}[Proof of Theorem \ref{thm: weak Harper}]
Let $S\subseteq V(Q^d)$. Since the average degree of the induced subgraph $Q^d[S]$ is
\[
\frac{2e(S)}{|S|},
\]
the previous lemma implies that there exists $S_0\subseteq S$ such that the induced subgraph $G[S_0]$ has minimum degree at least
\[
\frac{e(S)}{|S|}.
\]
By the first lemma, $|S_0|\ge 2^{e(S)/|S|}$.
Since $|S_0|\le |S|$, we obtain $|S| \ge 2^{\,e(S)/|S|}$.
Taking logarithms,
\[
\log_2 |S|
\ge
\frac{e(S)}{|S|},
\]
and therefore
\[
e(S)\le |S|\log_2 |S|.\qedhere
\]
\end{proof}

\begin{remark}
    If $S$ is a $(d-1)$-dimensional subcube, then
    \[
    e(S)
    =
    2^{d-1}\log_2\!\left(2^{d-1}\right)
    =
    (d-1)2^{d-1}.
    \]
    In particular, Theorem \ref{thm: weak Harper} gives a weak/meaningless bound for large sets $S\subseteq V(Q^d)$.
\end{remark}

The following theorem gives another isoperimetric bound that treats large subsets of vertices in $Q^d$.

\begin{thm}\label{isop_large sets_Q^d}
    For every $S\subseteq V(Q^d)$ satisfying $0<|S|\le 2^{d-1}$, we have
    \[
    |\partial S|\ge |S|.
    \]
\end{thm}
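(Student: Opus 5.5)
The approach is induction on $d$, using the recursive structure of $Q^d$ noted in the remark after its definition: $Q^d$ consists of two copies $Q_0,Q_1$ of $Q^{d-1}$, where $Q_j=\{\bar{x}: x_d=j\}$, joined by the perfect matching $\bar{x}\leftrightarrow \bar{x}\oplus e_d$. The base case $d=1$ is immediate: $|S|=1$, and the single vertex has its one edge in $\partial S$.

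For the inductive step, let $0<|S|\le 2^{d-1}$. Write $S_j=S\cap Q_j$ and $s_j=|S_j|$, and assume by symmetry that $s_0\ge s_1$. The boundary $\partial S$ splits into three disjoint parts.
\begin{itemize}
\item The first part is the boundary of $S_0$ inside $Q_0$, denoted $\partial_0 S_0$.
\item The second part is the boundary of $S_1$ inside $Q_1$, denoted $\partial_1 S_1$.
\item The third part consists of matching edges with exactly one endpoint in $S$. There are at least $|s_0-s_1|=s_0-s_1$ of these, since at most $s_1$ vertices of $S_0$ can be matched into $S_1$.
\end{itemize}

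\textbf{Main case: $s_0\le 2^{d-2}$.} Then also $s_1\le 2^{d-2}$, so the induction hypothesis applies in both copies whenever the sets are nonempty, and an empty set contributes $0$. This gives
\[
|\partial S|\ge s_0+s_1+(s_0-s_1)\ge s_0+s_1=|S|.
\]

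\textbf{Obstacle case: $s_0>2^{d-2}$.} The induction hypothesis cannot be applied to $S_0$ directly. Instead, apply it to the complement $Q_0\setminus S_0$ inside $Q_0$. This set has size $2^{d-1}-s_0<2^{d-2}$ and has the same boundary in $Q_0$. If it is nonempty, we get $|\partial_0 S_0|\ge 2^{d-1}-s_0$. If it is empty, the bound $|\partial_0 S_0|\ge 0=2^{d-1}-s_0$ holds trivially.

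Since $s_0+s_1\le 2^{d-1}$, we have $s_1\le 2^{d-1}-s_0<2^{d-2}$, so the induction hypothesis gives $|\partial_1 S_1|\ge s_1$. Summing the three parts,
\[
|\partial S|\ge (2^{d-1}-s_0)+s_1+(s_0-s_1)=2^{d-1}\ge |S|.
\]

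Handling $S_0$ via its complement is the one non-routine step. The matching-edge count $s_0-s_1$ exactly compensates for the loss this incurs, and the hypothesis $|S|\le 2^{d-1}$ is used only in this case.
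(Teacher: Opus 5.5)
Your proof is correct and follows essentially the same route as the paper: induction on $d$, splitting $Q^d$ along one coordinate into two copies of $Q^{d-1}$ joined by a perfect matching, applying the induction hypothesis in each copy (to the complement when a part exceeds half its subcube), and adding the at least $|s_0-s_1|$ matching edges of $\partial S$. The only difference is cosmetic: the paper labels the smaller part $S_0$ and merges your two cases into one inequality chain by bounding the larger part's internal boundary by $\min\{|S_1|,|V_1\setminus S_1|\}\ge |S_0|$, whereas you split on whether $s_0>2^{d-2}$ and also handle empty parts explicitly.
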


\begin{remark}
    The theorem is tight when $S$ is a $(d-1)$-dimensional subcube.
\end{remark}

\begin{proof}

We prove the statement by induction on $d$. The case $d=1$ is trivial. Let $d>1$ and let
\[
S\subseteq V(Q^d),\quad 0<|S|\le 2^{d-1}.
\]
Define
\[
S_0=\{\bar{x}\in S\mid x_1=0\},
\quad
S_1=\{\bar{x}\in S\mid x_1=1\}.
\]
Then
\[
S=S_0\sqcup S_1.
\]
Without loss of generality, assume $|S_0|\le |S_1|$. We further define
\[
V_0=\{\bar{x}\in V\mid x_1=0\},
\quad
V_1=\{\bar{x}\in V\mid x_1=1\}.
\]
Notice that $S_0\subseteq V_0, S_1\subseteq V_1$ and $|V_0|=|V_1|=2^{d-1}$. By the induction hypothesis, since
\[
|S_0|\le \frac12 |S|,
\quad \text{and}\quad
|S_0|\le \frac12 |V_0|,
\]
we have
\[
|E(S_0,V_0\setminus S_0)|\ge |S_0|,
\quad
|E(S_1,V_1\setminus S_1)|
\ge
\min\{|S_1|,\;|V_1\setminus S_1|\}
\ge |S_0|.
\]

Furthermore, $Q^d$ contains a perfect matching between $V_0$ and $V_1$. In this matching, exactly $|S_1|$ edges are incident to vertices of $S_1$, and at least $|S_1|-|S_0|$ of them have their other endpoint in $V_0 \setminus S_0$. Thus, at least $|S_1|-|S_0|$ of such edges belong to $\partial S$.

\begin{figure}[H]
    \centering
    \includegraphics[width=0.5\textwidth]{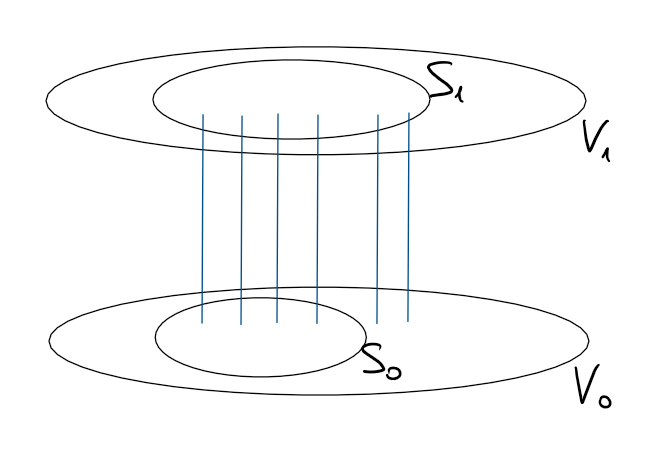}
\end{figure}

To conclude,
\[
|\partial S|
\ge
|E(S_0,V_0\setminus S_0)|
+
|E(S_1,V_1\setminus S_1)|
+
(|S_1|-|S_0|)
\ge
|S_0|+|S_0|+(|S_1|-|S_0|)
=
|S_0|+|S_1|
=
|S|.\qedhere
\]
\end{proof}

\section{Vertex Isoperimetric Inequalities for the Hypercube}

For general interest, we also present a vertex isoperimetric inequality. We begin by exhibiting a subset of vertices in the hypercube which serves as an example attaining equality in the isoperimetric inequality that we will present later.

\begin{example}
    For $0\le k\le d$, define
    
    \[
    B_k=
    \{\bar x\in V(Q^d)\mid |\bar x|\le k\},
    \]
    a ball of radius $k$ around $\bar{0}$. The number of vertices of weight at most $k$ is
    \[
    |B_k|
    =
    \sum_{i=0}^{k}
    \binom{d}{i}.
    \]
    Moreover,    
    \[
    N(B_k)
    =
    \{\bar x\in V(Q^d)\mid |\bar x|=k+1\},
    \]
    and therefore
    \[
    |N(B_k)|
    =
    \binom{d}{k+1}.
    \]
\end{example}

\begin{defn}
    We define the \textit{lexicographic order} on the vertices of $Q^d$ as follows:

    For $\bar x\neq \bar y\in V(Q^d)$,
    
    \[
    \bar x <_{\mathrm{lex}} \bar y
    \iff
    x_{i_0}=0,\ y_{i_0}=1, \quad \text{where} \quad i_0=
    \min\{1\le i\le d\mid x_i\neq y_i\,\}.
    \]
\end{defn}

\begin{defn}
    We define the \textit{simplicial order} on the vertices of $Q^d$ as follows:

    \[
    \bar x <_{\mathrm{sim}} \bar y
    \iff
    |\bar x|<|\bar y|,\quad \text{or} \quad (|\bar x|=|\bar y|
    \quad\text{and}\quad
    \bar x <_{\mathrm{lex}} \bar y).
    \]
\end{defn}

\begin{example}
    For $d=3$, the simplicial order on $V(Q^3)$ is

    \[
    \sigma=
    (000,001,010,100,011,101,110,111).
    \]
\end{example}

\begin{thm}[Harper \cite{zbMATH03254683}]
    Let $A \subseteq V(Q^d)$.
    Let $B$ be the set of the first $|A|$ vertices in the simplicial order on $V(Q^d)$. Then
    \[
    |N(A)| \ge |N(B)|.
    \]
    In particular, if    
    \[
    |A|=\sum_{i=0}^{k} \binom{d}{i},
    \quad
    0 \le k < d,
    \]
    then
    \[
    |N(A)| \ge \binom{d}{k+1}.
    \]
\end{thm}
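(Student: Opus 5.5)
The plan is the classical compression argument, by induction on $d$. The idea is to transform $A$ step by step into the initial segment $B$ of the simplicial order without increasing $|N(A)|$. It is cleaner to work with the closed neighbourhood $\Gamma(A)=A\cup N(A)$, since $|N(A)|=|\Gamma(A)|-|A|$ and $|A|=|B|$. So the target statement is $|\Gamma(A)|\ge|\Gamma(B)|$. The small cases $d=1,2$ are checked directly.

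\textbf{Step 1: $i$-compression.} For a coordinate $i$, I define the $i$-compression $C_i(A)$ as follows. Split $A$ according to the value of $x_i$ into two sections $A_0^{(i)},A_1^{(i)}$, each viewed as a subset of $Q^{d-1}$. Replace each section by the initial segment of the simplicial order on $Q^{d-1}$ of the same size. Then $|C_i(A)|=|A|$. I will show $|\Gamma(C_i(A))|\le|\Gamma(A)|$ using the decomposition
\[
\Gamma(A)_0=\Gamma_{d-1}(A_0)\cup A_1,\qquad \Gamma(A)_1=\Gamma_{d-1}(A_1)\cup A_0 .
\]
By the induction hypothesis, $|\Gamma_{d-1}(A_j)|$ does not increase when $A_j$ is replaced by its initial segment. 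Moreover, initial segments of the simplicial order are nested, and the closed neighbourhood of an initial segment is again an initial segment. Hence the union of two initial segments is simply the larger of the two. So after compression each section of $\Gamma$ has size $\max\{|\Gamma_{d-1}(\cdot)|,|\cdot|\}$, which is at most the size of the corresponding union before compression.

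\textbf{Step 2: compressed sets.} Repeated compression strictly decreases $\sum_{x\in A}\mathrm{rank}(x)$ unless nothing changes, so the process terminates. I therefore reach a set $A'$ with $|\Gamma(A')|\le|\Gamma(A)|$ that is $i$-compressed for every $i$.

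\textbf{Main obstacle.} The remaining task is to show that a set compressed in every direction either is an initial segment of the simplicial order, or belongs to a tiny explicit family of exceptions. In the exceptional cases one can verify $|\Gamma(A')|\ge|\Gamma(B)|$ by hand. The standard exception is roughly "all vertices of weight $\le k$ except the last one in order, plus the first vertex of weight $k+1$". Handling this step requires a careful combinatorial analysis of how the simplicial order restricts to sections, comparing the sections by taking $i$ to be the first and last coordinates.

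\textbf{The "in particular" part.} This follows immediately from the main inequality. When $|A|=\sum_{i\le k}\binom{d}{i}$, the initial segment $B$ is the ball $B_k$. By the preceding Example, $|N(B_k)|=\binom{d}{k+1}$.
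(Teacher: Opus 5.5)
The notes only cite Harper for this theorem and give no proof, so your proposal has to be judged on its own. As it stands it has two genuine gaps.

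\textbf{The order in Step~1.} Step~1 relies on the claim that the closed neighbourhood of an initial segment is again an initial segment. This is false for the simplicial order as defined in the notes, where vertices of equal weight are compared by ordinary binary lex (a $0$ at the first differing coordinate comes first). In $Q^4$, $\Gamma(\{0000,0001\})$ consists of the five vertices of weight at most $1$ together with $0011,0101,1001$. This is not an initial segment, because $0110$ precedes $1001$.

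In fact the theorem itself fails for that order. The first eight vertices form $B=\{0000,0001,0010,0100,1000,0011,0101,0110\}$, and $|N(B)|=7$ (the three remaining vertices of weight $2$ and all four of weight $3$). By contrast, $A=\{0000,0001,0010,0100,1000,1100,1010,1001\}$ has $|N(A)|=6$.

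You must therefore work with Harper's order: among vertices of equal weight, the one with a $1$ at the first differing coordinate comes first (in $Q^4$: $1100,1010,1001,0110,0101,0011$).
\begin{itemize}
\item For that order your claim holds, but it still needs a proof. It reduces to showing that the upper shadow of an initial segment of a layer is an initial segment of the next layer.
\item You also tacitly need that this order, restricted to a section $\{z: z_i=j\}$, is the simplicial order of $Q^{d-1}$. This is what makes your rank-decrease argument work, and it is needed again below.
\item The ``in particular'' part is unaffected, since a full ball is an initial segment in either order.
\end{itemize}

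\textbf{The compressed sets.} The step you call the main obstacle is the heart of the proof, and you only assert it. The missing idea is short.

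Suppose $A$ is $i$-compressed for every $i$, and $x\notin A$, $y\in A$ with $x<y$. Then $x$ and $y$ cannot agree in any coordinate $i$. If they did, they would lie in the same $i$-section, where $A$ is an initial segment, and this would force $x\in A$. So $y$ is the antipode of $x$.

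Applying this to every such pair shows that there is only one pair, and that $y$ is the immediate successor of $x$. Hence $|x|\le |y|=d-|x|\le |x|+1$, since otherwise a vertex of weight $|x|+1$ would lie between them.

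Now identify vertices with their supports. Two cases remain.
\begin{itemize}
\item For $d=2k+1$, $A$ must be the ball of radius $k$ with its last vertex $\{k+2,\dots,2k+1\}$ replaced by $\{1,\dots,k+1\}$.
\item For $d=2k$, $A$ must consist of all vertices of weight less than $k$ and all weight-$k$ vertices containing $1$, with $\{1,k+2,\dots,2k\}$ replaced by $\{2,\dots,k+1\}$. Your description omits this even case.
\end{itemize}
A direct count shows that the closed neighbourhoods of these two sets exceed that of the initial segment of the same size by exactly $k$ and $k-1$, respectively. Without this classification and the check of the exceptional sets, your compression procedure does not yet yield $|\Gamma(A)|\ge|\Gamma(B)|$.
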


\section{The Model $G_p$}

\begin{defn}
    Let $G=(V,E)$ be a finite graph, and let $0 \le p \le 1$. We define a probability distribution on the subgraphs $H \subseteq G$ as follows: for every edge $e\in E(G)$,
    \[
    \mathbb{P}(e\in E(H)) = p.
    \]
    That is, each edge is chosen independently with probability $p$.

    Equivalently, for every $H\subseteq G$, we can define:     
    \[
    \mathbb{P}(G_p = H)
    =
    p^{|E(H)|}
    (1-p)^{\,|E(G)|-|E(H)|}.
    \]
\end{defn}

\begin{remark}
    The value of $p$ may depend on the parameters of the graph $G$.
\end{remark}

\begin{remark}
    If $G=K_n$ (the complete graph on $n$ vertices), then $G_p$ is exactly $G(n,p)$.
\end{remark}

\subsection{Typical Questions about $G_p$}

\begin{enumerate}
\item What is the typical size of the connected components of $G_p$? For which values of $p$ is the existence of a giant component guaranteed whp?

\item What are the typical combinatorial properties of $G_p$? For example, perfect matching, connectivity, Hamilton cycle, etc.

\item What is the critical probability for which a fixed graph $H$
appears in $G_p$ whp?
\end{enumerate}

We focus on the following question:
\begin{question}
    We consider a $d$-regular graph $G$, where $d$ can be a constant satisfying $d\ge 3$ or $d\to \infty$. What is the critical probability $p$ of the existence of the giant component in $G_p$?
\end{question}

\underline{Guess/Explanation}: Choose a vertex $v$ of $G$ and expose the connected component of $G_p$ containing $v$, denoted by $C_v$. To that end, we perform a search process from $v$ (BFS or DFS), while revealing the edges of $G_p$. If we get (during the process) to a vertex $u$ in $C_v$ (due to an edge entering $u$), we expect to have $d-1$ edges leaving $u$ and going outside the currently explored connected component $C_v$. Intuitively, by the GW process, if the expected number of edges leaving $u$ is $(d-1)p\eqqcolon c<1$, then one may expect that no large component will emerge from $v$. On the other hand, if $(d-1)p = c > 1$, then the situation changes dramatically, and one expects that $C_v$ will be large, with probability that bounded away from 0.

We therefore might guess that the critical probability for the emergence of a giant component in $G_p$ is
\[
p^*=\frac{1}{d-1}.
\]

\begin{remark}
    Notice that for large $d$ we have
    \[
    \frac{1}{d-1}
    \approx
    \frac{1}{d},
    \]
    whereas for constant values of $d$, there is a substential difference between $\frac{1}{d-1}$ and $\frac{1}{d}$.
\end{remark}

\subsection{Sub-Critical Percolation}

\begin{thm}
    Let $G$ be a $d$-regular graph on $n$ vertices, and let $\epsilon>0$ be a fixed constant. Set $p=\frac{1-\epsilon}{d-1}$. Then whp (as $n\to\infty$), every connected component
    $L_i$ of $G_p$ satisfies
    \[
    |L_i|
    \le
    \frac{9\ln n}{\epsilon^2}.
    \]
\end{thm}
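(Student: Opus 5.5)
We will take a union bound over vertices: it suffices to show that for each fixed $v\in V(G)$, with $k\coloneqq\lceil 9\ln n/\epsilon^2\rceil$,
\[
\mathbb{P}(|C_v|\ge k)=o(1/n).
\]
There are two natural routes: counting trees via the Beveridge--Frieze--McDiarmid bound, or a BFS exploration dominated by a binomial. We choose the exploration, since it handles the exponent cleanly.

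\textbf{Exploration.} We run BFS from $v$ in $G_p$, exposing edges only when queried. Whenever a vertex $u\ne v$ is processed, it was discovered through an edge from its parent, which has already been exposed. Hence at most $d-1$ new edges from $u$ are queried, and $v$ itself queries at most $d$. If $|C_v|\ge k$, then the first $k-1$ processed vertices must have discovered at least $k-1$ new vertices in total. The number of queries made by these $k-1$ vertices is at most $d+(k-2)(d-1)\le (k-1)(d-1)+1$, and each query succeeds independently with probability $p$.

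Coupling the query answers with i.i.d.\ Bernoulli$(p)$ bits, as in the DFS/BFS exposure from Lecture 4, we get
\[
\mathbb{P}(|C_v|\ge k)\le \mathbb{P}\bigl(\mathrm{Bin}((k-1)(d-1)+1,\,p)\ge k-1\bigr).
\]
The mean of this binomial is $\mu=((k-1)(d-1)+1)\frac{1-\epsilon}{d-1}\le (1-\epsilon)(k-1)+1$.

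\textbf{Chernoff estimate.} We apply Chernoff's upper-tail inequality from Lecture 1 with deviation $a=k-1-\mu\ge \epsilon(k-1)-1\approx \epsilon k$. The subtlety is that the stated bound $e^{-a^2/(2\mu)+a^3/(2\mu^2)}$ is only useful when $a\ll\mu$, which holds for small $\epsilon$. For a cleaner argument, one can use the standard form $\mathbb{P}(X\ge (1+\eta)\mu)\le e^{-\eta^2\mu/3}$ for $\eta\le 1$, with $\eta\approx \epsilon/(1-\epsilon)$. This gives an exponent of about $\epsilon^2 k/(3(1-\epsilon))\ge \epsilon^2 k/3=3\ln n$. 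The probability is then at most $n^{-3+o(1)}=o(1/n)$, and the union bound over $n$ vertices finishes the proof. For $\epsilon$ close to $1$, where $\eta>1$, we will either use the bound $(e/(1+\eta))^{(1+\eta)\mu}$ type estimate or note that monotonicity in $p$ lets us reduce to $\epsilon\le 1/2$ with room to spare; alternatively, $p$ tiny makes components trivially small.

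\textbf{Main obstacle.} The principal difficulty is this Chernoff step. We must match the constant $9$ against the specific version of Chernoff stated in the notes, including the $+1$ slack from the root's extra edge and the $-1$ in $k-1$. We expect the constant $9$ to have been chosen generously so that any reasonable form works. A minor point is that the binomial domination must be justified via the exposure coupling, since the number of queries is random. We handle this by padding with dummy bits so that exactly $(k-1)(d-1)+1$ bits are always used.
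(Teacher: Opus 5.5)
Your proof is correct and is essentially the paper's argument. Both explore the component with a search process fed by i.i.d.\ Bernoulli$(p)$ bits, observe that discovering about $k$ new vertices uses at most about $k(d-1)+1$ queries, and apply Chernoff to a binomial with mean roughly $(1-\epsilon)k$. The only difference is bookkeeping: you run a separate BFS from each vertex and union-bound over the $n$ starting vertices (needing $o(1/n)$ each), whereas the paper runs one global exploration and union-bounds over the at most $m\le n^2$ windows of length $kd-(k-1)$ in the bit sequence, obtaining $n^2e^{-\epsilon^2k/4}=o(1)$.
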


\begin{proof}
    We run a search process on $G$ (BFS or DFS) and ``feed" it with random bits. Let $m = |E(G)|$ and notice that $m=\frac{nd}{2}\le n^2$. Let $\bar{X}=(X_i)_{i=1}^{m}$ be a sequence of independent random variables satisfying
    \[
    \mathbb{P}(X_i=1)=p,
    \quad
    \mathbb{P}(X_i=0)=1-p.
    \]
    
    When the algorithm asks whether the $i$-th edge (the order is determined by the execution of the algorithm) of $G$ falls into $G_p$, if $X_i=1$, then $e_i\in E(G_p)$, and if $X_i=0$, then $e_i\in E(G_p)$. At the end of the algorithm, we obtain a random subgraph $G_p$, together with its connected components.
    
    Fix $k=k(n,\epsilon)$, whose value will be chosen later. Suppose there exists a connected component $K$ with $|K|>k$. Let us consider the precise moment during the execution of the algorithm at which the $(k+1)$-st vertex of $K$ was discovered. Since, in each connected component, the first vertex comes "for free", by that moment we have obtained exactly $k$ positive answers. In addition, note that
    \begin{enumerate}
        \item we queried only edges incident to the first $k$ vertices (according to the order in which the algorithm discovered them) of $K$. 
        
        \item $K$ contains a tree that spans the first $k$ vertices in $K$.
    \end{enumerate}

    Hence, we have queried at most $kd-(k-1)$ edges (where $k-1$ is a lower bound on the number of edges inside these $k$ vertices).

    Therefore, $\bar{X}$ must contain a segment of length $kd-(k-1)$, with at least $k$ positive answers, i.e. there are $k$ indices in this segment for which $X_i=1$.

    \begin{figure}[H]
        \centering
        \includegraphics[width=0.5\textwidth]{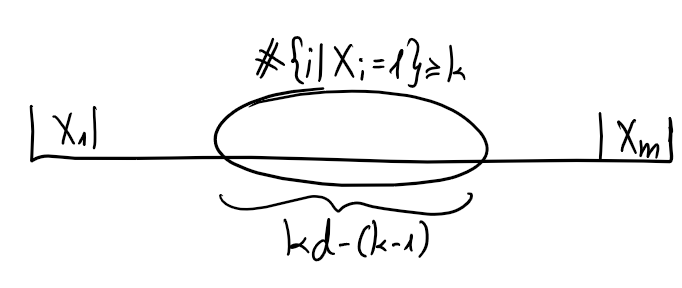}
    \end{figure}

    Since $p=\frac{1-\epsilon}{d-1}$, the number of $X_i=1$ in this segment is distributed as
    \[
    Y \sim Bin\bigl(kd-(k-1),\,p\bigr).
    \]
    Hence
    \[
    \mathbb E[Y]
    =
    \bigl(kd-k+1\bigr)\cdot \frac{1-\epsilon}{d-1}
    =
    (k(d-1)+1)\cdot \frac{1-\epsilon}{d-1}
    =
    (1-\epsilon)k+\frac{1-\epsilon}{d-1}.
    \]
    By Chernoff's inequality, we have
    \[
    \mathbb{P}(Y \ge k)
    \le
    e^{-\frac{\epsilon^{2}k}{4}}.
    \]
    Now, the probability that there exists an interval of length
    $kd-(k-1)$ with at least $k$ successes in $[m]$ is at most
    
    \[
    m\cdot e^{-\frac{\epsilon^{2}k}{4}}
    <
    n^{2}\cdot e^{-\frac{\epsilon^{2}k}{4}}.
    \]
    By choosing
    \[
    k=
    \left\lfloor
    \frac{9\ln n}{\epsilon^{2}}
    \right\rfloor,
    \]
    this probability is $o(1)$.
    
    Therefore, whp, $G_p$ has no component of size $>k$, namely for all $i\ge 1$,
    
    \[
    |L_i|
    \le
    \frac{9\ln n}{\epsilon^{2}}.
    \]
\end{proof}

\begin{remark}
    A few comments are in place.
    \begin{enumerate}
        \item The theorem remains true even if the assumption that $G$ is $d$-regular is replaced by the assumption that $\Delta(G)\le d$, with the same proof.
        \item Applying the theorem to the case where $G=K_n$, we obtain that in $G(n,p)$, for $p=\frac{1-\epsilon}{n}$, whp $|L_i|\le \frac{9\ln n}{\epsilon^2}$, for every $i\ge 1$.
        \item In fact, in $G(n,p)$ with $p=\frac{1-\epsilon}{n}$ ($\epsilon>0$ is a small constant), whp $|L_1|=\Theta\left(\frac{\ln n}{\epsilon^2}\right)$.
    \end{enumerate}
\end{remark}

Later, we look at $Q^d_p$ with $p=\frac{c}{d}$, where $c>1$ is a constant, and show that whp, there exists a giant component $L_1$ in $Q^d_p$ such that $|L_1|=(1+o(1))yn$ where $y\in (0,1)$ satisfies $y=1-e^{-cy}$. Furthermore, for every $i\ge 2$, we have $|L_i|=O(d)$.
\newpage
\phantomsection
\addcontentsline{toc}{chapter}{Lecture 9}
\lecture{9}{June 21, 2026}{Lecture 9}{Gali Maman}
\setcounter{chapter}{9}
\setcounter{section}{0}

\section{The Super-Critical Case --- $Q^d_p$}

\begin{thm}[Ajtai, Komlós, Szemerédi \cite{zbMATH03769676}; Bollob\'{a}s, Kohayakawa, \L uczak \cite{zbMATH00035138}]\label{giant_in_Q^d_p}
Assume $c > 1$ is a constant, and consider $Q^d_p$ with $p = \frac{c}{d}$. Then whp:
\begin{enumerate}
    \item $|L_1| = (1+o(1))yn$, where $n \coloneqq 2^d$ and $y\coloneqq y(c)$ is the (unique) solution of
    \begin{equation}\label{eq: y}
        \tag{$\ast$} 1-y = e^{-cy}
    \end{equation}
    in $(0,1)$.
    \item Every other component $L_i$, $i\ge 2$, satisfies
    \[
    |L_i| \le \frac{d}{c-1-\ln c}.
    \]
\end{enumerate}
\end{thm}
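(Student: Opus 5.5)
The plan is to follow the structure of the supercritical argument for $G(n,\frac{c}{n})$, with the Galton--Watson heuristic of Lecture 4 made rigorous locally and sprinkling used to merge large components. There are three steps: count the vertices lying in \emph{large} components, show there is no component of intermediate size, and then merge all large components into one.

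Fix a threshold $k_0 = \frac{d}{c-1-\ln c}$ (more precisely, a value slightly above it). Throughout, $v$ is a fixed vertex, and $C_v$ denotes its component in $Q^d_p$.

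\textbf{Step 1 (local structure / first moment).} I would show that
\[
\mathbb{P}\bigl(|C_v| > k_0\bigr) = (1+o(1))\,y .
\]
For the lower bound, explore $C_v$ by BFS up to $k = o(d)$ vertices, for instance $k = \sqrt d$. In this range every explored vertex still has at least $d - k = (1-o(1))d$ unexplored edges into new vertices. Hence the exploration stochastically dominates a Galton--Watson process with offspring distribution $\mathrm{Bin}(d-k,p) \approx \mathrm{Po}(c)$, so with probability $y - o(1)$ the process reaches $\sqrt d$ vertices. For the upper bound, dominate the exploration by $\mathrm{Bin}(d,p)$ offspring; then $\mathbb{P}(|C_v| \ge \omega) \le y + o(1)$ for any $\omega \to \infty$.

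To turn this into a statement about the whole cube, let $Z$ be the number of vertices $v$ with $|C_v| \ge \sqrt d$. By the above, $\mathbb{E} Z = (1+o(1))yn$. For concentration I would use Chebyshev: the events for $u$ and $v$ depend only on edges within distance $\sqrt d$ of the respective vertex. The number of vertices within distance $\sqrt d$ is only $2^{o(d)}$, so $\mathrm{Var}\, Z = o(n^2)$.

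\textbf{Step 2 (no intermediate components).} This is the counting step. The expected number of components of order $k$ is at most $n\, t(v,k)\, p^{k-1}(1-p)^{|\partial T|}$, where $T$ is the spanning tree of the component and $\partial T$ its edge boundary.

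For the factor $t(v,k)$, use Beveridge--Frieze--McDiarmid: $t(v,k) \le \frac{k^{k-2}d^{k-1}}{(k-1)!}$. For the boundary, use Harper's Theorem \ref{thm: Harper}, which gives $|\partial S| \ge k(d-\log_2 k)$. Combining these, the bound is roughly
\[
n\,\frac{k^{k-2}}{(k-1)!}\,c^{k-1} e^{-ck(1-\log_2 k/d)} \lesssim n\,\bigl(c\,e^{1-c+c\log_2 k/d}\bigr)^k .
\]
For $k_0 \le k \le \beta d$ the base is at most $e^{-(c-1-\ln c)+o(1)}$. With $n = 2^d$, the total is $o(1)$ exactly once $k(c-1-\ln c) > d\ln 2$. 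This only matches $k_0$ up to the constant $\ln 2$, so the range near $k_0$ must be handled more carefully. There I would count only components all of whose vertices lie in a small neighbourhood, or use the finer boundary estimate that a $k$-set with $k \ll d$ has $|\partial S| \ge kd - O(k^2)$. Medium sizes (say $\beta d \le k \le n^{\varepsilon}$) are treated with the edge-boundary bound as well. For larger $k$, I would use Theorem \ref{isop_large sets_Q^d} together with a relaxed count.

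\textbf{Step 3 (sprinkling, the main obstacle).} Write $p = p_1 + p_2$ (approximately), with $p_1 = \frac{c-\varepsilon}{d}$ and $p_2 = \frac{\varepsilon}{d}$. In $Q^d_{p_1}$, Steps 1 and 2 give about $y(c-\varepsilon)n$ vertices lying in components of size at least $n^{\varepsilon'}$. Unlike in $G(n,p)$, two large sets need not be joined by many cube edges, so direct sprinkling fails. Instead I would first show that these vertices are dense everywhere: whp every vertex of $Q^d$ is within distance 2 of some large component. Then I would use a Menger/isoperimetric argument, based on Harper's vertex isoperimetry, to show that any partition of the large components into two classes each of size $\Theta(n)$ admits $n/\mathrm{poly}(d)$ edge-disjoint short paths between the classes. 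Each such path is opened by $p_2$ with probability $d^{-O(1)}$. A union bound over partitions is then affordable because the number of components of size at least $n^{\varepsilon'}$ is only $n^{1-\varepsilon'}$. This is the Ajtai--Komlós--Szemerédi / Bollobás--Kohayakawa--Łuczak path-counting step, and I expect it to be the hardest part. Finally, letting $\varepsilon \to 0$ and using continuity of $y(c)$ yields $|L_1| = (1+o(1))yn$.
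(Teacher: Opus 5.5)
The genuine gap is part 2, and it starts with an overclaim in Step 2. Your tree count works only while $\log_2 k$ stays below a fixed fraction of $d$ (about $(c-1-\ln c)d/c$). Beyond that, the factor $(1-p)^{k(d-\log_2 k)}$ no longer beats the factor $n(ec)^k$ coming from the trees. The large-set bound $|\partial S|\ge|S|$ supplies only $e^{-ck/d}$, so no ``relaxed count'' can exclude components of order between $n^{\gamma}$ and $o(n)$. That range can only be handled by sprinkling, and Step 3 does not do it, for two reasons.

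First, you treat only partitions whose two classes both have order $\Theta(n)$. A greedy argument then gives $|L_1|\ge|W_1|-\eta n$, which suffices for part 1. It does not show that every $G_1$-component of order $\ge n^{\varepsilon'}$ joins $L_1$. For that, the paper takes a union bound over \emph{all} partitions, with a failure probability decaying in the number $\ell$ of components on the smaller side. The bound $|\partial S|\ge\min\{|S|,n-|S|\}$ gives $\ge\ell d^{31}$ cube edges between $A'$ and $B'$, hence $\ge\ell d^{27}/30$ edge-disjoint paths of length $\le 5$, and $\sum_\ell\binom{s}{\ell}e^{-\Theta(\ell d^2)}=o(1)$.

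Second, a component of $G=G_1\cup G_2$ can be assembled from many small $G_1$-components glued by $G_2$-edges while avoiding every large $G_1$-component, and nothing in your plan bounds such a component. The paper's choice $p_2=d^{-5}$ makes the auxiliary graph on small $G_1$-components (degrees $O(d^2)$, edge probabilities $O(d^{-3})$) subcritical. This gives order $O(d^2)$, which the no-intermediate-size lemma then cuts down to $k_0$. With your $p_2=\varepsilon/d$ this estimate is unavailable.

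Within your setup, once all $G_1$-components of order $\ge n^{\varepsilon'}$ are merged, you could argue as follows. Any other component of $G$ of order $>k_0$ has order $>n^{\varepsilon'}$. It therefore contains a $G$-connected union $D'$ of small $G_1$-components with $n^{\varepsilon'}\le|D'|\le n^{\varepsilon'}+O(d)$, all of whose boundary edges are closed in $G_1$. Such sets exist with probability at most $O(d)\,n\bigl(c\,e^{1-c+\varepsilon+2c\varepsilon'+o(1)}\bigr)^{n^{\varepsilon'}}=o(1)$.

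Next, the density claim, that whp every vertex of $Q^d$ is within distance $2$ of a large component, is the heart of the sprinkling, and you give no route to it. For a fixed vertex the failure probability must be $o(2^{-d})$, yet the events for its $\binom d2$ vertices at distance $2$ are strongly dependent. The paper gets independence by fixing a set $I$ of $\lfloor (c-1)d/(2c)\rfloor$ coordinates. It then uses the $\binom{|I|}{2}$ vertex-disjoint subcubes $H_{ij}$ of dimension $d-|I|$ (coordinates in $I$ fixed to the indicator of $\{i,j\}$). Each contains a vertex $u_{ij}$ at distance $2$ from $\bar 0$, and each is still supercritical since $(d-|I|)p\ge (c+1)/2$. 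This gives failure probability $(1-\delta)^{\binom{|I|}{2}}=e^{-\Theta(d^2)}$ for some $\delta=\delta(c)>0$.

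Two smaller points. The range near $k_0$ needs no extra care: your own computation requires only $k(c-1-\ln c)>d\ln 2$, while $k_0(c-1-\ln c)=d$. So the bound is $d^{O(1)}(2/e)^d\to 0$, which is exactly how the paper shows no component has order in $[k_0,d^t]$. Also, Step 1 only reaches order $\sqrt d$, so you additionally need $\mathbb{P}(\sqrt d\le|C_v|\le n^{\varepsilon'})=o(1)$; this is the same tree count without the factor $n$.

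Your Chebyshev argument is a valid substitute for the paper's McDiarmid step. It uses that $\{|C_u|\ge\sqrt d\}$ and $\{|C_v|\ge\sqrt d\}$ are independent when $u,v$ are at distance more than $2\sqrt d$, and that such balls contain $2^{o(d)}$ vertices.
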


\begin{remark}~
\begin{enumerate}
    \item The order of $L_1$ in $Q^d_p$ is asymptotically equal to the order of the giant component of $G(n,p)$ for $p = \frac{c}{n}$.
    \item If $c = 1+\epsilon$ for small $\epsilon > 0$, then it is easy to see that
    \[
    c - 1 - \ln c>0\;\text{ and }\; c - 1 - \ln c = \Theta(\epsilon^2),
    \]
    and therefore whp
    \[
    |L_i| \le O\left(\frac{d}{\epsilon^2}\right) = O\left(\frac{\ln n}{\epsilon^2}\right)
    \]
    (this can be shown to be tight up in $n$ and in $\epsilon$).
\end{enumerate}
\end{remark}

\begin{proof}

The proof is based on a paper by Michael Krivelevich \cite{arXiv:2311.07210} and consists of a sequence of lemmas. We will not prove the following two lemmas here; proofs may be found in a variety of standard sources.

\begin{lemma}\label{lec8: lem1}
Consider a Galton--Watson (GW) process with offspring distribution $\mathrm{Bin}\!\left(d, \frac{c}{d}\right)$. When $c>1$, as $d \to \infty$ the process survives (i.e. does not become extinct at any finite stage) with probability $(1+o(1))y$, where $y \in (0,1)$ is defined as in \eqref{eq: y}.
\end{lemma}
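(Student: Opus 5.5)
The plan is to compare the Galton--Watson process with offspring distribution $\mathrm{Bin}(d,c/d)$ to the one with offspring distribution $\mathrm{Po}(c)$. For the latter, Lecture 4 computed that the survival probability satisfies $1-y=e^{-cy}$. For a general offspring law with probability generating function $f(s)=\mathbb{E}[s^\xi]$, the standard first-generation decomposition gives the extinction probability as a fixed point of $f$ in $[0,1]$. Conditioning on the number of children of the root, exactly as in Lecture 4, the extinction probability $q$ satisfies $q=f(q)$. Moreover, $q$ is the smallest fixed point in $[0,1]$, since $q=\lim_t f^{(t)}(0)$ is the limit of the probabilities of extinction by generation $t$.

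For $\mathrm{Bin}(d,c/d)$ we have
\[
f_d(s)=\Bigl(1-\tfrac{c}{d}(1-s)\Bigr)^d .
\]
Writing the survival probability as $y_d=1-q_d$, it satisfies $1-y_d=(1-cy_d/d)^d$. Since $f_d$ is convex with $f_d'(1)=c>1$, there is a unique fixed point in $[0,1)$, so $y_d$ is the unique root in $(0,1]$ of
\[
g_d(y)\coloneqq 1-y-(1-cy/d)^d .
\]

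The next step is to show $y_d\to y$. We have $(1-cy/d)^d\to e^{-cy}$ uniformly on $[0,1]$, since $|\ln(1-x)+x|=O(x^2)$ for $x\le c/d$. This gives $g_d\to g(y)\coloneqq 1-y-e^{-cy}$ uniformly on $[0,1]$. The function $g$ is concave with $g(0)=0$ and $g'(0)=c-1>0$, and it has a unique positive root $y$ at which it crosses transversally. Concretely, for any small $\eta>0$ we have $g>0$ on $[\eta,y-\eta]$ and $g<0$ on $[y+\eta,1]$, with margins bounded away from zero.

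Uniform convergence then forces $g_d$ to have the same signs on these intervals for large $d$. The remaining worry is a spurious root of $g_d$ in $(0,\eta)$. This is ruled out because $g_d$ is concave with $g_d(0)=0$ and $g_d'(0)=c-1>0$, so $g_d$ is positive on an initial segment and has at most one positive root. Hence $y_d\in(y-\eta,y+\eta)$ for all large $d$, which gives $y_d=(1+o(1))y$.

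The only delicate point is the one just handled: excluding roots of $g_d$ near $0$, where $g$ itself vanishes. Concavity of $g_d$ together with uniqueness of the nontrivial fixed point settles it.
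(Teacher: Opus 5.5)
Your argument is correct and complete. The notes do not prove this lemma; they explicitly defer it to standard sources, so there is no proof in the paper to compare against. What you give is the standard generating-function proof. It makes rigorous, for $\mathrm{Bin}(d,c/d)$, the offspring-conditioning computation done heuristically for $\mathrm{Po}(c)$ in Lecture 4, including the identification of extinction as the smallest fixed point of $f_d$. It then passes to the limit via uniform convergence of $g_d$ to $g$ and the transversal crossing of $g$ at $y$.

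One redundancy is purely cosmetic. The uniqueness of the positive root of $g_d$ that you re-derive from concavity at the end is the same fact as the uniqueness of the fixed point of $f_d$ in $[0,1)$ from your first step, since $g_d(y)=(1-y)-f_d(1-y)$. So the ``spurious root near $0$'' step could simply cite that earlier uniqueness.
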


\begin{lemma}[Measure concentration / edge-exposure martingale / McDiarmid's inequality]\label{lec8: lem2}
Let $G=(V,E)$ be a finite graph with $|E|=m$ edges, and let $f: 2^G \to \mathbb{R}$ be a function on the subgraphs of $G$ (i.e.\ $f(G')$ is a function of the subgraph $G' \subseteq G$). Suppose $f$ is Lipschitz with parameter $C$: for every $G', G'' \subseteq G$ differing in exactly one edge,
\[
|E(G') \triangle E(G'')| = 1 \quad \Longrightarrow \quad |f(G') - f(G'')| \le C.
\]
Let $X = f(G')$ for $G' \sim G_p$ (the random subgraph of $G$ obtained by including each edge independently with probability $p$) with $p\in [0,1]$. Then for all $\beta > 0$,
\[
\mathbb{P}\big[\,|X - \mathbb{E}[X]| \ge \beta\,\big] \le 2 \exp\left\{ -\frac{\beta^2}{C^2 m} \right\}.
\]
\end{lemma}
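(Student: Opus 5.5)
We will prove the lemma by the edge-exposure martingale together with a Hoeffding/Azuma-type exponential moment bound. Fix an arbitrary ordering $e_1,\ldots,e_m$ of $E(G)$, and let $\xi_1,\ldots,\xi_m$ be the independent Bernoulli$(p)$ indicators of the events $e_i\in E(G_p)$, so that $X=f(G_p)$ is a function of $(\xi_1,\ldots,\xi_m)$. Let $\mathcal{F}_i$ be the $\sigma$-algebra generated by $\xi_1,\ldots,\xi_i$. Define the Doob martingale $Z_i=\mathbb{E}[X\mid\mathcal{F}_i]$ for $0\le i\le m$. Then $Z_0=\mathbb{E}[X]$ and $Z_m=X$, so
\[
X-\mathbb{E}[X]=\sum_{i=1}^m D_i,\qquad D_i\coloneqq Z_i-Z_{i-1},
\]
with $\mathbb{E}[D_i\mid\mathcal{F}_{i-1}]=0$.

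\textbf{Step 1: bounded increments.} This is the step where the Lipschitz hypothesis enters, and I expect it to be the only genuinely delicate point. Condition on $\xi_1,\ldots,\xi_{i-1}$ and set $g(b)=\mathbb{E}[X\mid\mathcal{F}_{i-1},\xi_i=b]$ for $b\in\{0,1\}$. By independence of the edges, $g(b)$ is the average of $f$ over the same distribution of $(\xi_{i+1},\ldots,\xi_m)$, with only the $i$-th coordinate changed. Coupling the two averages on identical values of $\xi_{i+1},\ldots,\xi_m$, each pair of subgraphs differs in exactly the edge $e_i$, so $|g(1)-g(0)|\le C$.

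Since $Z_i=g(\xi_i)$ and $Z_{i-1}=p\,g(1)+(1-p)g(0)$, the increment $D_i$ is, conditionally on $\mathcal{F}_{i-1}$, a mean-zero variable supported in an interval of length at most $C$. Note that the interval has length $C$, not $2C$; this is what later yields the constant claimed in the lemma.

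\textbf{Step 2: Hoeffding's lemma and the exponential moment.} Hoeffding's lemma states that a mean-zero random variable $D$ with values in an interval $[a,a+C]$ satisfies $\mathbb{E}e^{\lambda D}\le e^{\lambda^2C^2/8}$ for every $\lambda\in\mathbb{R}$. I would prove it by bounding $e^{\lambda x}$ on $[a,a+C]$ by its chord (convexity), and then checking that the logarithm $\varphi(\lambda)$ of the resulting bound satisfies $\varphi(0)=\varphi'(0)=0$ and $\varphi''\le C^2/4$.

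Apply this conditionally on $\mathcal{F}_{i-1}$ and peel off the increments one at a time via the tower property:
\[
\mathbb{E}\,e^{\lambda\sum_{i=1}^m D_i}=\mathbb{E}\Bigl[e^{\lambda\sum_{i<m}D_i}\,\mathbb{E}\bigl[e^{\lambda D_m}\mid\mathcal{F}_{m-1}\bigr]\Bigr]\le\cdots\le e^{m\lambda^2C^2/8}.
\]

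\textbf{Step 3: Chernoff step.} By Markov's inequality,
\[
\mathbb{P}\bigl(X-\mathbb{E}X\ge\beta\bigr)\le e^{-\lambda\beta+m\lambda^2C^2/8}.
\]
Choosing $\lambda=4\beta/(mC^2)$ gives the bound $e^{-2\beta^2/(C^2m)}$. The same argument applied to $-f$, which is also $C$-Lipschitz, bounds the lower tail by the same quantity. A union bound over the two tails then gives
\[
\mathbb{P}\bigl(|X-\mathbb{E}X|\ge\beta\bigr)\le 2e^{-2\beta^2/(C^2m)}\le 2e^{-\beta^2/(C^2m)},
\]
which is the claimed inequality, even with room to spare.

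If one used only the cruder estimate $|D_i|\le C$ with Azuma's inequality, the exponent would be $\beta^2/(2C^2m)$, which is weaker than what the lemma states. This is why Step 1 must establish the interval-of-length-$C$ form of the increment bound.
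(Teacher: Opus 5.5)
Your proof is correct. It is the standard edge-exposure (Doob) martingale plus Hoeffding's lemma proof of McDiarmid's inequality, which the notes invoke by name and do not prove, deferring to standard sources; confining each increment $D_i$ conditionally to an interval of length $|g(1)-g(0)|\le C$ even gives the stronger exponent $2\beta^2/(C^2m)$, and you are right that plain Azuma with $|D_i|\le C$ would only give $2e^{-\beta^2/(2C^2m)}$ (though for the notes' one application, with $\beta=n^{2/3}$, $C=2d^t$, $m=nd/2$, any constant in the exponent suffices).
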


The following two lemmas were proved in previous lectures.

\begin{lemma}[Counting trees in $Q^d$ given a root]\label{lec8: lem3}
Let $v \in V(Q^d)$ and let $k\ge 1$ be an integer. The number of trees on $k$ vertices in $Q^d$ that contain $v$ is at most
\[
(ed)^{k-1}.
\]
\end{lemma}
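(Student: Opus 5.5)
This is an immediate consequence of the Beveridge--Frieze--McDiarmid theorem from Lecture 7. That theorem states that for any graph $G$ with maximum degree $\Delta$, any vertex $v\in V(G)$ and any $k\ge 2$,
\[
t(v,k)\le \frac{k^{k-2}\Delta^{k-1}}{(k-1)!}\le (e\Delta)^{k-1}.
\]
The plan is to apply it with $G=Q^d$. Since $Q^d$ is $d$-regular, its maximum degree is exactly $\Delta=d$. Hence for every $k\ge 2$ the number of trees on $k$ vertices in $Q^d$ that contain $v$ satisfies $t(v,k)\le (ed)^{k-1}$.

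One point needs a short remark: the meaning of ``trees containing $v$'' versus ``trees rooted at $v$''. In the definition of $\mathcal{T}(v,k)$, a tree rooted at $v$ is simply a subtree of $G$ on $k$ vertices that contains $v$, with $v$ designated as the root. So the two families coincide, and $t(v,k)$ is exactly the quantity in the statement. I will note this identification explicitly.

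It remains to treat the degenerate case $k=1$, which the cited theorem excludes. The only tree on one vertex containing $v$ is $\{v\}$ itself. Thus the count is $1=(ed)^0$, and the bound holds with equality.

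There is no real obstacle here. If one preferred to avoid the dependence on the earlier theorem, one could repeat its labeling argument directly. One would label the vertices of the tree by $[k]$ with $v\mapsto k$, map each labeled tree to a spanning tree of $K_k$ (at most $k^{k-2}$ choices, by Cayley's formula), and reconstruct the embedding by BFS from $k$ (at most $d$ choices per vertex). This gives the bound $k^{k-2}d^{k-1}/(k-1)!$. Finally, the elementary inequality $k^{k-2}/(k-1)!\le (k-1)^{k-1}/(k-1)!\le e^{k-1}$ yields $(ed)^{k-1}$.
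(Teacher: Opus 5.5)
Your proposal is correct and matches the paper, which simply invokes the Beveridge--Frieze--McDiarmid bound $t(v,k)\le (e\Delta)^{k-1}$ from Lecture 7 with $\Delta=d$, since $Q^d$ is $d$-regular. Your separate check of the case $k=1$, where the count is $1=(ed)^0$, is a useful addition, because that theorem is stated only for $k\ge 2$.
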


\begin{lemma}[Harper's isoperimetric inequality, weak version]~\label{lec8: lem4}
\begin{enumerate}
    \item For every $S \subseteq V(Q^d)$,
    \[
    e(S) \le |S| \cdot \log_2 |S|,
    \]
    and consequently
    \[
    |\partial S| \ge |S| \,(d - 2 \log_2 |S|).
    \]
    \item For every $S \subseteq V(Q^d)$ with $|S| \le \frac{n}{2}$,
    \[
    |\partial S| \ge |S|.
    \]
\end{enumerate}
\end{lemma}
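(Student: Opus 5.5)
Both parts are restatements of results proved in Lecture 8, so the plan is just to invoke them and add one line of bookkeeping for the edge-boundary bound in part 1.

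For the first inequality of part 1, I apply Theorem \ref{thm: weak Harper} directly: for every $S\subseteq V(Q^d)$ it gives $e(S)\le |S|\log_2|S|$. (The sharper Theorem \ref{thm: Harper} would give the bound with an extra factor $\frac12$, but the weak version suffices.) For reference, Theorem \ref{thm: weak Harper} was proved by combining two lemmas. A subgraph of average degree $2e(S)/|S|$ contains a subgraph $S_0\subseteq S$ of minimum degree at least $e(S)/|S|$, by deleting low-degree vertices one at a time. A subset of $Q^d$ inducing minimum degree $\delta$ has at least $2^\delta$ vertices, by the layer-counting argument $|A_{i+1}|\ge\frac{\delta-i}{i+1}|A_i|$. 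Hence $|S|\ge|S_0|\ge 2^{e(S)/|S|}$.

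For the boundary bound of part 1, I use the Observation on the regular graph $Q^d$: double counting degrees gives $d|S|=2e(S)+|\partial S|$. Substituting the bound on $e(S)$ yields
\[
|\partial S| = d|S|-2e(S)\ \ge\ d|S|-2|S|\log_2|S| = |S|\,(d-2\log_2|S|).
\]

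Part 2 is exactly Theorem \ref{isop_large sets_Q^d}, with $n=2^d$ so that $|S|\le n/2=2^{d-1}$. Recall that it was proved by induction on $d$ as follows. Split $S$ by the first coordinate into $S_0\subseteq V_0$ and $S_1\subseteq V_1$, assuming without loss of generality $|S_0|\le|S_1|$. The induction hypothesis applied inside each half yields at least $|S_0|$ boundary edges in $V_0$, and at least $\min\{|S_1|,|V_1\setminus S_1|\}\ge |S_0|$ in $V_1$. The perfect matching between $V_0$ and $V_1$ contributes at least $|S_1|-|S_0|$ further boundary edges. Summing these gives $|\partial S|\ge|S|$.

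There is no real obstacle here, since everything needed was established earlier and the lemma only repackages it. The single point to check is that the $\min$ step in the induction for part 2 is valid: this uses $|S|\le 2^{d-1}$, which gives $|V_1\setminus S_1| = 2^{d-1}-|S_1|\ge |S_0|$.
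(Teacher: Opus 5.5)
Your proposal is correct and takes the same approach as the paper, which likewise just points back to earlier results: the weak Harper bound $e(S)\le |S|\log_2|S|$, combined with the degree-counting identity $d|S|=2e(S)+|\partial S|$, for part 1, and the inductive edge-isoperimetric theorem for $0<|S|\le 2^{d-1}$ for part 2. Your check that $|V_1\setminus S_1|=2^{d-1}-|S_1|\ge |S_0|$ is exactly what justifies the $\min$ step in that induction.
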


From now on, assume $p = \frac{c}{d}$ for a constant $c>1$, and we study $Q^d_p$. We also fix an integer $t>0$ (to be chosen later). Lastly, we consider $y$ as defined in \eqref{eq: y}.

\begin{lemma}\label{lec8: lem5}
    In $Q^d_p$, whp there is no connected component of size in the range $\left[\frac{d}{c-1-\ln c}, \, d^t\right]$.
\end{lemma}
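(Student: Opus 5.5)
We use a first-moment argument over spanning trees. If $Q^d_p$ has a connected component $K$ with $|K|=k$, then $K$ contains a spanning tree $T$ on $k$ vertices. Every edge of $T$ is retained, and every edge of $\partial_{Q^d}(V(T))$ is deleted. Let $Z_k$ count pairs (tree $T$ in $Q^d$ on $k$ vertices, with all tree edges present and all boundary edges of $V(T)$ absent). It then suffices to show that $\sum_{k=k_0}^{d^t}\mathbb{E}[Z_k]=o(1)$, where $k_0=\frac{d}{c-1-\ln c}$, and to finish with Markov's inequality.

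To bound $\mathbb{E}[Z_k]$, first count trees. Choose a root $v$ in $n=2^d$ ways; by Lemma \ref{lec8: lem3} there are at most $(ed)^{k-1}$ trees through $v$. Next, bound the boundary using Lemma \ref{lec8: lem4}(1): $|\partial S|\ge k(d-2\log_2 k)$. For $k\le d^t$ we have $2\log_2 k\le 2t\log_2 d=o(d)$, so $|\partial S|\ge kd(1-o(1))$. This gives
\[
\mathbb{E}[Z_k]\le 2^d (ed)^{k-1}\left(\frac{c}{d}\right)^{k-1}\left(1-\frac{c}{d}\right)^{kd(1-o(1))}\le 2^d \left(ce\right)^{k-1} e^{-ck(1-o(1))}= 2^d\, e^{-k(c-1-\ln c)(1-o(1))}\cdot O(1).
\]
Since $c-1-\ln c>0$ for $c\ne1$, the right-hand side decays geometrically in $k$.

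The main obstacle is the constant. At $k=k_0$ the bound reads $2^d e^{-d(1-o(1))}$. Because $e>2$, this is $e^{-d(1-\ln 2-o(1))}=o(1)$, but this works only thanks to the slack between $\ln 2$ and $1$. We must check that the $o(1)$ corrections do not eat this slack. The polynomial factors $ed/(c\cdot\ldots)$ are harmless. The real term to control is $(1-p)^{-2k\log_2 k}\le e^{2ck\log_2 k/d}$; for $k\le d^t$ this equals $e^{k\cdot O(t\log d/d)}=e^{o(k)}$, which is fine. A further issue is that $(1-c/d)^{kd}$ versus $e^{-ck}$ loses a factor $e^{-O(k/d)}$, which is again fine. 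Once the correction terms are dominated, the ratio between consecutive $k$ is at most $e^{-(c-1-\ln c)(1-o(1))}<1$, so the sum from $k_0$ to $d^t$ is $O(1)$ times its first term, namely $O(2^d e^{-d(1-o(1))})=o(1)$.

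If the constant turns out too tight at the lower end, the fallback is to note that the lower endpoint $d/(c-1-\ln c)$ already incorporates the needed slack, since $2^d=e^{d\ln2}$. One can also separately handle $k$ slightly above $k_0$ by using the exact $k^{k-2}/(k-1)!$ form of the tree-counting bound, which gains a factor $e^{-\Theta(\ln k)}$ per tree. Either way, Markov's inequality on $\sum_k Z_k$ completes the proof.
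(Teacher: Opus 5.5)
Your proposal is correct and is essentially the paper's argument. The paper fixes $v$, bounds $\mathbb{P}(|C_v|=k)\le (ed)^{k-1}p^{k-1}(1-p)^{k(d-2\log_2 k)}$ using the rooted tree count and the weak Harper inequality, sums the resulting geometric series to get $e^{-d+o(d)}=o(1/n)$, and takes a union bound over $v$; this is the same as your first-moment count with the factor $2^d$. Your final fallback paragraph is unnecessary, since $2^d e^{-d+o(d)}=o(1)$ already handles the lower endpoint because $e>2$, and $(1-c/d)^{kd}\le e^{-ck}$ holds with no loss.
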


\begin{proof}
Let $v \in V(Q^d)$ and denote by $C_v$ the connected component of $v$ in $Q^d_p$. By the union bound, it suffices to show
\[
\mathbb{P}\left(|C_v| \in \left[\frac{d}{c-1-\ln c}, \, d^t\right]\right) = o\!\left(\frac{1}{n}\right).
\]
Suppose $|C_v| = k$ for some $k \in \left[\frac{d}{c-1-\ln c}, \, d^t\right]$. Then:
\begin{enumerate}
    \item There is a tree $T \subseteq Q^d_p$ containing $v$, with $|V(T)| = k$.
    \item There is no edge of $Q^d_p$ between $V(T)$ and $V(Q^d) \setminus V(T)$.
\end{enumerate}
Therefore,
\[
\mathbb{P}(|C_v| = k) \le \underbrace{(ed)^{k-1}}_{\text{choice of tree } T} \cdot \underbrace{p^{k-1}}_{\text{pay for } E(T)} \cdot \underbrace{(1-p)^{k(d - 2\log_2 k)}}_{\text{no edges between } V(T) \text{ and } V(Q^d)\setminus V(T)}.
\]

Summing over $k$,
\begin{align*}
\mathbb{P}\left(|C_v| \in \left[\frac{d}{c-1-\ln c}, \, d^t\right]\right)
&\le \sum_{k= d/(c-1-\ln c)}^{d^t} (ed)^{k-1} p^{k-1} \cdot (1-p)^{k(d - 2\log_2 k)} \\
&\le \sum_{k= d/(c-1-\ln c)}^{d^t} (ec)^k \cdot e^{-\frac{c}{d}\, k(d - 2\log_2 k)} \\
&= \sum_{k= d/(c-1-\ln c)}^{d^t} \left[ ec \cdot e^{-c + \frac{2c\log_2 k}{d}} \right]^k \\
&= \sum_{k= d/(c-1-\ln c)}^{d^t} \left[ e^{1 + \ln c - c + o(1)} \right]^k \\
&= (1+o(1))\, e^{(1 + \ln c - c + o(1))\, \frac{d}{c-1-\ln c}} \\
&= (1+o(1))\, e^{-d+o(d)} = o\!\left(\frac{1}{n}\right) \qquad (\text{since } n = 2^d).
\end{align*}
\end{proof}

\begin{lemma}\label{lec8: lem6}
For every $v \in V(Q^d)$, we have
\[
\mathbb{P}\left(|C_v| \ge d^t\right) = (1+o(1))y.
\]
\end{lemma}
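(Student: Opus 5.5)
We prove matching upper and lower bounds by coupling the exploration of $C_v$ with Galton--Watson processes with binomial offspring, and then use Lemma \ref{lec8: lem1}. Run BFS from $v$ in $Q^d$, exposing edges of $Q^d_p$ only when the search queries them.

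\textbf{Upper bound.} Each vertex $u\neq v$ discovered by the search is entered through one edge, so it has at most $d-1$ unqueried edges to undiscovered vertices; the root has $d$. Hence the number of new vertices found from $u$ is stochastically dominated by $\mathrm{Bin}(d,p)$, independently over processed vertices, because the queried edges are disjoint. This gives a coupling in which the BFS tree of $C_v$ is contained in a GW tree with offspring $\mathrm{Bin}(d,c/d)$. Thus $\mathbb{P}(|C_v|\ge d^t)\le \mathbb{P}(\text{GW total progeny}\ge d^t)$. The progeny is at least $d^t$ either because the process survives, which has probability $(1+o(1))y$ by Lemma \ref{lec8: lem1}, or because it dies out after reaching size at least $d^t$. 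The second event has probability $o(1)$: conditioned on extinction, a supercritical GW process is a subcritical GW process, whose total progeny is $O(1)$ in distribution, uniformly in $d$. Alternatively, one can bound it directly by $\sum_{k\ge d^t}\mathbb{P}(\mathrm{Bin}(kd,p)=k-1)$-type estimates. So $\mathbb{P}(|C_v|\ge d^t)\le (1+o(1))y$.

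\textbf{Lower bound.} This is the main obstacle, because in $Q^d$ a newly discovered vertex $u$ may have many neighbours that are already discovered, so the domination above reverses only approximately. The plan is to fix a small $\delta>0$ and show that, as long as at most $d^t$ vertices have been discovered, almost every processed vertex $u$ still has at least $(1-\delta)d$ edges to undiscovered vertices. Then the offspring dominates $\mathrm{Bin}((1-\delta)d,p)$, a GW process with parameter $c(1-\delta)>1$. By Lemma \ref{lec8: lem1} this process survives with probability $(1+o(1))y(c(1-\delta))$, and letting $\delta\to0$ with continuity of $y$ in $c$ gives the matching bound.

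To justify the local-sparsity claim, use Lemma \ref{lec8: lem4}(1). Any set $S$ of at most $d^t$ vertices spans $e(S)\le |S|t\log_2 d=o(|S|d)$ edges. So the explored set $S$ has average internal degree $O(t\log d)=o(d)$, and all but an $o(1)$-fraction of its vertices have at most $\delta d$ neighbours in $S$. To make the coupling clean, I would use a modified exploration that processes only vertices with at most $\delta d$ neighbours in the current explored set and skips the rest. Since only an $o(1)$-fraction of vertices are skipped, the process still dominates a GW process with parameter $c(1-\delta)(1-o(1))$ until it reaches size $d^t$. Survival of the dominated GW process implies the exploration reaches $d^t$ vertices, so $\mathbb{P}(|C_v|\ge d^t)\ge (1-o(1))y(c(1-\delta))$.

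The delicate part is the bookkeeping: the degrees into the explored set depend on the exploration order, so I would verify sparsity at each time step using the isoperimetric bound applied to the current explored set, rather than using a fixed set.
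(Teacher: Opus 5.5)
Your upper bound is fine. Dominating the BFS from $v$ by a Galton--Watson tree with $\mathrm{Bin}(d,p)$ offspring, and noting that the process conditioned on extinction is subcritical with tight total progeny, correctly gives $\mathbb{P}(|C_v|\ge d^t)\le(1+o(1))y$. It even spells out a direction the paper only asserts.

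The gap is in the lower bound. It sits in the step ``since only an $o(1)$-fraction of vertices are skipped, the process still dominates a GW process with parameter $c(1-\delta)(1-o(1))$'', and in the claim that survival of that GW process implies the exploration reaches $d^t$ vertices.
\begin{itemize}
\item A skipped vertex gets zero offspring, and which vertices are skipped is decided adaptively by the exploration history. So the explored tree does not contain any Galton--Watson tree.
\item Deleting one individual deletes its whole subtree. A deterministic bound on the \emph{number} of deletions therefore gives no stochastic domination by a thinned GW process.
\item Your bound $2|S|\log_2|S|/(\delta d)$ on heavy vertices only helps once the active queue is already large. As soon as $|S|$ exceeds $\delta d$ it allows $\Theta(\log d)$ deletions, which could empty a small queue.
\end{itemize}

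What is missing is an exploration-walk estimate, and with it your route works. The argument runs as follows.
\begin{itemize}
\item The decision to skip $u_j$ depends only on the set discovered before $u_j$'s edges to undiscovered vertices are queried. Hence the offspring counts of the non-skipped vertices, in processing order, are i.i.d.\ and dominate $\mathrm{Bin}(\lceil(1-\delta)d\rceil,p)$.
\item While fewer than $\delta d$ vertices are discovered, nothing can be skipped, so the walk coincides with the GW walk.
\item Consider the event that the GW process survives and its partial sums satisfy $S_\ell\ge(1-\eta)c(1-\delta)\ell$ for all $\ell\ge K_0$. By Chernoff and a union bound, this has probability $y(c(1-\delta))-o(1)$ as $K_0\to\infty$.
\item On that event, after $\ell\ge K_0$ non-skipped steps at least $(1-\eta)c(1-\delta)\ell$ vertices are discovered. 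The number of skipped steps is at most $\gamma$ times the number discovered, with $\gamma=2t\log_2 d/(\delta d)=o(1)$. So the queue stays positive until $d^t$ vertices are found.
\end{itemize}

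The paper avoids this adaptivity issue altogether by splitting scales. It couples with a $\mathrm{Bin}(d-d^{1/2},p)$ Galton--Watson process only until $d^{1/2}$ vertices are found; there the domination is exact and nothing needs to be skipped. It then shows $\mathbb{P}(d^{1/2}\le|C_v|\le d^t)=o(1)$ by the first-moment count $\sum_k (ed)^{k-1}p^{k-1}(1-p)^{k(d-2\log_2 k)}$ over trees containing $v$, as in Lemma~\ref{lec8: lem5}. Harper's inequality is applied there to the boundary of the final component, not to an evolving explored set.
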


\begin{proof}
First, we estimate $\mathbb{P}\left(|C_v| \ge d^{1/2}\right)$. We run a search algorithm (BFS or DFS) on $Q^d_p$ to grow $C_v$. As long as $|C_v| \le d^{1/2}$, for every vertex $u \in C_v$ whose neighbors we are about to expose, at most $d^{1/2}$ of its edges have already been exposed. Hence we can lower-bound (couple) the growth of $C_v$ by a GW process with offspring distribution $\mathrm{Bin}(d - d^{1/2}, p)$, and therefore $|C_v| \ge d^{1/2}$ with probability $y(c')$, which equals to $(1-o(1))y$, since
\[
c' = (d-d^{1/2})\,p = (1-o(1))c.
\]

Next, we estimate $\mathbb{P}\left(d^{1/2} \le |C_v| \le d^t\right)$. This event is handled very similarly to before:
\[
\mathbb{P}\left(d^{1/2} \le |C_v| \le d^t\right) \le \sum_{k=d^{1/2}}^{d^t} (ed)^{k-1} p^{k-1} (1-p)^{k(d - 2\log_2 k)}
\le \sum_{k=d^{1/2}}^{d^t} \left(ec\cdot  e^{-c + \frac{2c\log_2 k}{d}}\right)^{k} = o(1),
\]
because $ec\cdot e^{-c} < 1$ for all $c > 1$. Therefore
\[
\mathbb{P}\left(|C_v| \ge d^t\right) = 1 - \mathbb{P}\left(|C_v| \le d^{1/2}\right) - \mathbb{P}\left(|C_v| \in \left(d^{1/2}, d^t\right)\right) = y + o(1) = (1+o(1))y.
\]
\end{proof}

\begin{lemma}\label{lec8: lem7}
Denote
\[
W = \{v \in V(Q^d) \mid |C_v| \ge d^t\}
\]
(the collection of ``largish'' components).
Then, whp
\[
|W| = (1+o(1))yn.
\]
\end{lemma}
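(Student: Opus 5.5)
The plan is a first-moment computation followed by concentration via Lemma \ref{lec8: lem2}. By linearity of expectation and Lemma \ref{lec8: lem6},
\[
\mathbb{E}|W| = \sum_{v} \mathbb{P}(|C_v|\ge d^t) = (1+o(1))yn .
\]
It then suffices to show that $|W|$ is concentrated around its mean with deviations $o(n)$.

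A direct application of Lemma \ref{lec8: lem2} to $f(G')=|W(G')|$ fails. Adding or removing a single edge can merge or split components, which can change $|W|$ by as much as $n$. So the Lipschitz constant is not small.

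The fix is to work with a truncated surrogate. Let
\[
W' = \{v : |C_v| \ge d^{1/2}\}, \qquad \text{or better} \qquad W''=\{v: |C_v|\ge d^t\}\ \text{as seen within a bounded exploration.}
\]
Concretely, define $f(G')$ as the number of vertices $v$ whose BFS-ball in $G'$ reaches at least $k_0 := d/(c-1-\ln c)$ vertices. Equivalently, $f$ counts vertices in components of size at least $k_0$, where the truncation is made local.

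The cleanest choice is the following. Let $f(G')$ be the number of vertices $v$ such that the component of $v$ in $G'$, explored up to size $d^t$, is not exhausted, i.e.\ $|C_v|\ge d^t$. Toggling an edge $e=(a,b)$ affects only vertices $v$ whose truncated exploration of size $d^t$ reaches $a$ or $b$. A vertex $v$ with $|C_v|<d^t$ (in either graph) must lie in the component of $a$ or of $b$, and these components have size $<d^t$ in the relevant graph. Hence $f$ changes by at most $2d^t$, so $C=2d^t$.

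There is a caveat: a vertex with $|C_v|\ge d^t$ in both graphs is unaffected, and a vertex whose status changes has a component of size $<d^t$ in one of the graphs, containing $a$ or $b$. This gives $|f(G')-f(G'')|\le 2d^t$. With $m=nd/2$ and $\beta = n/d^{t+1}\cdot d^{t}\cdot\omega$, or simply $\beta=\sqrt{n}\,d^{t+1}$, Lemma \ref{lec8: lem2} yields
\[
\mathbb{P}\big(\,\big||W|-\mathbb{E}|W|\big|\ge \beta\big)\le 2\exp\!\left(-\frac{\beta^2}{4d^{2t}\cdot nd/2}\right)=2e^{-d/2}=o(1),
\]
and $\beta=\sqrt{n}\,d^{t+1}=o(n)$ because $n=2^d$. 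Therefore whp $|W|=(1+o(1))yn$.

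The main obstacle is exactly the Lipschitz bound: we must check carefully that the status change of $v$ under one edge toggle forces $v$ into a component of size below $d^t$ containing an endpoint of the toggled edge. This holds because if $|C_v|\ge d^t$ in the graph with the edge and $<d^t$ without it, then $v$'s small component (without the edge) contains $a$ or $b$. The symmetric direction is handled the same way. All remaining steps are routine.
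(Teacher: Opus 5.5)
Your argument is correct and is the paper's proof: linearity of expectation with Lemma~\ref{lec8: lem6}, then Lemma~\ref{lec8: lem2} with Lipschitz constant $2d^t$ (the paper takes $\beta=n^{2/3}$ where you take $\beta=\sqrt{n}\,d^{t+1}$; both work). Your opening claim that applying Lemma~\ref{lec8: lem2} directly to $|W|$ fails is mistaken, since your ``cleanest choice'' of $f$ is exactly $|W|$, and your own argument shows that a single edge toggle changes it by at most $2d^t$, so no truncated surrogate is needed.
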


\begin{proof}
Notice that we can write
\[
|W| = \sum_{v \in V(Q^d)} \mathbbm{1}_v, \qquad
\mathbbm{1}_v =
\begin{cases}
1 & |C_v| \ge d^t, \\
0 & \text{otherwise.}
\end{cases}
\]
By linearity of expectation,
\begin{align*}
\mathbb{E}[|W|] &= \sum_{v \in V(Q^d)} \mathbb{E}[\mathbbm{1}_v] = \sum_{v \in V(Q^d)} \mathbb{P}\left(|C_v| \ge d^t\right) \\
&= (1+o(1))yn \qquad (\text{by Lemma \ref{lec8: lem6}}).
\end{align*}
We now apply the concentration result (Lemma \ref{lec8: lem2}). For $G' \subseteq Q^d$, define $f(G') = |W|$, and set a random variable $X = f(G')$.
Then:
\begin{itemize}
    \item[(1)] $\mathbb{E}[X] = (1+o(1))yn$.
    \item[(2)] Toggling the state of a single edge of $E(Q^d)$ (present in $Q^d_p$ or not) changes the value of $X$ by at most $2d^t$ (this is maximal precisely when the toggled edge is the unique connection between two components, each of size $< d^t$, that would merge into a component of size $\ge d^t$).
\end{itemize}

\begin{center}
\includegraphics[width=0.55\textwidth]{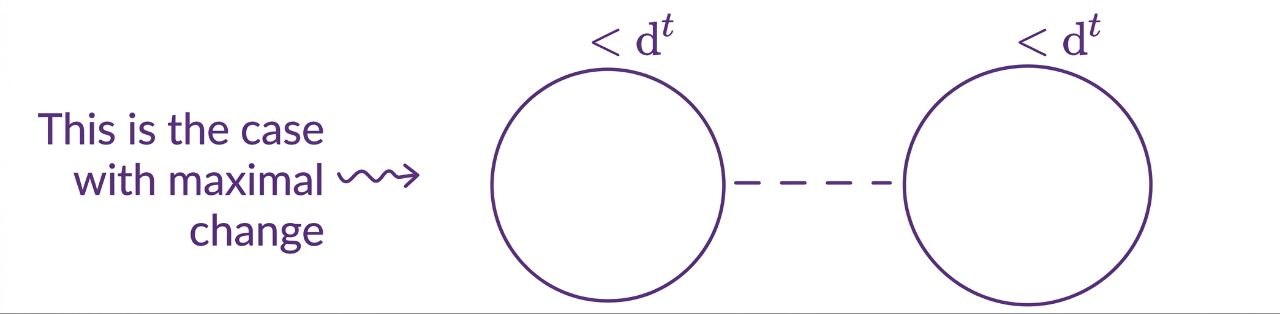}
\end{center}
By McDiarmid's inequality (Lemma \ref{lec8: lem2}),
\[
\mathbb{P}\big[|X - \mathbb{E}[X]| \ge n^{2/3}\big] \le 2\exp\left\{-\frac{n^{4/3}}{\frac{nd}{2}\cdot (2d^t)^2}\right\} = o(1).
\]
Hence, whp
\[
X = (1+o(1))\,\mathbb{E}[X] = (1+o(1))yn.
\]
\end{proof}

Informally, in the next lemma we show that $W$ is spread ``nicely'' throughout $V(Q^d)$.

\begin{lemma}\label{lec8: lem8}
Let $W$ be as in Lemma \ref{lec8: lem7}. Whp, in $Q^d_p$, every vertex $v \in V(Q^d)$ is at distance at most two from $W$.
\end{lemma}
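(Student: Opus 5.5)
We want: whp every vertex $v$ has a vertex of $W$ within distance two in $Q^d$ (distance measured in the hypercube, not in $Q^d_p$). The plan is a union bound over $v$. It therefore suffices to show that for a fixed $v$,
\[
\mathbb{P}\big(B(v,2)\cap W=\emptyset\big)=o(1/n)=o(2^{-d}),
\]
where $B(v,2)$ is the ball of radius $2$ around $v$ in $Q^d$. This ball contains $\Theta(d^2)$ vertices. Inside it we find many ``independent trials'' each of which succeeds with probability about $y$. The trials fail jointly with probability about $(1-y)^{\Theta(d)}$, and we need the constant in the exponent to beat $\ln 2$.

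To get the independent trials, fix $v$ and its $d$ neighbours $u_1,\dots,u_d$. For each $i$, grow the component of $u_i$ by a BFS that is confined to a private region $R_i$, with $R_i\cap R_j=\emptyset$ for $i\neq j$. Concretely, $R_i$ is the set of vertices obtained from $v$ by flipping coordinate $i$ together with a set of further coordinates avoiding $\{1,\dots,d\}\setminus D_i$, where $D_1,\dots,D_d$ are disjoint blocks of about $d/2$ coordinates. A cleaner choice may be to split $[d]$ into two halves $A$ and $B$ with $|A|=|B|=d/2$. Use the $d/2$ neighbours $u_i$ of $v$ obtained by flipping $i\in A$. For each such $u_i$, explore only the subcube in which coordinates of $A\setminus\{i\}$ are frozen at $v$'s values and coordinates of $B$ are free. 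These subcubes $v\oplus e_i\oplus\{0,1\}^B$ are pairwise disjoint and each is a copy of $Q^{d/2}$. Percolation inside them is independent because they use disjoint edge sets.

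Within each copy of $Q^{d/2}$, run the Galton--Watson coupling from the proof of Lemma \ref{lec8: lem6}. As long as fewer than $d^{1/2}$ vertices have been explored, each vertex has at least $d/2-d^{1/2}$ unexplored edges, each present with probability $c/d$. The coupled process therefore has mean about $c/2$, and this is the main obstacle: $c/2$ may be less than $1$. So the halving must be made unbalanced. Take $|A|=\alpha d$ with $\alpha$ small. We then get $\alpha d$ disjoint subcubes of dimension $(1-\alpha)d$, each with branching mean $(1-\alpha)c>1$. Each reaches $d^{1/2}$ vertices with probability at least $y((1-\alpha)c)-o(1)\ge y'>0$. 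To go from $d^{1/2}$ to $d^t$, Lemma \ref{lec8: lem5} already says that components of size at least $d/(c-1-\ln c)$ in $Q^d_p$ have size at least $d^t$ whp. We should grow to size $d/(c-1-\ln c)$ rather than $d^{1/2}$, still inside the subcube. This is fine because a GW process that survives reaches any size $O(d)$ with probability $y'-o(1)$, and while the explored set has size $O(d)$ the available degree stays $(1-\alpha-o(1))d$ if we use DFS/BFS tree counting. Then $\mathbb{P}(\text{all fail})\le(1-y'+o(1))^{\alpha d}$.

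The difficulty is that $(1-y')^{\alpha d}$ must be smaller than $2^{-d}$, and this fails for small $\alpha$. Therefore I would instead use the $\Theta(d^2)$ vertices at distance exactly two. Pick about $\alpha' d^2$ vertices $w=v\oplus e_i\oplus e_j$ with $i,j\in A$, $|A|=\alpha d$, $i<j$. Each $w$ gets its own subcube $w\oplus\{0,1\}^B$, and these subcubes are disjoint for distinct pairs $\{i,j\}$. This gives $\binom{\alpha d}{2}=\Theta(d^2)$ independent trials, each succeeding with probability at least $y'/2$. The failure probability is then $e^{-\Theta(d^2)}=o(2^{-d})$. A union bound over the $n=2^d$ vertices finishes the proof. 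The event ``the component of $w$ has size at least $d/(c-1-\ln c)$'' is witnessed inside the subcube and is monotone, so it transfers to $Q^d_p$. Combined with Lemma \ref{lec8: lem5} this places $w$ in $W$ whp, with the exceptional event of Lemma \ref{lec8: lem5} handled once globally.
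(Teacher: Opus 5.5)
Your final argument is the paper's: a block of about $\epsilon d/2$ coordinates with $\epsilon=(c-1)/c$, the $\Theta(d^2)$ vertices at distance two obtained by flipping a pair inside that block, each carrying its own disjoint supercritical subcube of dimension $d-\lfloor \epsilon d/2\rfloor$, then failure probability $e^{-\Theta(d^2)}=o(1/n)$ and a union bound over vertices. The only variation is that you target size $d/(c-1-\ln c)$ inside the subcube and upgrade to size $d^t$ by invoking Lemma \ref{lec8: lem5} once globally, whereas the paper asks for size $d^t$ inside $H_{ij}$ directly; in either version the per-trial bound should be proved as in Lemma \ref{lec8: lem6}, coupling with the GW process only up to $d^{1/2}$ explored vertices and then excluding larger sizes by tree counting inside the subcube, because once $\Theta(d)$ vertices are explored the available degree is not deterministically $(1-\alpha-o(1))d$ as you claim.
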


\begin{proof}
By symmetry, it suffices to show that
\[
\mathbb{P}\Big[\,\bar{0} \text{ is not within distance } 2 \text{ of } W\,\Big] = o\!\left(\frac{1}{n}\right).
\]
Define $\epsilon = \frac{c-1}{c} > 0$ and $I = \left[\left\lfloor \frac{\epsilon d}{2} \right\rfloor\right]$. For every $i \ne j \in I$, define
\[
H_{ij} = \big\{ \bar{x} \in Q^d \mid x_i = 1,\ x_j = 1,\ \forall k \in I \setminus \{i,j\},\ x_k = 0 \big\},
\]
and let $u_{ij} \in H_{ij}$ be the point with $(u_{ij})_i = (u_{ij})_j = 1$ and $(u_{ij})_k = 0$ for all $k \ne i,j$.

\bigskip
\begin{center}
\begin{tikzpicture}[scale=1]
\draw[-] (0,0) -- (8,0);
\draw[thick] (0,0) -- (3,0);
\draw (0,0) node[below]{$1$} -- (8,0) node[below]{$d$};
\draw (2.5,0.1) -- (2.5,-0.1);
\draw (0,0.1) -- (0,-0.1);
\node[above] at (1.3,0.1) {$I$, length $\lfloor \epsilon d/2 \rfloor$};
\filldraw (0.6,0) circle (1.5pt) node[below]{$i$};
\filldraw (1.3,0) circle (1.5pt) node[below]{$j$};
\end{tikzpicture}
\end{center}
\bigskip

Then:
\begin{enumerate}
    \item $H_{ij}$ is a subcube of $Q^d$ of dimension $d - \left\lfloor \frac{\epsilon d}{2} \right\rfloor$.
    \item $u_{ij} \in H_{ij}$, and $u_{ij}$ is at distance $2$ from $\bar{0}$.
    \item For $(i,j) \ne (i',j')$, observe that $V(H_{ij}) \cap V(H_{i'j'}) = \emptyset$.
\end{enumerate}
Since percolation restricted to each $H_{ij}$ is distributed as $Q^{d-\lfloor \epsilon d/2\rfloor}_p$, we note
\[
\left(d - \left\lfloor \frac{\epsilon d}{2} \right\rfloor\right) p \ge \left(d - \frac{\epsilon d}{2}\right)\frac{c}{d} = \left(1 - \frac{\epsilon}{2}\right) c = \left(1 - \frac{c-1}{2c}\right) c = \frac{c+1}{2} > 1.
\]
That is, percolation on $H_{ij}$ is supercritical. Hence, for every $i \ne j \in I$,
\[
\mathbb{P}\big[u_{ij} \text{ belongs to a component of size } \ge d^t \text{ of } H_{ij}\big] \ge \delta
\]
for some $\delta \coloneqq \delta(c) > 0$. Therefore,
\begin{align*}
    \mathbb{P}(\text{for all } i \ne j \in I,\ u_{ij} &\text{ does not belong to a component of size } \ge d^t \text{ of } H_{ij})\\
    &\le (1-\delta)^{\binom{|I|}{2}} \le e^{-\delta \binom{|I|}{2}}= e^{-\Theta(d^2)} = o\!\left(\frac{1}{n}\right),
\end{align*}
as required.

Therefore, with probability $1 - o(1/n)$, there exist $i \ne j \in I$ such that the component of $u_{ij}$ in $H_{ij}$ has at least $d^t$ vertices. Since the component of $u_{ij}$ in $Q^d_p$ contains the one in $H_{ij}$, it follows that whp, every vertex $v \in V(Q^d)$ is at distance at most $2$ from $W$.
\end{proof}

Let
\[
    t = 31,
\]
and define $p_1, p_2$ by
\[
p_2 = \frac{1}{d^5}, \qquad 1-p = (1-p_1)(1-p_2),
\]
so in particular $p_2 \ge p - p_1$. Let
\[
G_1 \sim Q^d_{p_1}, \qquad G_2 \sim Q^d_{p_2}
\]
be independent, and set $G = G_1 \cup G_2$. Notice that $G \sim Q^d_p$. Denote
\[
W_1 = \{v \in V(Q^d) : |C_{G_1,v}| \ge d^t\}.
\]

\begin{lemma}\label{lec8: lem9}
    Whp, all components of $W_1$ merge into a single component after adding the edges of $G_2$.
\end{lemma}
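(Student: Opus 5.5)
We will show that whp, for every partition of the largish components of $G_1$ into two parts $A\sqcup B=W_1$ (each part a union of components of $G_1$ of size $\ge d^t$) with both parts non-empty, $G_2$ connects $A$ and $B$ through a short path. The number of such partitions is at most $2^{n/d^t}$, because each component in $W_1$ has at least $d^t$ vertices. The argument therefore needs a failure probability per partition of order $\exp(-\omega(n/d^t))$.

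The first step is to apply Lemma \ref{lec8: lem8} to $G_1$. The value $p_1$ equals $p-O(d^{-5})$, so $dp_1=c-o(1)>1$ and all earlier lemmas hold for $G_1$. Hence whp every vertex of $Q^d$ is within distance $2$ (in $Q^d$) of $W_1$. Now fix a partition $A\sqcup B$ and assume without loss of generality $|A|\le|B|$, so $|A|\le n/2$. Let $A'$ be the set of vertices within distance $2$ of $A$ that are closer to $A$ than to $B$. Define $B'$ in the same way, assigning every vertex to $A'$ or $B'$ according to its nearest part of $W_1$. These two sets partition $V(Q^d)$.

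Suppose first that $|A'|\le n/2$. By Lemma \ref{lec8: lem4}(2) we have $|\partial A'|\ge |A'|\ge|A|\ge d^t$. If instead $|A'|>n/2$, we apply the same lemma to $B'$. Each boundary edge $xy$ with $x\in A'$ and $y\in B'$ gives a $Q^d$-path of length at most $5$ from $A$ to $B$. The path runs from $A$ to $x$ in at most $2$ steps, crosses the edge $xy$, and reaches $B$ in at most $2$ more steps.

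I then need many such paths that are pairwise edge-disjoint, or at least have bounded overlap. A given path edge lies in at most $d^{O(1)}$ such paths, since it has at most $d^{4}$ neighbouring configurations within radius $4$. A greedy selection therefore yields at least $|A|/d^{O(1)}$ edge-disjoint $A$--$B$ paths of length at most $5$. Each path lies entirely in $G_2$ with probability at least $p_2^5=d^{-25}$. The probability that none of them appears is at most
\[
(1-d^{-25})^{|A|/d^{8}}\le \exp\bigl(-|A|\,d^{-33}\bigr).
\]

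For the union bound I will sum over $a=|A|$. The number of choices of $A$ with $|A|=a$ is at most $\binom{n/d^t}{a/d^t}\le (en)^{a/d^t}=\exp\bigl(O(a d/d^t)\bigr)$, because $A$ is determined by its components, of which there are at most $a/d^t$. With $t=31$ the failure bound $\exp(-a d^{-33})$ is too weak compared with $\exp(a d^{1-31})$: the exponents compare as $d^{-33}$ against $d^{-30}$. The path exponent is the main obstacle, and I will fix it by tightening the counts. I will use $p_2=d^{-5}$ and paths of length at most $5$, giving path probability $d^{-25}$. Using the fact that the matching from the isoperimetric boundary uses each vertex $O(d)$ times, the overlap loss should be only $d^{2}$ rather than $d^{8}$. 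The result is about $a d^{-27}$ in the failure exponent against $a d^{-30}\log n=a d^{-29}$ from the count of choices. The union bound then gives $o(1)$, as the choice $t=31$ suggests.

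Concretely, I will try the following. Each of the at least $|A|$ boundary edges is extended to a short path by choosing fixed geodesics to the nearest vertices of $A$ and $B$. Each edge of $Q^d$ is used by at most $d^{2}$ of these paths, because the nearest-point assignment maps each vertex to a single target. So at least $|A|/(5d^{2})$ of the paths are edge-disjoint.
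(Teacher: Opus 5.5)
Your proof is correct and follows essentially the paper's route. The shared steps are: Lemma~\ref{lec8: lem8} applied to $G_1$; enlarged sets $A'\supseteq A$, $B'\supseteq B$ covering $V(Q^d)$; the isoperimetric bound of Lemma~\ref{lec8: lem4}(2), giving at least $|A|$ crossing edges; greedy extraction of edge-disjoint $A$--$B$ paths of length at most $5$, each present in $G_2$ with probability at least $p_2^5=d^{-25}$; and a union bound over partitions counted through their components (your count by $a=|A|$ is equivalent to the paper's $\binom{s}{\ell}\le n^{\ell}$). Your geodesic refinement to $O(d^2)$ overlap is valid but unnecessary: a fixed edge lies on only $O(d^4)$ paths of length at most $5$, not $d^8$, so the crude greedy bound the paper uses already gives failure probability $e^{-\Omega(a d^{-29})}$ (the paper's $e^{-\Theta(\ell d^2)}$), which beats the counting factor $(en)^{a/d^{31}}=e^{O(a d^{-30})}$. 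That factor already includes the $\log n$, so your ``$a d^{-30}\log n = a d^{-29}$'' counts it twice, though this slip is harmless.
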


\begin{proof}
By Lemma \ref{lec8: lem8} (applied to $G_1$), every vertex $v \in V(Q^d)$ is at distance $\le 2$ from $W_1$. Suppose toward contradiction that exposing the edges of $G_2$ does not unite the components of $W_1$. Then, there is a partition
\[
W_1 = A \cup B, \qquad A, B \ne \emptyset,
\]
respecting the connected components of $W_1$, such that there is no path in $G_2$ between $A$ and $B$.

Suppose $W_1$ has $s \le n$ components, and (WLOG) one of $A, B$ has $\ell \le s/2$ of them. Define
\[
A' := \{v \in V(Q^d) \setminus B : \mathrm{dist}_{Q^d}(v, A) \le 2\}, \qquad B' := V(Q^d) \setminus A'.
\]

\begin{center}
\includegraphics[width=0.55\textwidth]{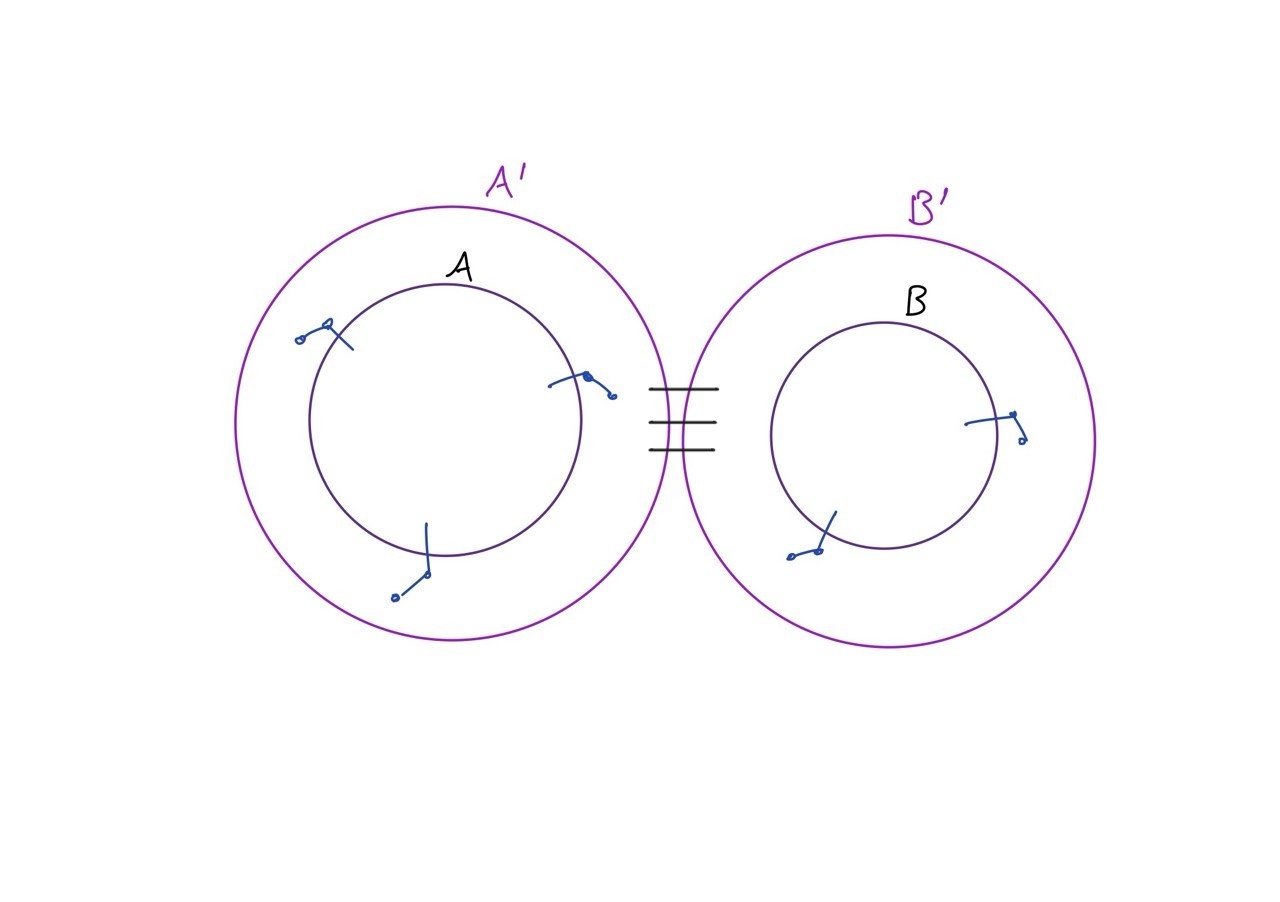}
\end{center}
Then $A \subseteq A'$, $B \subseteq B'$, and $A' \cup B' = V(Q^d)$. By the isoperimetric inequality in $Q^d$, there are at least $\min\{|A'|,|B'|\}$ edges between $A'$ and $B'$. Since
\[
|A'| \ge |A| \ge \ell\, d^{31}, \qquad |B'| \ge |B| \ge \ell\, d^{31},
\]
there are at least $\ell\, d^{31}$ edges of $Q^d$ between $A'$ and $B'$. By construction of $A, A', B, B'$, each such edge lies on a path of length $\le 5$ between $A$ and $B$ (at most two vertices inside each of $A$ and $B$, plus the crossing edge).

Hence there are in total at least $\ell\, d^{31}$ (not necessarily disjoint) paths of length $\le 5$ between $A$ and $B$ in $Q^d$. Since every edge of $Q^d$ lies on at most $5\,d^4$ such paths, a greedy argument yields at least
\[
\frac{\ell\, d^{31}}{5 \cdot 5\, d^4 + 1} \ge \frac{\ell\, d^{27}}{30}
\]
pairwise edge-disjoint paths of length $\le 5$ between $A$ and $B$. Each such path is entirely contained in $G_2$ with probability $\ge p_2^5$, independently across the paths. Hence
\[
\mathbb{P}\big[A, B \text{ not connected in } G_2\big] \le \left(1-p_2^5\right)^{\ell d^{27}/30} \le e^{-p_2^5 \cdot \ell d^{27}/30} = e^{-\Theta(\ell d^2)}.
\]
Therefore
\[
\mathbb{P}\big[\exists \text{ components of } W_1 \text{ disconnected in } G_2\big] \le \sum_{\ell \le s/2} \binom{s}{\ell} e^{-\Theta(\ell d^2)} \le \sum_{\ell \le s/2} n^{\ell}\, e^{-\Theta(\ell d^2)} = o(1),
\]
using $s \le n$ and $n=2^d$.
\end{proof}

\paragraph{Proof of the theorem.}
Set $p_1, p_2, G_1, G_2, W_1$ as above. Since $p = (1+o(1))p_1$, Lemma \ref{lec8: lem7} gives that whp
\[
|W_1| = (1+o(1))yn.
\]
Moreover, every connected component of $G_1$ outside of $W_1$ has size $\le \frac{d}{c-1-\ln c}$. By Lemma \ref{lec8: lem9}, all components of $W_1$ merge into a single component whp, and therefore
\[
|L_1| \ge |W_1| = (1+o(1))yn.
\]
On the other hand, recalling
\[
W = \{v \in V(Q^d) : |C_{G,v}| \ge d^{t}\},
\]
Lemma \ref{lec8: lem7} gives $|L_1| \le |W|$, and whp
\[
|W| = (1+o(1))yn,
\]
so that
\[
|L_1| \le (1+o(1))yn.
\]
Combining the two bounds,
\[
|L_1| = (1+o(1))yn.
\]

\begin{center}
\includegraphics[width=0.7\textwidth]{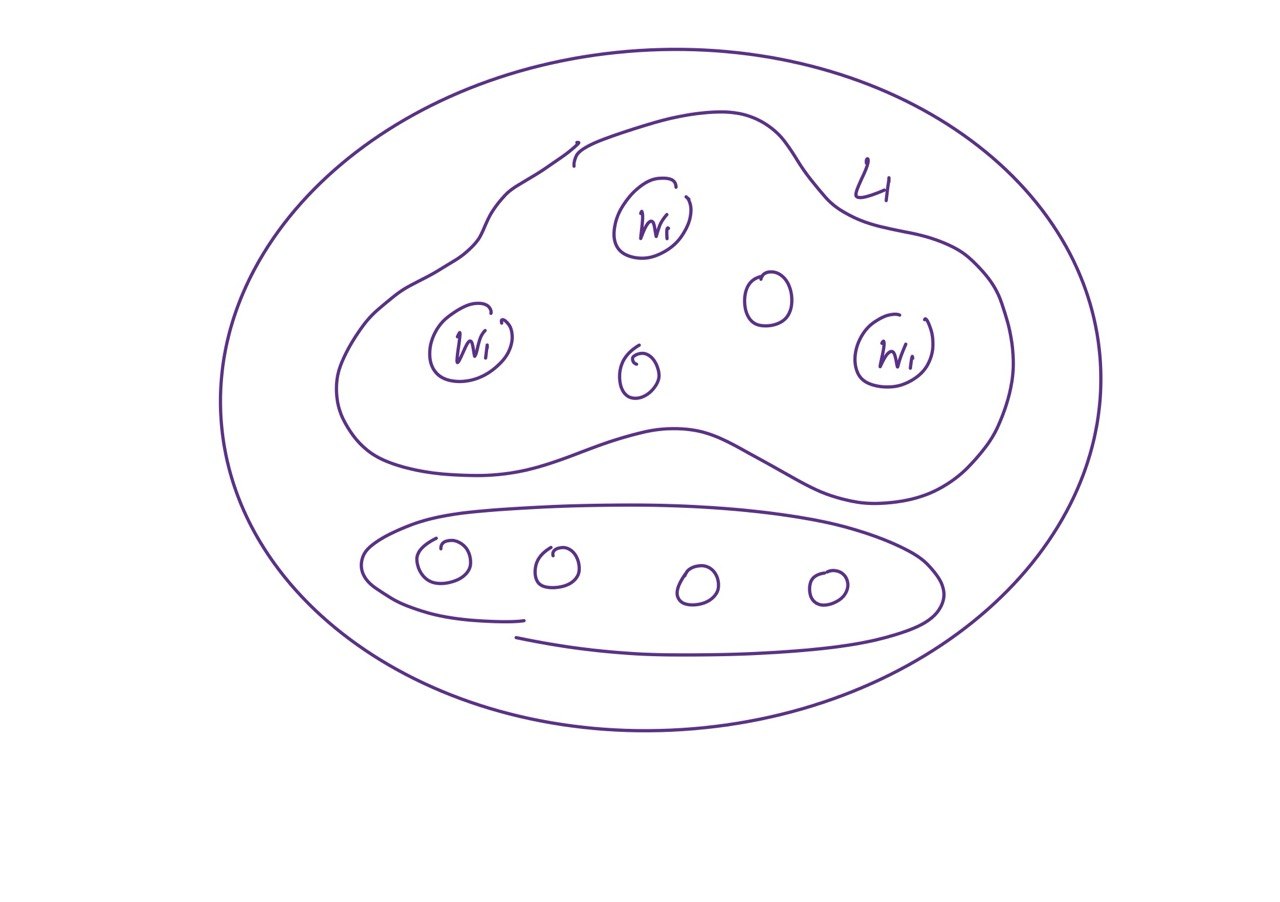}
\end{center}
It remains to show that all components of $G = Q^d_p$ outside $W_1$ have size whp at most $\frac{d}{c-1-\ln c}$ in $G$.

Define an auxiliary random graph $\Gamma$ whose vertices are the connected components of $G_1$ outside $W_1$, where two such components are joined by an edge in $\Gamma$ iff there is a $G_2$-edge between them. Each vertex of $\Gamma$ corresponds to a component of size $O(d)$ with maximum degree $d$, hence connected to $O(d^2)$ components; between any two components there are at most $d^2$ possible connecting edges. Therefore, any two vertices of $\Gamma$ are joined by an edge with probability
\[
O(d^2)\cdot p_2 = O(d^{-3}).
\]
Thus in $\Gamma$ the degrees are $O(d^2)$ and the edge probability is $O(d^{-3})$ —-- so we are in the sub-critical regime. By the sub-critical analysis, whp every connected component of $\Gamma$ has size
\[
O(\log_2 n) = O(d).
\]
Expanding each vertex of $\Gamma$ back into its underlying component of $G_1$ (size $O(d)$), a component of $G$ outside $W_1$ has size whp
\[
O(d) \cdot O(d) = O(d^2).
\]
But by Lemma \ref{lec8: lem5}, $G = Q^d_p$ has no component of size in $\left[\frac{d}{c-1-\ln c}, d^2\right]$, so this bound forces the actual size to fall below the lower end of that interval. Hence, for all $i\ge 2$, we have
\[
|L_i| \le \frac{d}{c-1-\ln c},
\]
which completes the proof.
\end{proof}

\newcommand{\lecturername}{Dr. Sahar Diskin} 

\renewcommand{\lecture}[4]{\heading{#1}{Percolation on Finite Graphs, Tel Aviv Univ., Spring 2026}{#2}{\lecturername}{Scribe: #4}{#3}}

\newpage
\phantomsection
\addcontentsline{toc}{chapter}{Lecture 10}
\lecture{10}{June 28, 2026}{Lecture 10}{Aner Mash}
\setcounter{chapter}{10}
\setcounter{section}{0}

\section{A comparison between $G(n,p)$ and $Q^d_p$}

Consider the random graph $G(n,p)$:
\begin{figure}[H]
    \centering
    \includegraphics[width=0.5\textwidth]{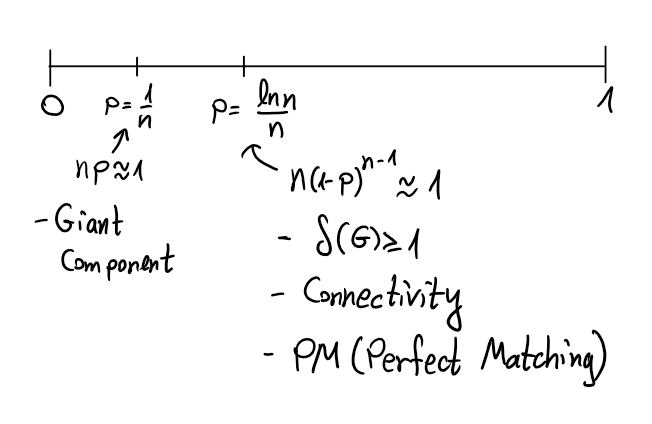}
\end{figure}
\noindent Consider the random hypercube $Q_p^d$:
\begin{figure}[H]
    \centering
    \includegraphics[width=0.5\textwidth]{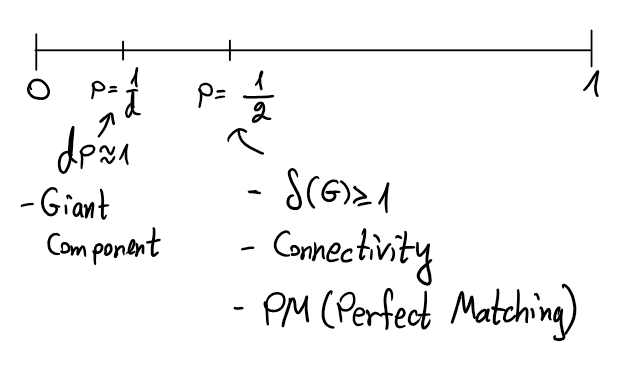}
\end{figure}
Intuitively, $p=\frac{1}{2}$ comes from the calculation of the first moment:
\[
\mathbb{E}[\# \text{ isolated vertices}] = 2^d \cdot (1-p)^d=1\iff p=\frac{1}{2}.
\]

\section{Perfect Matching in $Q^d_p$}

\begin{thm}[Bollob\'as \cite{zbMATH04162929}]\label{thm: PM}
    \[
    \lim_{d\to\infty} \mathbb{P}(Q_p^d \text{ contains a perfect matching}) = \begin{cases} 0 & p \le \frac{1}{2} - \frac{\omega(d)}{d} \\ 1 & p \ge \frac{1}{2} + \frac{\omega(d)}{d} \end{cases}
    \]
\end{thm}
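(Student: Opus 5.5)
The proof has two parts: a second-moment argument for the $0$-statement and an expansion/Hall-type argument for the $1$-statement.

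\textbf{The $0$-statement.} Let $p\le \frac12-\frac{\omega(d)}{d}$; by monotonicity we may assume equality. Let $X$ be the number of isolated vertices of $Q^d_p$. Then
\[
\mathbb{E}[X]=2^d(1-p)^d=\left(1+\tfrac{2\omega}{d}\right)^d(1+o(1))\to\infty
\]
once $\omega\to\infty$ slowly (say $\omega\le \sqrt d$). For the variance, two distinct vertices $u,v$ are isolated with probability $(1-p)^{2d}$ if they are non-adjacent, and $(1-p)^{2d-1}$ if they are adjacent. The adjacent pairs number $nd/2$, so they contribute at most $O(d/n)\mu^2=o(\mu^2)$. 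Hence $\mathrm{Var}(X)=\mu+o(\mu^2)$, and Chebyshev's inequality gives $X>0$ whp. In particular there is no perfect matching.

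\textbf{The $1$-statement.} Let $p=\frac12+\frac{\omega}{d}$. Since $Q^d$ is bipartite with sides $O,E$ of equal size, I will verify Hall's condition for $Q^d_p$: $|N_{Q^d_p}(S)|\ge |S|$ for every $S\subseteq O$ (and symmetrically for $E$). By the usual reduction it suffices to check $|S|\le n/4$, using the complement trick on the other side.

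First, $\mathbb{E}[\#\text{isolated}]=2^d(1-p)^d=(1-\tfrac{2\omega}{d})^d\to0$, so whp $\delta(Q^d_p)\ge1$. I will then split $S$ by size.

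\emph{Small sets, $|S|\le d^{C}$.} Here the structure of $Q^d$ is nearly a tree, since any two vertices have at most $2$ common neighbours. If $|N(S)|<|S|$, then $S\cup N(S)$ contains a connected witness set with few edges leaving it. I will bound the probability over connected witnesses $T$ containing a fixed vertex using the tree count (Lemma \ref{lec8: lem3}, $(ed)^{k-1}$ choices). The requirement that the $\approx k d$ boundary edges of $Q^d$ around the structure are mostly absent in $Q^d_p$ costs roughly $(1-p)^{(d-O(\log k))\cdot(\text{\#vertices of } S)}$. The main subtlety is that for $|S|=2$, two vertices of $O$ at distance $2$ share only two neighbours. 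The event that both have all their edges in $Q^d_p$ going to a common single vertex costs about $(1-p)^{2d-2}$. Summed over $n d^2$ pairs this gives $n d^2 2^{-2d}(1+o(1))^{\ldots}=o(1)$, and similar counting works for all $k\le d^C$ because each extra vertex of $S$ adds a factor $(1-p)^{d-O(\log k)}\approx 2^{-d}$ against only $\mathrm{poly}(d)$ new choices.

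\emph{Large sets, $d^C\le |S|\le n/4$.} Here I will use the isoperimetric inequalities (Lemma \ref{lec8: lem4}) together with a concentration/union-bound argument. By Lemma \ref{lec8: lem4}, $S$ has at least about $|S|(d-2\log_2|S|)$ boundary edges, and vertices of $O$ see $N_{Q^d}(S)$ with $|N_{Q^d}(S)|\ge$ (Harper vertex isoperimetry) substantially more than $|S|$. The worry is that $Q^d_p$ retains only half of the edges. I will first show whp that every vertex has $p$-degree at least $d/2-o(d)$. I will then argue that, for $|N(S)|<|S|$, the at least $|S|(d/2-o(d))$ retained edges from $S$ must land in a set $T$ with $|T|<|S|$. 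This contradicts the edge isoperimetry bound $e(S\cup T)\le |S\cup T|\log_2|S\cup T|$ whenever $\log_2(2|S|)<d/4$, i.e. $|S|\le 2^{d/4}$.

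For $|S|$ between $2^{d/4}$ and $n/4$, I expect the main obstacle. A direct union bound over $S$ is too weak, since there are $\binom{n}{|S|}$ choices. I would first try an edge-counting approach via a spanning-structure argument: show $Q^d_{1/2+\omega/d}$ contains, whp, a spanning subgraph where every set has expansion, e.g. by exposing the graph in two rounds (sprinkling). In the first round at probability $\approx 1/2$ a giant robust component covers almost everything, as in Theorem \ref{giant_in_Q^d_p}. The sprinkled edges, together with the Hall verification on small sets, then finish the argument. I expect this large-set regime to need the most care.
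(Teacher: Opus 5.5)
Your $0$-statement (second moment on isolated vertices) and your small-set first-moment count are fine. The $1$-statement, however, has a genuine gap for larger sets.

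First, the claim that whp every vertex of $Q^d_p$ has degree at least $d/2-o(d)$ is false. The degrees of the vertices in $\mathcal{O}$ are independent $\mathrm{Bin}(d,p)$ variables. For $p=\frac12+\frac{\omega}{d}$ with $\omega=o(d)$, and any fixed $0<\epsilon<\frac12$, one gets $2^{d-1}\,\mathbb{P}(\mathrm{Bin}(d,p)\le \epsilon d)\ge 2^{(H(\epsilon)-o(1))d}\to\infty$, where $H$ is the binary entropy. So whp there are vertices of degree at most $\epsilon d$, and your deterministic argument for $d^C\le |S|\le 2^{d/4}$ rests on a false premise.

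This range needs no new idea, though, because your own counting extends much further. A minimal Hall violator gives a set $W$, connected in $Q^d$, with $|W|=m\le 2^{d-1}$ and $|W_{\mathcal{O}}|\ge m/2$ (or the symmetric version). By Harper, at least $\frac m2 d-e(W)\ge \frac m2(d-\log_2 m)$ edges of $Q^d$ must be absent. Hence the expected number of such $W$ is at most $2^d(ed)^m2^{-\frac m2(d-\log_2 m)}$, which is summably small for $3\le m\le 2^d/d^{\ln^2 d}$. This is exactly Cases 1--2 of the paper's proof.

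The real difficulty is the range $2^d/d^{\ln^2 d}\le m\le 2^{d-1}$, and your sprinkling sketch does not address it. A giant component covering almost all vertices says nothing about Hall's condition for sets of linear size. Moreover, no plain union bound can close this range. The Harper estimate only forces $\frac m2\log_2(2^d/m)$ absent edges, which is essentially tight for near-subcubes, giving probability about $(m/2^d)^{m/2}$. But $\binom{2^d}{m}(m/2^d)^{m/2}\ge (2^d/m)^{m/2}\ge 1$, and also $2^d(ed)^m(m/2^d)^{m/2}\ge 1$ once $m\ge 2^d/(ed)^2$.

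The missing idea is the paper's dichotomy on the edge boundary:
\begin{itemize}
\item If $|\partial W|\ge m\ln^4 d$, then $e(W)\le \frac m2(d-\ln^4 d)$, so at least $\frac m2\ln^4 d$ edges must be absent. The resulting factor $2^{-\frac m2\ln^4 d}$ beats even the trivial count $\binom{2^d}{m}\le e^{m(1+\ln^3 d)}$.
\item If $|\partial W|<m\ln^4 d$, you need a structural count of such sets, namely Lemma \ref{finger_print}: there are at most $\exp(2m/\ln^2 d)$ of them. The reason is that, by averaging, some $k=\log_2 \ln^5 d$ directions carry few boundary edges. So $W$ is determined by the $k$-dimensional subcubes in those directions that it fully contains, plus few extra vertices. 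This count is beaten by the cost $2^{-\frac m2(d-\log_2 m)}\le 2^{-m/2}$.
\end{itemize}
Without such a count of sets with small edge boundary, the $1$-statement is not proved.
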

\noindent The proof is due to Sahar Diskin and Anna Geisler \cite{arXiv:2404.14020}.

\subsection{Preparation}

\begin{lemma}\label{finger_print}
    Let $\frac{2^d}{d^{\ln^2 d}}\le m\le 2^d$ be an integer. Then:
    \[
    \# \{ S \subseteq V(Q^d) : |S|=m, \, |\partial S| < m \cdot \ln^4 d \} \leq \exp\left\{\frac{2m}{\ln^2 d}\right\}.
    \]
\end{lemma}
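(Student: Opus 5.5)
The idea is a fingerprint (container-type) encoding. Since $|\partial S|<m\ln^4 d$, an average vertex of $S$ has only about $\ln^4 d \ll d$ neighbours outside $S$. So $S$ is almost ``closed'', and it can be recovered from a sparse random subset of itself by a majority-type closure rule, up to a few corrections. Counting pairs (fingerprint, corrections) will bound the number of sets $S$.

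\textbf{Step 1 (exceptional vertices).} Fix $S$ with $|S|=m$ and $|\partial S|<m\ln^4 d$. Let
\[
B_1=\{v\in S: |N(v)\setminus S|\ge d/10\},\qquad B_2=\{v\notin S: |N(v)\cap S|\ge d/10\}.
\]
Double counting the edges of $\partial S$ gives $|B_1|,|B_2|\le 10m\ln^4 d/d$.

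\textbf{Step 2 (random fingerprint and closure).} Let $q=\ln^{-6}d$ and let $X\subseteq S$ contain each vertex independently with probability $q$. Define the closure
\[
\mathrm{Cl}(X)=\{v\in V(Q^d): |N(v)\cap X|\ge qd/2\}.
\]
Consider a vertex $v\in S\setminus B_1$. It has at least $0.9d$ neighbours in $S$, so $\mathbb{E}|N(v)\cap X|\ge 0.9qd$. Chernoff's inequality gives $\mathbb{P}(v\notin \mathrm{Cl}(X))\le e^{-\Theta(qd)}=e^{-\Theta(d/\ln^6 d)}$.

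Now consider a vertex $v\notin S\cup B_2$ that lies in $N(S)$. Its expected number of neighbours in $X$ is at most $qd/10$, so $\mathbb{P}(v\in\mathrm{Cl}(X))\le e^{-\Theta(qd)}$ as well. Vertices at distance at least $2$ from $S$ have no neighbours in $X$ and are never in $\mathrm{Cl}(X)$.

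Let $F$ be the set of failures of these two kinds. There are at most $m+|\partial S|\le 2m\ln^4 d$ candidate vertices, so $\mathbb{E}|F|\le 2m\ln^4 d\, e^{-\Theta(d/\ln^6 d)}$. Also $\mathbb{E}|X|=qm$. By Markov's inequality and a union of the two bad events, there is a realisation $X$ with $|X|\le 2qm$ and $|F|\le m/d$. Fix such an $X$. Then
\[
S=\mathrm{Cl}(X)\,\triangle\, D,\qquad D\subseteq B_1\cup B_2\cup F,\qquad |D|\le 30m\ln^4 d/d.
\]
Moreover $D$ lies within distance $2$ of $X\cup\mathrm{Cl}(X)$, a set of size at most $(d+1)^2\cdot 2^d\cdot 2qm$. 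I would rather use the cruder fact that $D$ lies in the radius-$2$ ball around $X$, which has size at most $d^2|X|\le 2d^2qm$, and I would double-check this containment.

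\textbf{Step 3 (counting).} The map $S\mapsto (X,D)$ is injective, since $S$ is recovered as $\mathrm{Cl}(X)\triangle D$. The number of choices of $X$ is at most $\sum_{j\le 2qm}\binom{2^d}{j}\le (e2^d/(2qm))^{2qm}$. Because $m\ge 2^d/d^{\ln^2 d}$, we have $\ln(2^d/m)\le \ln^3 d$, so the logarithm of this count is at most
\[
2qm\bigl(\ln^3 d+6\ln\ln d+1\bigr)=O(m/\ln^3 d).
\]
Given $X$, the number of choices of $D$ is at most $\binom{2d^2qm}{\le 30m\ln^4 d/d}$. Its logarithm is $O\bigl((m\ln^4 d/d)\ln d\bigr)=o(m/\ln^3 d)$. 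Adding the two, the number of sets $S$ is at most $\exp\{2m/\ln^2 d\}$ for large $d$.

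\textbf{Main obstacle.} The delicate point is balancing $q$. It must be small enough that the fingerprint cost $qm\ln(2^d/(qm))$ stays below $m/\ln^2 d$; this is exactly where the lower bound on $m$ enters, through $\ln(2^d/m)\le\ln^3 d$. It must also be large enough that $qd\to\infty$ fast enough for the Chernoff failures to be negligible. The choice $q=\ln^{-6}d$ appears to satisfy both. The other point needing care is confining $D$ to a small, $X$-determined ground set so that encoding $D$ is cheap.
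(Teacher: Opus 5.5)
Your argument is sound except for the containment you flagged: it is false, and it should simply be dropped. The set $D=S\triangle\mathrm{Cl}(X)$ need not lie within distance $2$ of $X$, nor within distance $2$ of $X\cup\mathrm{Cl}(X)$. (The bound $(d+1)^2\cdot 2^d\cdot 2qm$ you wrote exceeds $2^d$, so it is vacuous anyway.)

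For example, when $m$ is well below $2^d$ (say $m\le 2^d/d^5$), $S$ may be a subcube together with $\Theta(m\ln^4 d/d)$ vertices that have no neighbour in $S$ and are at distance at least $5$ from the rest of $S$. Such a vertex lies in $B_1$ and is never in $\mathrm{Cl}(X)$, so it lies in $D$. Unless it was sampled into $X$, it is at distance at least $5$ from $X$ and at least $4$ from $\mathrm{Cl}(X)$, and a typical $X$ misses all but a $q$-fraction of these vertices.

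No small $X$-determined ground set is needed. Encode $D$ as an arbitrary subset of $V(Q^d)$ of size at most $s=30m\ln^4 d/d$. Using once more $\ln(2^d/m)\le\ln^3 d$, this costs at most $s\ln(e2^d/s)\le s(\ln^3 d+\ln d+1)=O(m\ln^7 d/d)=o(m/\ln^3 d)$ nats. This is exactly how the paper pays for its leftover vertices, via the factor $\binom{2^d}{m\ln^9 d/d}$. So the ``other point needing care'' in your last paragraph is not an obstacle.

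With that repair, your proof takes a genuinely different route from the paper's.

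The paper argues deterministically and exploits the product structure of $Q^d$. By averaging over directions, it finds a set $I$ of $k=\log_2(\ln^5 d)$ coordinates with $e_I(S,S^c)<m\ln^4 d\cdot k/d$. It then tiles $Q^d$ into the $2^{d-k}$ subcubes of dimension $k$ spanned by $I$. Since a nonempty proper subset of $Q^k$ has at least $k$ boundary edges, at most $m\ln^4 d/d$ tiles are split by $S$. Hence $S$ is a union of whole tiles up to at most $m\ln^9 d/d$ extra vertices, and one encodes $I$, the full tiles (about $(m/2^k)\ln(2^d/m)\approx m/\ln^2 d$ nats), and the extras.

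Your random $q$-sample with majority closure is a container-type argument in the spirit of Sapozhenko. Your density $q=\ln^{-6}d$ plays the role of the paper's $2^{-k}=\ln^{-5}d$: both compress $S$ by a polylogarithmic factor and pay separately for $O(m\,\mathrm{polylog}(d)/d)$ exceptional vertices.

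What your route buys is generality. It uses only $d$-regularity and $qd\to\infty$ quickly, so it works verbatim in any $d$-regular graph on $n$ vertices with $\ln(n/m)\le\ln^3 d$. It also gives a slightly cheaper fingerprint, $O(m/\ln^3 d)$. The price is a probabilistic existence step, choosing a good realisation of $X$ for each $S$. The paper's encoding is instead explicit, needs no concentration, and yields concrete structural information: low-boundary sets in $Q^d$ are near-unions of $k$-dimensional subcubes in a common set of directions.
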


\begin{remark}
    For such $m$,
    \[
    \#\{\text{subsets of size } m\} = \binom{2^d}{m} \ge \left(\frac{2^d}{m}\right)^m.
    \]
    This lemma shows that relatively few subsets of size $m$ have weak expansion in $Q^d$. 
\end{remark}

\begin{proof}
    Fix $\frac{2^d}{d^{\ln^2 d}}\le m\le 2^d$. Define the family:
    \[
    \mathcal{F} \coloneqq \{ S \subseteq V(Q^d) \mid |S|=m, \, |\partial S| < m \cdot \ln^4 d \}.
    \]
    
    Given $i \in [d]$ and $S \subseteq V$, let $E_i(S, S^c)$ denote the set of edges in $E(S, S^c) = \partial S$ that are oriented along the $i$-th dimension.
    \begin{figure}[H]
        \centering
        \includegraphics[width=0.5\textwidth]{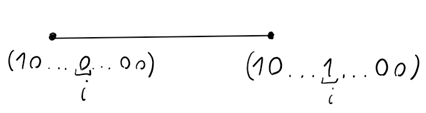}
    \end{figure}
    
    Denote $e_i(S, S^c) \coloneqq |E_i(S, S^c)|$. Given a subset  $I \subseteq [d]$, we denote:
    \[
    e_I(S, S^c) \coloneqq \sum_{i \in I} e_i(S, S^c).
    \]
    
    We say that $S$ is a \textit{``bad'' set w.r.t. $I$} if
    \[
    e_I(S, S^c) < m \cdot \ln^4 d \cdot \frac{|I|}{d}.
    \]
    
    Averaging for every $1\le k\le d$, if $S \in \mathcal{F}$, then there exists a subset $I$ with $|I|=k$, such that $S$ is ``bad'' with respect to $I$.
    Let us denote this family by $\mathcal{F}_I$.
    
    We wish to bound $|\mathcal{F}|$. Observe that 
    \[
        \mathcal{F}\subseteq \bigcup_{\substack{I\subseteq [d] \\ |I|=k}} \mathcal{F}_I,
    \]
    hence
    \[
    \forall k \quad |\mathcal{F}| \le \binom{d}{k} \cdot \max_{\substack{I \subseteq [d] \\ |I|=k}} |\mathcal{F}_I|.
    \]
    Set
    \[
        k = \log_2(\ln^5d).
    \]
    Our goal is to estimate $|{\cal F}_I|$ from above, for $I\subseteq [d]$ with $|I|=k$.
    Let $\mathcal{Q}_I$ be the set of all subcubes of $Q^d$ with all fixed coordinates outside $I$.
    Observe that every $Q\in \mathcal{Q}_I$ is a $k$-dimensional subcube of $Q^d$, and in particular $|V(Q)| = 2^k$. Moreover, notice that $|\mathcal{Q}_I|=\frac{2^d}{2^k}$.

    \begin{claim} Given a $k$-dimensional cube $Q$ and a subset $A\subseteq V(Q)$ such that $A \neq \emptyset$ and $A \neq V(Q)$, we have $|\partial A| \ge k$. (Exercise - use Harper) \end{claim}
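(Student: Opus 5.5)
Set $a\coloneqq |A|$, so $1\le a\le 2^k-1$. The plan is to derive the bound from the edge-isoperimetric results for the hypercube already available (Theorem \ref{thm: Harper} and Theorem \ref{isop_large sets_Q^d}), applied inside $Q\cong Q^k$. We split by the size of $A$ and use that $\partial A=\partial(V(Q)\setminus A)$ within $Q$ (here $\partial$ refers to edges of $Q$). Replacing $A$ by its complement in $V(Q)$ if necessary, we may assume $1\le a\le 2^{k-1}$. The complement is still nonempty and proper, so this reduction costs nothing.

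\textbf{Main bound from Harper.} Applying Theorem \ref{thm: Harper} in $Q^k$ gives
\[
|\partial A|\ge a\,(k-\log_2 a).
\]
We then minimize $g(a)\coloneqq a(k-\log_2 a)$ over the range $1\le a\le 2^{k-1}$. Since $g$ is concave in $a$ (its second derivative is $-1/(a\ln 2)<0$), its minimum over the interval is attained at an endpoint. At the endpoints we have $g(1)=k$ and $g(2^{k-1})=2^{k-1}\cdot 1=2^{k-1}\ge k$ for all $k\ge 1$. Hence $|\partial A|\ge k$ whenever $1\le a\le 2^{k-1}$. Combined with the complement reduction, this covers every nonempty proper subset $A\subsetneq V(Q)$.

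\textbf{Alternative without concavity.} If one prefers to avoid the concavity argument, there is a cruder route. For $a\le 2^{k-1}$, Theorem \ref{isop_large sets_Q^d} gives $|\partial A|\ge a$, which already suffices when $a\ge k$. For $a<k$, Harper's bound gives $a(k-\log_2 a)$, and since $\log_2 a\le a-1$ this is at least $a(k-a+1)\ge k$, because $(a-1)(k-a)\ge 0$.

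\textbf{Expected obstacle.} The only delicate point is the endpoint check in the main bound, or equivalently the small-$a$ estimate in the alternative. The small-$a$ estimate needs $\log_2 a\le a-1$ for integers $a\ge1$, which holds since $2^{a-1}\ge a$. Everything else is routine.
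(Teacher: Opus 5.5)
Your proof is correct and follows the intended route; the paper leaves this claim as an exercise with the hint to use Harper. You apply Theorem \ref{thm: Harper} inside $Q\cong Q^k$ to get $|\partial A|\ge a(k-\log_2 a)$, pass to the complement so that $a\le 2^{k-1}$, and use concavity to reduce to the endpoint values $g(1)=k$ and $g(2^{k-1})=2^{k-1}\ge k$ (your alternative via Theorem \ref{isop_large sets_Q^d} and $(a-1)(k-a)\ge 0$ is also valid).
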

    
    Let $S \in \mathcal{F}_I$. The number of distinct subcubes from ${\cal Q}_I$ such that the intersection with $S$ is neither $\emptyset$ nor $S$ is at most $\frac{|S| \cdot \ln^4 d}{d}$. Otherwise, the number of edges in the directions of $I$ would be at least $k \cdot \frac{|S| \cdot \ln^4 d}{d}$, which implies $S \notin \mathcal{F}_I$ --- a contradiction.
    
    Therefore, $S$ contains at least $\frac{|S|}{2^k} - \frac{|S| \cdot \ln^4 d}{d}$ subcubes from ${\cal Q}_I$ and at most $\frac{|S| \cdot \ln^4 d}{d} \cdot 2^k$ additional vertices. We obtain:
    \[
    |\mathcal{F}_I| \le \binom{\frac{2^d}{2^k}}{\frac{m}{2^k}} \cdot \binom{2^d}{\frac{m \cdot \ln^4 d}{d}\cdot 2^k}\le  \binom{\frac{2^d}{\ln^5 d}}{\frac{m}{\ln^5 d}} \cdot \binom{2^d}{\frac{m \cdot \ln^9 d}{d}} \stackrel{\text{exercise}}{\le} \exp\left(\frac{m}{\ln^2 d}\right),
    \]
    where we used $m \ge \frac{2^d}{d^{\ln^2 d}}$. Thus,
    \[
    |\mathcal{F}| \le \binom{d}{\log_2(\ln^5 d)} \cdot \exp\left(\frac{m}{\ln^2 d}\right) < \exp\left(\frac{2m}{\ln^2 d}\right).
    \]
\end{proof}

Lastly, recall the well-known Hall's condition for having a perfect matching in a bipartite graph:
\begin{claim}\label{hall}
    Let $G=(A\cup B,E)$ be a bipartite graph such that $|A|=|B|$. The graph $G$ contains a perfect matching if and only if for every $X\subseteq A$, we have $|N_G(X)|\ge |X|$.  
\end{claim}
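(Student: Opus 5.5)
Claim \ref{hall} is Hall's theorem, so I will prove it in the standard way. Necessity is immediate. If $M$ is a perfect matching and $X\subseteq A$, then the $M$-partners of the vertices of $X$ are $|X|$ distinct vertices of $B$, and all of them lie in $N_G(X)$; hence $|N_G(X)|\ge |X|$.

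For sufficiency I will show the stronger statement that Hall's condition gives a matching saturating $A$. Since $|A|=|B|$, such a matching is perfect. The proof is by induction on $|A|$, with the case $|A|=1$ immediate from $|N(\{a\})|\ge 1$. The induction step splits into two cases.

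\emph{Case 1: $|N_G(X)|\ge |X|+1$ for every nonempty proper $X\subsetneq A$.} Pick any edge $ab$ and delete both $a$ and $b$. In the remaining graph every nonempty $X\subseteq A\setminus\{a\}$ loses at most one neighbour, so it still satisfies $|N(X)|\ge |X|$. By induction the remaining graph has a matching saturating $A\setminus\{a\}$, and adding $ab$ finishes this case.

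\emph{Case 2: some nonempty proper $X_0\subsetneq A$ is tight, i.e.\ $|N_G(X_0)|=|X_0|$.} Consider the two subgraphs $G_1=G[X_0\cup N(X_0)]$ and $G_2=G[(A\setminus X_0)\cup (B\setminus N(X_0))]$.
\begin{itemize}
\item $G_1$ satisfies Hall's condition, because the neighbourhoods of subsets of $X_0$ lie entirely inside $N(X_0)$. By induction it has a matching saturating $X_0$.
\item $G_2$ also satisfies Hall's condition. For $Y\subseteq A\setminus X_0$, write $N_2(Y)=N(Y)\setminus N(X_0)$ for its neighbourhood in $G_2$. Then
\[
|N_2(Y)| \;=\; |N(Y\cup X_0)|-|N(X_0)| \;\ge\; |Y|+|X_0|-|X_0| \;=\; |Y|.
\]
By induction $G_2$ has a matching saturating $A\setminus X_0$.
\end{itemize}
The two matchings use disjoint vertex sets, so their union saturates $A$.

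The only delicate point is the computation showing that $G_2$ inherits Hall's condition. It uses the identity $N(Y\cup X_0)=N_2(Y)\sqcup N(X_0)$, which holds because $N(X_0)$ is exactly the part of the combined neighbourhood that lies outside $B\setminus N(X_0)$. Alternatively, the whole claim follows quickly from K\"onig's theorem: if a minimum vertex cover $C$ had $|C|<|A|$, then $X=A\setminus C$ would satisfy $N(X)\subseteq C\cap B$, so $|N(X)|\le |C|-|C\cap A|<|X|$, contradicting Hall's condition.
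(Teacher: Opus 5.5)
The paper does not prove this claim: it recalls Hall's theorem as well known and uses only its contrapositive, in the form of an ``obstacle'' $A_0\subseteq A$, $B_0\subseteq B$ with $G[A_0\cup B_0]$ connected, $|A_0|=|B_0|+1$ and $N_G(A_0)=B_0$, in the perfect-matching argument for $Q^d_p$.

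Your proof is the standard Halmos--Vaughan induction, and it is correct.
\begin{itemize}
\item The two cases are exhaustive, because Hall's condition forces every non-tight set to have surplus at least one.
\item In Case~1, each nonempty $X\subseteq A\setminus\{a\}$ is a proper subset of $A$. So after deleting $b$ it still has at least $|X|$ neighbours.
\item In Case~2, the disjoint decomposition $N(Y\cup X_0)=N_2(Y)\sqcup N(X_0)$ together with $|N(X_0)|=|X_0|$ gives Hall's condition in $G_2$. Both $G_1$ and $G_2$ have strictly fewer, but at least one, vertices on the $A$-side, so the induction applies.
\end{itemize}
Your K\"onig-based alternative is also valid. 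However, it trades Hall's theorem for an equivalent min--max theorem, so the induction is the self-contained route.

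If you want to connect your proof to how the paper actually uses the claim, one extra minimality step is needed. Take an inclusion-minimal $X\subseteq A$ with $|N(X)|<|X|$. Then:
\begin{itemize}
\item $|N(X)|=|X|-1$, since otherwise deleting any vertex of $X$ would give a smaller violator.
\item $G[X\cup N(X)]$ is connected. Every vertex of $N(X)$ has a neighbour in $X$, so each component meets $X$, and some component would otherwise already violate Hall's condition.
\end{itemize}
This yields exactly the obstacle $(A_0,B_0)=(X,N(X))$ that the paper uses later.
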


It easily follows from Hall’s theorem that if a bipartite graph $G=(A\cup B,E)$ has no perfect matching, then there are sets $A_0\subseteq A$ and $B_0\subseteq B$ such that $G[A_0\cup B_0]$ is connected and
\begin{enumerate}
    \item $|A_0|=|B_0|+1$ and $N_G(A_0)=B_0$ or
    \item $|B_0|=|A_0|+1$ and $N_G(B_0)=A_0$.
\end{enumerate}
In both cases $|A_0\cup B_0|$ is odd.

\subsection{Proof of Theorem \ref{thm: PM}}
    
Let $p \ge \frac{1}{2} + \frac{\omega(d)}{d}$. Whp, $Q^d_p$ contains no isolated vertices (the expected number of isolated vertices in $Q^d_p$ tends to 0, hence the assertion is implied by Markov's inequality). Denote by $\mathcal{O}$ and $\mathcal{E}$ the sides of the hypercube (which is a bipartite graph). For every $W\subseteq V(Q^d)$, denote by $W_\mathcal{O}=W\cap \mathcal{O}$ and $W_\mathcal{E}=W\cap \mathcal{E}$.

\begin{figure}[H]
    \centering
    \includegraphics[width=0.5\textwidth]{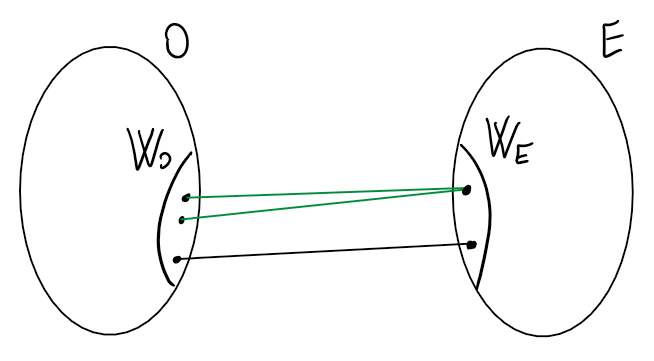}
    \caption*{Hall's condition fails because of the green edges}
\end{figure}

Let $m\in \mathbb{N}_{odd}$. A set $W\subseteq V(Q^d)$ is called an \textit{$m$-obstacle} if $|W|=m$, $W$ is connected in $Q^d$, $|W_\mathcal{O}|< |W_\mathcal{E}|$ and $N_{Q^d_p}(W_\mathcal{O}) = W_\mathcal{E}$. One can define an analogous notion of an $m$-obstacle by exchanging the roles of $\mathcal{O}$ and $\mathcal{E}$. Hence, by the remark after Claim \ref{hall}, it suffices to show that whp, there are no $m$-obstacles on either side for any odd $1\le m\le 2^{d-1}$. We show it for our definition of an $m$-obstacle, where the proof for the analog definition is very similar. First observe that $m=1$ is an easy to check case. Hence, assume that $m\ge 3$. We split the proof into cases:
\medskip

\textbf{Case 1:} $3 \le m \le 2^{\frac{d}{100}}$. \\
By Theorem \ref{thm: Harper}, we have $e(W) \le \frac{m \cdot \log_2 m}{2}$. Also, $|W_\mathcal{O}| \ge \frac{m}{2}$.
Therefore,
\[
e_{Q^d}(W_\mathcal{O}, V \setminus W_\mathcal{E}) \ge \frac{m}{2}\cdot d-e(W)\ge \frac{m}{2}\cdot (d-\log_2 m)\ge \frac{49}{100}md.
\]

Now,
\begin{align*}
\mathbb{E}[\# m\text{-obstacles}] &\le 2^d \cdot (ed)^m \cdot \left(1-p\right)^{\frac{49}{100}md}\le 2^d \cdot (ed)^m \cdot \left(\frac{1}{2}\right)^{\frac{49}{100}md} \\
&\le 2^{-\frac{49}{100}md + d + 2m \cdot \log_2 d} \le 2^{-\frac{45}{100}md + d}.
\end{align*}

Since $m\ge 3$, we have 
\[
    \sum_{m=3}^{2^{\frac{d}{100}}} \mathbb{E}[\# m\text{-obstacles}] \to 0,
\]
and finish by Markov's inequality.
\medskip

\textbf{Case 2:} $2^{\frac{d}{100}} \le m \le \frac{2^d}{d^{\ln^2 d}}$. \\
\[
\mathbb{E}[\# m\text{-obstacles}] \le 2^d \cdot (ed)^m \cdot \left(\frac{1}{2}\right)^{\frac{m}{2} \cdot (d - \log_2 m)} \le 2^{d + 2m\cdot \log_2 d  - \frac{m \cdot d}{2} + \frac{m \cdot \log_2 m}{2}} \le 2^{-\frac{m\ln^3 d}{3}}.
\]

Again, summing over all values of $m$, we finish by Markov's inequality.
\medskip

\textbf{Case 3:} $\frac{2^d}{d^{\ln^2 d}}\le m\le 2^{d-1}$. \\
Let $W$ be an $m$-obstacle. We split this case into two subcases according to the size of $\partial W$.
\begin{enumerate}
    \item [(1)] Suppose $|\partial W| \ge m \cdot \ln^4 d$.\\
        Note that $dm=d|W|=2e(W)+|\partial W|$. Hence, $e(W)\le \frac{m}{2}(d-\ln^4 d)$. We have $|W_\mathcal{O}|\ge \frac{m}{2}$ and thus $e(W_\mathcal{O}, V \setminus W_\mathcal{E}) \ge \frac{m}{2}\cdot d-e(W)\ge  \frac{m\ln^4 d}{2}$. Therefore, the probability that such $m$-obstacle $W$ exists in $Q^d_p$ is at most:
        \begin{align*}
        \binom{2^d}{m} \cdot \left(\frac{1}{2}\right)^{\frac{m\ln^4 d}{2}} &\le \exp\left(m\left(2 \ln⁡\left(\frac{2^d}{m}\right)-\frac{\ln^4 d}{2}\right)\right)\le \exp\left(m\left(2\ln^3 d-\frac{\ln^4 d}{2}\right)\right)\\
        &\le \exp\left(-\frac{m\ln^4 d}{3}\right).
        \end{align*}
        Summing over $\le 2^d$ values of $m$ and using Markov's inequality completes this case.
        \item [(2)] Suppose $|\partial W| < m \cdot \ln^4 d$.\\
        In this case, by Lemma \ref{finger_print} we have that the probability we have such an $m$-obstacle in $Q^d_p$ is at most
        \[
        \exp\left( \frac{2m}{\ln^2 d}\right) \cdot \left(\frac{1}{2}\right)^{\frac{m}{2} \cdot (d - \log_2 m)}\le \exp\left( \frac{2m}{\ln^2 d}\right) \cdot \left(\frac{1}{2}\right)^{\frac{m}{2}}.
        \]
        Lastly, as before, we finish by Markov's inequality.
\end{enumerate}

\renewcommand{\lecturername}{Prof. Michael Krivelevich} 

\newpage
\phantomsection
\addcontentsline{toc}{chapter}{Lecture 11}
\lecture{11}{July 5, 2026}{Lecture 11}{Nati Pupko}
\setcounter{chapter}{11}
\setcounter{section}{0}

\section{Preliminaries}


\begin{defn}
    Given a graph $G(V,E)$, for any two vertices $v,u\in V(G)$ we define the \textit{distance} between $u$ and $v$ to be the (edge) length of a shortest $u,v$ path in $G$, and denote it by $\text{dist}_G(u,v)$. In case no path exists between $u$ and $v$, we define $\text{dist}_G(u,v)=\infty$.
\end{defn}

\begin{defn}
    The \textit{diameter} of a graph $G(V,E)$ is defined to be the largest distance between a pair of vertices in $G$. We denote it by $\text{diam}(G)$. Note that $G$ is connected iff $\text{diam}(G)<\infty$.
\end{defn}

\begin{defn}
    Given two vertices in the hypercube, $u,v \in Q^d$, we define the \textit{Hamming distance} between $u$ and $v$ to be the number of coordinates in which $u$ and $v$ differ. Formally, define
    \[
    \mathcal{D}(u,v) = \{ i \in [d] \mid u_i \neq v_i \},
    \]
    and the Hamming distance is defined as
    \[
    d(u,v) = |\mathcal{D}(u,v)|.
    \]
\end{defn}

We now prove that for the hypercube, these two notions of distance are equivalent.
\begin{claim}
    Given two vertices in the hypercube, $u,v \in Q^d$, we have:
    $$d(u,v)=\text{dist}_{Q^d}(u,v).$$
\end{claim}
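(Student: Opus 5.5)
We prove the two inequalities $\text{dist}_{Q^d}(u,v)\le d(u,v)$ and $\text{dist}_{Q^d}(u,v)\ge d(u,v)$ separately. Both rest on the definition of $E(Q^d)$: two vertices are adjacent iff their Hamming distance is exactly $1$, i.e.\ iff they differ in exactly one coordinate.

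For the upper bound we construct an explicit path of length $d(u,v)$. Write $\mathcal{D}(u,v)=\{i_1<i_2<\dots<i_k\}$ with $k=d(u,v)$. Set $w_0=u$, and for $1\le j\le k$ let $w_j$ be obtained from $w_{j-1}$ by flipping coordinate $i_j$. Then consecutive vertices $w_{j-1},w_j$ differ in exactly one coordinate, so each pair is an edge of $Q^d$. After flipping all coordinates of $\mathcal{D}(u,v)$ we have $w_k=v$. Hence $w_0,w_1,\dots,w_k$ is a $u,v$ walk of length $k$, and it is in fact a path, since the flipped-coordinate sets $\{i_1,\dots,i_j\}$ are distinct. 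Therefore $\text{dist}_{Q^d}(u,v)\le k$; for $k=0$ the statement is trivial.

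For the lower bound, take any $u,v$ path $u=x_0,x_1,\dots,x_\ell=v$ in $Q^d$. The key observation is that $d$ is a metric on $\{0,1\}^d$: $d(x,z)\le d(x,y)+d(y,z)$, because $\mathcal{D}(x,z)\subseteq \mathcal{D}(x,y)\cup\mathcal{D}(y,z)$ (if $x_i\ne z_i$ then $y_i$ differs from at least one of them). Applying the triangle inequality along the path gives
\[
d(u,v)\le \sum_{j=1}^{\ell} d(x_{j-1},x_j)=\ell,
\]
since each step is an edge and so changes exactly one coordinate. Taking a shortest path yields $d(u,v)\le \text{dist}_{Q^d}(u,v)$. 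Note that $Q^d$ is connected, which follows from the first part, so this distance is finite.

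Neither step is a genuine obstacle. The only point needing care is the triangle inequality for the Hamming distance, which is justified by the set inclusion above. Alternatively, one can argue by induction on the path length that after $j$ steps at most $j$ coordinates differ from $u$.
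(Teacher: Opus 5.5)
Your proof is correct and follows essentially the same route as the paper: the upper bound uses an explicit path that flips the coordinates of $\mathcal{D}(u,v)$ one at a time, and the lower bound uses the fact that each edge changes exactly one coordinate, which you formalize through the triangle inequality for the Hamming distance. Indexing the path by the elements $i_1<\dots<i_k$ of $\mathcal{D}(u,v)$ is slightly more careful than the paper's phrasing of ``altering the first $j$ coordinates,'' which literally only works when the differing coordinates come first.
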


\begin{proof}
    In order to establish equality, we first prove that $\text{dist}_{Q^d}(u,v)\ge d(u,v)$ by proving that any $u-v$ path in $Q^d$ must include at least $d(u,v)$ edges. After proving this, we show that $\text{dist}_{Q^d}(u,v)\le d(u,v)$ by constructing an explicit $u,v$ path with $d(u,v)$ edges, proving equality.

    Indeed, every edge of $Q^d$ connects between two vertices of Hamming distance 1, therefore, a path of length $k$ can change at most $k$ coordinates. In particular, in order to connect two vertices with $d(u,v)$ different coordinates we need at least $d(u,v)$ edges.

    For the other direction, we can define $P=(v=v_0,...,v_{d(u,v)}=u)$ by defining $v_j$ to be the vector obtained by altering the first $j$ coordinates of $v$ to match $u$.
\end{proof}

\begin{corollary}
    $\mathrm{diam}(Q^d)=d$.
\end{corollary}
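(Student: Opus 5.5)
The plan is to apply the preceding claim, $d(u,v)=\mathrm{dist}_{Q^d}(u,v)$ for all $u,v\in V(Q^d)$, and show two inequalities. This converts the diameter of $Q^d$ into the largest Hamming distance between two binary vectors of length $d$.

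\emph{Upper bound.} For any $u,v\in V(Q^d)$, the set $\mathcal{D}(u,v)$ is a subset of $[d]$, so $d(u,v)=|\mathcal{D}(u,v)|\le d$. By the claim, $\mathrm{dist}_{Q^d}(u,v)\le d$ for every pair. In particular every distance is finite, so $Q^d$ is connected and $\mathrm{diam}(Q^d)\le d$.

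\emph{Lower bound.} I will exhibit a pair at distance exactly $d$: take the antipodal vertices $\bar 0=(0,\ldots,0)$ and $\bar 1=(1,\ldots,1)$. They differ in every coordinate, so $\mathcal{D}(\bar 0,\bar 1)=[d]$ and $d(\bar 0,\bar 1)=d$. By the claim, $\mathrm{dist}_{Q^d}(\bar 0,\bar 1)=d$, hence $\mathrm{diam}(Q^d)\ge d$.

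Together the two bounds give $\mathrm{diam}(Q^d)=d$. There is no real obstacle here, since the substantive work (both directions of the distance identity) is done in the claim. The only point to state explicitly is that the upper bound also certifies connectivity, so the diameter is finite rather than $\infty$.
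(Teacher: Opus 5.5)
Your proposal is correct and follows the paper's (implicit) argument. The paper states the corollary as an immediate consequence of the claim $d(u,v)=\mathrm{dist}_{Q^d}(u,v)$: distances are bounded by $|\mathcal{D}(u,v)|\le d$, and the value $d$ is attained by antipodal vertices such as $\bar{0}$ and $\bar{1}$.
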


\section{Diameter of the giant in $Q^d_p$}

We now turn to analyzing the typical diameter of the giant component in $Q^d_p$, where $p=\frac{1+\epsilon}{d}$ and $\epsilon>0$ is a small constant. We can imagine theoretical cases where the diameter is asymptotically greater than $d$ (where many edges are deleted, and the shortest path between two vertices $u,v$ is highly convoluted). However, we wish to understand the typical behavior.

The question of what is the right order of $\text{diam}(Q^d_p)$ has been open for some time, until recently.

In 2023, Erde, Kang and Krivelevich \cite{zbMATH07628800} proved that whp $\text{diam}(Q^d_p)=O(d^3)$. This was the first result to establish that the correct order is polynomial in $d$. Later, in 2026, Anastos, Diskin, Lichev and Zhukovskii \cite{arXiv:2510.13348} established that whp $\text{diam}(Q^d_p)=O(d)$. This is also the best possible bound, in the sense that whp $\text{diam}(Q^p_d)=\Theta(d),$ (but not exactly $d$).

We prove a weaker statement, that the diameter is typically polynomial in $d$.
\begin{thm}\label{diam_Q^d_p}
Let $\epsilon>0$ be a sufficiently small constant. Let $L_1$ be the largest component of $Q^d_p$, where $p=\frac{1+\epsilon}{d}$. Then, whp 
\[
    \text{diam}(L_1)=O(d^{12}).
\]
\end{thm}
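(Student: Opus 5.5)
The approach is to rerun the sprinkling proof of Theorem \ref{giant_in_Q^d_p}, this time keeping track of path lengths. Write $G=G_1\cup G_2$ with $G_1\sim Q^d_{p_1}$, $G_2\sim Q^d_{p_2}$ independent, $1-p=(1-p_1)(1-p_2)$ and $p_2=d^{-5}$. Then $p_1=(1-o(1))p$, so $G_1$ is still supercritical. With $t$ fixed (say $t=31$ as before), let $W_1$ be the set of vertices lying in $G_1$-components of size at least $d^t$.

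The argument has three steps.

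\textbf{Step 1 (short internal paths).} I would show that whp every $G_1$-component $K$ meeting $W_1$ contains, for every vertex $u\in K$, a subtree of size $d^t$ rooted at $u$ whose depth is at most $d^{t}$. This is trivial from BFS: the first $d^t$ vertices discovered from $u$ have distance at most $d^t$ from $u$. So every vertex of $W_1$ has a ``local ball'' $B(u)$ in $G_1$ of size at least $d^t$ and radius at most $d^t$.

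\textbf{Step 2 (short connecting paths between local balls).} Take $u,w\in W_1$. I want a path of length $\mathrm{poly}(d)$ in $G$ between $B(u)$ and $B(w)$. A single sprinkle only joins sets that are close in $Q^d$, so I would walk along a shortest $Q^d$-path $u=x_0,x_1,\dots,x_r=w$ with $r\le d$. By Lemma \ref{lec8: lem8}, each $x_i$ is within $Q^d$-distance $2$ of some $z_i\in W_1$. Consecutive balls $B(z_i)$ and $B(z_{i+1})$ are then each of size $d^t$, and their centres are at $Q^d$-distance at most $5$. I would then show that whp (with failure probability $o(2^{-2d})$, which is enough for a union bound over all relevant pairs) there are at least $d^{t}/\mathrm{poly}(d)$ edge-disjoint $Q^d$-paths of length at most $5$ between two such balls, or between suitable supersets of them. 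Sprinkling with $p_2^5=d^{-25}$ then connects the two balls as long as $t\ge 27$ or so.

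For this I would use the isoperimetric inequality of Lemma \ref{lec8: lem4} together with the greedy edge-disjoint-paths argument from Lemma \ref{lec8: lem9}. The structure there was: expand $A$ to $A'$, take at least $\min(|A'|,|B'|)$ crossing edges, and select many disjoint short paths greedily. I would apply it locally, inside the subcube spanned by the coordinates around $x_i,x_{i+1}$, or simply in the ball of radius $O(1)$.

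Each such link costs at most $2d^t+5$ edges. There are at most $d+1$ links, plus the two end connections from an arbitrary $v\in L_1$ into $W_1$. Since components outside $W_1$ have size $O(d)$, that last part needs care: the path in $L_1$ from $v$ to $W_1$ has length bounded by the size of the $G_1$-clusters it passes through, $O(d^2)$ by the $\Gamma$ argument. The total is $O(d\cdot d^t)$. Choosing $t$ around $11$, with $p_2$ and the path length in the sprinkle adjusted accordingly, would give $O(d^{12})$.

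\textbf{Main obstacle.} The hardest part is Step 2. The balls $B(z_i)$ are random and revealed by $G_1$, and I need many disjoint short $Q^d$-connections between two specific size-$d^t$ sets with exponentially small failure probability, so that a union bound over $2^{2d}$ pairs works. Isoperimetry in $Q^d$ gives boundary only linear in the set size; this is enough for the greedy count of edge-disjoint $\le 5$-paths, but only if the relevant sets $A',B'$ are controlled.

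The first thing I would try is to replace the sets $B(z_i)$ by the subcube trick from Lemma \ref{lec8: lem8}. I would find $\Theta(d^2)$ disjoint subcubes $H_{ij}$ near $x_i$, with $\Theta(d^2)$ more near $x_{i+1}$ paired with them by coordinate shifts, each containing a $d^t$-size local cluster. Matching corresponding vertices along one coordinate direction then gives $d^t$ candidate edges per pair, each open independently with probability $p_2$, and this yields an exponentially small failure probability.
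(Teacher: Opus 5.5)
Step~2 does not work, and it carries the whole proof. Sprinkling joins two sets only when there are very many candidate short $Q^d$-paths between them. Isoperimetry supplies these only for \emph{global} partitions. In Lemma~\ref{lec8: lem9} the crossing edges come from a partition $A'\cup B'=V(Q^d)$, and they lie on short $A$--$B$ paths precisely because $A\cup B$ is all of $W_1$ and every vertex of the cube is within distance $2$ of $W_1$.

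For two \emph{prescribed} clusters $B(z_i),B(z_{i+1})$ of size $d^t$ there is no such partition, and the statement you need is false. A $G_1$-cluster of polynomial size grown by BFS from $z$ is typically a tree of depth $O(\log d)$ whose branches flip essentially random, distinct coordinates. So two such clusters rooted at $Q^d$-distance at most $5$ typically have only $O(1)$ pairs of vertices within $Q^d$-distance $5$ of each other, essentially the pairs near the roots. The link therefore succeeds only with probability $o(1)$, not $1-o(2^{-2d})$.

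The subcube fallback fails for the same reason. The cluster of $u$ in $H$ and that of $u\oplus e_m$ in $H\oplus e_m$ are determined by disjoint, independent edge sets. So $K\oplus e_m$ and $K'$ typically share $O(1)$ vertices, and the direction-$m$ matching gives $O(1)$ candidate edges, not $d^t$. When $m\notin I$, the ``paired'' subcubes are not translates at all but the same subcube.

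Even successful links between \emph{some} cluster near $x_i$ and \emph{some} cluster near $x_{i+1}$ would not compose along the geodesic. The cluster used for the link to $x_{i-1}$ and the one used for the link to $x_{i+1}$ need not be joined by a short path. Chaining along a $Q^d$-geodesic requires prescribing which small clusters get connected, and sprinkling cannot do that.

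The paper avoids local linking entirely, using two ingredients missing from your proposal.

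The first is the expansion lemma (Lemma~\ref{lem:lemma 1}): whp every $G$-connected $S$ with $C_1d\le|S|\le n^{\alpha}$ has $|N_G(S)|\ge\alpha|S|$. It is proved by counting trees, bounding the open boundary edges via a maximal matching, and weak Harper. Hence every ball of radius $C_2d$ around a giant vertex has at least $n^{\alpha}$ vertices, and the giant splits into pieces of size $\Theta(n^{\alpha})$ and diameter $O(d)$. Your plain-BFS radius bound $d^t$ cannot play this role. It would also force $t\approx 11$ for the final count, which is incompatible with the $t\ge 27$ your sprinkling count needs at $p_2=d^{-5}$.

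The second ingredient is a global volume-growth argument. Take any union $A$ of $\ell$ pieces (at most three quarters of them). Combine the isoperimetric inequality of Theorem~\ref{isop_large sets_Q^d} for the partition $A'\cup B'$, sprinkling with $p_2=\delta/d$, and a union bound over the at most $n^{\ell}$ such unions. The union bound is affordable because each failure probability is $\exp(-\Omega(\ell n^{\alpha}/d^{9}))$. This shows the $(5+4C_2d)$-neighbourhood of $A$ has volume at least $(1+\Omega(d^{-10}))|A|$. Starting from the piece containing a given vertex, $O(d^{10}\log n)=O(d^{11})$ rounds, each costing distance $O(d)$, reach more than half of $L_1'$, giving $O(d^{12})$. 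Vertices of $L_1\setminus L_1'$ are shown separately to lie within $O(d)$ of $L_1'$.
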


\subsection{Proof of Theorem \ref{diam_Q^d_p}}
Throughout the proof we use the following notation: $n=2^d$, $p=\frac{1+\epsilon}{d}$, where $\epsilon>0$ is a sufficiently small constant. We also denote $G\sim Q^d_p$, and let $L_1$ be the unique giant component of size $|L_1|=\Theta_\epsilon(n)$ (which existence is asserted by Theorem \ref{giant_in_Q^d_p}).

The proof relies on a series of lemmas:

\begin{lemma}\label{lem:lemma 1}
    For any $\epsilon>0$ small enough, there exist $C_1\coloneqq C_1(\epsilon)>0$ large enough, and $\alpha\coloneqq \alpha(\epsilon)>0$ small enough, such that the following holds whp. Let $S\subseteq V(G)$ such that $G[S]$ connected, and of size $C_1d\le |S|\le n^\alpha$. Then $|N_G(S)|\ge \alpha|S|$.
\end{lemma}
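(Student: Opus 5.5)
Throughout, $N_G(S)$ denotes the external neighbourhood of $S$ in $G=Q^d_p$, i.e.\ vertices outside $S$ joined to $S$ by a $G$-edge. The plan is a first-moment union bound over trees, in the spirit of Lemma \ref{lec8: lem5}. If $G[S]$ is connected with $|S|=k$ and $|N_G(S)|<\alpha k$, then:
\begin{enumerate}
\item $S$ spans a tree $T\subseteq G$ with $k$ vertices;
\item there is a set $U\subseteq N_{Q^d}(V(T))$ with $|U|<\alpha k$ such that every $Q^d$-edge from $V(T)$ to $V(Q^d)\setminus (V(T)\cup U)$ is absent from $G$.
\end{enumerate}
We bound the expected number of such pairs $(T,U)$ and sum over $C_1d\le k\le n^\alpha$.

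For the counting, fix a root $v$; there are $n$ choices. By Lemma \ref{lec8: lem3} there are at most $(ed)^{k-1}$ trees on $k$ vertices containing $v$, and each is present with probability $p^{k-1}$. Hence the tree factor is at most
\[
n\,(ed\,p)^{k-1}\le n\,(e(1+\epsilon))^{k}.
\]
The set $U$ lies in $N_{Q^d}(V(T))$, which has at most $kd$ vertices, so the number of choices of $U$ is at most
\[
\sum_{j<\alpha k}\binom{kd}{j}\le \Big(\frac{ed}{\alpha}\Big)^{\alpha k}=e^{\alpha k\ln(ed/\alpha)}.
\]

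Next, the boundary edges. By Lemma \ref{lec8: lem4}(1), $|\partial_{Q^d}V(T)|\ge k(d-2\log_2 k)$, and $\log_2 k\le \alpha d$ since $k\le n^\alpha$. Each vertex of $U$ absorbs at most $d$ of these edges, so at least $k(d-2\alpha d)-\alpha k d=kd(1-3\alpha)$ boundary edges must be closed. These edges are disjoint from $E(T)$, so this event is independent of the tree being present, and it contributes a factor of at most
\[
e^{-p\,kd(1-3\alpha)}=e^{-(1+\epsilon)(1-3\alpha)k}.
\]
Multiplying, each term is at most $n\cdot q^k$, where
\[
q=e(1+\epsilon)\,e^{-(1+\epsilon)(1-3\alpha)}\,e^{\alpha\ln(ed/\alpha)}=\exp\big(\ln(1+\epsilon)-\epsilon+3\alpha(1+\epsilon)+\alpha\ln(ed/\alpha)\big).
\]
The main obstacle is exactly this term $\alpha\ln(ed/\alpha)$. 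It grows with $d$, while the gain $\epsilon-\ln(1+\epsilon)\approx\epsilon^2/2$ is only a constant, so counting $U$ crudely among all $\approx kd$ boundary vertices is too wasteful.

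The fix I would try first is to stop guessing $U$ and instead run a DFS/BFS exploration from $S$ with the random bit sequence $\bar X$, as in the sub-critical theorem of Lecture 8. Neighbours of $S$ in $G$ correspond to positive answers on the $\approx kd$ boundary edges, and there are at least $kd(1-2\alpha)$ of them by Lemma \ref{lec8: lem4}(1). The count $|N_G(S)|$ is then at least the number of positive answers minus collisions (two boundary edges of $S$ hitting the same outside vertex). Two steps are needed:
\begin{enumerate}
\item Dominate the collisions by noting that each outside vertex has at most $d$ edges into $S$, and that in $Q^d$ two vertices share at most $2$ common neighbours. Small sets therefore have few doubly-hit vertices, and one would like a Chernoff bound showing that the number of positive answers landing on already-hit vertices is $o(k)$, with failure probability $e^{-\Omega(k)}$.
\item Apply Chernoff's inequality to the positive answers: they are distributed as $\mathrm{Bin}(kd(1-2\alpha),p)$ with mean about $(1+\epsilon)k$, so fewer than $2\alpha k$ of them occur with probability $e^{-\Omega(k)}$, for a rate independent of $d$.
\end{enumerate}
This improves the per-tree boundary factor to $e^{-(1+\epsilon)k(1-O(\alpha))}$ without any $(d/\alpha)^{\alpha k}$ loss. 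Then $q<1$ is a constant once $\alpha\ll\epsilon^2$, and the union bound gives at most $n\sum_{k\ge C_1 d}q^k\le n\,q^{C_1 d}\cdot O(1)=o(1)$, provided $C_1$ is chosen so that $q^{C_1}<1/4$, which beats $n=2^d$.
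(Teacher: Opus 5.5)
Your diagnosis of the first attempt is right, but the proposed fix leaves a genuine gap. After the union bound over at most $n(ed)^{k-1}$ trees, each open with probability $p^{k-1}$, you carry a factor $(e(1+\epsilon))^{k-1}$. So for a fixed tree, the probability of $|N_G(S)|<\alpha k$ must be at most $\rho^k(e(1+\epsilon))^{-k}$ for a constant $\rho<1$. That is, it must be essentially as small as the probability $\approx e^{-(1+\epsilon)k}$ that every boundary edge is closed, and a failure probability ``$e^{-\Omega(k)}$'' is useless.

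Already for the lower tail of the number $Y$ of open boundary edges, the Chernoff bound $e^{-a^2/(2\mu)}$ from the notes gives only about $e^{-(1+\epsilon)k/2}$. You would need the sharp estimate $\mathbb{P}(Y<s)\le(1-p)^{m-s}\sum_{j<s}(emp/j)^j$; this part is repairable.

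The collision step, however, is only a wish. Write $m_w=|N_{Q^d}(w)\cap S|$. The fact that two vertices of $Q^d$ have at most two common neighbours gives only $\sum_w\binom{m_w}{2}\le k\binom d2$, hence an expected number of collisions up to about $(1+\epsilon)^2k/2$, the same order as $\mathbb{E}Y$. Moreover, the target ``$o(k)$ collisions'' is false. Take $S$ a Hamming ball of radius $\beta d$ with $2^{H(\beta)d}\le n^{\alpha}$, where $H$ is the binary entropy; then the typical number of collisions is of order $\beta k$. To complete your route you would need two things not in the proposal: isoperimetric control of $\sum_w\binom{m_w}{2}$, and a proof that more than $ck$ collisions occur with probability below $(e(1+\epsilon))^{-k}$, with $c\ln(1/c)\ll\epsilon^2$, which forces a delicate choice of $\alpha$. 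The DFS framing is also beside the point: once $T$ is fixed, the edges of $\partial S$ are simply independent coins.

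The missing idea, which is what the paper does, is to guess \emph{edges} instead of vertices, so that every guess is paid for by a factor $p$. Let $M$ be a maximal matching of $G$ between $S$ and $V\setminus S$. Then:
\begin{itemize}
\item $i=|M|\le|N_G(S)|\le\alpha k$;
\item the $i$ edges of $M$ are open;
\item every edge of $\partial S$ not touching $V(M)$ is closed, and there are at least $|\partial S|-2id$ of those.
\end{itemize}
So for a fixed tree the probability is at most $\sum_{i\le\alpha k}\binom{kd}{i}p^i(1-p)^{|\partial S|-2id}$.

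Here $\binom{kd}{i}p^i\le(e(1+\epsilon)k/i)^i$, so the factor $d$ that ruined your count of $U$ cancels, and $(1-p)^{-2id}\le 10^i$. The sum is therefore at most $e^{\epsilon^3k}$ once $\alpha$ is small, and collisions never enter. With $|\partial S|\ge(1-\epsilon^3)kd$ from the weak Harper bound, the probability that some bad $S$ of size $k$ exists is at most $n\bigl((1+\epsilon)e^{-\epsilon+3\epsilon^3}\bigr)^k\le ne^{-\epsilon^2k/4}$. This is $o(1/n)$ for $k\ge C_1d$.
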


\begin{proof}
    Let $C_1 d \le k \le n^{\alpha}$ (where $C_1$ and $\alpha$ are constants that will be chosen later). We define
    \[
    \mathcal{A}_k = \{ S \subseteq V \mid |S| = k \text{ and } G[S] \text{ is connected}, \, |N(S)| \le \alpha k \}.
    \]
    It is enough to show that $\mathbb{P}(\mathcal{A}_k) = o(\frac{1}{n})$, then apply the union bound to obtain:

    $$\mathbb{P}\bigg(\bigcup_{k=C_1d}^{n^\alpha}\mathcal{A}_k\bigg)\le \sum_{k=C_1d}^{n^\alpha}\mathbb{P}(\mathcal{A}_k)\le \sum_{k=C_1d}^{n^\alpha} o\bigg(\frac{1}{n}\bigg)=o(1).$$

    We now proceed to proving $\mathbb{P}(\mathcal{A}_k)=o(\frac{1}{n})$.
    If $\mathcal{A}_k$ occurs (i.e. the set is not empty), then there exists a tree $T$ in $Q^d$ on $k$ vertices such that:
    \begin{enumerate}[label=(\arabic*)]
        \item All edges of $T$ are present in $G$.\label{first}
        \item Denote $S = V(T)$. The $S$ has at most $\alpha k$ neighbors in $G$ outside of $S$.\label{second}
    \end{enumerate}

    Given a tree $T$, conditions \ref{first} and \ref{second} address disjoint sets of edges, and therefore are independent events. We will bound the probability of each of them separately, than multiply, and apply a union bound over all possible trees $T$.

    \underline{Bounding the probability of \ref{first}:} A tree of size $k$ has $k-1$ edges, therefore the probability of it being present in $G$ is $p^{k-1}$.

    \underline{Bounding the probability of \ref{second}:} 
    Condition \ref{second} implies that in the graph $G[S, V\setminus S]$, the maximal size of a matching is $i$, for some $0\le i\le \alpha k$. Let $M$ be a maximal such matching. Then:
    \begin{enumerate}
        \item All $i$ edges of $M$ in $G$.
        \item All edges of $Q^d$ between $S$ and $V\setminus S$ (i.e., $\partial S$) that are not incident to vertices of $M$ are not present in $G$ (as otherwise $M$ could be made larger).
    \end{enumerate}
    Hence,
    \[
        \mathbb{P}(\ref{second})\le \sum_{i=0}^{\alpha k} \binom{|\partial S|}{i}p^i(1-p)^{|\partial S|-2id}\le \sum_{i=0}^{\alpha k} \binom{kd}{i}p^i(1-p)^{k(d-2\log_2 k)-2id},
    \]
    where the last inequality holds by (weak) Harper's inequality. We note that $k\le n^\alpha$, and that for $\alpha$ small enough we have:
    $$k(d-2\log_2k)\ge (1-\epsilon^3)dk.$$
    Thus, overall, by the union bound:
    \begin{align*}
    \mathbb{P}(\mathcal{A}_k) &\le n \cdot (ed)^{k-1} p^{k-1} \sum_{i=0}^{\alpha k} \binom{kd}{i} p^i (1-p)^{(1-\epsilon^3)kd - 2id} \\
    &\le n \left(e(1+\epsilon)\right)^k (1-p)^{(1-\epsilon^3)kd} \sum_{i=0}^{\alpha k} \binom{kd}{i} p^i (1-p)^{-2id}.
    \end{align*}
    Note that:
    \[
    \sum_{i=0}^{\alpha k} \binom{kd}{i} p^i (1-p)^{-2id} \le \sum_{i=0}^{\alpha k} \left(\frac{ekd}{i}\right)^i \left(\frac{1+\epsilon}{d}\right)^i \cdot 10^i
    = \sum_{i=0}^{\alpha k} \left( \frac{ek(1+\epsilon)}{i} \cdot 10 \right)^i.
    \]
    The terms in this sum grow exponentially, and the maximum is achieved at the boundary $i = \alpha k$:
    \[
    \sum_{i=0}^{\alpha k} \left( \frac{ek(1+\epsilon)}{i} \cdot 10 \right)^i \le O(k) \cdot \left( \frac{30}{\alpha} \right)^{\alpha k} \le e^{\epsilon^3 k},
    \]
    for sufficiently small $\alpha\coloneqq \alpha(\epsilon)>0$, where we used the fact that $\lim_{\alpha \to 0^+} \left( \frac{30}{\alpha} \right)^{\alpha} = 1$.

    Plugging this estimate back into $\mathbb{P}(\mathcal{A}_k)$, we obtain:
    \begin{align*}
    \mathbb{P}(\mathcal{A}_k) &\le n \left( e(1+\epsilon) \right)^k (1-p)^{(1-\epsilon^3)kd} e^{\epsilon^3 k} \\
    &\le n \left( e(1+\epsilon) e^{-(1+\epsilon)(1-\epsilon^3)d \cdot \frac{1}{d}} e^{\epsilon^3} \right)^k \\
    &\le n \left( e(1+\epsilon) e^{-1-\epsilon+3\epsilon^3} \right)^k = n \left( (1+\epsilon) e^{-\epsilon + 3\epsilon^3} \right)^k.
    \end{align*}
    Using $1+\epsilon \le e^{\epsilon - \frac{\epsilon^2}{3}}$ for small $\epsilon > 0$, we obtain:
    \[
    \mathbb{P}(\mathcal{A}_k) \le n \left( e^{-\frac{\epsilon^2}{3} + 3\epsilon^3} \right)^k \le n e^{-\frac{\epsilon^2}{4} k}.
    \]
    Choosing $C_1\coloneqq C_1(\epsilon)$ sufficiently large in terms of $\epsilon$, we obtain:
    \[
    \mathbb{P}(\mathcal{A}_k) = o\left(\frac{1}{n}\right).
    \]
    This completes the proof of Lemma~\ref{lem:lemma 1}.
\end{proof}

We will apply Lemma \ref{lem:lemma 1} on balls centered at $v$, i.e., $B(v,k)=\{u \in V: \text{dist}_G(u,v)\le k\}$. This is clearly a connected set, therefore, by Lemma \ref{lem:lemma 1} if we have $C_1d\le |B(v,k)|\le n^\alpha$, then observing that $B(v,k+1)=B(v,k)\cup N_G\left(B(v,k)\right)$, we obtain:
$$|B(v,k+1)|\ge (1+\alpha)|B(v,k)|.$$

In order to ensure that $|B(v,k)| \ge C_1d$, we let $k_0=C_1d$, thus, $|B(v,k_0)|\ge k_0=C_1 d$. Now, by induction, for every $i\ge 0$,

\begin{align}\label{growing balls}
    |B(v,k_0+i)|\ge \text{min}\big\{ (1+\alpha)^i|B(v,k_0)|,\ n^\alpha \big\}.
\end{align}

We use this observation for the next lemma:

\begin{lemma}\label{lem: partition}
    For every sufficiently small constant $\epsilon>0$, there exist $\alpha=\alpha(\epsilon)>0$ and $C_2=C_2(\epsilon)>0$ such that whp there exists a partition $V(L_1)=W_1\cup ... \cup W_t$ satisfying:
    \begin{enumerate}
        \item $W_i \cap W_j= \emptyset$ for all $1\le i\neq j\le t$;
        \item $n^\alpha\le |W_i|\le 2n^\alpha$;
        \item for all $u,v \in W_j$, $\text{dist}_{L_1}(u,v)\le 4C_2d$.
    \end{enumerate}
\end{lemma}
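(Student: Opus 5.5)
The plan is to build the partition greedily from short-radius balls, using the ball-growth bound \eqref{growing balls} from Lemma \ref{lem:lemma 1}. Set $C_2$ so that $k_0+\lceil\log_{1+\alpha}(n^\alpha)\rceil\le C_2 d$; this is possible because $\log_{1+\alpha} n^\alpha = O(d)$ when $n=2^d$. Then, whp, every $v\in V(L_1)$ satisfies $|B_G(v,C_2d)|\ge n^\alpha$. The justification is as follows. Since $L_1$ is connected and has $\Theta(n)$ vertices, $|B(v,k_0)|\ge k_0$ holds for every $v\in L_1$: the ball either contains a path of length $k_0$ or already contains all of $L_1$. Iterating Lemma \ref{lem:lemma 1} on the connected sets $B(v,k)$ gives geometric growth by a factor $(1+\alpha)$ until the size reaches $n^\alpha$. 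If some ball jumps past $n^\alpha$ in one step, I will truncate it to a connected subset of size in $[n^\alpha,2n^\alpha]$.

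\textbf{Step 1 (small connected pieces).} For every $v\in L_1$, I will fix a connected subset $T_v\ni v$ of $B(v,C_2d)$ with $n^\alpha\le |T_v|$. Its radius around $v$ is at most $C_2d$, so its internal diameter is at most $2C_2d$. Concretely, take a BFS tree of $B(v,C_2d)$ and keep a subtree containing $v$ with the right size, which is possible because BFS layers are added in order.

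\textbf{Step 2 (cluster centres).} Greedily choose a maximal family of vertices $v_1,\dots,v_t\in L_1$ whose $T_{v_i}$'s are pairwise vertex-disjoint. Restrict each $T_{v_i}$ to its first $n^\alpha$ vertices in BFS order; this keeps it connected and of radius at most $C_2d$, and the restricted sets become the seeds $W_i$. By maximality, every $u\in L_1$ has $T_u$ meeting some $T_{v_i}$. Hence $\mathrm{dist}_{L_1}(u,W_i)\le 2C_2d$ for that $i$.

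\textbf{Step 3 (assigning the remaining vertices).} Assign each uncovered $u$ to a nearest seed, ties broken arbitrarily, along a shortest path, Voronoi-style. Every vertex on a shortest path from $u$ to its nearest seed has that same seed as a nearest one, so each cell stays connected and has radius at most $2C_2d+C_2d$ from $v_i$ (within the cell). Distances inside a cell are then at most about $6C_2d$. Readjusting constants, for instance by running the construction with radius $C_2d/2$, gives the required $4C_2d$.

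The main obstacle is the upper bound $|W_i|\le 2n^\alpha$, because a Voronoi cell can absorb far too many vertices. My first attempt will be to split any oversized cell, which is a tree-like connected set of radius $O(d)$, into connected pieces of size in $[n^\alpha,2n^\alpha]$ using the standard tree-splitting lemma: a tree with at least $n^\alpha$ vertices can be cut into subtrees of sizes in $[n^\alpha,2n^\alpha]$ once maximum degree is controlled. The degree bound $\Delta\le d$ lets pieces be cut at size at most $n^\alpha d$, and that slack can be absorbed by working with $n^{\alpha/2}$-scale balls. The catch is that pieces cut from a tree can have large internal tree-distance. To handle this, I will measure diameter in $L_1$ rather than in the piece. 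Each piece contains a seed-proximal vertex only if cut carefully, so I would instead choose the seeds themselves at scale $n^{\alpha}$ and attach leftover vertices to seeds, bounding the cell sizes via the disjointness of the enlarged balls $T_u$. That last counting step is where I expect the real difficulty.
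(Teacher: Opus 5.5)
Your growth step and Steps 1--3 follow the paper's construction: a maximal family of centres $v_i$ with disjoint radius-$C_2d$ neighbourhoods, every other vertex within $2C_2d$ of some centre, and cells built around the centres. However, the proof stops at exactly the point you flag: you never obtain $|W_i|\le 2n^\alpha$. The direction you end with, bounding cell sizes via the disjointness of the enlarged balls $T_u$, cannot succeed. Disjointness gives a lower bound on each cell and the bound $t\le n^{1-\alpha}$ on the number of cells. Nothing bounds an individual cell from above, since the ball-growth estimate only gives lower bounds on $|B(v,r)|$. So a single cell may contain far more than $2n^\alpha$ vertices, and it has to be split.

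The missing observation is that the split is trivial, given how the statement is phrased. Condition 3 measures distance in $L_1$, not inside $W_j$, and the $W_j$ are not required to be connected. In your construction every vertex of the cell of $v_i$ is within $3C_2d$ of $v_i$ in $L_1$. (It is within $2C_2d$ if, as in the paper, each leftover vertex $u$ is attached directly to a centre with $\mathrm{dist}_{L_1}(u,v_i)\le 2C_2d$ rather than to a nearest seed.) Hence \emph{every} subset of the cell has pairwise $L_1$-distances at most $6C_2d$ (resp.\ $4C_2d$), because the connecting path may go through $v_i$ and leave the subset. So cut each cell, which has at least $n^\alpha$ vertices, into blocks of exactly $n^\alpha$ vertices, and merge the remainder (fewer than $n^\alpha$ vertices) into one of them. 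All blocks then have sizes in $[n^\alpha,2n^\alpha)$, with no tree-splitting, degree control or counting needed. The paper ends the same way: it divides each cell $U_i$ into pieces of size between $n^\alpha$ and $2n^\alpha$, and the diameter bound is inherited from $U_i$ through $v_i$.

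Two smaller points. First, the connectivity of cells that you work to maintain is irrelevant, and with arbitrary tie-breaking in the Voronoi assignment it can actually fail. Second, rerunning with radius $C_2d/2$ is not a valid fix for the constant, since balls of that radius need not reach size $n^\alpha$. Because $C_2$ is existentially quantified, simply rename $\tfrac32 C_2$ as $C_2$, or attach leftovers directly to centres to get $4C_2d$.
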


\begin{proof}
By \eqref{growing balls}, we conclude that there exists a constant $C_2=C_2(\epsilon)>0$, such that for every $v\in V(L_1)$ we have $|B(v,C_2d)|\ge n^\alpha$.
Choose $X \subseteq V(L_1)$ to be a maximal set of vertices such that for any $u \neq v \in X$:
\[
B_{L_1}(u, C_2 d) \cap B_{L_1}(v, C_2 d) = \emptyset.
\]
Write $X = \{v_1, \dots, v_{t_0}\}$, for some $t_0\in \mathbb{N}$. By maximality, for every $u \in V(L_1)$, there exists some $v_i \in X$ such that:
\[
B(u, C_2 d) \cap B(v_i, C_2 d) \neq \emptyset \implies \text{dist}_{L_1}(u, v_i) \le 2 C_2 d.
\]
We define a family of disjoint sets $U_1, \dots, U_{t_0} \subseteq V(L_1)$ as follows:
\begin{enumerate}
    \item Initialize $U_i = B_{L_1}(v_i, C_2 d)$.
    \item For any vertex $u \notin \bigcup_{i=1}^t U_i$, we assign $u$ to exactly one $U_i$ for which $\text{dist}(u,v_i) \le 2 C_2 d$.
\end{enumerate}
This yields a partition of $V(L_1)$ to sets $U_1, \dots, U_{t_0}$ satisfying:
\begin{itemize}
    \item $U_i \supseteq B(v_i, C_2 d) \implies |U_i| \ge n^{\alpha}$.
    \item For any $u \in U_i$, we have $\text{dist}_{L_1}(u, v_i) \le 2 C_2 d$.
\end{itemize}
\begin{remark}
The distance between any $u,v \in U_i$ is at most $4 C_2 d$ inside $L_1$.
\end{remark}

Finally, we divide each $U_i$ into smaller disjoint connected subsets $W_j\subseteq U_i$ of sizes $n^{\alpha} \le |W_j| \le 2n^{\alpha}$. This gives a partition:
\[
V(L_1) = W_1 \cup \dots \cup W_t,
\]
where for each $j \in [t]$:
\begin{enumerate}
    \item $W_i \cap W_j = \emptyset$ for $i \neq j$;
    \item $n^{\alpha} \le |W_i| \le 2n^{\alpha}$;
    \item For all $u,v \in W_j$, $\text{dist}_{L_1}(u,v) \le 4 C_2 d$.
\end{enumerate}
\end{proof}

Our strategy now becomes proving that the sets from Lemma \ref{lem: partition} are close to one another, which would allow us to construct a short path between any pair of vertices of $G$ in the following way: If $u\in W_i$ and $v\in W_j$, if we prove that $W_i$ and $W_j$ are close, we can take representatives $u'\in W_i$ and $v'\in W_j$ so that $u'$ and $v'$ are close, which allows us to construct a short $u-v$ path in the following fashion:
$$u\to u'\to v'\to v,$$
where Lemma \ref{lem: partition} tells us that $u$ and $u'$, as well as $v$ and $v'$ are at distance at most $4C_2 d$ from one another.

In order to show something of this sort, we apply sprinkling.
Given our $p = \frac{1+\epsilon}{d}$, we choose $p_2 = \frac{\delta}{d}$, for some $\delta\coloneqq \delta(\epsilon)$ small enough, and $p_1$ such that $1-p = (1-p_1)(1-p_2)$, which gives $p_1 \ge \frac{1+\epsilon-\delta}{d}\ge \frac{1+\epsilon/2}{d}$.

Let $G_1 \sim Q^d_{p_1}$ and $G_2 \sim Q^d_{p_2}$, so $G = G_1 \cup G_2 \sim Q^d_p$. Let $L_1'\subseteq L_1$ be the giant component in $G_1$ (which exists, whp, by Theorem \ref{giant_in_Q^d_p}).

\begin{lemma}\label{lem: paths}
    Whp, any partition of $L_1'=A\cup B$ respecting the sets $W_1,..,W_t$ as in Lemma \ref{lem: partition} (applied in $G_1$) so that $A$ contains $\ell$ of these sets, $1\le \ell\le \frac{3t}{4}$, in $G_2$ there are $\Omega\big(\frac{\ell\cdot n^\alpha}{d^9}\big)$ edge-disjoint paths of length at most 5 between $A$ and $B$. Therefore, whp there exist at least $\Omega\big(\frac{\ell\cdot n^\alpha}{d^{10}}\big)$ different endpoints of paths of length at least 5 between $A$ and $B$.
\end{lemma}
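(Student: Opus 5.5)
The plan is to mirror the sprinkling argument of Lemma \ref{lec8: lem9}, with the sets $W_i$ of Lemma \ref{lem: partition} in place of the largish components. First, I would establish a deterministic "nice distribution" property of $L_1'$ in $G_1$, in the spirit of Lemma \ref{lec8: lem8}. Whp every vertex of $Q^d$ lies within $Q^d$-distance $2$ of $L_1'$. To see this, I would run the subcube argument on the disjoint subcubes $H_{ij}$: with probability $1-o(1/n)$ some $u_{ij}$ lies in a component of size $\ge d^t$ of $G_1$. By the uniqueness part of Theorem \ref{giant_in_Q^d_p} combined with Lemma \ref{lec8: lem5}, every such component lies in $L_1'$.

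Next, given a partition $L_1'=A\cup B$ with $A$ a union of $\ell\le 3t/4$ of the $W_i$, I would set
\[
A'=\{v\notin B:\ \mathrm{dist}_{Q^d}(v,A)\le 2\}, \qquad B'=V(Q^d)\setminus A'.
\]
Then $|A'|\ge|A|\ge \ell n^\alpha$ and $|B'|\ge|B|\ge (t/4)n^\alpha\ge \ell n^\alpha/3$. Applying the isoperimetric inequality (Lemma \ref{lec8: lem4}, part 2) to the smaller of $A',B'$ gives at least $\ell n^\alpha/3$ edges of $Q^d$ between $A'$ and $B'$. Each such edge extends, inside $Q^d$, to a path of length at most $5$ from $A$ to $B$.

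I would then pass to edge-disjoint paths greedily. A fixed edge lies in at most $O(d^4)$ paths of length $\le 5$, so we keep at least $\Omega(\ell n^\alpha/d^4)$ pairwise edge-disjoint $A$--$B$ paths in $Q^d$. Each path survives in $G_2$ independently with probability $\ge p_2^5=(\delta/d)^5$. The number surviving therefore dominates $\mathrm{Bin}(\Omega(\ell n^\alpha/d^4),\Theta(d^{-5}))$, whose mean is $\Theta(\ell n^\alpha/d^9)$. Chernoff's lower tail gives failure probability $\exp(-\Omega(\ell n^\alpha/d^9))$ for having fewer than half the mean.

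The main obstacle is the union bound over partitions. The number of choices of $A$ with $\ell$ parts is $\binom{t}{\ell}\le t^\ell\le n^{\ell}$, which is about $e^{\ell d}$. This is beaten by $e^{-\Omega(\ell n^\alpha/d^9)}$ because $n^\alpha=2^{\alpha d}\gg d^{10}$. One subtlety must be handled carefully: the sets $W_i$, and $L_1'$ itself, are determined by $G_1$ alone, so I would condition on $G_1$ and use only the independence of $G_2$. Summing over $\ell$ then gives $o(1)$.

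For the final sentence of the statement, note that each endpoint in $A$ (or in $B$) is shared by at most $d$ of the edge-disjoint paths, since its degree in $Q^d$ is $d$. Hence the number of distinct endpoints is at least $\Omega(\ell n^\alpha/d^{10})$.
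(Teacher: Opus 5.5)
Your proposal is correct and follows essentially the same route as the paper: the same sets $A'$ and $B'$, the isoperimetric bound $|E_{Q^d}(A',B')|\ge \ell n^\alpha/3$, greedy extraction of $\Omega(\ell n^\alpha/d^4)$ edge-disjoint paths of length at most $5$, survival probability at least $p_2^5$ with binomial concentration and a union bound, and the degree-$d$ argument for distinct endpoints. You also spell out steps the paper only gestures at: why every vertex is within distance $2$ of $L_1'$ (all components of size $\ge d^t$ in $G_1$ lie in $L_1'$), the conditioning on $G_1$, and the comparison of $\binom{t}{\ell}\le n^{\ell}$ with $e^{-\Omega(\ell n^\alpha/d^9)}$ in the union bound.
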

\begin{proof}
    Let:
    \[
    A' = \{ v \in V(Q^d)\setminus B \mid \text{dist}(v, A) \le 2 \}, \quad B' = V(Q^d) \setminus A'.
    \]
    Notice that $A\subseteq A'$ and $B\subseteq B'$, hence $\min\{|A'|,|B'|\}\ge \min\{|A|,|B|\}$. We use this inequality later.
    Observe that since $A$ contains $\ell$ of these sets, with $1\le \ell\le \frac{3t}{4}$, then $|A|\ge \ell \cdot n^\alpha$, and also $|B|\ge (t-\ell)n^\alpha \ge \frac{\ell}{3}\cdot n^\alpha$.
    Together, by Theorem \ref{isop_large sets_Q^d},
    \[
    |E_{Q^d}(A', B')| \ge \min\{|A'|,|B'|\}\ge \min\{|A|,|B|\}\ge  \frac{\ell \cdot n^{\alpha}}{3}.
    \]
    
    \begin{figure}[h]
    \centering
    \begin{tikzpicture}[scale=0.8]
    \draw (0,0) circle (1.2);
    \draw (0,0) circle (0.7);
    \node at (0,0) {$A$};
    \node at (0,1.0) {$A'$};
    
    \draw (5,0) circle (1.2);
    \draw (5,0) circle (0.7);
    \node at (5,0) {$B$};
    \node at (5,1.0) {$B'$};
    
    \draw (1.2,0.2) -- (3.8,0.2);
    \draw (1.2,0) -- (3.8,0);
    \draw (1.2,-0.2) -- (3.8,-0.2);
    \node at (2.5,0.5) {$\ge \frac{\ell \cdot n^{\alpha}}{3}$};
    \node at (2.5,-1) {$Q^d$};
    \end{tikzpicture}
    \end{figure}
    
    Also, as we have proven that whp in $G_1$ every vertex of $Q^d$ is at distance at most 2 from $L_1'$ (see Theorem \ref{lec8: lem8}), we thus have that every $v\in B'$ is at distance at most 2 from $B$, hence, every edge $e \in E_{Q^d}(A', B')$ is part of a path of length at most 5 in $Q^d$ between $A$ and $B$.
    Hence, the number of edge-disjoint paths in $Q^d$ between $A$ and $B$ is at least:
    \[
   \frac{|E(A',B')|}{5\cdot 5d^4+1}\ge \frac{\ell \cdot n^{\alpha}}{80 d^4}.
    \]
    Each such path is present in $G_1\cup G_2$ with probability at least $p_2^5 = \left(\frac{\delta}{d}\right)^5$. Thus, the expected number of open paths connecting $A$ and $B$ in $G=G_1\cup G_2$ is:
    \[
    \frac{\ell \cdot n^{\alpha}}{80 d^4} \cdot \left(\frac{\delta}{d}\right)^5 = \frac{\delta^5\cdot \ell \cdot n^{\alpha}}{80 d^9}.
    \]

    Using the standard concentration of the binomial distribution and the union bound over all partitions $V(L_1')=A\cup B$, we yield whp at least $\Omega\big(\frac{\ell\cdot n^\alpha}{d^9}\big)$ edge-disjoint paths of length at most 5 between $A$ and $B$ in $G$.
    For the second part of the lemma, we note that for any $v\in B$ has degree $d$ in $Q^d$, hence it is an endpoint of at most $d$ such paths.
\end{proof}

We are now close to completing the proof. To finish, we first prove that the vertices of $L_1'$ are close to one another (in $L_1$):
\begin{lemma}
    Whp, the distance in $L_1$ between any pair of vertices $u,v\in L_1'$ is $O(d^{12})$.
\end{lemma}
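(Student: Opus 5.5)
We plan to build, for a fixed $u\in L_1'$, a nested sequence of sets $A_0\subseteq A_1\subseteq\cdots$ that are unions of parts $W_j$ from Lemma \ref{lem: partition} (applied in $G_1$). Every vertex of $A_s$ will be within $L_1$-distance $r_s$ of $u$, and $A_s$ will grow geometrically until it contains more than $\frac{3t}{4}$ of the parts. We then run the same construction from $v$. Two collections, each holding more than three quarters of the $t$ parts, must share a part $W_j$, and $W_j$ has diameter at most $4C_2d$. So $\mathrm{dist}_{L_1}(u,v)\le r_{s_u}+r_{s_v}+4C_2d$. It therefore suffices to show that the number of growth steps is $O(d^{2})$ and that each step adds only $O(d^{10})$ to the radius (or some other split whose product is $O(d^{12})$).

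The construction goes as follows. Start with $A_0=W_{j_0}$, the part containing $u$, so $r_0\le 4C_2d$. Given $A_s$ consisting of $\ell_s\le \frac{3t}{4}$ parts, apply Lemma \ref{lem: paths} to the partition $A=A_s$, $B=L_1'\setminus A_s$. Whp this gives $\Omega(\ell_s n^\alpha/d^{10})$ distinct endpoints in $B$ of $G$-paths of length at most $5$ starting in $A_s$. These endpoints lie in distinct vertices of $B$, and each part has at most $2n^\alpha$ vertices, so they meet at least $\Omega(\ell_s/d^{10})$ parts of $B$. Let $A_{s+1}$ be $A_s$ together with all those parts. Each new vertex is reached from $u$ by a path of length at most $r_s$ into $A_s$, then a path of length at most $5$, then at most $4C_2d$ steps inside its part. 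Hence $r_{s+1}\le r_s+5+4C_2d$ and $\ell_{s+1}\ge \ell_s(1+c/d^{10})$. After $s=O(d^{10}\log t)=O(d^{11})$ steps (since $\log t\le d$), we have $\ell_s>\frac{3t}{4}$, and the radius is at most $O(d^{11})\cdot O(d)=O(d^{12})$, which is the claimed bound.

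One gap must be checked: when $\ell_s$ is small, for instance $\ell_s=1$, the bound $\Omega(\ell_s/d^{10})$ is below $1$, yet Lemma \ref{lem: paths} still guarantees at least one new part, because the number of paths is positive. In that case $\ell_{s+1}\ge \ell_s+1$, so the additive step always works, and the multiplicative rate $1+\Omega(d^{-10})$ holds in general. Overall growth is at least $\max(\ell_s+1,\ell_s(1+cd^{-10}))$, which still reaches $\frac{3t}{4}$ within $O(d^{10}+d^{10}\log t)=O(d^{11})$ steps.

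The main obstacle is making sure Lemma \ref{lem: paths} may be used simultaneously for all the (random, adaptively chosen) sets $A_s$ and for every starting vertex $u$. The lemma is stated as holding whp for all respecting partitions at once, so we invoke it once as a uniform event and then run the construction deterministically on that event, with no union bound over $u$. We must also confirm that the paths found in $G_2$ use edges of $G=G_1\cup G_2$, so that they really lie in $L_1$. Their endpoints lie in $L_1'\subseteq L_1$, and $L_1$ is a connected component of $G$, so the whole path lies in $L_1$.
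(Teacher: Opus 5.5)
Your proposal is correct and follows essentially the same route as the paper. You iteratively absorb every part $W_j$ reached by the short paths of Lemma \ref{lem: paths}, so the collection grows by a factor $1+\Omega(d^{-10})$ per step while the radius grows by $5+4C_2d$; this gives $O(d^{11})$ steps and distance $O(d^{12})$. The differences are cosmetic: you track the number of parts rather than the volume, and stop at more than $3t/4$ parts from both $u$ and $v$, which keeps you within the lemma's hypothesis $\ell\le 3t/4$. The paper instead grows the volume past half of $L_1'$, and your steps-versus-radius bookkeeping is cleaner than its count.
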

\begin{proof}
    For each partition of $L_1'$ as in Lemma \ref{lem: paths}, whp there are at least $\Omega\left(\frac{\ell \cdot n^{\alpha}}{d^{10}}\right)$ distinct endpoints in $B$ that correspond to at least $\Omega\left(\frac{\ell \cdot n^{\alpha}}{d^{10}}\right)$ edge-disjoint paths between $A$ and $B$. Now, since $|W_j|\le 2n^\alpha$, for every $1\le j\le t$, these endpoints must belong to at least:
    \[
    \frac{\Omega\left(\frac{\ell \cdot n^{\alpha}}{d^{10}}\right)}{2n^{\alpha}} = \Omega\left(\frac{\ell}{d^{10}}\right)
    \]
    distinct components $\{W_j\}$ in $B$. Look at $W_j\subseteq B$. Recall that the distance (in $L_1$) between any $u,v\in W_j$ is at most $4C_2 d$. Hence, there are
    \[
        \Omega\left(\frac{\ell}{d^{10}}\right)\cdot n^\alpha =\Omega\left(\frac{\ell\cdot n^\alpha}{d^{10}}\right)
    \]
    distinct vertices at distance at most $5+4C_2 d$ from $A$. Therefore.
    \[
        |B(A,5+4C_2 d)|\ge |A|\left(1+\Omega\left(\frac{1}{d^{10}}\right)\right).
    \]

    Now, let $v\in V(L_1')$. There is $1\le j\le t$ such that $v\in W_j$. Denote $A_0=W_j$ and $B_0=V(L_1')\setminus A_0$. Apply the above argument on the partition $V(L_1')=A_0\cup B_0$. Define $A_1\subseteq V(L_1')$ as the union of all $\{W_j\}$ at distance at most $5+4C_2 d$ from $A_0$. By the above,
    \[
        A_1\supseteq B(A,5+4C_2 d) \implies |A_1|\ge |A_0|\left(1+\Omega\left(\frac{1}{d^{10}}\right)\right).
    \]
    Iteratively define $A_i$ in such a way that $|A_{i+1}|\ge |A_i|\left(1+\Omega\left(\frac{1}{d^{10}}\right)\right)$ for every $i\in \mathbb{N}$. 
    
    After $O(d^{10}\cdot d)=O(d^{11})$ steps of the procedure we double the volume of $A_0$, and thus, since $|L_1'|=\Theta(n)$, we have to move $O(d^{11}\cdot \log_2n)=O(d^{12})$ steps to cover more than half of $L_1'$. Therefore, the distance (in $L_1$) between any two vertices from $L_1'$ is whp $O(d^{12})$.
\end{proof}

Now, all that remains is to show that typically we do not have a vertex in $L_1\setminus L_1'$ that is too far from $L_1'$. If we prove this, we are done, because for any pair of vertices $u,v\in L_1$ we can take vertices $u',v' \in L_1'$, such that $u$ is close to $u'$, and $v$ is close to $v'$, providing a short path from $u$ to $v$ by:
$$u\to u' \to v' \to v.$$

\begin{defn}
    Let $v\in V(L_1')$. We denote by $C_v$ the set of connected components of $L_1\setminus L_1'$ that connect to $v$ in $G_2$ (the set of components $C_i$ such that there exists a $v_i\in C_i$ with $(v_i,v)\in E(G_2)$).
\end{defn}

We have already proven that in $G_1$, whp, every connected component outside of $L_1'$ is of size at most $K_1 d$ for some $K_1\coloneqq K_1(\epsilon)>0$. We now prove the following lemma:

\begin{lemma}\label{small in L_1L_1'}
    There exists a constant $K_2\coloneqq K_2(\epsilon)>0$, such that whp, for every $v\in V(L_1')$ the set $C_v$ is of size at most $|C_v|\le K_2d$.
\end{lemma}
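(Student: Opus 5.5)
Fix $v\in V(L_1')$. In $Q^d$ the vertex $v$ has only $d$ neighbours, and each $G_2$-edge at $v$ reaches at most one component of $G_1$ outside $L_1'$. So $|C_v|\le \deg_{G_2}(v)$, i.e. the number of components is at most the $G_2$-degree of $v$. I read $|C_v|$ as the total number of vertices in the union of these components, since that is what is needed later to bound distances from $L_1\setminus L_1'$ to $L_1'$. Hence
\[
|C_v|\le \sum_{u\in N_{G_2}(v)} |C_{G_1,u}|\,\mathbbm{1}\{u\notin L_1'\}.
\]

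\textbf{Step 1.} Bound $\deg_{G_2}(v)$. It is distributed as $\mathrm{Bin}(d,\delta/d)$, which has mean $\delta$. By a crude tail estimate, $\mathbb{P}(\deg_{G_2}(v)\ge r)\le \binom{d}{r}(\delta/d)^r\le (e\delta/r)^r$. Taking $r=\beta d$ for a small constant $\beta$ makes this $e^{-\Omega(d\log d)}=o(1/n)$. So a union bound over all $n$ vertices gives that whp every vertex has $G_2$-degree at most $\beta d$.

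\textbf{Step 2.} Control the sizes of the attached $G_1$-components. We already know (the proof of Theorem \ref{giant_in_Q^d_p} together with Lemma \ref{lec8: lem5}, applied to $G_1$) that whp every component of $G_1$ outside $L_1'$ has size at most $K_1 d$. Combined with Step 1 this only gives $|C_v|\le \beta K_1 d^2$, so we need more. The key observation is that $G_2$ is independent of $G_1$. I will therefore condition on $G_1$ and bound the sum of component sizes over the random $G_2$-neighbours of $v$.

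Let $s_u=|C_{G_1,u}|$ for $u\notin L_1'$, and $s_u=0$ otherwise. Given $G_1$, the quantity $X_v=\sum_{u\sim_{Q^d} v} s_u\xi_u$, with $\xi_u\sim\mathrm{Bernoulli}(\delta/d)$ i.i.d., is a sum of independent variables, each bounded by $K_1 d$. I will bound its exponential moment using $\mathbb{E}[\sum_{u\sim v} s_u]$. The crucial input is that, in $G_1$, $\mathbb{P}(|C_{G_1,u}|=k,\ u\notin L_1')$ decays exponentially in $k$ for $k\ge 1$. This follows from the tree-counting estimate $(ed)^{k-1}p_1^{k-1}(1-p_1)^{k(d-2\log_2 k)}$ from Lemma \ref{lec8: lem5}, giving roughly $(e c' e^{-c'})^k$ with $c'=(1+\epsilon/2)$. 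Since the components of different neighbours are not independent, I cannot use a product bound directly. Instead I will use that the union of the small components meeting $N_{Q^d}(v)$ forms a forest-like structure counted by trees rooted at those neighbours. Concretely, for a target total $K_2 d$, the event that the $G_2$-neighbours $u_1,\dots,u_r$ have $G_1$-components of total size at least $K_2 d$ requires a forest of trees rooted at $u_1,\dots,u_r$ with total size $k\ge K_2 d$ whose outer boundary is closed in $G_1$.

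\textbf{Step 3 (main obstacle).} The estimate is the expected number of such configurations. Roughly,
\[
\sum_{r\le \beta d}\binom{d}{r}\Big(\frac{\delta}{d}\Big)^r\sum_{k\ge K_2 d}\binom{k}{r}(ed)^{k}p_1^{k-r}(1-p_1)^{k(d-2\log_2 k)-rd}.
\]
This uses the rooted-tree count (Lemma \ref{lec8: lem3}) for each tree, and the weak Harper bound (Lemma \ref{lec8: lem4}) for the boundary, losing the edges back to $v$. The factor $(e(1+\epsilon/2)e^{-(1+\epsilon/2)+o(1)})^k$ decays like $e^{-\Theta(\epsilon^2)k}$. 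With $k\ge K_2 d$ and $K_2$ large in terms of $\epsilon,\beta,\delta$, this beats both the combinatorial factors (which are at most $e^{O(d)}$) and the extra union bound over $v$ (a factor $n=2^d$), giving $o(1)$.

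The delicate point is absorbing the factors $p_1^{-r}$ and $(1-p_1)^{-rd}$, which come from the roots not being connected to each other. They contribute at most $(d e^{2})^{r}\cdot(\delta/d)^r=(e^2\delta)^r$, harmless for small $\delta$. Restricting to $k\le n^{\alpha}$ is legitimate because components outside $L_1'$ have size $O(d)$ anyway. Hence whp $|C_v|\le K_2 d$ for all $v\in V(L_1')$.
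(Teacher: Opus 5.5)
Your argument is correct in substance and uses the same first-moment idea as the paper. A bad $v$ forces a union of $G_1$-components of total size $k\ge K_2d$, attached to $v$ by $G_2$-edges, whose $Q^d$-boundary is closed in $G_1$. The rooted tree count (Lemma~\ref{lec8: lem3}) and the weak Harper bound (Lemma~\ref{lec8: lem4}) make each such configuration cost $e^{-\Theta(\epsilon^2)k}$, and this beats the union bound over $v$ once $K_2$ is large. Your reading of $|C_v|$ as a vertex count is the intended one; literally, $C_v$ has at most $d$ members.

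The bookkeeping differs from the paper's, and each version buys something:
\begin{itemize}
\item The paper first trims. Since components outside $L_1'$ have size at most $K_1d$, it selects a sub-union $\tilde C$ of the components in $C_v$ with $K_2d\le|\tilde C|\le(K_1+K_2)d$. It then encodes $\tilde C\cup\{v\}$ by one tree on $k+1$ vertices of $Q^d$, with every edge charged at rate $p$.
\item This keeps $k=\Theta(d)$ and folds the choice of attaching neighbours and piece sizes into the tree count. The cost is that charging $G_1$-edges at $p$ rather than $p_1$ produces the factor $e^{-\epsilon^2/3+\delta}$, which forces $\delta<\epsilon^2/3$.
\item Your forest rooted at the $G_2$-neighbours charges component edges at $p_1$ and the $r$ attaching edges at $\delta/d$. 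The decay rate is then $c'e^{1-c'}$ with $c'=dp_1$, and no relation between $\delta$ and $\epsilon$ is needed.
\item In exchange you pay for choosing the roots and the size composition. Because you do not trim, you must also cover all $k$ up to $K_1d^2$.
\end{itemize}
Your Step~1 is not needed, since $r\le d$ trivially gives $k\le K_1d^2$. The exponential-moment plan in Step~2 is never used.

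Two points need tightening.

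\textbf{The combinatorial factor.} $\binom{d}{r}\binom{k}{r}$ is not $e^{O(d)}$ uniformly in $k$. Already at $k=K_2d$ it grows with $K_2$, and for $k=\Theta(d^2)$ it is $e^{\Theta(d\log d)}$. So ``choose $K_2$ large to beat $e^{O(d)}$'' is circular as stated. The crude bound $\binom{k}{r}\le 2^k$ would be fatal, since $\ln 2\gg\epsilon^2$. Compare termwise in $k$ instead. With $e^2\delta\le 1$, Vandermonde gives $\sum_r\binom{d}{r}\binom{k}{r}\le\binom{k+d}{d}\le\bigl(e(1+x)\bigr)^d$ for $k=xd$. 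Hence the contribution of $k$, including the factor $2^d$ for the choice of $v$, is at most $\exp\bigl(d(\ln 2+1+\ln(1+x)-(\gamma-o(1))x)\bigr)$, where $\gamma=c'-1-\ln c'>0$. This is at most $e^{-\gamma k/2}$ once $x\ge K_2$ is large, and summing over $k\ge K_2d$ gives $o(1)$.

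\textbf{Repeated components.} Several $G_2$-neighbours of $v$ may lie in the same $G_1$-component. Root the forest at one chosen neighbour per component, with $r$ the number of components. Then the trees are vertex-disjoint and your count applies. Counting forests by $(ed)^{k-r}$ rather than $(ed)^k$ also removes the artificial $p_1^{-r}$ factor you had to absorb.
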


Observe that $$L_1=L_1'\cup \bigcup_{v \in V(L_1)} C_v.$$
Note that the sets $C_v$ are not necessarily disjoint. 
Assuming Lemma \ref{small in L_1L_1'}, we have that whp for every $u\in V(L_1)$, there exists $v\in V(L_1')$ such that $u\in C_i\in C_v$ for some component $C_i$ in $L_1\setminus L_1'$ that is connected to $v$ by an edge from $G_2$. Thus, the distance between $u$ and $L_1'$ is at most $|C_i|\le |C_v|$ steps, which the lemma argues is $O(d)$.
To conclude, the distance (in $L_1$) between any two vertices $u,v\in V(L_1)$ is $O(d)+O(d^{12})+O(d)=O(d^{12})$, completing the proof of Theorem \ref{diam_Q^d_p}.

We now prove Lemma \ref{small in L_1L_1'}.

\begin{proof}
Assume in contradiction that there exists a $v\in V(L_1)$ such that $|C_v|\ge K_2d$. Since all connected components in $L_1\setminus L_1'$ are of size at most $K_1d$, we can find a subset $\tilde{C}=\cup C_{i_j}$, of components from $C_v$ ($\tilde{C}$ is a set of vertices that form the union of some subset of components of $C_v$), such that:

$$K_2d \le |\tilde{C}|\le (K_1+K_2)d.$$

Denote the size of $\tilde{C}$ by $k$. We note that $\tilde{C}$ satisfies the following properties:
\begin{enumerate}
    \item $\tilde{C}\cup \{v\}$ is a connected graph in $G$ with $k+1$ vertices;
    \item All edges between $\tilde{C}$ and $V(Q^d)\setminus \tilde{C}$ are not present in $G_1$.
\end{enumerate}

We aim to prove that whp no such set exists. We note that these two conditions concern disjoint sets of edges and are thus independent. We bound the probability by:
\begin{enumerate}
    \item Applying the union bound over all possible sizes $k$ of $\tilde{C}$: $K_2d\le k\le (K_1+K_2)d$.
    \item Choosing a tree $T$ on $k+1$ vertices ($\le (k+1)(ed)^{k}$ possible spanning trees) and requiring its edges to be presented in $G$ ($p^k$, $k$ edges in $G$).
    \item Choosing a vertex $v$ in $T$ (in $k+1$ ways) and requiring all edges at the boundary of $T\setminus \{v\}$ to be closed in $G_1$.
\end{enumerate}
We obtain that the probability that such a set $\tilde{C}$ exists is bounded from above by:
\begin{align*}
    \sum_{k=K_2d}^{(K_1+K_2)d}n(k+1)(ed)^kp^k(1-p_1)^{k(d-2\log_2 k)}&\le n^2\sum_{k=K_2d}^{(K_1+K_2)d}\big(e(1+\epsilon)\big)^k e^{-\frac{1+\epsilon-\delta}{d}k(d-2\log_2 k)} \\
    &\le n^2\sum_{k=K_2d}^{(K_1+K_2)d}\exp\big(1+\epsilon-\frac{\epsilon^2}{3}-1-\epsilon+\delta+o(1)\big)^k \\
    &\le n^2\sum_{k=K_2d}^{(K_1+K_2)d}\exp\big(-\frac{\epsilon^2}{3}+\delta+o(1)\big)^k.
\end{align*}
    
Taking $K_2$ to be sufficiently large, and $\delta=\delta(\epsilon)>0$ sufficiently small, this probability tends to $0$, completing the proof.
\end{proof}

\newpage
\phantomsection
\addcontentsline{toc}{chapter}{Lecture 12}
\lecture{12}{July 12, 2026}{Lecture 12}{Binyamin kobzantsev}
\setcounter{chapter}{12}
\setcounter{section}{0}

\section{Hitting Time for Connectivity in a Random Cube}

\subsection{Background / Definitions}

\noindent \textbf{Notation:} $N = |E(Q^d)| = \frac{nd}{2}$, where $n = 2^d$.

\begin{defn}
Let $\sigma \in S_N$ be a permutation of $E(Q^d)$. We define a process $\tilde{Q} = \{Q_i\}_{i=0}^N$ on $Q^d$ by:
\begin{align*}
    V(Q_i) &= V(Q^d) = \{0,1\}^d; \\
    E(Q_i) &= \{e_{\sigma(1)}, \dots, e_{\sigma(i)}\}.
\end{align*}
\end{defn}

\begin{remark}
Note that:
\[
\overline{K_{2^d}}=Q_0 \subsetneq Q_1 \subsetneq \dots \subsetneq Q_N = Q^d
\]
are graphs on $V(Q^d)$, and $|E(Q_i)| = i$ for all $0 \le i \le N$.
\end{remark}

\begin{defn}
If, when defining a graph process on $Q^d$, the permutation $\sigma \in S_N$ is chosen uniformly at random, we say that the process is a \textit{random graph process} on $Q^d$.
\end{defn}

\begin{defn}
We say that $\mathcal{A}$ is an \textit{increasing (non-trivial) monotone property} of subgraphs of $Q^d$ if:
\begin{enumerate}
    \item $\mathcal{A} \subseteq \{G \subseteq Q^d\}$
    \item If $G_1 \in \mathcal{A}$ and $G_1 \subseteq G_2 \subseteq Q^d$, then $G_2 \in \mathcal{A}$.
    \item $Q_0 \notin \mathcal{A}$ and $Q_N = Q^d \in \mathcal{A}$.
\end{enumerate}
\end{defn}

\begin{defn}
Given a graph process $\tilde{Q} = \{Q_i\}_{i=0}^N$ and a monotone non-trivial property $\mathcal{A}$, we define the \textit{hitting time} of $\mathcal{A}$ with respect to $\tilde{Q}$ by:
\[
\tau_{\mathcal{A}}(\tilde{Q}) = \min \{ 0 \le i \le N \mid Q_i \in \mathcal{A} \}.
\]
Thus, $\tau_{\mathcal{A}}(\tilde{Q})$ is the first moment in the process where $\mathcal{A}$ holds.
\end{defn}

\begin{remark}
If $\sigma \in S_N$ is chosen randomly, then for any property $\mathcal{A}$, the hitting time $\tau_{\mathcal{A}}(\tilde{Q})$ is a random variable. One can therefore study its distribution, find its expectation, variance, etc.
\end{remark}

\begin{thm}[Bollobás \cite{zbMATH04162929}]
In a random graph process $\tilde{Q}$, whp:
\[
\tau_C(\tilde{Q}) = \tau_1(\tilde{Q}),
\]
where $\tau_C(\tilde{Q})$ is the hitting time for connectivity, and $\tau_1(\tilde{Q})$ is the hitting time for the disappearance of isolated vertices.
\end{thm}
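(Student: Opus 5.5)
We follow the template of the proof for $G(n,p)$ in Lecture 6: bracket $\tau_1$ between two deterministic times and verify three properties that together force $\tau_C=\tau_1$. The first moment of isolated vertices in $Q^d_p$ is $2^d(1-p)^d$, which crosses $1$ at $p=\frac12$. Set
\[
p_1=\tfrac12-\tfrac{\omega(d)}{d},\qquad p_2=\tfrac12+\tfrac{\omega(d)}{d},\qquad m_i=p_iN,
\]
with $\omega(d)\to\infty$ slowly, say $\omega(d)=\ln d$. We will show that whp the process satisfies:
(i) in $Q_{m_1}$ there are isolated vertices, their set $V_0$ has size at most $f(d)$ for some $f$ with $f(d)^2 d\,\omega(d)\ll 2^d$ (e.g.\ $f=e^{O(\omega)}$), and every vertex outside $V_0$ lies in a single component;
(ii) $Q_{m_2}$ has no isolated vertices;
(iii) no edge $e_i$ with $m_1<i\le m_2$ has both endpoints in $V_0$.
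Given (i)–(iii), the deterministic argument of Lecture 6 applies verbatim. Every $v\in V_0$ receives its first edge between $m_1$ and $m_2$, and by (iii) that edge goes into the big component. So the graph becomes connected exactly when the last isolated vertex disappears.

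For (ii), compute in $Q^d_{p_2}$: $\mathbb{E}[\#\text{isolated}]=2^d(1-p_2)^d\le e^{-\Theta(\omega)}\to0$. Then transfer to $Q_{m_2}$ by monotonicity, using the analogue of Proposition \ref{proposition: comparison gnp_gnm - monotone}; its proof only used that $|E(Q^d_p)|\sim \mathrm{Bin}(N,p)$ and that the conditional law is uniform. For the first part of (i), in $Q^d_{p_1}$ we have $\mu=2^d(1-p_1)^d=e^{\Theta(\omega)}\to\infty$. The second moment works because two vertices share at most one potential edge: $\mathbb{E}[X^2]\le\mu+\mu^2/(1-p_1)$ exactly as before, so $X>0$ whp. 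Markov gives $X\le \mu\,\omega$ whp, and both facts transfer to $Q_{m_1}$ by monotonicity. For (iii), conditional on the history, $e_i$ is uniform among the remaining $\ge N/3$ edges. At most $|V_0|d$ of them lie inside $V_0$ (in fact at most $\frac12|V_0|\log_2|V_0|$). A union bound over $m_2-m_1=O(N\omega/d)$ steps gives $O(|V_0|d\,\omega/d)$ multiplied by $1/\Theta(N)$ per step, i.e.\ roughly $O(f\,\omega\, d/N)=o(1)$.

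The main obstacle is the second part of (i): showing that in $Q_{m_1}$ (equivalently in $Q^d_{p_1}$, with a $\sqrt{m_1}$ loss via Proposition \ref{proposition: comparison gnp_gnm}, or via monotonicity applied to the decreasing property ``there is a component of order in $[2,2^{d-1}]$'') there is no component $C$ with $2\le|C|\le 2^{d-1}$. I would split by $k=|C|$, following the three-case scheme of the perfect matching proof in Lecture 10. For $2\le k\le 2^{d/100}$, use a spanning tree (Lemma \ref{lec8: lem3}) together with Harper's bound $|\partial C|\ge k(d-\log_2 k)$:
\[
\mathbb{E}\le 2^d(ed)^{k-1}p_1^{k-1}(1-p_1)^{k(d-\log_2k)}.
\]
The case $k=2$ is delicate. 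There the boundary has $2d-2$ edges and the bound is $2^d\cdot d\cdot\frac12\cdot 2^{-(2d-2)}e^{O(\omega)}=o(1)$, which is fine; larger $k$ decays like $2^{-\Theta(kd)}$. For $2^{d/100}\le k\le 2^d/d^{\ln^2 d}$ the same estimate works. For the largest range, split by $|\partial C|$. If $|\partial C|\ge k\ln^4 d$, use $\binom{2^d}{k}2^{-|\partial C|(1-o(1))}$. Otherwise use Lemma \ref{finger_print} to count sets and pay $(1-p_1)^{|\partial C|}$ with $|\partial C|\ge k$ from Theorem \ref{isop_large sets_Q^d}. The case requiring the most care is this last one, because $(1-p_1)^{k}\approx 2^{-k}$ must beat $e^{2k/\ln^2 d}$. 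This holds, but the $\omega/d$ correction to $p_1$ must be checked not to spoil it.
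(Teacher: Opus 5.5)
Your step (iii) fails as computed. Given the history, each step of the window lands inside $V_0$ with probability at most $3e_{Q^d}(V_0)/N$, and the window has $m_2-m_1=\frac{2\omega}{d}N$ steps. The union bound is therefore $\frac{6\omega}{d}\,e_{Q^d}(V_0)$, not $O(f\omega d/N)$. With $e_{Q^d}(V_0)\le |V_0|d/2$ this is $3\omega|V_0|\to\infty$. Even with Harper's $e_{Q^d}(V_0)\le\frac12|V_0|\log_2|V_0|$ and your choice $\omega=\ln d$ (so $|V_0|\approx\mu\approx d^2$), it is of order $d\ln^2 d$.

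The underlying reason is that in $K_n$ the window is a $\Theta(\ln\ln n/n)$ fraction of all edges. In $Q^d$ it is a $\Theta(\omega/d)$ fraction, only logarithmically small in $n=2^d$, so a fixed edge of $Q^d$ is added during the window with probability about $2\omega/d$.

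The missing idea is hypercube-specific: whp $V_0$ spans \emph{no} edge of $Q^d$ at all. Indeed, the expected number of edges of $Q^d$ with both endpoints isolated in $Q^d_{p_1}$ is $\frac{d2^d}{2}(1-p_1)^{2d-1}=\Theta(d\,e^{4\omega}2^{-d})\to 0$. This property is decreasing, so it transfers to $Q_{m_1}$. Then (iii) holds deterministically for every $i>m_1$, and your property (ii) and the upper time $m_2$ are no longer needed. This is precisely the paper's proof. It fixes a single time corresponding to $p=\frac12-\varepsilon$, with $\varepsilon>0$ a small constant, and shows that whp there are isolated vertices, all other vertices lie in one giant component, and no two isolated vertices are adjacent in $Q^d$. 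Alternatively, your window could be saved by taking $\omega=\ln\ln d$, so that whp $|V_0|=O(\ln^2 d\,\ln\ln d)$, together with Harper's bound.

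There are two smaller slips.

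First, for $X>0$ at $p_1$: the bound $\mathbb{E}[X^2]\le\mu+\mu^2/(1-p_1)$ only gives $\mathrm{Var}(X)\le\mu+\mu^2\frac{p_1}{1-p_1}\approx\mu+\mu^2$, since $p_1\approx\frac12$, and that is useless for Chebyshev. You need the fact that only the $d2^d$ ordered pairs adjacent in $Q^d$ are dependent. This gives $\mathrm{Var}(X)\le\mu+\mu^2\frac{d}{2^d(1-p_1)}=\mu+o(\mu^2)$. The paper instead counts isolated vertices in one bipartition class; these are independent, so their number is $\mathrm{Bin}(2^{d-1},(1-p)^d)$.

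Second, ``there is a component of order in $[2,2^{d-1}]$'' is not a monotone property: an edge between two isolated vertices creates such a component. So only your $\sqrt{m_1}$-loss transfer is legitimate. It does suffice, because your first-moment bound is dominated by $k=2$ and equals $O(d^5 2^{-d})=o(m_1^{-1/2})$.

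With these repairs, your direct first-moment exclusion of all components of order in $[2,2^{d-1}]$ is a valid, if heavier, alternative to the paper's route. You use tree counting with Harper for $k\le 2^d/d^{\ln^2 d}$, then Lemma \ref{finger_print} and Theorem \ref{isop_large sets_Q^d}. The paper only excludes orders in $[2,n^{1/3}]$ and then merges the large components by sprinkling with constant $p_2=\varepsilon$. That needs only weak Harper, $|\partial A|\ge|A|$, and the bound $n^{10\varepsilon}$ on the number of isolated vertices, and avoids the fingerprint lemma.
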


\begin{remark}
Meaning in words: In a random graph process of the cube, whp, the graph becomes connected exactly at the time when the last isolated vertex disappears.
\end{remark}

\begin{remark}
For every graph process on $Q^d$, deterministically:
\[
\tau_C(\tilde{Q}) \ge \tau_1(\tilde{Q}).
\]
\end{remark}

\begin{proof}[Proof (Diskin, Krivelevich \cite{arXiv:2404.09289})]
It is enough to prove that there exists $t \coloneqq t(d)$ such that whp in a random graph process $\tilde{Q}$, the graph $Q_t$ satisfies:
\begin{enumerate}
    \item $\delta(Q_t) = 0$.
    \item The connected components of $Q_t$ are either isolated vertices or a giant component $L_1$ of size $|L_1| = 2^d(1 - o(1))$.
    \item For any two isolated vertices $u \neq v$ in $Q_t$, we have that $u$ and $v$ are not connected by an edge of $Q^d$.
\end{enumerate}

Indeed, at time $t$, there are isolated vertices and therefore $\tau_C(\tilde{Q}) \ge \tau_1(\tilde{Q}) > t$. Also, every edge touching an isolated vertex has its other endpoint in $L_1$. Therefore, adding any edge touching an isolated vertex $v$ in $Q_t$ connects $v$ to $L_1$. Hence, exactly at the moment when the last isolated vertex of $Q_t$ disappears, then this vertex and all other isolated vertices are connected to $L_1$, thus we get a connected graph. 

Due to monotonicity (similar to $G(n,p)$ and a random graph process of $K_n$), it is enough to prove the following claim:

\begin{claim}
Assume $p = \frac{1}{2} - \varepsilon$, where $\varepsilon > 0$ is a sufficiently small constant. Consider a random cube $Q_p^d$. Then, whp:
\begin{enumerate}[label=(\arabic*)]
    \item There are isolated vertices in $Q_p^d$.\label{prop1_12}
    \item $|L_1(Q_p^d)| = 2^d(1 - o(1))$, and any other component besides $L_1$ is an isolated vertex.\label{prop2_12}
    \item No two isolated vertices in $Q_p^d$ are adjacent in $Q^d$.\label{prop3_12}
\end{enumerate}
\end{claim}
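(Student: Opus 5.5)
Write $n=2^d$ and $G\sim Q^d_p$ with $p=\frac12-\varepsilon$. I would prove the three items separately, using first and second moments for \ref{prop1_12} and \ref{prop3_12} and an isoperimetric union bound (as in the perfect matching proof of Lecture 10) for \ref{prop2_12}.

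\textbf{Items \ref{prop1_12} and \ref{prop3_12}.} Let $X$ be the number of isolated vertices of $G$. Then $\mathbb{E}[X]=2^d(1-p)^d=\left(2(\frac12+\varepsilon)\right)^d=(1+2\varepsilon)^d\to\infty$. For distinct $u,v$ the isolation events are independent unless $uv\in E(Q^d)$. In that case they share one edge and the joint probability is $(1-p)^{2d-1}$. Hence
\[
\mathrm{Var}(X)\le \mathbb{E}[X]+nd(1-p)^{2d-1}=\mathbb{E}[X]+O\!\left(\frac{d\,\mathbb{E}[X]^2}{n}\right)=o(\mathbb{E}[X]^2),
\]
and Chebyshev gives $X>0$ whp. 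For \ref{prop3_12}, the expected number of $Q^d$-edges whose two endpoints are both isolated is
\[
\frac{nd}{2}(1-p)^{2d-1}=O\!\left(d\,2^d(\tfrac12+\varepsilon)^{2d}\right)=O\!\left(d\left(\tfrac{(1+2\varepsilon)^2}{2}\right)^d\right)=o(1)
\]
for small $\varepsilon$. Markov then finishes.

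\textbf{Item \ref{prop2_12}.} Components cannot have size in $(n/2,n)$ other than $L_1$, so it suffices to show that whp there is no connected set $S$ in $G$ with $2\le |S|=k\le n/2$ and no $G$-edges leaving $S$. Given that, every non-giant component is an isolated vertex. Then $|L_1|=n-X$, and $\mathbb{E}[X]=(1+2\varepsilon)^d=o(n)$ gives $|L_1|=(1-o(1))n$ whp.

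For a fixed $S$ of size $k$ spanning a tree of $G$, the probability of no outgoing edges is $(1-p)^{|\partial S|}=(\frac12+\varepsilon)^{|\partial S|}$. I would split by $k$.
\begin{enumerate}
\item \emph{Small $k$, say $2\le k\le 2^{d/100}$.} Use Lemma~\ref{lec8: lem3} to count trees, at most $n(ed)^{k-1}$, and Harper (Theorem~\ref{thm: Harper}) to get $|\partial S|\ge k(d-\log_2 k)$. The bound is
\[
n(ed)^{k-1}p^{k-1}(\tfrac12+\varepsilon)^{k(d-\log_2k)}\le n\,(e d)^{k}\,2^{-0.98kd(1-\delta)}.
\]
At $k=2$ the factor is about $(\frac12+\varepsilon)^{2d-2}$ against $n$, so this is $o(1/d)$ per $k$ and the sum over $k$ is $o(1)$.
\item \emph{Middle range} $2^{d/100}\le k\le n/d^{\ln^2 d}$. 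The same computation works: Harper still gives $|\partial S|\ge k\,\Omega(\ln^2 d\cdot\ln d)$ and $(ed)^k$ is beaten. More carefully, I would count sets via trees only where $(ed)^k$ is dominated, as in Case 2 of the perfect matching proof.
\item \emph{Large $k$} up to $n/2$. Follow Case 3 there. If $|\partial S|\ge k\ln^4 d$, count sets by $\binom{n}{k}\le e^{k\ln(n/k)}\le e^{k\ln^3 d}$ against $(\frac12+\varepsilon)^{k\ln^4 d}$. Otherwise use Lemma~\ref{finger_print} to count such $S$ by $e^{2k/\ln^2 d}$, together with $|\partial S|\ge k$ from Theorem~\ref{isop_large sets_Q^d}, giving $(\frac12+\varepsilon)^k e^{2k/\ln^2 d}$, which is summable.
\end{enumerate}

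\textbf{Main obstacle.} I expect the hardest step to be the boundary between the small and middle ranges. There the tree count $(ed)^{k}p^{k}\approx (e/2)^k d^0$ multiplies $(1-p)^{|\partial S|}$ with $|\partial S|\approx k(d-\log_2 k)$. I must make sure that $\log_2 k$ never approaches $d$ while the tree count is still in use, which fixes where to switch to the fingerprint counting. If the constants there fail, I would first try requiring only the weaker Harper bound $e(S)\le \frac12|S|\log_2|S|$ on the $G$-spanning set rather than on a tree.
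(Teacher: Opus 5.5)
Your argument is correct, and for item (2) it takes a genuinely different route from the paper.

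\textbf{Items (1) and (3).} These are essentially the paper's arguments. For (1) the paper avoids any covariance computation: it counts isolated vertices only in one bipartition class, where the isolation events are independent and the count is exactly $\mathrm{Bin}(2^{d-1},(1-p)^d)$. Your second-moment bound over all vertices is equally valid.

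\textbf{Your route for (2).} You run a single-round union bound showing directly that $Q^d_p$ has no component of order in $[2,n/2]$.
For $k\le n/d^{\ln^2 d}$ you use tree counting with the full Harper inequality (Theorem~\ref{thm: Harper}). There $d-\log_2 k\ge \ln^2 d\log_2 d$, so $(\frac12+\varepsilon)^{|\partial S|}$ overwhelms $(ed)^k$.
For larger $k$ you use Lemma~\ref{finger_print} together with $|\partial S|\ge |S|$ (Theorem~\ref{isop_large sets_Q^d}). This is the template of the perfect-matching proof, and all three ranges check out.

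\textbf{The paper's route for (2).} The paper instead sprinkles with $p_2=\varepsilon$.
In $G_1$ it only excludes components of order in $[2,n^{1/3}]$. There $\log_2 k\le d/3$, so the \emph{weak} bound $|\partial S|\ge k(d-2\log_2 k)$ proved in Lecture~8 suffices.
It then merges the components of order $\ge n^{1/3}$ through $G_2$. The union bound is only over partitions respecting these large components, giving at most $n^{a/n^{1/3}}=e^{o(a)}$ choices for a side of size $a$. So the crude bound $|\partial A|\ge |A|$ beats $(1-\varepsilon)^{a/2}$.
Finally it applies (3) to $G_1$ to attach the $G_1$-isolated vertices to the giant.

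\textbf{What each buys.} The paper's proof is more elementary and self-contained: no full Harper (which the notes cite without proof) and no fingerprint lemma (whose proof in the notes leaves an exercise). It pays for this with a second exposure round and an extra attachment step. Yours is one-shot and yields the structural statement directly in $Q^d_p$, but depends on the heavier counting machinery of Lecture~10.

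\textbf{Your anticipated obstacle.} It does not actually arise, and the way you describe it is slightly off. The tree factor is $(edp)^k\approx(ed/2)^k$ here, since $p\approx\frac12$ rather than $c/d$; it is not $(e/2)^k$. It is beaten by $(\frac12+\varepsilon)^{k(d-\log_2 k)}$ as soon as $d-\log_2 k\ge C\log_2 d$ for a suitable constant $C$. So the tree regime and the range of Lemma~\ref{finger_print} overlap with plenty of room to spare.
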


\begin{proof}[Proof of \ref{prop1_12}]
Recall that $Q^d$ is a bipartite graph with parts $O, E$ of size $|O| = |E| = 2^{d-1}$. The random variable $X$ counts the number of isolated vertices in part $O$ of $Q_p^d$ and is distributed as $\operatorname{Bin}(2^{d-1}, (1-p)^d)$. Hence,
\[
\mathbb{E}[X] = 2^{d-1} \cdot (1-p)^d = 2^{d-1} \left(\frac{1}{2} + \varepsilon\right)^d = 2^{d-1} \frac{(1+2\varepsilon)^d}{2^d} = \frac{(1+2\varepsilon)^d}{2} \xrightarrow[d \to \infty]{} \infty.
\]
Therefore, whp $X > 0$, which implies that whp there are isolated vertices in $Q_p^d$.
\end{proof}
 
\begin{proof} [Proof of \ref{prop3_12}]
We want to show that whp, for every edge $(u,v) \in E(Q^d)$, it is not the case that both $u$ and $v$ are isolated in $Q_p^d$, i.e., $d_{Q_p^d}(u) = d_{Q_p^d}(v) = 0$ is false.

For any edge $e = (u,v) \in E(Q^d)$:
\[
\mathbb{P}\left(d_{Q_p^d}(u) = d_{Q_p^d}(v) = 0\right) = (1-p)^{2d-1} = \left(\frac{1}{2} + \varepsilon\right)^{2d-1} = \frac{(1+2\varepsilon)^{2d-1}}{2^{2d-1}}.
\]
Thus, the expected number of such pairs of adjacent (in $Q^d$) isolated vertices (in $Q^d_p$) is
\[
|E(Q^d)| \cdot \frac{(1+2\varepsilon)^{2d-1}}{2^{2d-1}} = \frac{2^d \cdot d}{2} \cdot \frac{(1+2\varepsilon)^{2d-1}}{2^{2d-1}} \xrightarrow[d \to \infty]{} 0,
\]
for a sufficiently small constant $\varepsilon > 0$. By Markov's inequality, whp there are no such pairs.
\end{proof}

\begin{proof}[Proof of \ref{prop2_12}]
We use two-round exposure (sprinkling). Let us define $p_2 = \varepsilon$, and define $p_1$ such that:
\[
1-p = (1-p_1)(1-p_2).
\]
This implies $p_1 \ge \frac{1}{2} - 2\varepsilon$.
Recall that $G = G_1 \cup G_2 \sim Q_p^d$ where $G_i \sim Q_{p_i}^d$ for $i=1,2$. Let $L'_1$ denote the giant component of $G_1$, and let $L_1$ denote the giant component of $G$.
We will show that whp in $G_1$ there is no component of size:
\[
k \in \left[2, n^{\frac{1}{3}}\right].
\]
Indeed, if there exists a component in $G_1$ of size $k$, then there exists a tree $T$ of size $k$ in $Q^d$ whose edges are contained in $G_1$, and there are no edges in $G_1$ between $V(T)$ and $V(Q^d) \setminus V(T)$.

Therefore, using (weak) Harper's isoperimetric inequality, the probability that there exists a component of size $k \in [2, n^{1/3}]$ is at most:
\begin{align*}
    \sum_{k=2}^{n^{1/3}} n(ed)^{k-1} (1-p_1)^{k(d - 2\log_2 k)} &\le \sum_{k=2}^{n^{1/3}} n(ed)^{k-1} \left(\frac{1}{2} + 2\varepsilon\right)^{k(d - 2\log_2 k)}\\
    &\le \sum_{k=2}^d n \left( ed \left(\frac{1}{2} + 2\varepsilon\right)^{d(1-o(1))} \right)^k + \sum_{k=d+1}^{n^{1/3}} n \left( ed \left(\frac{1}{2}+2\varepsilon\right)^{\frac{d}{3}} \right)^k\\
    &\le d \cdot n^{-\frac{1}{3}} + n^{\frac{1}{3}} \cdot n^{-\frac{d}{4}} = o(1),
\end{align*}
where the last inequality holds since $\epsilon>0$ is sufficiently small. Therefore, whp there are no components of size $k \in [2, n^{1/3}]$ in $G_1$. Furthermore,
\[
\mathbb{P}(d_{G_1}(v) = 0) = (1-p_1)^d \le \left(\frac{1}{2} + 2\varepsilon\right)^d \le n^{-1 + 9\varepsilon},
\]
which implies:
\[
\mathbb{E}[\text{number of isolated vertices in } G_1] \le n^{9\varepsilon}.
\]
By Markov's inequality, whp there are at most $n^{10\varepsilon}$ isolated vertices in $G_1$. Now, we will use the edges of $G_2$ to connect the large components of $G_1$. Let us define:
\[
W = \{ v \in V(Q^d) \mid v \text{ belongs to a component of size } \ge n^{\frac{1}{3}} \text{ in } G_1 \}.
\]
Then, whp:
\[
|W| \ge n - n^{10\varepsilon}.
\]

\begin{figure}[htbp]
\centering
\begin{tikzpicture}[scale=0.9]
    \draw[thick] (0,0) ellipse (2.5cm and 1.8cm);
    \node[above] at (0,1.8) {$W$};
    
    \draw[thick] (-1,0.5) circle (0.4cm) node {O};
    \draw[thick] (1,0.7) circle (0.4cm) node {O};
    \draw[thick] (0,-0.8) circle (0.5cm) node {O};
    
    \draw[thick] (4,1) circle (0.2cm) node {\footnotesize v};
    \draw[thick] (4,-0.5) circle (0.2cm);
    \draw[thick] (3.5,-1.5) circle (0.2cm);
    
    \node[right] at (4.3, 0) {isolated vertices in $G_1$};
    
    \draw[dashed, ->, >=stealth, thick] (1,0.7) -- (3.8,1) node[midway, above] {$G_2$ edges};
    \draw[dashed, ->, >=stealth, thick] (0,-0.8) -- (3.8,-0.5);
    \draw[dashed, ->, >=stealth, thick] (0,-0.8) -- (3.3,-1.4);
    
\end{tikzpicture}
\caption{The set $W$ contains components of size $\ge n^{1/3}$ in $G_1$. Some edges from $G_2$ connect these components to the isolated vertices outside $W$.}
\end{figure}

If $W$ does not merge into one component in $G$, then there exists a partition $W = A \cup B$ where $A, B \neq \emptyset$, with no connecting edge between $A$ and $B$ in $G_2$. We can assume $a = |A| \le |B|$. Notice that
\[
|E_{Q^d}(A, A^c)| \ge |A|=a.
\]
Some of these edges leaving $A$ in $Q^d$ can lead to $A^c \setminus B$. Since there are at most $n^{10\varepsilon}$ vertices outside of $W$, at most $n^{10\varepsilon}d$ such edges do not touch $B$. Hence, since $a \ge n^{1/3}$, the number of edges in $Q^d$ between $A$ and $B$ is at least:
\[
|E_{Q^d}(A, B)| \ge |A| - n^{10\varepsilon} d \ge \frac{a}{2}.
\]
Therefore:
\[
\mathbb{P}(E_{G_2}(A, B) = \emptyset) \le (1-p_2)^{\frac{a}{2}} = (1-\varepsilon)^{\frac{a}{2}}.
\]
Denote by $s$ the number of components of size at least $n^{1/3}$. Obviously, $s\le n$. Thus, the probability that $W$ does not connect into a single component in $G=G_1\cup G_2$ is whp at most

\[
\sum_{a=n^{1/3}}^{n/2} \left( \sum_{i=1}^{a/n^{1/3}} \binom{s}{i} \right) \cdot (1-\varepsilon)^{\frac{a}{2}} \le \sum_{a=n^{1/3}}^{n/2} s^{\frac{a}{n^{1/3}}} \cdot e^{-\frac{\varepsilon a}{2}}
\le \sum_{a=n^{1/3}}^{n/2} \left( n^{\frac{1}{n^{1/3}}}\cdot  e^{-\frac{\varepsilon }{2}} \right)^a = o(1).
\]
Therefore, whp in $G = G_1 \cup G_2$, the vertices of $W$ merge into a single component. Recall that whp, for every edge in $Q^d$ incident to a vertex isolated in $G_1$, its other endpoint is in $W$ (apply \ref{prop3_12} to $G_1$). Thus, after exposing $G_2$, we obtain whp a single giant component $L_1$ in $G$, and, outside of it, only isolated vertices.
\end{proof}
\end{proof}

\bibliographystyle{abbrv}
\bibliography{Bib}

\end{document}